\documentclass{amsart} 
\usepackage{amssymb}
\usepackage{latexsym}
\usepackage{amsthm}
\usepackage[dvips]{graphicx}
\input xy
\xyoption{all}
\theoremstyle{plain}
 
\newtheorem{thm}{Theorem}[section]
\newtheorem{cor}[thm]{Corollary}
\newtheorem{lem}[thm]{Lemma}
\newtheorem{prop}[thm]{Proposition} 

\theoremstyle{remark} 

\newtheorem{ex}[thm]{Example}

\newtheorem{defn}{Definition}[section]
\newtheorem{rem}{Remark}[section]
\newtheorem{notation}{Notation}

\numberwithin{equation}{section}

\newcommand{\kv}{\mathbf{k}}
\newcommand{\mv}{\mathbf{m}}
\newcommand{\nv}{\mathbf{n}}

\newcommand{\ZZ}{\mathbb Z}
\newcommand{\CC}{\mathbb C}
\newcommand{\QQ}{\mathbb Q}
\newcommand{\RR}{\mathbb R}
\newcommand{\NN}{\mathbb N}
\newcommand{\PP}{\mathbb P}

\newcommand{\FF}{\mathbb F}
\newcommand{\LL}{\mathbb L}
\newcommand{\Aff}{\mathbb A}
\newcommand{\MM}{\mathcal M}
\newcommand{\OO}{\mathcal O}
\newcommand{\EE}{\mathcal E}

\newcommand{\w}{\mathbf w}
\newcommand{\z}{\mathbf z}
\newcommand{\g}{\mathbf g} 
\newcommand{\bi}{\mathbf i} 
\newcommand{\bj}{\mathbf j} 
\newcommand{\ba}{\mathbf a} 
\newcommand{\bb}{\mathbf b}

\newcommand{\bmu}{\boldsymbol{\mu}} 
\newcommand{\bgam}{\boldsymbol{\gamma}} 
\newcommand{\fA}{\mathfrak{A}}

\newcommand{\fL}{\mathfrak{L}} 
\newcommand{\fp}{\mathfrak{p}}

\newcommand{\ok}{\overline{k}}
\newcommand{\lra}{\longrightarrow} 

\newcommand{\xtil}{{\widetilde X}} 
\newcommand{\ytil}{{\widetilde Y}}

\DeclareMathOperator{\prim}{prim}
\DeclareMathOperator{\spec}{Spec}
\DeclareMathOperator{\lcm}{lcm}
\DeclareMathOperator{\red}{red}

\DeclareMathOperator{\Frob}{Frob}

\DeclareMathOperator{\trace}{Trace}

\begin{document}

\title{Zeta-functions of certain $K3$ fibered 
Calabi--Yau threefolds} 
\author{Yasuhiro Goto}
\address{Department of Mathematics, Hokkaido University of Education, 
1-2 Hachiman-cho, Hakodate 040-8567 Japan}
\email{ygoto@hak.hokkyodai.ac.jp}
\author{Remke Kloosterman}
\address{Institut f\"ur Algebraische Geometrie, Universit\"at Hannover,
Welfengarten 1, D-30167, Hannover, Germany}
\email{kloosterman@math.uni-hannover.de}
\author{Noriko Yui}
\address{Department of Mathematics and Statistics, Queen's University,
Kingston, Ontario Canada K7L 3N6}
\email{yui@mast.queensu.ca}
\date{\today}
\subjclass[2000]{Primary 14J32, 14J20}
\keywords{Calabi--Yau threefolds, $K3$-fibrations, Elliptic fibrations,
deformations of Calabi--Yau varieties, zeta-functions, resolution of singularities}
\begin{abstract}
We consider certain $K3$-fibered Calabi--Yau threefolds. 
One class of such Calabi--Yau threefolds are constructed by 
Hunt and Schimmrigk in \cite{HS} using twist maps. They are realized 
in weighted projective spaces as orbifolds of hypersurfaces. 
Our main goal of this paper is to investigate arithmetic properties 
of these $K3$-fibered Calabi--Yau threefolds. In particular, we give 
detailed discussions on the construction of these Calabi--Yau varieties, 
singularities and their resolutions. We then determine the zeta-functions 
of these Calabi--Yau varieties. Next we consider deformations of our 
$K3$-fibered Calabi--Yau threefolds, and we study the variation of 
the zeta-functions using $p$-adic rigid cohomology theory.  
\end{abstract}

\maketitle
\setcounter{tocdepth}{1}
\tableofcontents
\newpage

\section{Introduction}\label{sect1}

We will study the so-called {\it split-type} (or {\it product-type}) 
Calabi--Yau threefolds. By this, we mean Calabi--Yau threefolds with 
fibrations of lower-dimensional Calabi--Yau varieties, that is, with 
elliptic, or $K3$ fibrations, or both. Hunt and Schimmrigk \cite{HS} 
described a method of constructing such split-type Calabi--Yau threefolds 
using twist maps, and showed many examples defined by hypersurfaces 
of diagonal (or Fermat) type in weighted projective spaces. In this 
paper, we also consider non-diagonal hypersurfaces which we call 
{\it quasi-diagonal} threefolds. They have many geometric properties 
in common with the diagonal type and they are birational to some 
quotient of diagonal threefolds. But, their arithmetic is slightly 
different and allows us to construct many more interesting examples. 
(For arithmetic purposes, we may work on more general hypersurfaces 
such as {\it weighted Delsarte hypersurfaces}.) 

The Calabi--Yau threefolds constructed in this paper, except for those 
discussed in Sections \ref{sect12} to \ref{sect13}, have components of 
diagonal or quasi-diagonal hypersurfaces and they are of CM type in the 
sense that their Hodge groups are commutative. However, when hypersurfaces
used in our construction are not of diagonal type, it is very unlikely 
that the quotient varieties are to be of CM type. For instance,
deformations of our Calabi--Yau threefolds are not of CM type. 

In the earlier sections of this paper, we describe how to construct 
split-type Calabi--Yau threefolds. The way we construct the Calabi--Yau 
threefolds is as follows: let $S$ be a surface and $C$ be a curve, and 
take a product $S\times C$. Assume that a finite group $\bmu$ acts on 
both $S$ and $C$. Then define the quotient $V:=S\times C/\bmu$ by this 
action of $\bmu$. Resolving its singularities, we obtain a Calabi--Yau 
threefold $\tilde V$. We focus our attention on the case where $\bmu$ 
is a cyclic group acting on one variable of each component. Then $V$ is 
birational to a quasi-smooth weighted hypersurface $X$ and we can use 
toroidal desingularizations on $X$ to construct a resolution $\tilde V$. 
In fact, singularities of $X$ are easier to handle than those of $V$ 
so that we can keep track of the fields of definition under 
desingularization. This is important for arithmetic investigations. 
 
The construction of $\tilde V$ from a product $S\times C$ naturally 
induces fibrations on $\tilde V$. They are dependent on the choice of 
each component. We consider the cases where $\tilde V$ has $K3$ and/or 
elliptic fibrations, as done in the paper of Hunt and Schimmrigk~\cite{HS}. 

We will then determine the zeta-functions of split-type Calabi--Yau 
threefolds. We discuss the cases where both $S$ and $C$ are defined by 
either diagonal or quasi-diagonal equations, so that $\tilde V$ is 
birational to a hypersurface of diagonal or quasi-diagonal type in a 
weighted projective $4$-space. The zeta-functions of such hypersurfaces 
are computed by using Weil's method Our task is therefore to determine 
their singularities and resolutions explicitly to describe the 
zeta-function of $\tilde V$. 

In order to express their zeta-functions in a simple form, we compute 
them over some finite extensions of $\FF_p$. But, by calculating them 
over several extensions of $\FF_p$, we can determine zeta-functions 
over $\FF_p$. We explain the idea of this in Section \ref{sect7} 
(cf. Lemma \ref{lem7-2e} and Remark \ref{zeta-over-Fp}) and discuss 
more details in a subsequent paper. Also, our zeta-functions in Sections 
\ref{sect7} and \ref{sect10} are described in terms of Jacobi sums. 
They are endowed with a group action which induces a natural decomposition 
(called {\it motivic decomposition}) of zeta-functions. This is explained, 
for instance, in \cite{GY} and \cite{Yui06} and we investigate more 
details about this in a subsequent paper. 

Now the \'etale cohomology groups of $\tilde V$ are built up from those of 
components by the K\"unneth formula. The eigenvalues of the Frobenius map 
on the \'etale cohomology groups of $\tilde V$ are products of the 
eigenvalues of the components which are compatible with the finite 
group actions. We compute the zeta-functions of $K3$-fibered 
Calabi--Yau threefolds, using the splitting property, and computing 
eigenvalues of Frobenius for the components which are compatible with 
group actions. 

Next, we will study the variation of the zeta-functions of the Calabi--Yau
threefolds considered in the previous sections. Some deformations can 
be constructed by twist maps, but not in general. We ought to introduce 
a different cohomology theory to study the zeta-functions, namely the  
$p$-adic rigid cohomology theory. 

The paper is organized as follows. 

In Section $2$, we review some facts on weighted diagonal hypersurfaces 
and twist maps, respectively, which are needed for the subsequent 
discussions. In Sections $3$ and $4$, we construct $K3$-fibered or 
elliptic fibered Calabi--Yau threefolds via twist maps. We start with 
the product $C\times Y$ where $C$ is a curve and $Y$ is a surface. The 
twist map induces a group action, $\bmu$, on the product $C\times Y$ and 
we take the quotient $V=C\times Y/\bmu$. Resolving singularities of $V$, 
we obtain a Calabi--Yau threefold with $K3$ or elliptic fibration. We 
list various examples of these using diagonal or quasi-diagonal 
hypersurfaces in weighted projective spaces. 

In Section $5$, we study in detail singularities appearing in our 
constructions. We choose $C$ and $S$ to be quasi-smooth curves and 
surfaces, so that they have cyclic quotient singularities. In our case, 
$V=S\times C/\bmu$ is birational to a quasi-smooth hypersurface $X$ which 
has at most cyclic quotient singularities. 

In Section $6$, we consider resolutions of singularities. Since weighted 
projective spaces are toric varieties, we can employ toric 
desingularizations and they also resolve singularities of quasi-smooth 
subvarieties $X$. To find a crepant resolution of $X$ (when it is 
Calabi--Yau) however, we usually need only a partial desingularization 
of the ambient space. We describe an algorithm for it and give several 
examples of explicit resolutions. We also discuss the field of definition 
for such resolutions and exceptional divisors. 

In Section $7$, we compute the cohomology groups of smooth Calabi--Yau 
threefolds constructed from the products $C\times S$ where $C$ is a
diagonal curve and $S$ a diagonal surface. In Sections $8$ and $9$, 
we explicitly calculate the zeta-functions of our Calabi--Yau threefolds.

{From} Section $10$ on, we consider a family of quasi-smooth hypersurfaces, 
focusing on a two-parameter family. Some of the deformations may not be 
constructed from the twist map. We compute the zeta-function of the family 
using rigid cohomology theory.  In Section $11$, the deformation matrix 
is computed in terms of hypergeometric functions. In the final section 
$12$, an explicit example is discussed. 

\medskip 
In this paper, our discussions are focused on the local arithmetic, namely, 
the determination of the zeta-functions of our Calabi--Yau varieties.  
In subsequent papers, we plan to give global considerations and 
hope to determine the $L$-series of our Calabi--Yau varieties and discuss
their modularity (at least at the motivic level.)  Also, in this article, we
have not fully utilized elliptic or $K3$-fibrations in our arithmetic quests,
and this is left as a topic of our future investigation. This involves
some function fields arithmetic of elliptic curves or $K3$ surfaces.

\medskip 
{\bf Acknowledgments} Y. Goto's research was partially supported by the  
Grants-in-Aid for Scientific Research (C) 18540005 and 21540003 of the 
Japan Society for the Promotion of Science (JSPS). 
The second author wishes to thank the third author for the invitation 
to Queen's University and he wishes to thank Queen's University for 
their hospitality. 
N. Yui was supported in part by a Discovery Grant of Natural Science 
Research Council of Canada (NSERC). During the preparation of this
paper, N. Yui held a visiting professorship at various institutions.
This includes Fields Institute (Toronto), Tsuda College (Tokyo) and Nagoya
University (Nagoya).  She thanks these institutions for their
hospitality and support. 

\section{Preamble: weighted diagonal hypersurfaces } 
\label{sect2}

We first recall the definition of weighted projective spaces, 
which are realized as certain singular quotients of the usual projective
spaces. The standard references on weighted projective spaces 
are Dolgachev \cite{Dol} and Dimca \cite{Di}. Let $k$ be a field 
and fix an algebraic closure $\ok$ of $k$. For instance, $k=\QQ$
or $k=\FF_q$, a finite field of characteristic $p=char(k)$ with $q$ elements. 
In this section, we work only over $\ok$ and often omit to specify the 
field of definition. Let $(w_0, w_1,\cdots, w_n)\in\NN^{n+1}$ be a 
{\it weight}. When $k=\FF_q$, assume that $\mbox{gcd}(q,w_i)=1$ for 
every $i$. We may assume without loss of generality that weights are 
{\it normalized}, that is, no $n$ of the $n+1$ weights have common 
divisor $>1$. For each $i, 0\leq i \leq n$, let $\bmu_{w_i}$ denote 
the group of $w_i$-th roots of unity. 
Let $\bmu:=\bmu_{w_0}\times\cdots\times \bmu_{w_n}$ act on the usual
projective $n$-space $\PP^n$ as follows:
For $\g=(g_0,g_1,\cdots, g_n)\in\bmu$ and for the homogeneous
coordinate $\z=(z_0:z_1:\cdots: z_n)$ on $\PP^n$, the action is
given by 
$$(\g, \z)\mapsto (g_0z_0: \cdots: g_nz_n).$$
The quotient $\PP^n/\bmu$ is a weighted projective $n$-space and
denoted by $\PP^n(w_0, \w)=\PP^n(w_0,w_1,\cdots, w_n)$ where we put
$\w=(w_1,\cdots, w_n)$. The usual projective space $\PP^n$
is identified with $\PP^n(1,1,\cdots, 1)$. 
A {\it weighted hypersurface} $V$ is the zero locus of a weighted
homogeneous polynomial. Such $V$ is said to be {\it transversal} 
if the singular locus of $V$ is contained in the singular locus of
$\PP^n(w_0,\w)$, and {\it quasi-smooth} if the affine cone of $V$ 
is smooth outside the vertex. 

In this paper, we take the products of quasi-smooth weighted 
projective hypersurfaces of diagonal type and consider their 
quotients under the action of finite groups. 

Let $d$ be a positive integer such that $w_i\mid d$ for every 
$i$ $(0\leq i\leq n)$ and write $d_i:=d/w_i$. The weighted 
hypersurface, $V$, defined by the equation 
$$c_0x_0^{d_0}+c_1x_1^{d_1}+\cdots +c_nx_n^{d_n}=0\qquad 
(c_i\neq 0)$$ 
will be called a {\it weighted diagonal hypersurface of degree 
$d$ in $\PP^n(w_0,w_1,\cdots, w_n)$}. $V$ is quasi-smooth if and 
only if $d_i\neq 0$ in $k$ for $0\leq i\leq n$. For simplicity, we 
consider the case where the coefficients are $c_0=c_1=\cdots =c_n=1$, 
namely the hypersurface 
$$x_0^{d_0}+x_1^{d_1}+\cdots +x_n^{d_n}=0$$ 
(it may be called the {\it weighted Fermat hypersurface of degree $d$}). 

We note that weighted diagonal hypersurfaces of degree $d$ enjoy the 
same geometry as weighted Fermat hypersurfaces of degree $d$. However, 
their arithmetic properties are different. For instance, when $V$ is a 
surface over $\FF_q$ or $\QQ$, bringing in non-trivial coefficients $c_i$ 
allows us to construct examples of surfaces with particularly small 
(or large) Picard numbers over $\FF_q$ or $\QQ$. 

Write $Q:=(w_0,w_1,\cdots, w_n)$ and let $V$ be the weighted 
diagonal hypersurface of degree $d$ in $\PP^n(Q)$. $V$ is of 
dimension $n-1$ and its properties are investigated by various authors 
(see, for instance, \cite{Go1} and \cite{Yu}). Here we recall its 
cohomology groups. 
Choose a prime $\ell$ different from $\mbox{char}\, (k)$. Let 
$H^i(V,\QQ_{\ell})$ denote the $i$-th $\ell$-adic \'etale 
cohomology group of $V$ over $\ok$. If $i\neq \dim V=n-1$, we 
have 
$$H^i(V,\QQ_{\ell}) \cong 
\begin{cases} 
\QQ_{\ell} & \mbox{ if $i$ is even } \\ 
\{ 0\} & \mbox { if $i$ is odd } 
\end{cases}$$ 
and for $i=n-1$, $H^{n-1}(V,\QQ_{\ell})$ is decomposed into 
a direct sum of one-dimensional subspaces as follows: let 
$H^{n-1}_{prim}(V,\QQ_{\ell})$ denote the primitive part of 
the cohomology $H^{n-1}(V,\QQ_{\ell})$, namely 
$$H^{n-1}(V,\QQ_{\ell})\cong 
\begin{cases} 
H^{n-1}_{prim}(V,\QQ_{\ell}) & \mbox{ if $\dim V$ is odd } \\ 
V(0) \oplus H^{n-1}_{prim}(V,\QQ_{\ell}) & \mbox { if $\dim V$ 
is even } 
\end{cases}$$
where $V(0)$ denotes the subspace corresponding to the hyperplane 
section. Let 
$$\fA_{n-1}(Q):=\biggl\{ \ba =(a_0,a_1,\cdots,a_n)\,|\, a_i\in 
(w_i\ZZ/d\ZZ), a_i\neq 0, \sum_{i=0}^{n} a_i\equiv 0\pmod d 
\biggr\}. $$ 
Then for $V$ over $\ok$, we have a decomposition 
\begin{equation} \label{coh1} 
H^{n-1}_{prim}(V,\QQ_{\ell}) \cong \bigoplus_{\ba \in \fA_{n-1}} 
V(\ba ) 
\end{equation}  
where    
$$V(\ba )=\{ v\in H^{n-1}_{prim}(V,\QQ_{\ell}) \mid  
\gamma^{*}(v)={\zeta}_0^{a_0}{\zeta}_1^{a_1}\cdots  
{\zeta}_n^{a_n} v,\ \forall \gamma 
=({\zeta}_0,{\zeta}_1,\cdots ,{\zeta}_n)\in \Gamma \}$$ 
with 
$$\Gamma :=\mu_{d_0} \times \mu_{d_1} \times \cdots \times 
\mu_{d_n} /(\mbox{diagonal elements}),$$ 
$\mu_d$ is the group of $d$-th roots of unity in $\ok$ and 
$\gamma^{*}$ is the automorphism of $H^{n-1}_{prim}(V,\QQ_{\ell})$ 
induced by $\gamma$. (Here we choose a prime $\ell$ satisfying 
$\ell \equiv 1\pmod{d}$ so that $\mu_d$ can be embedded into 
$\QQ_{\ell}$ multiplicatively.) 

For the product of two weighted diagonal hypersurfaces, we have 
the K\"unneth formula to compute its cohomology and we find 
\begin{equation} \label{coh2} 
H^{j}(V_1\times V_2,\QQ_{\ell}) =\bigoplus_{i_1+i_2=j} 
H^{i_1}(V_1,\QQ_{\ell})\otimes H^{i_2}(V_2,\QQ_{\ell}).  
\end{equation} 
Since $H^{i_1}(V_1,\QQ_{\ell})$ and $H^{i_2}(V_2,\QQ_{\ell})$ 
are decomposed into one-dimensional pieces, so is $H^{j}(V_1\times 
V_2,\QQ_{\ell})$ and each summand takes such a form as $V(\ba ) 
\otimes V(\bb )$. In later sections, we look at the case where 
$V_1$ is a curve and $V_2$ is a surface so that $V_1\times V_2$ 
is a threefold. 

Let $V_1$ and $V_2$ be weighted diagonal hypersurfaces and write 
$Y:=V_1\times V_2$. The direct product $\Gamma_{V_1} \times 
\Gamma_{V_2}$ acts on $Y$ component-wise. In this paper, we 
choose a subgroup, $\Gamma_Y$, of $\Gamma_{V_1} \times 
\Gamma_{V_2}$ and consider the quotient variety 
$$X:=Y/\Gamma_Y.$$ 
Since $\Gamma_Y$ is a finite abelian group, $X$ has at most 
abelian quotient singularities. 

The quotient variety $X=Y/\Gamma_Y$ is usually singular. But if the order 
of $\Gamma_Y$ is invertible in $k$, we can compute the cohomology of $X$ as 
the $\Gamma_Y$-invariant subspace of the cohomology of $Y$. For this, 
we work over an algebraic closure $\ok$ of $k$. Write $p:=
\mbox{char}\, (k)$ and assume $(p,\# \Gamma_Y)=1$. We choose 
a prime $\ell$ satisfying $\ell \equiv 1\pmod{d}$. It is known 
that the $\ell$-adic \'etale cohomology satisfies 
\begin{equation} \label{coh3} 
H^{i}(X,\QQ_{\ell})\cong H^i(Y,\QQ_{\ell})^{\Gamma_Y} 
\end{equation} 
for every $i$ $(0\leq i\leq 2\dim Y)$. (In fact, the 
Hochschild-Serre spectral sequence holds for $Y$ and the 
Galois cohomology for a finite group $\Gamma_Y$ vanishes 
by tensoring with $\QQ_{\ell}$. This yields the isomorphism 
in question.) 

\section{Twist maps} 
\label{sect4}
 
In this section, we recall the method of Hunt and Schimmrigk~\cite{HS} 
to construct $K3$ fibered Calabi--Yau threefolds as quotients of weighted 
hypersurfaces not necessarily of diagonal type, using the twist maps. 
This construction is a generalization of the construction Shioda and 
Katsura~\cite{SK79} for non-singular hypersurfaces in the usual projective
spaces.  
We will describe the construction of quotients of weighted hypersurfaces by
twist maps in any dimension.

Let $V_1$ and $V_2$ be two weighted hypersurfaces defined as follows.

\begin{equation*}
\begin{split}
V_1:=&\{x_0^{\ell} + f(x_1,\cdots, 
x_n)=0\}\subset\PP^n(w_0,\w)\quad\mbox{deg}(V_1)=\ell w_0 \\
V_2:=&\{y_0^{\ell} + g(y_1,\cdots, y_m)=0\}\subset\PP^m(v_0,\mathbf v) 
\quad
\mbox{deg}(V_2)=\ell v_0
\end{split}
\end{equation*}

\noindent where $\w=(w_1,\cdots, w_n)$ and $\mathbf v=(v_1,\cdots, v_m)$ 
and both $f$ and $g$ are assumed to be quasi-smooth, and so are $V_1$ and 
$V_2$. Consider the hypersurface: 

\begin{equation*}
X:=\{f(z_1,\cdots, z_n)-g(t_1,\cdots, t_m)=0\}\subset
\PP^{n+m-1}(v_0 \w,\, w_0\mathbf v).
\end{equation*}
where ${\mbox{deg}}(X)=v_0w_0\ell=v_0{\mbox{deg}}(f)=w_0{\mbox{deg}}(g)$.  

In order to define the twist map we need to assume that $\gcd(v_0,w_0)=1$. This 
condition seems to be missing in \cite{HS}. If this is the case then fix 
$s_0,t_0\in \ZZ$ such that 
$0\leq s_0 <v_0$, $0\leq t_0 < w_0$, $s_0w_0+1\equiv 0 \bmod v_0$ and 
$t_0v_0+1\equiv 0 \bmod w_0$. Let $s=(s_0w_0+1)/v_0$ and $t=(t_0v_0+1)/w_0$. 
Note that $s,t$ are non-zero integers.  

\begin{defn}\label{defn2.1} 
The rational map
\begin{equation*}
\Phi: \PP^n(w_0,\w)\times \PP^m(v_0,\mathbf v)\dashrightarrow 
\PP^{n+m-1}(v_0\w, 
w_0\mathbf v) 
\end{equation*}
\begin{equation*}
\begin{split}
((x_0,x_1,\cdots, x_n),\quad  &(y_0,y_1,\cdots, y_m)) \mapsto \\
 & (x_0^{s_0w_1} y_0^{tw_1} x_1,\cdots, x_0^{s_0w_n}y_0^{tw_n} x_n,\, 
x_0^{sv_1} y_0^{t_0v_1}y_1, \cdots, x_0^{sv_m} y_0^{t_0v_m} y_m)
\end{split}
\end{equation*}
restricted to $V_1\times V_2$ is a generically rational finite map onto 
$X$. The map $\Phi$ is called the {\it twist map}.
\end{defn}

\begin{rem} \label{rem4.1}In \cite{HS} a slightly different definition of the 
twist map is given, namely 

\begin{equation*}
\begin{split}
((x_0,x_1,\cdots, x_n),\quad  &(y_0,y_1,\cdots, y_m)) \mapsto \\
 & (y_0^{w_1/w_0}x_1,\cdots, y_0^{w_n/w_0}x_n,\, x_0^{v_1/v_0}y_1,
\cdots, x_0^{v_m/v_0}y_m).
\end{split}
\end{equation*} 

If $v_0$ or $w_0$ is different from 1 then one takes some $v_0$-th or $w_0$-th
roots 
of $x_0$ or $y_0$. It is not directly clear that this map is a rational map, 
that is, can be given in terms of polynomials. In \cite{HS} it is then argued 
that this map is well-defined, but no proof is given for the fact that this map 
is given by polynomials. 
Below, we construct a counterexample to the claim of Hunt and Schimmrigk 
where the twist map is neither a polynomial map nor well-defined. 
\end{rem}

\begin{rem}\label{rem4.2}
The rational map $\Phi$ can be extend to points with $x_0=0$ and $y_0\neq0$ as
follows
\begin{equation*}
\begin{split}
((0,x_1,\cdots, x_n),\quad  &(y_0,y_1,\cdots, y_m)) \mapsto \\
 & ( x_1,\cdots, x_n,\, 0,
\cdots, 0)
\end{split}
\end{equation*}
A similar extension exists for points with $x_0\neq 0$ and $y_0=0$. The map
$\Phi$ is 
not defined at points with $x_0=y_0=0$. 
\end{rem}

\begin{rem}\label{rem4.3}
The condition $\gcd(v_0,w_0)=1$ is necessary, both for our definition and for
the 
definition in \cite{HS}.  We give an example for which the twist map, as
defined 
in \cite{HS}, is not well-defined. 

Take $\mathbf{w}=\mathbf{v}=(2,1,1)$. Let $V_1=\{ x_0^2+x_1^4+x_2^4 \}$ and 
$V_2=\{y_0^2+y_1^4+y_2^4\}$. The twist map of \cite{HS} in this case should be
the 
rational map $\PP^2(2,1,1) \times \PP^2(2,1,1) \rightarrow
\PP^3(2,2,2,2)=\PP^3$ 
given by 
\[ (x_0,x_1,x_2)\times (y_0,y_1,y_2) \mapsto 
( \sqrt{y_0} x_1,\sqrt{y_0}x_2,\sqrt{x_0}y_1,\sqrt{x_0}y_2).\]
This map is not well-defined, since it depends on the choice of $\sqrt{x_0y_0}$.
\end{rem} 

Now we let the group $\bmu_{\ell}$ of 
$\ell$-th roots of unity act on $V_1\times V_2\subset
\PP^{n}(w_0,\w)\times \PP^m(v_0,\mathbf v)$ by $\gamma\in\bmu_{\ell}$. 
The action is defined as follows. 

\begin{defn}\label{defn2.2}
Assume that $\gcd (w_0, v_0,\ell)=1$. Then the group $\bmu_{\ell}$ 
acts on $V_1\times V_2$ by    
\begin{equation*}
(\gamma, (x_0:\cdots: x_n),\, 
(y_0:\cdots: y_m))\mapsto ((\gamma x_0:x_1,\cdots: x_n),
(\gamma y_0:y_1,\cdots:y_m)).
\end{equation*}
for every $\gamma \in \bmu_{\ell}$. 
\end{defn}  

The quotient space $V_1\times V_2/{\bmu_{\ell}}$ is a projective 
variety and the rational map $V_1\times V_2\to X$ is generically 
$\ell: 1$.

$$\begin{array}{ccc} 
V_1\times V_2 & \small{\Phi \mid_{V_1\times V_2}} & \\ 
\downarrow & \searrow  & \\ 
V_1\times V_2/{\bmu_{\ell}} & \cdots \rightarrow & X 
\end{array}$$ 

Now we discuss singularities on the varieties $V_1\times V_2/ 
{\bmu_{\ell}}$ and $X$. First we know that only singularities 
occurring in ambient weighted projective spaces are cyclic quotient 
singularities. Since $f$ and $g$ are quasi-smooth, so are $V_1$ and 
$V_2$, and they have cyclic quotient singularities all due to the 
ambient spaces. Further, a threefold $X$ is defined by $f-g$ and 
$f$ and $g$ have no common variable. Hence $X$ is also quasi-smooth 
and it possesses at most cyclic quotient singularities. 

Cyclic quotient singularities are resolved by toroidal resolutions 
and moreover quasi-smooth varieties can be desingularized by 
applying toroidal resolutions to their ambient spaces. An example 
is that we obtain a resolution of $X$ by restricting a resolution 
of $\PP^{n+m-1}(v_0 \w,\, w_0\mathbf v)$ onto $X$. We note that it 
is often sufficient to take a partial resolution of 
$\PP^{n+m-1}(v_0 \w,\, w_0\mathbf v)$ to desingularize $X$; we 
explain this in Section \ref{sect-resol}. 

On the other hand, $V_1\times V_2$ and $V_1\times V_2/{\bmu_{\ell}}$ 
are no longer quasi-smooth in most cases and they have abelian 
quotient singularities. As $V_1\times V_2/{\bmu_{\ell}}$ is 
birational to $X$, we may work on $X$ (rather than on 
$V_1\times V_2/{\bmu_{\ell}}$) to construct their smooth models 
and discuss their Calabi--Yau properties. 

Let $\pi_X: \tilde X\to X$ be a smooth resolution of $X$.  
A natural question is {\bf When is $\tilde X$ Calabi--Yau?}
\smallskip 

In search of an answer to this question, we will look into the 
fibrations. Project the quotient $V_1\times V_2/\bmu_{\ell}$ (via 
$V_1/\mu_{\ell}\times V_2/\mu_{\ell}$) to the first and the second 
components, respectively. This gives rise to the following two 
rational fibrations: 

\begin{equation*}
\phi_1:\tilde X\to V_1/\bmu_{\ell}\quad\mbox{and}\quad 
\phi_2:\tilde X\to V_2/\bmu_{\ell}.
\end{equation*}

$$\begin{matrix} \quad &  X & \quad \\
                 \phi_1\swarrow & \quad & \searrow \phi_2 \\
                 \quad & \qquad & \qquad \\
                  V_1/{\mu_{\ell}}\quad\quad & \quad \quad \qquad &
V_2/{\mu_{\ell}}
                \end{matrix}$$
If $\pi_X :\tilde X\to X$ is a smooth resolution, then the composite map 
$\phi_1 \circ \pi_X$ is a fibration of $\tilde X$ onto 
$V_1/\mu_{\ell}$ whose generic fibers are copies of resolutions of $V_2$. 
A similar property holds for the fibration $\phi_2$. The situation is 
illustrated as follows, where $Y_i$ denotes a smooth resolution of 
$V_i/\mu_{\ell}$: 
$$\begin{matrix} 
          X &\overset{\pi_X}\longleftarrow &\tilde X \\
          \phi_i\downarrow& \quad\quad &  \\ 
          V_i/\mu_{\ell} &\overset{\pi_i}\longleftarrow & Y_i 
\end{matrix} $$
           
\begin{prop}\label{prop4.1}
Suppose that $\tilde X$ is Calabi--Yau and $w_0>1$. Write $\tilde{\phi_1}:=
\phi_1 \circ \pi_X$ and let $\tilde V_2$ be the generic smooth fiber of 
$\tilde{\phi_1}$. Then the rational fibration $\tilde{\phi_1}: \tilde X\to 
V_1/\mu_{\ell}$ lifts to a genuine fibration $\tilde X\to Y_1$ for some 
smooth resolution $Y_1$ of $V_1/\mu_{\ell}$, if and only if $\tilde V_2$ 
is also Calabi--Yau. A similar property holds for $\phi_2$.
\end{prop}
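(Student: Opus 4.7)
The plan is to prove both directions via adjunction on a smooth fiber of a (possibly resolved) fibration. The forward implication reduces to a normal-bundle computation at a smooth fiber, while the converse requires a comparison on the generic fiber of a resolved version of $\tilde\phi_1$, together with a descent step from the resolution back to $\tilde X$.

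For the forward direction, assume $\tilde\phi_1$ lifts to a regular morphism $f:\tilde X\to Y_1$. For a generic $p\in Y_1$, the fiber $f^{-1}(p)$ is smooth and isomorphic to $\tilde V_2$, and $f$ is smooth along this fiber, so the normal bundle $N_{\tilde V_2/\tilde X}\cong f^{*}T_{Y_1}\big|_{\tilde V_2}$ is trivial. Adjunction then gives $K_{\tilde V_2}\sim K_{\tilde X}\big|_{\tilde V_2}$, and the hypothesis $K_{\tilde X}\sim 0$ yields $K_{\tilde V_2}\sim 0$. Combined with the standard vanishing of intermediate Hodge numbers inherited by a smooth fiber of a Calabi--Yau fibration, $\tilde V_2$ is Calabi--Yau.

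For the reverse direction, suppose $\tilde V_2$ is Calabi--Yau. By Hironaka there is a sequence of blowups $\sigma:\hat X\to\tilde X$, supported in codimension $\geq 2$, such that $\hat\phi_1:=\tilde\phi_1\circ\sigma$ is a regular morphism; choose a smooth resolution $Y_1^{(0)}\to V_1/\mu_{\ell}$ dominated by $\hat X$. Since $\tilde X$ is smooth with $K_{\tilde X}\sim 0$, one has $K_{\hat X}=\sum a_i E_i$ with strictly positive coefficients $a_i>0$ on the $\sigma$-exceptional divisors $E_1,\ldots,E_k$. Let $\hat F$ be a generic smooth fiber of $\hat X\to Y_1^{(0)}$; since $\sigma$ is an isomorphism away from a codimension $\geq 2$ locus, $\hat F$ is birational to $\tilde V_2$, so $K_{\hat F}\sim 0$. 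Adjunction on $\hat F$ reads
$$K_{\hat F}\;=\;K_{\hat X}\big|_{\hat F}\;=\;\sum_{E_i\text{ horizontal}} a_i\bigl(E_i\big|_{\hat F}\bigr),$$
a sum of effective divisors with positive coefficients; the vanishing of the left-hand side forces every $\sigma$-exceptional $E_i$ to be vertical with respect to $\hat\phi_1$.

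The main obstacle is the descent step: passing from ``all $\sigma$-exceptional divisors are vertical'' to a regular morphism defined on $\tilde X$ itself. The strategy is to modify $Y_1^{(0)}$ by contracting the images of the $E_i$ and then resolving singularities, producing a smooth $Y_1\to V_1/\mu_{\ell}$ with respect to which each fiber of $\sigma$ is collapsed to a point; by the rigidity lemma, $\hat X\to Y_1$ then factors through $\sigma$, yielding the desired morphism $\tilde X\to Y_1$. In the $K3$-fibration setting of the paper, $V_1/\mu_{\ell}$ is a curve and this step is immediate: Zariski's extension theorem implies that any rational map from a smooth variety to a smooth projective curve is a morphism, so the converse reduces to the generic-fiber computation above. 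For higher-dimensional base (e.g.\ the elliptic-fibration case), I expect the descent to require a more delicate choice of $Y_1$; the hypothesis $w_0>1$ presumably enters precisely here by constraining the geometry of the indeterminacy locus so that the required contraction of $Y_1^{(0)}$ is available.
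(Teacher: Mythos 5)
The paper itself offers no argument for this proposition beyond the citation ``(Cf.\ Lemma 3.4 in \cite{HS})'', so your attempt has to stand on its own. Your forward direction is essentially fine: for a genuine fibration the generic fiber is smooth with trivial normal bundle, and adjunction gives $K_{\tilde V_2}\sim K_{\tilde X}|_{\tilde V_2}\sim 0$. But the phrase ``standard vanishing of intermediate Hodge numbers inherited by a smooth fiber of a Calabi--Yau fibration'' is not a standard fact (abelian-surface-fibered Calabi--Yau threefolds exist); in the present setting you should instead invoke that $\tilde V_2$ is a (crepant resolution of a) quasi-smooth weighted hypersurface, so the intermediate $h^{i,0}$ vanish automatically, or note that ``Calabi--Yau'' is being used in the weak sense of trivial dualizing sheaf as in Proposition \ref{prop4-2}.

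The converse, however, has a genuine gap. First, from ``$\hat F$ is birational to $\tilde V_2$'' you cannot conclude $K_{\hat F}\sim 0$: if some $\sigma$-exceptional divisor is horizontal, then $\sigma|_{\hat F}\colon \hat F\to\tilde V_2$ is a nontrivial birational morphism of smooth varieties, and $K_{\hat F}$ is a nonzero effective sum of its exceptional divisors --- which is exactly the right-hand side of your adjunction identity. So no contradiction arises, nothing forces the $E_i$ to be vertical, and the Calabi--Yau hypothesis on $\tilde V_2$ is never genuinely used; the assertion $K_{\hat F}\sim 0$ is essentially equivalent to the statement you are trying to prove, making the argument circular at its key step. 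Second, the appeal to ``Zariski's extension theorem'' is false: a rational map from a smooth variety to a smooth projective curve need not be a morphism (projection $\PP^2\dashrightarrow\PP^1$ from a point is the standard counterexample, and there the generic fiber is rational --- precisely the non-Calabi--Yau situation the proposition is meant to exclude). If that claim were true, lifting would be automatic for curve bases independently of the fiber, contradicting the ``only if'' direction in the examples of Table \ref{table3}, where the surface fiber is not $K3$. Third, the descent/contraction step for higher-dimensional bases, and the role of the hypothesis $w_0>1$, are left as expectations rather than arguments. As it stands the converse is not proved; closing it requires an actual analysis of the indeterminacy locus of the twist map (as in Lemma 3.4 of \cite{HS}), showing that when the generic fiber has trivial canonical sheaf the indeterminacy can be resolved by modifying the base alone, rather than the formal discrepancy computation you propose.
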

\begin{proof} (Cf. Lemma 3.4 in \cite{HS}.)  
\end{proof} 

The upshot of Proposition \ref{prop4.1} is that it provides a method of 
constructing split-type (product-type) Calabi--Yau varieties. That is, 
a covering of a product is Calabi--Yau if and only if one of the 
components is Calabi--Yau. 

\begin{prop}\label{prop4-2} 
Let $V_1$, $V_2$ and $X$ be quasi-smooth varieties as before. Then the 
following assertions hold. 

{\em (1)} A sufficient condition for $\tilde V_2$ to be Calabi--Yau  
hypersurface is: 
\begin{equation*}
v_0+v_1+\cdots + v_m=\ell v_0=\deg (V_2), 
\end{equation*}

{\em (2)} A sufficient condition for $\tilde X$ to be Calabi--Yau 
hypersurface is:
\begin{equation*}
\ell w_0 v_0= w_0\sum_{i=1}^m v_i + v_0\sum_{j=1}^n w_j. 
\end{equation*}
\end{prop}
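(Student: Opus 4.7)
The plan is to reduce both parts to the standard adjunction-formula criterion for quasi-smooth hypersurfaces in weighted projective space, and then invoke the existence of a crepant toric desingularization constructed in Section \ref{sect-resol}.

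Recall that for the weighted projective space $\PP^N(\alpha_0,\ldots,\alpha_N)$ the dualizing sheaf is $\omega_{\PP^N} \cong \mathcal{O}_{\PP^N}(-\sum_i \alpha_i)$. For any quasi-smooth weighted hypersurface $V \subset \PP^N(\alpha_0,\ldots,\alpha_N)$ of degree $d$, the adjunction formula gives
\[
\omega_V \;\cong\; \mathcal{O}_V\bigl(d - \textstyle\sum_{i=0}^{N} \alpha_i\bigr),
\]
so $\omega_V \cong \mathcal{O}_V$ precisely when $d = \sum_i \alpha_i$.

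For (1), apply this to $V_2 \subset \PP^m(v_0,\mathbf v)$, which is quasi-smooth of degree $\ell v_0$; the hypothesis $v_0 + v_1 + \cdots + v_m = \ell v_0$ is exactly the equality $d = \sum \alpha_i$, so $\omega_{V_2} \cong \mathcal{O}_{V_2}$. For (2), note that $X \subset \PP^{n+m-1}(v_0\w,\,w_0\mathbf v)$ is quasi-smooth of degree $\ell v_0 w_0$ (as remarked after Definition \ref{defn2.1}), while the sum of the weights of the ambient space is $v_0 \sum_{j=1}^n w_j + w_0 \sum_{i=1}^m v_i$. The stated numerical identity is therefore precisely $d = \sum_i \alpha_i$ for $X$, yielding $\omega_X \cong \mathcal{O}_X$.

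To promote triviality of the dualizing sheaf on the singular hypersurface to the Calabi--Yau property on a smooth model, I would invoke the toric resolution procedure of Section \ref{sect-resol}: a crepant partial desingularization of the ambient weighted projective space restricts to a crepant resolution $\pi_X:\tilde{X}\to X$ (respectively $\tilde V_2 \to V_2$), so that $\omega_{\tilde{X}} \cong \pi_X^{*}\omega_X$ is trivial. The main obstacle—and the geometric content hidden inside the numerical criterion—is precisely the existence of such a crepant resolution of the cyclic quotient singularities occurring on $X$; this requires the singularities to be (at worst) canonical and is exactly what the ambient toric construction of Section \ref{sect-resol} delivers, since the quasi-smoothness of $X$ means its singularities are inherited from the ambient space. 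Once this geometric input is in place, both assertions follow at once from the adjunction computation above.
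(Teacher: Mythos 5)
Your proof is correct and follows the same route as the paper's: the paper simply cites Dolgachev for the criterion that a quasi-smooth weighted hypersurface has trivial dualizing sheaf exactly when the degree equals the sum of the weights, and you have unpacked that criterion via adjunction and then verified the numerical identity in each case. Your extra remark about passing from $\omega_X\cong\mathcal{O}_X$ to a Calabi--Yau smooth model via the crepant toric resolution of Section~\ref{sect-resol} is a welcome elaboration the paper leaves implicit.
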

\begin{proof} (Cf. \cite{Dol}) The Calabi--Yau (sufficient) condition for a 
hypersurface in weighted projective spaces is that
the sum of all weights equal the degree of the variety. 
\end{proof}

\section{Construction of $K3$-fibered Calabi--Yau threefolds via twist maps} 
\label{sect5}

Now we apply the construction by twist maps to elliptic curves, 
$K3$ surfaces, and Calabi--Yau threefolds. 
\medskip 

\noindent{\bf Dimension $1$ Calabi--Yau varieties (Elliptic curves)}:
There are three elliptic curves in weighted projective spaces, that are of
diagonal form. 
They are given as in Table \ref{table1}. All three elliptic curves
have complex multiplication, the first and the third by $\ZZ[\sqrt{-3}]$
and the second by $\ZZ[\sqrt{-1}]$.

\begin{table} 
\caption{Elliptic curves in weighted projective 2-spaces}  
\label{table1} 
\[
\begin{array}{c|c|c}\hline \hline
\# & E_i &  \mbox{equation} \\ \hline
1 & E_1 & y_0^3+y_1^3+y_2^3=0\subset\PP^2(1,1,1) \\ \hline
2 & E_2 & y_0^4+y_1^4+y_2^2=0\subset\PP^2(1,1,2) \\ \hline 
3 & E_3 & y_0^6+y_1^3+y_2^2=0\subset\PP^2(1,2,3) \\ \hline
\hline
\end{array}
\] 
\end{table} 
\smallskip 

\noindent{\bf Dimension $2$ Calabi--Yau varieties ($K3$ surfaces)}: 
Let $C_{(w_0,w_1,w_2)}\in \PP^2(w_0,w_1,w_2)$ be a curve defined by
\begin{equation*}
C_{(w_0,w_1,w_2)}:=\{x_0^{\ell}+f(x_1,x_2)=0\}\subset\PP^2(w_0,w_1,w_2)
\quad \mbox{with deg $C=\ell w_0$} 
\end{equation*}
and the group $\ZZ/\ell \ZZ$ acts on $C$. Then applying the twist map to 
$C\times E_i$ where $i=1$ (resp. $3$) if $\ell=3$ (resp. $6$), and $i=2$ 
if $\ell=4$. Each $E_i$ has the automorphism group $\ZZ/\ell\ZZ$. We take 
the quotient of the product $C\times E_i$ under the action of the group
$\ZZ/\ell\ZZ$. 
Then we get a hypersurface $X\subset\PP^3(v_0w_1,v_0w_2, w_0v_1,w_0v_2)$ of 
degree $d$, where $(v_0,v_1,v_2)\in\{(1,1,1), (1,1,2), (1,2,3)\}$.

\begin{prop}\label{prop5-1}
There are eleven $K3$ surfaces arising from this construction. See 
{\em Table \ref{table2}} for the list, where we put $(k_0,k_1,k_2,k_3)= 
(v_0w_1,v_0w_2,w_0v_1,w_0v_2)$, $d=\sum_{i=0}^3 k_i$. 
For $f(x_1,x_2)$ and $g(y_1,y_2)$, we may take, for instance,
$$f(x_1,x_2)=x_1^{\ell w_0/w_1}+x_2^{\ell w_0/w_2}\quad\mbox{and}\quad
g(y_1,y_2)=-(y_1^{\ell v_0/v_1}+y_2^{\ell v_0/v_2}).$$
In this case, $C_{(w_0,w_1,w_2)}$ is covered by the diagonal
curve of degree $d$ in $\PP^2(1,1,1)$.
\end{prop}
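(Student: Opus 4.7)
The plan is to convert the K3 condition for $X$ into a clean numerical constraint on $(w_0,w_1,w_2)$ and then enumerate the valid triples for each $\ell\in\{3,4,6\}$.

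First I would apply Proposition~\ref{prop4-2}(2) to the twisted hypersurface $X\subset\PP^3(v_0 w_1,v_0 w_2,w_0 v_1,w_0 v_2)$ of degree $d=\ell v_0 w_0$, obtaining
\[
\ell v_0 w_0 \;=\; v_0(w_1+w_2) + w_0(v_1+v_2).
\]
Each elliptic curve $E_i$ in Table~\ref{table1} itself satisfies the Calabi--Yau identity $v_0+v_1+v_2=\ell v_0$ from Proposition~\ref{prop4-2}(1), so $v_1+v_2=(\ell-1)v_0$. Substituting this into the display and cancelling $v_0$ collapses the K3 condition to the single equation
\[
w_0 \;=\; w_1 + w_2.
\]

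Next I would layer on the constraints built into the construction. Normalization of $\PP^2(w_0,w_1,w_2)$ demands pairwise coprime weights; once $w_0=w_1+w_2$, this reduces to $\gcd(w_1,w_2)=1$ alone, since $\gcd(w_0,w_i)=\gcd(w_{3-i},w_i)$. The twist map requires $\gcd(v_0,w_0)=1$, which is automatic because $v_0=1$ for each $E_i$. Finally, the existence of the explicit diagonal $f(x_1,x_2)=x_1^{\ell w_0/w_1}+x_2^{\ell w_0/w_2}$ requires $w_1,w_2\mid\ell w_0$, and since $\gcd(w_i,w_0)=\gcd(w_i,w_{3-i})=1$, these collapse to the sharp divisibility constraints $w_1\mid\ell$ and $w_2\mid\ell$.

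The rest is bookkeeping. For each $\ell\in\{3,4,6\}$ I would list the divisors of $\ell$, form the unordered coprime pairs $(w_1,w_2)$, set $w_0:=w_1+w_2$, and read off the weights $(k_0,k_1,k_2,k_3)=(v_0 w_1,v_0 w_2,w_0 v_1,w_0 v_2)$ together with $d=\sum_i k_i$, producing the rows of Table~\ref{table2}. The main obstacle I anticipate is not the counting but the border-case checks: verifying that each tabulated diagonal $f$ is quasi-smooth in $\PP^2(w_0,w_1,w_2)$, that the resulting $X$ is quasi-smooth in the ambient weighted $\PP^3$ and that this $\PP^3(k_0,k_1,k_2,k_3)$ is itself normalized (no three of the four $k_i$ share a common factor), and finally exhibiting the explicit covering $(X_0,X_1,X_2)\mapsto(X_0^{w_0},X_1^{w_1},X_2^{w_2})$ of $C_{(w_0,w_1,w_2)}$ by the Fermat curve of degree $d$ in $\PP^2(1,1,1)$.
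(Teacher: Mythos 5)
Your reduction of the K3 condition is correct and is essentially what the paper's (unwritten) argument does: testing the sufficient condition of Proposition~\ref{prop4-2}(2) for $X\subset\PP^3(v_0w_1,v_0w_2,w_0v_1,w_0v_2)$ and using $v_1+v_2=(\ell-1)v_0$ for $E_1,E_2,E_3$ does collapse everything to $w_0=w_1+w_2$, and the normalization remark ($\gcd(w_1,w_2)=1$ suffices) is also fine. The gap is in the enumeration step. You impose $w_1\mid\ell$ and $w_2\mid\ell$ on the grounds that the diagonal $f(x_1,x_2)=x_1^{\ell w_0/w_1}+x_2^{\ell w_0/w_2}$ must exist; but the construction only requires $f$ to be quasi-smooth of degree $\ell w_0$, and the diagonal choice is offered in the statement merely as an instance (``we may take, for instance''). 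With your constraint the count over $\ell\in\{3,4,6\}$ comes out to ten, reproducing only rows $\#1$--$\#10$ of Table~\ref{table2}; it cannot produce row $\#11$, where $(w_0,w_1,w_2)=(11,5,6)$, $\ell=6$, and $\ell w_0/w_1=66/5\notin\ZZ$. That case is realized by a quasi-diagonal curve such as $x_0^6+x_1^{12}x_2+x_2^{11}=0\subset\PP^2(11,5,6)$, giving the quasi-diagonal K3 $z_0^{12}z_1+z_1^{11}+z_2^3+z_3^2=0\subset\PP^3(5,6,22,33)$ (see Remark~\ref{rem5-1} and the surface $Y_{11}$ used later). So as written your proof establishes a different (and weaker) statement than Proposition~\ref{prop5-1}.

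To repair it you must replace the diagonal divisibility by the condition that \emph{some} quasi-smooth $f$ of degree $\ell w_0$ in weights $(w_1,w_2)$ exists, i.e.\ for each $i$ either $w_i\mid \ell w_0$ or $w_i\mid \ell w_0-w_j$; with $w_0=w_1+w_2$ and $\gcd(w_1,w_2)=1$ this reads $w_i\mid\ell$ or $w_i\mid\ell-1$. This admits $(11,5,6)$ with $\ell=6$ and yields the eleventh row. Be aware, though, that this weaker condition also admits further triples (e.g.\ $(6,1,5)$ or $(7,2,5)$ with $\ell=6$, or $(3,1,2)$ with $\ell=3$), so to land exactly on the eleven surfaces of Table~\ref{table2} you would still need to state precisely which class of $f$ (diagonal or the quasi-diagonal shape $x_1^{a}x_2+x_2^{b}$ actually used by the authors) is being counted and check quasi-smoothness case by case; the paper itself gives no proof of this proposition, so this bookkeeping cannot simply be cited.
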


\begin{table} 
\caption{$K3$ surfaces appearing in our construction} 
\label{table2}  
\[
\begin{array}{c|c|c|c|c|c|c}\hline \hline
\# & C_{(w_0,w_1,w_2)} & g(C) & E_i & \ell  & (k_0,k_1,k_2,k_3) & d \\ \hline
1  & (2,1,1)         & 4    & E_1 & 3     & (1,1,2,2)         & 6 \\ \hline
2  &                 &      & E_2 & 4     & (1,1,2,4)         & 8 \\ \hline
3  &                 &      & E_3 & 6     & (1,1,4,6)         & 12 \\ \hline
4  & (3,1,2)         & 7    & E_2 & 4     & (1,2,3,6)         & 12 \\ \hline
5  &                 &      & E_3 & 6     & (1,2,6,9)         & 18 \\ \hline
6  & (4,1,3)         & 3    & E_1 & 3     & (1,3,4,4)         & 12 \\ \hline
7  &                 &      & E_3 & 6     & (1,3,8,12)        & 24 \\ \hline
8  & (5,1,4)         & 6    & E_2 & 4     & (1,4,5,10)        & 20 \\ \hline
9  & (7,1,6)         & 15   & E_3 & 6     & (1,6,14,21)       & 42  \\ \hline
10 & (5,2,3)         & 11   & E_3 & 6     & (2,3,10,15)       & 30  \\ \hline
11 & (11,5,6)        & 5    & E_3 & 6     & (5,6,22,33)       & 66  \\ \hline
\hline
\end{array}
\]
\end{table}  

\begin{rem}\label{rem5-1}
All $K3$ surfaces can be realized as orbifolds of diagonal 
or quasi-diagonal hypersurfaces. For instance, $\#3$ may be realized 
by a diagonal hypersurface
$z_1^{12}+z_2^{12}+z_3^3+z_4^2=0\subset\PP^2(1,1,4,6)$.
Similarly $\#8$ may be realized by a diagonal hypersurface
$z_1^{20}+z_2^5+z_3^4+z_4^2=0\subset\PP^3(1,4,5,10)$.
(By Goto \cite{Go1}, Yonemura \cite{Yo}, there are in total $14$ 
weighted diagonal K3 surfaces obtained as quotients of diagonal 
hypersurfaces by finite abelian group actions.)
The last example $\#11$ can be realized by the polynomial
$$z_0^{12}z_1+z_1^{11}+z_2^3+z_3^2=0\subset 
\PP^3(5,6,22,33).$$
Goto \cite{Go2} considered $K3$ surfaces of the latter kind, the so-called
{\it quasi-diagonal} $K3$ surfaces.
\end{rem}
\smallskip

\noindent{\bf Dimension $3$ Calabi--Yau varieties (Calabi--Yau threefolds)}: 
We now apply the twist map to construct $K3$-fibered Calabi--Yau threefolds,
which are the quotients of either $S\times E$, where $S$ is a surface 
and $E$ is an elliptic curve, or $C\times Y$ where $C$ is a curve
and $Y$ is a $K3$ surface in weighted projective spaces, under the action of
the group $\ZZ/\ell\ZZ$. 

\noindent{\bf (A) Elliptic fibered Calabi--Yau threefolds $S\times E$}:  
Let $(w_0,\w)=(w_0,w_1,w_2,w_3)$ be a weight, and consider the surface
\begin{equation} \label{eqn-s}
S_{(w_0,w_1,w_2,w_3)}:=\{x_0^{\ell}+f(x_1,x_2,x_3)=0\}\subset\PP^3 
(w_0,w_1,w_2,w_3)
\end{equation}
of degree $\ell w_0$.

Apply the twist map to the product $S_{(w_0,w_1,w_2,w_3)}\times E_i$
where $i=1$ (resp. $3$) if $\ell=3$ (resp. $6$), and $i=2$ if $\ell=4$ 
onto a threefold of degree $=\ell w_0$, and for the sake of simplicity,
we may take $g(y_1,y_2)=-(y_1^{\ell v_0/v_1}+y_2^{\ell v_0/v_2})$.
\smallskip

A natural question is : {\bf What combination of weights $(w_0,w_1,w_2,w_3)$ 
and $(v_0,v_1,v_2)$ would give rise to Calabi--Yau threefolds?}
\smallskip

We divide our constructions into two cases:

$1^{\circ}$: $S_{(w_0,w_1,w_2,w_3)}$ is not a $K3$ surface.

$2^{\circ}$: $S_{(w_0,w_1,w_2,w_3)}$ is a $K3$ surface.

\begin{prop} \label{prop5-2}
Let $E_i,\, i\in\{1,2,3\}$ be elliptic curves in {\em Table 
\ref{table1}}. Suppose that $S_{(w_0,w_1,w_2,w_3)}$ is not a $K3$ surface.
Then there are $14$ elliptically but not $K3$-fibered Calabi--Yau 
threefolds obtained as quotients of $S_{(w_0,w_1,w_2,w_3)}\times E_i$.
Here we put $(k_0,k_1,k_2,k_3,k_4)=(v_0w_1,v_0w_2,v_0w_3, w_0v_1,w_0v_2)$
and $d=\sum_{i=0}^4 k_i$. We may take, for instance, 
$$f(x_1,x_2,x_3)=x_1^{\ell w_0/w_1}+x_2^{\ell w_1/w_0}+x_3^{\ell w_0/w_3}.$$ 
With this choice, $S_{(w_0,w_1,w_2,w_3)}$ is a weighted diagonal surface,
though not $K3$.
\end{prop}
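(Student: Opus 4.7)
The plan is to reduce the proposition to a finite Diophantine enumeration. First I would apply the Calabi--Yau sufficient condition of Proposition~\ref{prop4-2}(2),
\begin{equation*}
\ell w_0 v_0 = w_0(v_1+v_2) + v_0(w_1+w_2+w_3),
\end{equation*}
to each of the three elliptic triples $(v_0,v_1,v_2)\in\{(1,1,1),(1,1,2),(1,2,3)\}$. Since $v_0=1$ and $v_1+v_2=\ell-1$ in every case, the identity collapses uniformly to
\begin{equation*}
w_0 = w_1+w_2+w_3,
\end{equation*}
so a single Diophantine equation governs the whole enumeration, independent of which $E_i$ is chosen.

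Next I would verify that the non-K3 hypothesis on $S$ is automatic: the Calabi--Yau (i.e.\ K3) condition for $S$ reads $w_1+w_2+w_3=(\ell-1)w_0$, and combining it with the threefold Calabi--Yau condition $w_0 = w_1+w_2+w_3$ forces $\ell=2$, contradicting $\ell\in\{3,4,6\}$. Consequently every admissible weight produces a surface $S$ that is not K3, and Proposition~\ref{prop4.1} then supplies the elliptic fibration (coming from the Calabi--Yau curve $E_i$) while ruling out the lifted K3 fibration (since $\tilde S$ is not Calabi--Yau), establishing the claimed fibration structure uniformly.

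The remaining step is to enumerate, for each $\ell\in\{3,4,6\}$, all normalized tuples $(w_0,w_1,w_2,w_3)$ with $w_1\le w_2\le w_3$ satisfying
\begin{enumerate}
\item $w_0=w_1+w_2+w_3$;
\item $w_i\mid \ell w_0$ for $i=1,2,3$, so that the Fermat form $f(x_1,x_2,x_3)=\sum_{i=1}^{3} x_i^{\ell w_0/w_i}$ defines a quasi-smooth weighted surface $S$ of degree $\ell w_0$;
\item no three of the weights $(w_0,w_1,w_2,w_3)$ share a common divisor $>1$, and likewise for the five weights $(v_0w_1,v_0w_2,v_0w_3,w_0v_1,w_0v_2)$ of the ambient $\PP^4$ of $X$.
\end{enumerate}
The bound $w_3<w_0\le 3w_3$ forced by (1), combined with the divisibility in (2), makes the search finite and short. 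Tallying the admissible pairs $(S,E_i)$ should then yield exactly $14$ Calabi--Yau threefolds of the claimed type.

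The main obstacle will be the careful book-keeping of this enumeration: one must exhaust the small weight tuples for each $\ell$, avoid counting unnormalized representatives twice, and simultaneously satisfy the three divisibility constraints $w_i\mid\ell w_0$ along with both normalization conditions (one for $\PP^3(w_0,\ldots,w_3)$ and one for the target $\PP^4$). Once the complete table is produced, the fibration assertion is immediate from Proposition~\ref{prop4.1}.
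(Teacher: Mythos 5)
Your first step is sound and is exactly what the paper's one-line proof does: testing the sufficient condition of Proposition~\ref{prop4-2}(2) for $S\times E_i$, and since $v_0=1$ and $v_1+v_2=\ell-1$ for each of $E_1,E_2,E_3$, this condition does collapse to $w_0=w_1+w_2+w_3$ (equivalently $d=\sum k_i$).

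The gap is in your second step, and it is fatal to the count of $14$. The identity $w_0=w_1+w_2+w_3$, together with the divisibility $w_i\mid \ell w_0$ needed for the diagonal $f$, is satisfied equally by every row of Table~\ref{table4} (e.g.\ $(3,1,1,1)$ with $\ell=3$, $(6,1,2,3)$ with $\ell=3$, $(8,1,1,6)$ with $\ell=3$), i.e.\ by the $23$ threefolds of Proposition~\ref{prop5-3} which the paper asserts \emph{are} $K3$-fibered. So your argument that ``the non-K3 hypothesis is automatic'' proves too much: taken literally it would make Proposition~\ref{prop5-3} vacuous, and your Diophantine enumeration, run as described, returns the union of Tables~\ref{table3} and~\ref{table4} (some $37$ pairs $(S,E_i)$), not $14$. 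The source of the error is the interpretation of ``$S$ is (not) a $K3$ surface'' and of ``$K3$-fibered'': the naive criterion $w_0+w_1+w_2+w_3=\deg S=\ell w_0$ indeed never holds in this construction (your computation forcing $\ell=2$ is correct), so it cannot be the condition that separates the two tables. Moreover, ruling out via Proposition~\ref{prop4.1} the one fibration $\phi_2$ whose generic fiber is a smooth model of $S$ does not rule out \emph{all} $K3$-fibrations of $\tilde X$: the $K3$ fibers recorded in Table~\ref{table4} come from other pencils on $X\subset\PP^4(k_0,\dots,k_4)$ (typically the pencil of the two equal smallest weights, whose generic member sits in a non-well-formed weighted $\PP^3$ and becomes, after reducing the weights, a $K3$ of smaller degree --- compare the listed ``$K3$-fiber'' weights, which are generally not $(w_0,w_1,w_2,w_3)$ at all). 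Thus both the assertion ``not $K3$-fibered'' for the $14$ cases and the separation from the $23$ cases of Proposition~\ref{prop5-3} require a genuinely different (fibration-theoretic) argument that your proposal does not supply; the paper's own proof verifies only the Calabi--Yau condition and inherits the fibration dichotomy from the structure of the construction and the tables.
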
 

\begin{table} 
\caption{Elliptic but not $K3$-fibered Calabi-Yau threefolds}  
\label{table3} 
\[
\begin{array}{c|c|c|c|c|c|c|c|c}\hline \hline
\# & (w_0,w_1,w_2,w_3) & E_i & \ell & (k_0,k_1,k_2,k_3,k_4) & d  & h^{1,1} &
h^{2,1} 
& \chi 
\\ \hline
1 & (5,1,1,3)      & E_1 &  3   & (1,1,3,5,5)           & 15 &  7 & 103 & -192
\\
2 &                & E_3 &  6   & (1,1,3,10,15)         & 30 &  5 & 251 & -492 
\\ \hline
3 & (5,1,1,2)      & E_2 &  4   & (1,2,2,5,10)          & 20 &  6 & 242 & -228
\\
4 &                & E_3 &  6   & (1,2,2,10,15)         & 30 &  4 & 208 & -408 
\\ \hline
5 & (7,1,2,4)      & E_2 &  4   & (1,2,4,7,14)          & 28 &  8 & 116 & - 216 
\\ \hline
6 & (7,1,3,3)      & E_1 &  3   & (1,3,3,7,7)           & 21 &  17& 65  & -96 \\ 
7 &                & E_3 &  6   & (1,3,3,14,21)         & 42 &  6 & 180 & -348 
\\ \hline
8 & (7,2,2,3)      & E_3 &  6   & (2,2,3,14,21)         & 42 &  7 & 151 & -288 
\\ \hline
9 & (9,1,4,4)      & E_2 &  4   & (1,4,4,9,18)          & 36 & 19 & 91  & -144 
\\ \hline
10 & (10,2,3,5)    & E_1 &  3   & (2,3,5,10,10)         & 30 & 12 & 48  & -108
\\
11 &               & E_3 &  6   & (2,3,5,20,30)         & 60 & 10 & 106 & -192 
\\ \hline
12 & (10,1,3,6)    & E_1 &  3   & (1,3,6,10,10)         & 30 & 19 & 67  & -96 
\\ 
13 &               & E_3 &  6   & (1,3,6,20,30)         & 60 & 10 & 178 & -336 
\\ \hline
14 & (13,1,6,6)    & E_3 &  6   & (1,6,6,26,39)         & 78 & 23 & 143 & -240 
\\ \hline 
\hline
\end{array}
\]
\end{table} 

The next proposition gives examples of elliptically and $K3$-fibered
Calabi--Yau threefolds, which generalize the construction of Livn\'e--Yui
\cite{LY05}.

\begin{prop}\label{prop5-3} Let $E_i\, (i\in\{1,2,3\})$ be 
elliptic curves in {\em Table \ref{table1}}, and let 
$S_{(w_0,w_1,w_2,w_3)}$ be a $K3$ surface. Then there are $23$ 
elliptically fibered Calabi--Yau threefolds obtained as quotients of 
$S_{(w_0,w_1,w_2,w_3)}\times E_i$ where $i\in\{1,2,3\}$, which have 
also $K3$-fibrations are tabulated in {\em Table \ref{table4}}. 
Here we put again $(k_0,k_1,k_2,k_3,k_4)=(v_0w_1,v_0w_2,v_0w_3,w_0v_1,w_0v_2)$
and $d=\sum_{i=0}^4 k_i$. 

For the polynomials $f(x_1,x_2,x_3)$ and $g(y_1,y_2)$,
we may take
$$f(x_1,x_2,x_3)=x_1^{\ell w_0/w_1}+x_2^{\ell w_0/w_2}+x_3^{\ell w_0/w_3},\qquad
g(y_1,y_2)=-(y_1^{\ell v_0/v_1}+y_2^{\ell v_0/v_2}).$$
In this case, $S_{(w_0,w_1,w_2,w_3)}$ is covered by the diagonal surface
of degree $d$ in $\PP^3(1,1,1,1)$.
\end{prop}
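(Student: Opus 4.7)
The plan is to prove Proposition \ref{prop5-3} by a complete classification of admissible weight tuples. The argument has three ingredients: (i) enumerate the K3 inputs, (ii) check the twist-map and CY3 compatibility, and (iii) invoke Proposition \ref{prop4.1} to certify both fibrations on the resolution.

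First I would enumerate the admissible K3 surfaces $S_{(w_0,w_1,w_2,w_3)}=\{x_0^{\ell}+f(x_1,x_2,x_3)=0\}\subset \PP^3(w_0,w_1,w_2,w_3)$ with $\ell\in\{3,4,6\}$. Triviality of the canonical class (the K3 hypothesis) forces $\sum_{i=0}^{3} w_i = \ell w_0$ by adjunction on the weighted projective $3$-space, and quasi-smoothness for the Fermat-type choice $f=\sum_{i=1}^{3} x_i^{\ell w_0/w_i}$ requires $w_i\mid \ell w_0$ for every $i\geq 1$. These two conditions, together with the normalization convention on the weights, reduce the problem to a finite enumeration that can be carried out by hand and cross-checked against the Reid--Yonemura--Goto list of weighted K3 hypersurfaces.

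Next, each admissible $S$ is paired with the unique $E_i$ from Table \ref{table1} whose order matches $\ell$ (so $\ell=3\leftrightarrow E_1$, $\ell=4\leftrightarrow E_2$, $\ell=6\leftrightarrow E_3$), and the twist map of Definition \ref{defn2.1} produces the candidate hypersurface $X\subset \PP^4(v_0w_1,v_0w_2,v_0w_3,w_0v_1,w_0v_2)$ of degree $\ell w_0 v_0$. The coprimality $\gcd(w_0,v_0)=1$ required by Definition \ref{defn2.1} is automatic because $v_0=1$ throughout Table \ref{table1}. I would then apply Proposition \ref{prop4-2}(2), i.e.\ the CY test $\ell w_0 v_0=w_0(v_1+v_2)+v_0(w_1+w_2+w_3)$, retaining only those pairs producing a Calabi--Yau resolution $\tilde X$.

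For each surviving pair, Proposition \ref{prop4.1} supplies both fibrations: the generic fiber $\tilde V_2 \simeq E_i$ of $\phi_1$ is elliptic (dim-$1$ CY), and the generic fiber $\tilde V_1 \simeq S$ of $\phi_2$ is K3 (dim-$2$ CY), so each rational fibration lifts to an honest fibration on a smooth model. Thus $\tilde X$ is simultaneously elliptically fibered over $Y_1\to V_1/\mu_\ell$ and K3-fibered over $Y_2\to V_2/\mu_\ell$, as claimed.

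The main obstacle is the combinatorial enumeration: the intertwined K3 and CY3 diophantine conditions, combined with the Fermat divisibility constraints, must be exhausted without missing or duplicating cases. Organizing the search by $\ell$, then by $w_0$, and finally by the unordered triple $(w_1,w_2,w_3)$ gives a tractable case analysis whose output is precisely the $23$ tuples recorded in Table \ref{table4}.
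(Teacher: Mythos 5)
Your plan breaks down at step~1. The K3 condition on $S_{(w_0,\dots,w_3)}$, which you correctly identify via adjunction as $\sum_{i=0}^{3} w_i = \ell w_0$, is \emph{incompatible} with the Calabi--Yau condition for $\tilde X$. Since $v_0=1$ for each $E_i$ of Table~\ref{table1} and the elliptic-curve condition $v_0+v_1+v_2=\ell v_0$ forces $v_1+v_2=\ell-1$, the test of Proposition~\ref{prop4-2}(2) reduces to $\ell w_0 = w_0(v_1+v_2)+(w_1+w_2+w_3)$, i.e.\ $w_1+w_2+w_3 = w_0$. Your K3 hypothesis instead demands $w_1+w_2+w_3 = (\ell-1)w_0$. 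These two can hold together only when $\ell=2$, but $\ell\in\{3,4,6\}$; so the filtered enumeration you propose is empty, not a list of $23$ cases. A direct check of Table~\ref{table4} confirms this: for $\#1$, $(w_0,\dots,w_3)=(3,1,1,1)$ and $\ell=3$ give $\sum w_i=6\neq 9=\ell w_0$, so $\omega_S\cong\OO_S(3)$ is nontrivial and $S$ is of general type, not K3. The same failure occurs in every row, and in some cases (e.g.\ $\#12$, where $S$ has weight $(8,1,1,6)$ but the ``K3-fiber'' column reads $(1,3,4,4)$) the K3 surface listed is plainly not $S$ at all, which should have been a warning sign.

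The paper's actual proof of Propositions~\ref{prop5-2}--\ref{prop5-4} is a one-line verification of the Calabi--Yau sufficient condition of Proposition~\ref{prop4-2}(2) on the product $S\times E_i$; it imposes no K3 constraint on $S$. By building that constraint in and then also applying the CY test, you have constructed an over-determined system with no solutions. The same issue propagates to your use of Proposition~\ref{prop4.1}: you invoke it to produce the K3-fibration on $\tilde X$ by claiming the generic fiber of $\phi_2$ (a resolution of $S$) is CY, but for the surfaces actually appearing in Table~\ref{table4} this hypothesis fails. You have followed the proposition's wording too literally -- ``let $S$ be a K3 surface'' is an imprecision of the paper -- and the resulting argument cannot produce the claimed table.
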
 

\begin{table} 
\caption{Elliptically and $K3$ fibered Calabi--Yau threefolds}  
\label{table4} 
\[
\begin{array}{c|c|c|c|c|c|c|c} \hline \hline
\# & (w_0,w_1,w_2,w_3) & E_i & \ell & (k_0,k_1,k_2,k_3,k_4) & d & \chi & 
K3-\mbox{fiber}\\ \hline
1 & (3,1,1,1)         & E_1 &  3   & (1,1,1,3,3)           & 9 & -216 &
(1,1,1,3)\\
2 &                   &E_2  &  4   & (1,1,1,3,6)           & 12 & -324 & \\
3 &                   & E_3 &  6   & (1,1,1,6,9)           & 18 & -540 & \\
\hline
4 & (4,1,1,2)         & E_1 &  3   & (1,1,2,4,4)           & 12 & -192 &
(1,1,2,4) 
\\
5 &                   & E_2 &  4   & (1,1,2,4,8)           & 16 & -288 & \\
6 &                   & E_3 &  6   & (1,1,2,8,12)          & 24 & -480 & \\
\hline
7 & (6,1,1,4)         & E_2 &  4   & (1,1,4,6,12)          & 24 & -312 &
(1,1,4,6) 
\\ 
8 &                   & E_3 &  6   & (1,1,4,12,18)         & 36 & -528 &  \\
\hline
9 & (6,1,2,3)         & E_1 &  3   & (1,2,3,6,6)           & 18 & -144 &
(1,2,3,6) 
\\
10 &                   & E_2 &  4   & (1,2,3,6,12)          & 24 & -480 &  \\
11 &                   & E_3 &  6   & (1,2,3,12,18)         & 36 & -360 & \\
\hline
12 & (8,1,1,6)         & E_1 &  3   & (1,1,6,8,8)           & 24 & -240 &
(1,3,4,4) 
\\
13 &                   & E_3 &  6   & (1,1,6,16,24)         & 48 & -624 & \\
\hline
14 & (8,1,3,4)         & E_3 &  6   & (1,3,4,16,24)         & 48 & -312 &
(1,1,4,6) 
\\ \hline
15 & (8,2,3,3)         & E_3 &  6   & (2,3,3,16,24)         & 48 & -240 &
(1,3,8,12) 
\\ \hline
16 & (9,1,2,6)         & E_2 &  4   & (1,2,6,9,18)          & 36 & -228 &
(1,2,6,9) 
\\
17 &                   & E_3 &  6   & (1,2,6,18,27)         & 54 & -408 & \\
\hline
18 & (9,2,3,4)         & E_2 &  4   & (2,3,4,9,18)          & 36 & -120 &
(1,2,3,6) 
\\ \hline
19 & (10,1,1,8)        & E_2 &  4   & (1,1,8,10,20)         & 40 & -432 &
(1,4,5,10) 
\\ \hline
20 & (10,3,3,4)        & E_3 &  6   & (3,3,4,20,30)         & 60 & -192 &
(2,3,10,15) 
\\ \hline
21 & (12,1,2,9)        & E_1 &  3   & (1,2,9,12,12)         & 36 & -168 &
(1,3,4,4) 
\\
22 &                   & E_3 &  6   & (1,2,9,24,36)         & 72 & -240 & \\
\hline
23 & (14,1,1,12)       & E_3 &  6   & (1,1,12,28,42)        & 84 & -960 &
(1,6,14,21) 
\\ \hline
\hline
\end{array}
\]
\end{table} 

\begin{rem}\label{rem5-2}
With our choice of the polynomials $f(x_1,x_2,x_3)$ and $g(y_1,y_2)$,
all Calabi--Yau threefolds constructed in Propositions 5.2 and 5.3
can also be realized as orbifolds of diagonal hypersurfaces 
defined by the equation: 
\begin{equation*}
V: x_0^d+ x_1^d+x_2^d+x_3^d+x_4^d=0\subset\PP^4
\end{equation*}
of degree $d$. Take a finite abelian group
$\bmu=\bmu_{k_0}\times\cdots\times \bmu_{k_4}$, where
$\bmu_{k_i}=\mbox{Spec} (\QQ[T]/(T^{k_i}-1)$. Now we impose the 
condition that each $k_i$ divides $d$. Then $\bmu$ acts on $V$ 
component-wise. Then a smooth resolution of the quotient $V/\bmu$ 
is a Calabi--Yau threefold in the weighted projective space 
$\PP^4(k_0,k_1,k_2,k_3,k_4)$ only when $d=k_0+k_1+\cdots + k_4$.  
This construction was carried out in Yui \cite{Yu}. 

With different choices of homogeneous polynomials $f(x_1,x_2,x_3)$
and $g(y_1,y_2)$, we may obtain more defining equations for these
Calabi--Yau hypersurfaces.
\end{rem} 

Next we list Calabi--Yau threefolds with elliptic fibrations
that are not realized as orbifolds of diagonal hypersurfaces.

\begin{prop}\label{prop5-4}
Take the product $S_{(w_0,w_1,w_2,w_3)}\times E_3$, where 
$S_{(w_0,w_1,w_2,w_3)}$ is defined in $(\ref{eqn-s})$, and $E_3$ 
is the elliptic curve defined in {\em Table \ref{table1}}. 
Consider the image of the twist map
\begin{equation*}
\PP^3(w_0,w_1,w_2,w_3)\times \PP^2(1,2,3)\to \PP^4(w_1,w_2,w_3,2w_0,3w_0)
\end{equation*}
Examples of elliptic fibered Calabi--Yau threefolds with constant fiber
modulus $E_3$ with positive Euler characteristic are listed in {\em Table 
\ref{table5}}. 
\end{prop}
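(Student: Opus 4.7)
The plan is to apply the machinery of Sections 3--4 specialized to the case $V_2 = E_3$ with weight $(v_0,v_1,v_2)=(1,2,3)$ and $\ell = 6$, and then to enumerate and verify. First I would check that the twist map takes values in the stated weighted projective space: by Definition \ref{defn2.1} the image lies in $\PP^{n+m-1}(v_0\w,\,w_0\mathbf{v})$, which for $n=3,\, m=2,\, v_0=1,\, \mathbf{v}=(2,3)$ is precisely $\PP^4(w_1,w_2,w_3,2w_0,3w_0)$; the hypothesis $\gcd(w_0,v_0)=\gcd(w_0,1)=1$ needed to define the twist map is automatic.

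Next I would extract the Calabi--Yau condition from Proposition \ref{prop4-2}(2). Substituting $\ell=6$, $v_0=1$, $v_1+v_2=5$, the equation $\ell w_0 v_0 = w_0 \sum v_i + v_0 \sum w_j$ collapses to
\begin{equation*}
6 w_0 \;=\; 5 w_0 + w_1 + w_2 + w_3, \qquad \text{i.e.,} \qquad w_0 \;=\; w_1 + w_2 + w_3.
\end{equation*}
This, together with the normalization of weights, the quasi-smoothness of $S_{(w_0,w_1,w_2,w_3)}$ (which for the specific choice $f(x_1,x_2,x_3)=x_1^{6w_0/w_1}+x_2^{6w_0/w_2}+x_3^{6w_0/w_3}$ forces $w_i \mid 6 w_0$ for $i=1,2,3$), and the quasi-smoothness of $E_3$ (already recorded in Table \ref{table1}), yields a finite Diophantine problem. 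I would then enumerate all such $(w_0,w_1,w_2,w_3)$, discarding those already listed in Proposition \ref{prop5-3} (where $S_{(w_0,w_1,w_2,w_3)}$ is itself a $K3$ surface, forcing $w_0=w_1+w_2+w_3$ together with the $K3$ condition $2w_0 = w_1+w_2+w_3+w_0$, automatically satisfied — so the present proposition is about \emph{additional} families, not a partition; I would clarify this).

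The constant-modulus claim for the elliptic fibration is essentially tautological: by the construction recalled before Proposition \ref{prop4.1}, the map $\phi_2\colon \tilde X \to V_2/\bmu_\ell$ has generic fiber a smooth resolution of $V_1 = S_{(w_0,w_1,w_2,w_3)}$ — so I must instead take $\phi_1$, which exhibits $\tilde X$ as a rational elliptic fibration over (a resolution of) $S/\bmu_\ell$ whose generic fiber is the smooth resolution of $V_2/\bmu_\ell = E_3$, hence constantly isomorphic to $E_3$ with $j=0$. The only thing to check here is that Proposition \ref{prop4.1} applies — i.e., that $\tilde V_2 = E_3$ is Calabi--Yau, which is trivial.

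The main obstacle is the Euler-characteristic calculation needed to identify which of the enumerated weights give $\chi(\tilde X)>0$. I would carry this out by fixing the toric (partial crepant) desingularization of the ambient $\PP^4(w_1,w_2,w_3,2w_0,3w_0)$ following the algorithm of Section \ref{sect-resol}, then computing $\chi(\tilde X)$ via the additive decomposition over strata: $\chi(\tilde X) = \chi(X^{\mathrm{sm}}) + \sum_\sigma \chi(E_\sigma)\chi(\text{stratum})$, where each exceptional contribution $E_\sigma$ is a toric bundle over the singular stratum determined by the fan subdivision. The singular strata of $X$ are cut out by the vanishing of weighted coordinates with $\gcd>1$, and one can read off $\chi$ of each stratum from the defining diagonal/quasi-diagonal equation. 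Extracting the finite list of weights with $\chi(\tilde X)>0$ from this routine-but-lengthy computation then fills in Table \ref{table5}; collecting them completes the proof.
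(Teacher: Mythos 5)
Your central computation agrees with the paper's (one-line) proof: the paper simply tests the sufficient condition of Proposition \ref{prop4-2}(2), which for $v_0=1$, $(v_1,v_2)=(2,3)$, $\ell=6$ reduces exactly to your identity $w_0=w_1+w_2+w_3$, and every weight in Table \ref{table5} satisfies it; your identification of the target space $\PP^4(w_1,w_2,w_3,2w_0,3w_0)$ and of the fibration $\phi_1$ with constant fiber $E_3$ is also fine. However, there is a genuine flaw in the way you set up the verification: you fix the diagonal choice $f(x_1,x_2,x_3)=x_1^{6w_0/w_1}+x_2^{6w_0/w_2}+x_3^{6w_0/w_3}$ and feed the induced divisibility $w_i\mid 6w_0$ into your ``finite Diophantine problem.'' This contradicts Remark \ref{rem5-3}, which says precisely that the Table \ref{table5} examples are \emph{not} realizable as orbifolds of diagonal hypersurfaces; concretely, for $\#24$ one has $6w_0=3486=2\cdot3\cdot7\cdot83$ while $w_1=41\nmid 3486$, so no such diagonal monomial exists. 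As set up, your enumeration would therefore exclude every entry of the table. The surfaces in $(\ref{eqn-s})$ are only required to have \emph{some} quasi-smooth $f$ of degree $6w_0$, and exhibiting (or invoking a criterion for the existence of) such a non-diagonal quasi-smooth $f$ for these weights is the step your proposal is missing and cannot supply in its present form.

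A secondary slip is the parenthetical comparison with Proposition \ref{prop5-3}: for $S$ of degree $\ell w_0=6w_0$ in $\PP^3(w_0,w_1,w_2,w_3)$ the trivial-canonical (K3-type) condition reads $6w_0=w_0+w_1+w_2+w_3$, i.e.\ $5w_0=w_1+w_2+w_3$, which is \emph{incompatible} with $w_0=w_1+w_2+w_3$, not ``automatically satisfied''; your displayed condition $2w_0=w_0+w_1+w_2+w_3$ would correspond to $\ell=2$, not $\ell=6$. (The distinction between Tables \ref{table4} and \ref{table5} is not a numerical CY condition but the diagonal realizability just discussed.) Finally, deferring the Euler-characteristic computation is acceptable for matching the paper — its proof also does not derive the values of $\chi$ in the table — but the quasi-smoothness/non-diagonality issue above must be repaired before your argument establishes the proposition.
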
 

\begin{table} 
\caption{Calabi--Yau threefolds with fiber $E_3$ and positive Euler 
characteristic}   
\label{table5}
\[
\begin{array}{c|c|c|c|c|c|c} \hline \hline
\# & (w_0,w_1,w_2,w_3) & (k_0,k_1,k_2,k_3,k_4) & d & h^{1,1} & h^{2,1} & \chi \\
\hline
24 & (581,41,42,498) & (41,42,498, 1162,1743) & 3486 & 491  & 11        & 960  
\\ \hline
25 & (498,36,41,421)& (36,41,421,996,1494)    & 2988 & 491  & 11        & 960  
\\ \hline
26  & (539,36,41,462)& (36,41,462,1078,1617)   & 3234 & 462  & 12        & 900  
\\ \hline 
27  &(463,31,41,391)& (31,41,391,926,1389)    & 2778 & 462  & 12        & 900  
\\ \hline
28 & (433,31,36,366)& (31,36,366,866,1299)    & 2598 & 433  & 13        & 840  
\\ \hline
29 & (414,24,41,349)& (24,41,349,828,1242)    & 2484 & 416  & 14        & 804  
\\ \hline
30 & (385,28,31,326)& (28,31,326,770,1155)    & 2310 & 387  & 15        & 744  
\\ \hline
31 & (372,18,41,313)& (18,41,313,744,1116)    & 2232 & 377  & 17        & 720  
\\ \hline
\hline
\end{array}
\]
\end{table} 

\begin{rem}\label{rem5-3} The examples listed in Table 5 are not realizable as 
orbifolds of diagonal hypersurfaces.  These provide examples of Calabi--Yau
threefolds with large positive Euler characteristics.  In fact, it
realizes the largest positive Euler characteristic $960$ known today
for Calabi--Yau threefolds.  
\end{rem}

\begin{proof} (Propositions \ref{prop5-2}, \ref{prop5-3} and \ref{prop5-4}.) 
We test the sufficient condition in Proposition \ref{prop4-2} (2) for 
the product $S_{w_0,w_1,w_2,w_3}\times E_i$ ($i=1,2,3)$ to be Calabi--Yau. 
\end{proof} 

\noindent{\bf (B) $K3$ fibered Calabi--Yau threefolds $C\times Y$}: 
Now we construct Calabi--Yau threefolds with $K3$-fibrations.

Let $Y_i$ be one of the eleven $K3$ surfaces constructed in Table 
\ref{table2}, where $i$ corresponds to the number $\#$ in Table \ref{table2}. 
Pick $i$, and let $(k_0,k_1,k_2,k_2)$ be a weight, and 
$d=k_0+k_1+k_2+k_3=\ell w_0$.  
If $i\neq 11$, for each $i$, $Y_i$ is defined by

\begin{equation*}
Y_i:\{y_0^{d/k_0}+y_1^{d/k_1}+y_2^{d/k_2}+y_3^{d/k_3}=0\}\subset\PP(k_0,k_1,
k_2,k_3)
\end{equation*}

We consider the product $C_{(w_0,w_1,w_2)}\times Y_i$ where $C_{(w_0,w_1,w_2)}$
is not an elliptic curve.  Suppose
that $\ZZ/\ell\ZZ$ is a subgroup of the automorphism group
$\mbox{Aut}(Y_i)$. 

Here is a typical example.  Let

\begin{equation*}
\begin{split}
Y_1:&=\{y_0^6+y_1^6+y_2^3+y_3^3=0\}\subset\PP^3(1,1,2,2)\quad \mbox{with 
$\ell=6$} \\
Y_4:&=\{y_0^{12}+y_1^6+y_2^4+y_3^2=0\}\subset\PP^3(1,2,3,6)\quad\mbox{with 
$\ell=12$} \\ 
Y_6:&=\{y_0^{18}+y_1^9+y_2^3+y_3^2=0\}\subset\PP^3(1,2,6,9)\quad\mbox{with 
$\ell=18$} \\
Y_9:&=\{y_0^{42}+y_1^7+y_2^3+y_3^2=0\}\subset\PP^3(1,6,14,21)\quad\mbox{with 
$\ell=42$} \\
Y_{10}:&=\{y_0^{15}+y_1^{10}+y_2^3+y_3^2=0\}\subset\PP^3(2,3,10,15)
\quad\mbox{with $\ell=15$} 
\end{split}
\end{equation*}
For $i=11$, the $K3$ surface $Y_{11}$ is given by
\begin{equation*}
Y_{11}:=\{y_0^{12}y_1+y_1^{11}+y_2^3+y_3^2=0\}\subset\PP^3(5,6,22,33)\quad 
\mbox{with $\ell=12$}
\end{equation*}

We will determine the lattice structures of the above K3 surfaces in a later
section.

\begin{rem} 
Among the $K3$ surfaces listed above, we know at least that $Y_9$ has a 
unimodular lattice; i.e., the Picard lattices (which coincide with the 
N\'eron-Severi groups for $K3$ surfaces) of the minimal resolutions of 
them are unimodular. See \cite{Go2}. 
\end{rem}  

\begin{prop}\label{prop5-8}
Take the product $C_{(v_0,v_1,v_2)}\times Y_i$ where $C_{(v_0,v_1,v_2)}
\subset\PP^2(v_0,v_1,v_2)$ is a curve and $Y_i$ $(i\in\{1,2,\cdots,10\})$ 
is a $K3$ surface. Then {\em Table \ref{table6}}  
gives examples of $K3$-fibered Calabi--Yau threefolds obtained by the 
twist map. Here we put $(k_0,k_1,k_2,k_3,k_4)=    
(v_0w_1,v_0w_2,v_0w_3,w_0v_1,w_0v_2)$ and $d=\sum_{i=0}^4 k_i$. 
$($Under the Remark, Calabi--Yau threefolds labeled ``new'' are those not 
found in the CYdata of Kreuzer and Skarke \cite{KS}.$)$ 
\end{prop}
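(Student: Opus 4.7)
The plan is to parallel the proofs of Propositions \ref{prop5-2} and \ref{prop5-3} by applying the sufficient Calabi--Yau criterion of Proposition \ref{prop4-2}(2) to the hypersurface $X$ produced by the twist map on $C_{(v_0,v_1,v_2)}\times Y_i$. Writing the weights of $Y_i$ as $(w_0,w_1,w_2,w_3)$ with $\deg Y_i=\ell w_0$, Definition \ref{defn2.1} realises $X$ as a quasi-smooth hypersurface of degree $\ell v_0 w_0$ in $\PP^4(v_0w_1,v_0w_2,v_0w_3,w_0v_1,w_0v_2)$, so the Calabi--Yau condition reads
$$v_0(w_1+w_2+w_3)+w_0(v_1+v_2)=\ell v_0 w_0.$$
Using the K3 condition $w_0+w_1+w_2+w_3=\ell w_0$ for $Y_i$, the above collapses to the single constraint $v_1+v_2=v_0$ on the weights of the base curve.

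With this reduction, the bulk of the argument becomes enumeration. First I would observe that the curves $C_{(v_0,v_1,v_2)}$ already tabulated in Table \ref{table2} --- namely $(2,1,1)$, $(3,1,2)$, $(4,1,3)$, $(5,1,4)$, $(7,1,6)$, $(5,2,3)$, $(11,5,6)$ --- are exactly those satisfying $v_0=v_1+v_2$; no other triples can occur. Next, for each pair $(C_{(v_0,v_1,v_2)}, Y_i)$ with $i\in\{1,\dots,10\}$, I would verify two compatibility conditions: (i) the order $\ell$ of the cyclic action on $C$ (i.e., $\deg C/v_0$) coincides with the $\ell$ attached to $Y_i$, so that a common $\bmu_\ell$-action is available in the sense of Definition \ref{defn2.2}; and (ii) $\gcd(v_0,w_0)=1$, which by Remark \ref{rem4.3} is precisely what is needed for the twist map to be a polynomial map. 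The surviving combinations constitute the rows of Table \ref{table6}.

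The $K3$-fibration assertion then falls out of Proposition \ref{prop4.1}: the generic fibre of $\tilde\phi_1:\tilde X\to C_{(v_0,v_1,v_2)}/\bmu_\ell$ is (a resolution of) $Y_i$, which is Calabi--Yau, so after replacing the base by a smooth model the rational fibration lifts to a genuine $K3$-fibration. The main obstacle I anticipate is not conceptual but organisational: one must check case-by-case that the toroidal resolution $\pi_X:\tilde X\to X$ produced by the algorithm of Section \ref{sect-resol} is crepant, so that the Calabi--Yau property genuinely descends to $\tilde X$. Once this book-keeping is handled the table is a matter of systematic inspection.
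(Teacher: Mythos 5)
Your proposal is correct and follows essentially the same route as the paper: the paper's proof consists precisely of testing the sufficient condition of Proposition \ref{prop4-2}(2), $\ell w_0v_0=v_0\sum_{i=1}^3 w_i+w_0\sum_{j=1}^2 v_j$, for each row of Table \ref{table6}, which is exactly your criterion (your reduction to $v_1+v_2=v_0$ via the $K3$ condition is a clean equivalent reformulation). The additional points you raise (the $\gcd(v_0,w_0)=1$ check, the lift of the fibration via Proposition \ref{prop4.1}, crepancy of the resolution, and the slightly overstated claim that no other weight triples can occur) go beyond the paper's one-line verification but are not needed, since the proposition only asserts that the tabulated cases are examples; crepancy in particular is already guaranteed in general by Proposition \ref{field-of-defn}.
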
 

\begin{table} 
\caption{Examples of $K3$-fibered Calabi--Yau threefolds}  
\label{table6} 
\[
\begin{array}{c|c|c|c|c|c|c|c} \hline \hline
\# & (v_0,v_1,v_2) &  Y_i & \ell & (k_0,k_1,k_2,k_3,k_4) & d   &  \chi  & 
\mbox{Remark}\\ \hline
1 & (2,1,1)           &  Y_1 & 6    & (1,1,2,4,4)           & 12  &  -192  & \\ 
2 &               &  Y_4 & 12   & (1,1,4,6,12)          & 24  & -312   & \\ 
3 &               &  Y_6 & 18   & (1,1,4,12,18)         & 36  & -528   & \\ 
4 &               &  Y_9 & 42   & (1,1,12,28,42)        & 84  & -960   & \\
\hline 
5 & (3,2,1)           &  Y_1 & 6    & (1,3,3,6,6)           & 18  & -144   & \\ 
6 &               &  Y_4 & 12   & (1,2,6,9,18)          & 36  & -228   & \\
7 &               &  Y_6 & 18   & (1,2,8,18,27)         & 54  & -408   & \\
8 &               &  Y_9 & 42   & (1,2,18,42,63)        & 126 & -720   & \\ 
9 &               &  Y_{10}& 15 & (2,4,9,30,45)         & 90  &        &
\mbox{new} 
\\ \hline 
10 & (4,1,3)       &  Y_1  & 6   & (1,3,4,8,8)           & 24  & -120   & \\ 
11 &               &  Y_9  & 42  & (1,3,24,56,84)        & 168 & -624   & \\
\hline
12  & (5,1,4)       &  Y_4  & 12  & (1,4,10,15,30)        & 60  & -168   &  \\ 
13 & (5,1,4)       &  Y_6  & 18  & (1,4,10,30,45)        & 90  &        & 
\mbox{new} \\ 
14 &               &  Y_{10} & 15& (2,8,15,50,75)        & 150 &        & 
\mbox{new} \\ \hline
15  & (7,1,6)       &  Y_6   & 18 & (1,6,14,42,63)        & 126 & -720   & \\ 
16 &               &  Y_9  & 42  & (1,6,42,98,147)       & 294 & -480   & \\ 
17 &               & Y_{10} & 15 & (2,12,21,70,105)      & 210 & -384   & \\
\hline
18 & (5,2,3)       & Y_1   & 6   & (2,3,5,10,10)         & 30  & -72    & \\
19 &               & Y_6   & 18  & (2,3,10,30,45)        & 90  & -216   & \\ 
20 &               & Y_9   & 42  & (2,3,30,70,105)       & 210 & -384   & \\
\hline 
\hline
\end{array}
\]
\end{table} 

\begin{proof} We test the sufficient condition that
$\ell w_0v_0=v_0\sum_{i=1}^3 w_i+w_0\sum_{j=1}^2 v_j$ where $(w_0,w_1,w_2,w_3)$
is a weight for the $K3$ surface $Y_i$ for $i=1,4,6,9,10$.  
\end{proof} 

Now we consider a particular $K3$ surface.
Let $S$ be a quasi-diagonal surface in $\PP^3 (5,6,22,33)$ of degree 66 
defined by the equation 
$$y_0^{12}y_1 +y_1^{11} +y_2^3+y_3^2 =0.$$ 
This is the $K3$ surface $\# 11$ in Table 2. We consider the product 
$S\times E_i$ with $i=1,2,3$. Let $(y_0:y_1:y_2:y_3)$ and $(x_0:x_1:x_2)$ 
be the projective coordinates for $S$ and $E_i$, respectively.
The split map requires that the first variables $x_0$ and $y_0$ 
of $S$ and $E_i$, respectively, should have the same degree $\ell$. 
As elliptic curves $E_i$ are defined by diagonal equations, and possible 
degrees for $x_0$ are $2,3,4$ and $6$. Hence to meet the degree condition 
for $S$ and $E$, $w_0$ should be either $22$ or $33$ and $S$ be defined by 
the equation 
$$S_1:\ y_0^3+y_1^{12}y_2 +y_2^{11}+y_3^2 =0\ \subset \PP^3 (22,5,6,33)$$ 
or 
$$S_2:\ y_0^2+y_1^{12}y_2 +y_2^{11}+y_3^3 =0\ \subset \PP^3 (33,5,6,22).$$ 
Then the following four pairs satisfy the degree condition: 
$$\begin{array}{ccl} 
S_1\times E_1 & \subset & \PP^3(22,5,6,33) \times \PP^2(1,1,1) \\ 
S_1\times E_3 & \subset & \PP^3(22,5,6,33) \times \PP^2(2,1,3) \\ 
S_2\times E_2 & \subset & \PP^3(33,5,6,22) \times \PP^2(2,1,1) \\ 
S_2\times E_3 & \subset & \PP^3(33,5,6,22) \times \PP^2(3,1,2) 
\end{array}$$ 
Among these four cases, only the third and fourth cases actually yield 
Calabi-Yau threefolds as explained below: 

\noindent (i) The third case $S_2\times E_2$ 
$$\begin{array}{lcl} 
\PP^3(33,5,6,22) & & \PP^2(2,1,1) \\ 
\{ y_0^2+y_1^{12}y_2 +y_2^{11}+y_3^3 =0\} & \times & \{x_0^2+x_1^4+x_2^4=0 \} 
\end{array}$$ 
where we have $\ell =2, w_0=33$ and $v_0=2$. The threefold obtained 
by the twist map from $S_2\times E_2$ is 
$$y_1^{12}y_2 +y_2^{11}+y_3^3+x_1^4+x_2^4=0\ \subset \PP^4(10,12,44,33,33)$$ 
of degree $132$. As this threefold is quasi-smooth and $10+12+44+33+33=132$, 
it is a Calabi--Yau threefold. 

\noindent (ii) The 4th case $S_2\times E_3$ 
$$\begin{array}{lcl} 
\PP^3(33,5,6,22) & & \PP^2(3,1,2) \\ 
\{ y_0^2+y_1^{12}y_2 +y_2^{11}+y_3^3 =0\} & \times & \{x_0^2+x_1^6+x_2^3=0 \} 
\end{array}$$ 
where we have $\ell =2, w_0=33$ and $v_0=3$. The threefold obtained 
by the twist map from $S_2\times E_3$ is 
$$y_1^{12}y_2 +y_2^{11}+y_3^3+x_1^6+x_2^3=0\ \subset \PP^4(15,18,66,33,66)$$ 
of degree $198$. Further, the weight can be reduced to $\PP^4(5,6,22,11,22)$, 
so that the threefold has degree $198/3=66$ in $\PP^4(5,6,22,11,22)$, and
$5+6+22+11+22=66$.  Hence this is a Calabi--Yau threefold. 

Slightly changing the order of weights, we summarize the above observation 
as follows.

\begin{prop}\label{prop5-9}
Let $S_2: y_0^2+y_1^{12}y_2+y_2^{11}+y_3^3=0\subset \PP^3(33,5,6,22)$
be a quasi-diagonal $K3$ surface. Then the products $S_2\times E_2$
and $S_2\times E_3$ give rise to Calabi--Yau threefolds by the twist map. 
They are 
$$\begin{array}{ll} 
z_0^{12}z_1 +z_1^{11}+z_2^4+z_3^4+z_4^3=0 & \subset \PP^4(10,12,33,33,44) \\ 
z_0^{12}z_1 +z_1^{11}+z_2^6+z_3^3+z_4^3=0 & \subset \PP^4(5,6,11,22,22) 
\end{array}$$  
\end{prop}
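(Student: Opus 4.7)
The plan is to execute, in order, the three-step recipe already laid out in Sections~\ref{sect4}--\ref{sect5}: identify the weight data so that the twist map of Definition~\ref{defn2.1} applies, write down the image hypersurface $X\subset \PP^4$, and then apply the Calabi--Yau numerical criterion of Proposition~\ref{prop4-2}(2), together with a quasi-smoothness check.

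First, I identify the role of each variable. In $S_2$ the distinguished coordinate is $y_0$, with $y_0^2$ entering the defining polynomial, so $\ell=2$ and $w_0=33$, while $\w=(5,6,22)$. For $E_i$ to pair with $S_2$ via the twist map, the distinguished coordinate must be the one whose $\ell$-th power appears, so after a harmless permutation of coordinates I write $E_2$ as $x_0^2+x_1^4+x_2^4=0$ in $\PP^2(2,1,1)$ and $E_3$ as $x_0^2+x_1^6+x_2^3=0$ in $\PP^2(3,1,2)$, giving $v_0\in\{2,3\}$ and $\mathbf v\in\{(1,1),(1,2)\}$. In both cases $\gcd(w_0,v_0)=\gcd(33,v_0)=1$, which is precisely the coprimality hypothesis preceding Definition~\ref{defn2.1}, so the twist map $\Phi$ is defined and generically $\ell$-to-$1$ onto a hypersurface $X\subset \PP^4(v_0\w,w_0\mathbf v)$.

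Second, I write down $X$ explicitly. With $f(y_1,y_2,y_3)=y_1^{12}y_2+y_2^{11}+y_3^3$ and $g_i(x_1,x_2)$ read off from $E_i$, the image is cut out by $f-g_i=0$. For $i=2$ this is a hypersurface of degree $\ell w_0v_0=132$ in $\PP^4(10,12,44,33,33)$; relabelling the coordinates in the order $(y_1,y_2,x_1,x_2,y_3)$ yields the first equation in the proposition, in $\PP^4(10,12,33,33,44)$. For $i=3$ I obtain degree $198$ in $\PP^4(15,18,66,33,66)$; all five weights share the common factor $3$, so after normalizing to $\PP^4(5,6,22,11,22)$ the degree drops to $66$ and, after relabelling in the order $(y_1,y_2,x_1,y_3,x_2)$, I recover the second equation of the proposition.

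Third, I verify the Calabi--Yau property. The criterion $\ell w_0v_0 = w_0\sum_j v_j + v_0\sum_i w_i$ of Proposition~\ref{prop4-2}(2) reads $132=33(1+1)+2(5+6+22)$ in case $i=2$ and $198=33(1+2)+3(5+6+22)$ in case $i=3$, which after weight normalization becomes $66=5+6+11+22+22$; all three are direct term-by-term arithmetic checks. The remaining point is quasi-smoothness of each hypersurface, which I would verify by inspecting the common zero locus of the partial derivatives: the three diagonal monomials force the three corresponding variables to vanish, after which the two quasi-diagonal monomials $z_0^{12}z_1$ and $z_1^{11}$ (inherited from $S_2$) force $z_0=z_1=0$ as well. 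The expected main obstacle is purely bookkeeping -- tracking the permutation of coordinates used to put each $E_i$ in twist-map-ready form, and carrying through the weight reduction by $3$ in case (ii) -- and once that is untangled the verifications reduce to arithmetic of small integers.
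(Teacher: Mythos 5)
Your overall route is the same as the paper's: rewrite $S_2$ and the elliptic curve so that the distinguished coordinates both carry the exponent $\ell=2$, apply the twist construction to get the image hypersurface in $\PP^4(v_0\w,w_0\mathbf v)$, normalize the weights (dividing by $3$ in the $E_3$ case), and then verify the Calabi--Yau sufficient condition of Proposition \ref{prop4-2}(2) together with quasi-smoothness. All of that bookkeeping — the image equations, the weight reduction from $\PP^4(15,18,66,33,66)$ of degree $198$ to $\PP^4(5,6,22,11,22)$ of degree $66$, the identities $132=33\cdot 2+2\cdot 33$ and $66=5+6+11+22+22$, and the partial-derivative check of quasi-smoothness — is correct and matches what the paper does.

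There is, however, one concrete error: you assert that $\gcd(w_0,v_0)=\gcd(33,v_0)=1$ in both cases, but for $E_3$ the distinguished coordinate is forced to be the one of weight $3$ (it is the only variable appearing squared), so $v_0=3$ and $\gcd(33,3)=3\neq 1$. Hence the hypothesis of Definition \ref{defn2.1} fails for $S_2\times E_3$, and your justification that ``the twist map $\Phi$ is defined and generically $\ell$-to-$1$'' does not apply to that case; note that Remark \ref{rem4.3} stresses precisely that this coprimality cannot simply be dropped. (The failure is visible in your own computation: the common factor $3$ in the image weights $(15,18,66,33,66)$ is exactly $\gcd(v_0,w_0)$.) To close the gap you would need a separate argument for the second threefold — for instance, a direct construction of a generically finite rational map from $S_2\times E_3$ (or from its $\bmu_2$-quotient) onto the normalized hypersurface in $\PP^4(5,6,22,11,22)$ — rather than a citation of Definition \ref{defn2.1}; the verification that the resulting hypersurface is quasi-smooth with weight sum equal to its degree is unaffected.
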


\section{Singularities}
\label{sect6}

In this section, we describe the singularities of our Calabi--Yau 
threefolds. They are deformations of weighted diagonal hypersurfaces 
or quasi-diagonal hypersurfaces of such forms as 
$$z_0^{d_0}+z_1^{d_1}+z_2^{d_2}+z_3^{d_3}+z_4^{d_4}-\lambda 
z_0^{e_0}z_1^{e_1}-\phi z_2^{e_2}z_3^{e_3}z_4^{e_4}=0$$ 
or 
$$z_0^{d_0}z_1+z_1^{d_1}+z_2^{d_2}+z_3^{d_3}+z_4^{d_4}-\lambda 
z_0z_1z_2z_3z_4=0$$ 
in $\PP^4 (w_0,w_1,w_2,w_3,w_4)$, where $\lambda$ and $\phi$ are 
deformation parameters. We choose the cases where they become 
quasi-smooth and transversal so that all the singularities are coming 
from the ambient space $\PP^4(Q)$. This means that the deformations 
have little effects on the determination of singular loci and local 
descriptions of individual singularities. 

Let $X$ be a quasi-smooth transversal threefold in $\PP^4(Q)$. We have 
$$\begin{array}{l}
\mbox{Sing}\, (X) =X\cap \mbox{Sing}\, (\PP^4(Q)) \\ 
\ \ =\{ (z_0:\cdots :z_4)\in X \mid \gcd (w_i \mid z_i\neq 0)\geq 2 \}.
\end{array}$$
As $\PP^4(Q)$ has only cyclic quotient singularities, so does $X$. 
Since $Q$ is reduced, every quadruplet of $\{w_0,w_1,w_2,w_3,w_4\}$ 
are coprime, and so we start looking for triplets having $\gcd \geq 2$.    
Following is a procedure to determine the singular loci of $X$. 

\begin{enumerate} 
\item Find a triplet $(w_i,w_j,w_k)$ of weights with $\gcd (w_i,w_j,w_k) 
\geq 2$. Letting $z_i$, $z_j$ and $z_k$ be non-zero and other two 
coordinates be $0$, we obtain singularities that form a singular locus 
of dimension $1$ on $X$. 
\item Find a pair $(w_i,w_j)$ of weights with $\gcd (w_i,w_j)\geq 2$. 
Letting $z_i$ and $z_j$ be non-zero and other three coordinates be $0$, 
we obtain singular loci of dimension $0$ on $X$. (It is often the case 
that $0$-dimensional singularities are on the intersection of two 
one-dimensional singular loci.) 
\item Find a weight $w_i$ that is greater than $1$. Letting $z_i\neq 0$ 
and other four coordinates be $0$, we obtain an isolated singularity on 
$X$. (This type of isolated singularities exist only on quasi-diagonal 
hypersurfaces at $(1:0:0:0:0)$ and this point is a singularity if and 
only if $w_0\geq 2$.) 
\item Once we know the singular loci of $X$, we consider the group 
actions around them to determine their singularities. Since each of 
them is a cyclic quotient singularity, we can describe it locally by 
the quotient of some affine space by a cyclic group action. 
Specifically, we proceed as follows. 

For every singularity other than $(1:0:0:0:0)$, the zero coordinates 
around the point give a (part of) local coordinate system for the 
covering affine space; for the singularity at $(1:0:0:0:0)$, $(z_2,z_3,z_4)$ 
gives a local coordinate system. Then by changing the coordinates if 
necessary, we can write the cyclic group action as follows: around 
a $1$-dimensional singularity, it can be written as  
$$(x,y,z) \longmapsto (\zeta x,\zeta^a y,z)$$ 
and around a $0$-dimensional singularity including $(1:0:0:0:0)$, 
it appears as 
$$(x,y,z) \longmapsto (\zeta^{a_1}x, \zeta^{a_2}y, \zeta^{a_3}z )$$  
where $\zeta$ ranges over the cyclic group $\mu_m$ of some order $m$ with  
$\gcd (a,m)=\gcd (a_1,a_2,a_3)=1$. In what follows, we use $\Aff^3 /\mu_m$ to 
denote the singularity above defined locally by the group action $\mu_m$. 
\end{enumerate} 

\begin{lem} \label{action-for-cy} 
Let $X$ be a deformation of a diagonal or quasi-diagonal hypersurface 
in $\PP^4(Q)$. Let $(x,y,z) \longmapsto (\zeta x,\zeta^a y,z)$ and 
$(x,y,z) \longmapsto (\zeta^{a_1}x, \zeta^{a_2}y, \zeta^{a_3}z )$ be 
the group actions described above. Assume that $X$ is Calabi-Yau. Then 
$$1+a\equiv a_1+a_2+a_3\equiv 0\pmod{m}.$$    
\end{lem}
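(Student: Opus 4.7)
The plan is to translate the Calabi--Yau hypothesis on $X$ into the existence of a global, nowhere-vanishing holomorphic $3$-form on the smooth locus of $X$, and then read off the claimed congruences from how such a form transforms under each local cyclic action. Since $X$ is a quasi-smooth degree-$d$ hypersurface in $\PP^4(Q)$, the Calabi--Yau condition forces $d = w_0+w_1+w_2+w_3+w_4$ by the weighted-adjunction formula (Proposition \ref{prop4-2} and \cite{Dol}). The standard Poincar\'e-residue construction then produces a canonical generator $\Omega$ of $\omega_X$ on the quasi-smooth locus, whose pullback to the affine cone $\spec k[z_0,\ldots,z_4]/(F)$ is a $\mu$-semi-invariant, nowhere-vanishing top form.

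First I would localize $\Omega$ near one of the singular points produced by the algorithm above. By step (4), an \'etale (or analytic) neighborhood of such a point is a quotient $\Aff^3/\mu_m$ with $\mu_m$ acting diagonally with the stated weights, and the pullback of $\Omega$ to the smooth cover $\Aff^3$ is a nowhere-vanishing regular $3$-form. In the coordinates $(x,y,z)$ that diagonalize the action, this pullback therefore has the shape $u\, dx\wedge dy \wedge dz$ for some unit $u\in\OO_{\Aff^3,0}^{\times}$.

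The heart of the argument is a character computation. A generator $\zeta \in \mu_m$ acts on $dx \wedge dy \wedge dz$ by $\zeta^{1+a}$ in the one-dimensional-singular-locus case and by $\zeta^{a_1+a_2+a_3}$ in the isolated case. For the form $u\, dx\wedge dy \wedge dz$ to descend from $\Aff^3$ to $X$ --- as it must, since $\Omega$ is defined on $X$ itself and not merely on its local cover --- the total character through which $\mu_m$ acts on this form must be trivial. Since $u$ is a unit, its value at the fixed point is nonzero, which pins down the character through which $u$ transforms as the trivial one; hence the character acting on $dx\wedge dy \wedge dz$ itself must also be trivial. This gives exactly $1+a \equiv 0$, respectively $a_1+a_2+a_3 \equiv 0$, modulo $m$.

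The main obstacle I anticipate is verifying that the diagonalizing coordinate change of step (4) really preserves the regularity and non-vanishing of the pulled-back form --- in other words, that it is an isomorphism of smooth germs rather than merely a birational map. This follows because quasi-smoothness makes the relevant affine cone smooth away from the vertex, so any two systems of local parameters at a point of $\Aff^3$ differ by a transformation with unit Jacobian. The character attached to the top form is therefore intrinsic and independent of the choice of diagonalizing coordinates, legitimizing the descent computation above.
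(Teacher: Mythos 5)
Your argument is correct in substance, but it takes a genuinely different route from the paper's. The paper's proof is a two-line arithmetic computation: at a point with, say, $z_3z_4\neq 0$ and $m=\gcd(w_3,w_4)$, the local weights are $(w_0,w_1,w_2)\bmod m$; the Calabi--Yau condition gives $w_0+\cdots+w_4=d$, and the diagonal or quasi-diagonal shape of the equation gives $m\mid d$ (via $w_3\mid d$, resp. $w_0\mid d-w_1$ at $(1:0:0:0:0)$), so reducing mod $m$ yields $w_0+w_1+w_2\equiv 0\pmod m$. You instead prove the statement geometrically: triviality of $\omega_X$ produces the residue $3$-form, and invariance of its pullback to the smooth local cover $\Aff^3$, evaluated at a fixed point, forces the character of $dx\wedge dy\wedge dz$ to be trivial --- i.e.\ you show the local quotient singularities of a Calabi--Yau hypersurface are Gorenstein. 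What your approach buys is generality (it never uses the diagonal/quasi-diagonal divisibility, only quasi-smoothness and $d=\sum w_i$); what the paper's buys is brevity and freedom from any descent argument. One technical point you should make explicit, since it is exactly where your ``$u$ is a unit'' step lives: you need the local $\mu_m$-cover to be \'etale in codimension one (no quasi-reflections), for otherwise the pullback of the generator of $\omega_X$ vanishes along the ramification divisor and the evaluation at the fixed point proves nothing. This holds here --- in the curve case because $\gcd(a,m)=1$ so every nontrivial element fixes only the $z$-axis, and in the point case because the weights are normalized, so no nontrivial element of $\mu_m$ can act trivially on two of the three local coordinates --- but it is a hypothesis on the action, not a consequence of quasi-smoothness, and it is not covered by the Jacobian/coordinate-change remark in your last paragraph.
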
 

\begin{proof} 
We choose the case $z_3z_4\neq 0$ with $\gcd (w_3,w_4)=m\geq 2$. (Other 
cases can be discussed similarly.) Then $(z_0,z_1,z_2)$ gives a local 
coordinate system on which the group action is 
$$(z_0,z_1,z_2)\longmapsto (\zeta^{w_0}x, \zeta^{w_1}y, \zeta^{w_2}z )$$ 
where $\zeta$ ranges over $\mu_m$. Since $X$ is Calabi-Yau, the weight 
satisfies $w_0+w_1+w_2+w_3+w_4=d$, where $d$ is the degree of $X$. 
Also, as $X$ is of diagonal or quasi-diagonal type, $w_3\mid d$. Hence 
$m\mid d$ and we have $w_0+w_1+w_2\equiv 0\pmod{m}$. 
\end{proof} 

\begin{rem} The majority of diagonal hypersurfaces do not have isolated 
singularities. Those having isolated singularities are, for instance, 
diagonal hypersurfaces in $\PP^4(1,3,4,4,12)$, $\PP^4(1,6,7,7,21)$ etc. 
In case of $\PP^4(1,3,4,4,12)$, the diagonal hypersurface is defined by 
$$z_0^{24}+z_1^{8}+z_2^{6}+z_3^{6}+z_4^{2}=0.$$ 
It has a 1-dimensional singular locus $z_0=z_1=0$ and two isolated 
singular points $(0:1:0:0:\pm \sqrt{-1})$. Note that the latter points 
are not on the 1-dimensional locus. 
\end{rem} 

\begin{lem} \label{action-diagonal} 
Let $X$ be a deformation of a diagonal hypersurface and 
$P$ be a $0$-dimensional singularity on $X$. Then for a suitable choice 
of $a$ and $b$, the group action above $P$ can be written as 
$$(x,y,z) \longmapsto (\zeta x, \zeta^{a}y, \zeta^{b}z )$$ 
with $\zeta \in \mu_m$. 
\end{lem}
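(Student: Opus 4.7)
The plan is to analyze the local stabilizer action at $P$ following the procedure outlined at the start of Section~\ref{sect6}, and then to show that, on a diagonal hypersurface with normalized weight system $Q$, a single generator of $\mu_m$ can be chosen so that the exponent $a_1$ in the general form $(\zeta^{a_1}x,\zeta^{a_2}y,\zeta^{a_3}z)$ becomes $1$.

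First, I would reduce to a convenient coordinate setup. By permuting the coordinates, the $0$-dimensional singularity $P$ may be placed at $P=(0:0:0:z_3:z_4)$ with $z_3z_4\neq 0$, arising from a pair of weights with $m:=\gcd(w_3,w_4)\geq 2$. The stabilizer of $P$ inside the defining $\mathbb G_m$-action on $\PP^4(Q)$ is precisely $\mu_m$, and $(z_0,z_1,z_2)$ serves as a local coordinate system at $P$ on which $\mu_m$ acts through the weighted action,
\[
\zeta\cdot(z_0,z_1,z_2)=(\zeta^{w_0}z_0,\;\zeta^{w_1}z_1,\;\zeta^{w_2}z_2),\qquad \zeta\in\mu_m.
\]
Thus the exponents $(a_1,a_2,a_3)$ in Lemma~\ref{action-for-cy} are $(w_0,w_1,w_2)$ reduced modulo $m$.

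The crux is to prove that $\gcd(w_i,m)=1$ for at least one $i\in\{0,1,2\}$. I would combine two inputs. Normalization of the weight system gives $\gcd(w_i,w_j,w_3,w_4)=1$ for every pair $\{i,j\}\subset\{0,1,2\}$, i.e. $\gcd(w_i,w_j,m)=1$, so each prime divisor of $m$ can divide \emph{at most one} of $w_0,w_1,w_2$. The Calabi--Yau condition on the diagonal hypersurface gives $d=\sum_i w_i$ together with $w_k\mid d$, so in particular $m\mid d$ and hence $m\mid w_0+w_1+w_2$. I would then argue that the scenario in which every $w_i$ ($i=0,1,2$) shares a prime with $m$ is incompatible with these congruences: the ``one $w_i$ per prime'' restriction forces a very rigid distribution of the prime factors of $m$ among the $w_i$, which is ruled out by the divisibility $m\mid w_0+w_1+w_2$ through a prime-by-prime analysis. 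This number-theoretic step will be the main obstacle, since neither normalization nor the Calabi--Yau relation alone suffices when $m$ has many prime factors.

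Once some $w_{i_0}$ is coprime to $m$, I would relabel the local coordinates so that $i_0=0$ and then change generator of $\mu_m$: take $\zeta':=\zeta^{w_0}$, which is again a generator since $\gcd(w_0,m)=1$. In terms of $\zeta'$ the action rewrites as
\[
\zeta'\cdot(x,y,z)=(\zeta'x,\;(\zeta')^{a}y,\;(\zeta')^{b}z),
\]
with $a\equiv w_1w_0^{-1}\pmod m$ and $b\equiv w_2w_0^{-1}\pmod m$, yielding the required form. Consistency with Lemma~\ref{action-for-cy} then provides the expected relation $1+a+b\equiv 0\pmod m$ as a sanity check.
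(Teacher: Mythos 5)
Your setup is the same as the paper's: place $P$ on the locus $z_3z_4\neq 0$ with $m=\gcd(w_3,w_4)$, take $(z_0,z_1,z_2)$ as local coordinates with $\mu_m$ acting through $(\zeta^{w_0},\zeta^{w_1},\zeta^{w_2})$, note that normalization makes $\gcd(w_0,m),\gcd(w_1,m),\gcd(w_2,m)$ pairwise coprime, and finish by replacing the generator $\zeta$ by $\zeta^{w_{i_0}}$ once some $w_{i_0}$ is coprime to $m$. All of that is fine. The gap is in the middle step, and it is a real one: you claim that the coprimality of some $w_i$ with $m$ follows from normalization together with the congruence $m\mid w_0+w_1+w_2$ by a ``prime-by-prime analysis.'' That implication is false. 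Take $(w_0,w_1,w_2)=(2,3,25)$ and $m=30$: each prime of $m$ divides exactly one of the $w_i$, and $w_0+w_1+w_2=30\equiv 0 \pmod m$, yet $\gcd(w_i,m)>1$ for all three. This is not an idle congruence example — the normalized weight system $(2,3,25,30,30)$ carries a quasi-smooth diagonal hypersurface of degree $150$ whose $0$-dimensional singularities at $z_0=z_1=z_2=0$ have local exponents $(2,3,25)$ modulo $30$, none invertible; so no congruence-level argument of the kind you propose can succeed, and the lemma itself is only true because of the specific (Calabi--Yau) diagonal weight systems under consideration.

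The paper closes this step differently: from pairwise coprimality it deduces that if all three $\gcd(w_i,m)$ exceeded $1$ then $m$ would need at least three distinct prime divisors, and then it disposes of this possibility by a case-by-case check over the diagonal hypersurfaces actually occurring (where every $w_i$ divides the degree $d$ and $d=\sum_i w_i$), concluding the same for their deformations. So what is missing in your proposal is precisely this finite verification using the full divisibility constraints $w_i\mid d$; the Calabi--Yau relation $m\mid w_0+w_1+w_2$ plus normalization alone does not force the conclusion. If you want a self-contained argument, you must bring in the conditions $w_i\mid \sum_j w_j$ for all $i$ (equivalently, run through the admissible weight lists), rather than only the residue of $w_0+w_1+w_2$ modulo $m$.
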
 

\begin{proof} 
For example, assume $\gcd (w_3,w_4)=m\geq 2$. Then $(z_0,z_1,z_2)$ can 
be chosen as a local coordinate system and $P$ is isomorphic to the 
singularity at the origin of the quotient of $\Aff^3$ by the action 
$$(z_0,z_1,z_2) \longmapsto (\zeta^{w_0}z_0,\zeta^{w_1}z_1,
\zeta^{w_2}z_2 )$$
where $\zeta \in \mu_m$. Since weight $Q=(w_0,w_1,w_2,w_3,w_4)$ is reduced, 
$\gcd (w_0,m)$, $\gcd (w_1,m)$ and $\gcd (w_2,m)$ are relatively prime 
in pairs. Hence $m$ is divisible by the product $\gcd (w_0,m)\gcd (w_1,m) 
\gcd (w_2,m)$. If $\gcd (w_i,m)\geq 2$ for $0\leq i\leq 2$, then $m$ has 
at least 3 distinct prime divisors. But a case-by-case analysis shows that 
this never happens for diagonal hypersurfaces (and hence for their 
deformations either). 
\end{proof} 

All Calabi-Yau threefolds we consider in this paper, such as those constructed 
in Proposition \ref{prop5-2} and Proposition \ref{prop5-3}, are quasi-smooth. 
Hence the above procedure can be applied to find their singular loci and 
local group actions. We illustrate this with three examples. Other cases 
can be treated similarly. 

\begin{ex} (Diagonal type)  \label{ex-diagonal} 
We consider a Calabi-Yau threefold in $\#9$ of Proposition \ref{prop5-3} 
defined by a diagonal hypersurface: 
$$X:\ z_0^{18}+z_1^9+z_2^6+z_3^3+z_4^3=0\ \subset \PP^4(1,2,3,6,6)$$ 
of degree $18$. It has 2 one-dimensional singular loci: 
$$\begin{array}{ll}
E: & (z_0=z_1=0),\ z_2^6+z_3^3+z_4^3=0\ \subset \PP^2(3,6,6) \\ 
E^{'}: & (z_0=z_2=0),\ z_1^9+z_3^3+z_4^3=0\ \subset \PP^2(2,6,6). 
\end{array}$$
By normalizing the weight as 
$$\PP^2(3,6,6)\cong \PP^2(1,2,2)\cong \PP^2 \mbox{ and } 
\PP^2(2,6,6)\cong \PP^2(1,3,3)\cong \PP^2$$ 
respectively, we see that these curves are isomorphic to 
$$E:\ z_2^3+z_3^3+z_4^3=0\quad \mbox{ and }\quad E^{'}:\ z_1^3+z_3^3+
z_4^3=0$$ 
in $\PP^2$. Hence they are isomorphic to the elliptic curve $E_1$ over 
$\QQ$ defined in Table \ref{table1}. $E$ and $E^{'}$ meet at 3 points 
$P_i=(0:0:0:1:-\omega^i)$, where $\omega$ is a primitive cube root of 
unity in $\CC$ and $i=1,2,3$. The group action around each singularity 
is described as follows. 

\noindent (i) Singularities on $E\setminus \{P_1,P_2,P_3\}$ are 
described locally as $\Aff^3 /\mu_3$, where the $\mu_3$-action is 
$$(x,y,z) \longmapsto (\zeta_3 x,\zeta_3^2 y,z).$$ 

\noindent (ii) Singularities on $E^{'}\setminus \{P_1,P_2,P_3\}$ are 
given by $\Aff^3 /\mu_2$, where the $\mu_2$-action is 
$$(x,y,z) \longmapsto (-x,-y,z).$$ 

\noindent (iii) The singularity at $P_i$ is given by $\Aff^3 /\mu_6$, 
where $\mu_6$ acts as 
$$(x,y,z) \longmapsto (\zeta_6 x, \zeta_6^2 y, \zeta_6^3 z ).$$ 
\end{ex} 

\begin{ex} (Quasi-diagonal type) \label{ex-qd1} 
We consider the second Calabi-Yau threefold of Proposition \ref{prop5-9}: 
$$z_0^{12}z_1 +z_1^{11}+z_2^6+z_3^3+z_4^3=0\ \subset \PP^4(5,6,11,22,22)$$ 
having degree $66$. There are 2 one-dimensional singular loci: 
$$\begin{array}{ll} 
E: & (z_0=z_1=0),\ z_2^6+z_3^3+z_4^3=0\ \subset \PP^2(11,22,22) \\ 
C: & (z_0=z_2=0),\ z_1^{11}+z_3^3+z_4^3=0\ \subset \PP^2(6,22,22). 
\end{array}$$ 
By normalizing the weight as 
$$\PP^2(11,22,22)\cong \PP^2(1,2,2)\cong \PP^2 \mbox{ and } 
\PP^2(6,22,22)\cong \PP^2(3,11,11)\cong \PP^2(3,1,1)$$ 
respectively, we see that these curves are isomorphic to
$$E:\ z_2^3+z_3^3+z_4^3=0\ \subset \PP^2\quad \mbox{ and }\quad 
C:\ z_1+z_2^3+z_4^3=0\ \subset \PP^2(3,1,1).$$ 
$E$ is an elliptic curve over $\QQ$ isomorphic to $E_1$ defined in 
Table \ref{table1} and $C$ is a rational curve over $\QQ$. $C$ and $E$ 
meet at 3 points $P_i=(0:0:0:1:-\omega^i)$, where $\omega$ is a primitive 
cube root of unity in $\CC$ and $i=1,2,3$. The group action around each 
singularity is described as follows. 

\noindent (i) Singularities on $E\setminus \{P_1,P_2,P_3\}$ are 
given by $\Aff^3 /\mu_{11}$, where the $\mu_{11}$-action is 
$$(x,y,z) \longmapsto (\zeta_{11}x,\zeta_{11}^{10}y,z).$$ 

\noindent (ii) Singularities on $C\setminus \{P_1,P_2,P_3\}$ are 
described locally as $\Aff^3 /\mu_2$, where the $\mu_2$-action is 
$$(x,y,z) \longmapsto (-x,-y,z).$$

\noindent (iii) The singularity at $P_i$ is given by $\Aff^3 /\mu_{22}$, 
where $\mu_{22}$ acts as 
$$(x,y,z) \longmapsto (\zeta_{22}x, \zeta_{22}^{10}y, \zeta_{22}^{11}z).$$ 

In addition to these singularities, we now have an isolated singularity, 
namely $P_4=(1:0:0:0:0)$. Since $z_0=1\neq 0$, neither singular locus 
above contains this point. 

\noindent (iv) Locally, $P_4$ is given by taking the quotient of $\Aff^3$ 
by a $\mu_5$-action 
$$(z_2,z_3,z_4)\longmapsto (\zeta_5 z_2,\zeta_5^2 z_3,\zeta_5^2 z_4)$$ 
where $\zeta$ ranges over $\mu_5$. This quotient has an isolated 
singularity at the origin. 
\end{ex} 

\begin{ex} (Quasi-diagonal type) \label{ex-qd2} 
We look at another quasi-diagonal hypersurface which has a genus-two 
curve as a singular locus. Consider a Calabi-Yau threefold defined by  
$$z_0^{12}z_1 +z_1^{11}+z_2^4+z_3^4+z_4^3=0\ \subset \PP^4(10,12,33,33,44)$$ 
of degree $132$. There are 3 one-dimensional singular loci: 
$$\begin{array}{ll}
C_1: & (z_0=z_1=0),\ z_2^4+z_3^4+z_4^3=0\ \subset \PP^2(33,33,44) \\ 
C_2: & (z_0=z_4=0),\ z_1^{11}+z_2^4+z_3^4=0\ \subset \PP^2(12,33,33) \\ 
C_3: & (z_2=z_3=0),\ z_0^{12}z_1 +z_1^{11}+z_4^3=0\ \subset \PP^2(10,12,44) 
\end{array}$$ 
These curves are isomorphic to 
$$\begin{array}{ll}
C_1: & z_2^4+z_3^4+z_4=0\ \subset \PP^2(1,1,4) \\ 
C_2: & z_1+z_2^4+z_3^4=0\ \subset \PP^2(4,1,1) \\ 
C_3: & z_0^{6}z_1 +z_1^{11}+z_4^3=0\ \subset \PP^2(5,3,11).  
\end{array}$$
$C_1$ and $C_2$ are rational curves and $C_3$ is a curve of genus 2. 
$C_1$ and $C_2$ meet at four points $P_i=(0:0:1:\zeta_8^{2i-1}:0)$, where 
$\zeta_8$ is a primitive 8th root of unity in $\CC$ and $i=1,2,3,4$. 

In addition to these singularities, there is a zero-dimensional singularity 
$P_5=(1:0:0:0:0)$. This point is on $C_3$. The group action around each 
singularity is described as follows. 

\noindent (i) Singularities on $C_1\setminus \{P_1,\cdots ,P_4\}$ are 
locally given by $\Aff^3 /\mu_{11}$, where the $\mu_{11}$-action is 
$$(x,y,z) \longmapsto (\zeta_{11}x,\zeta_{11}^{10}y,z).$$ 

\noindent (ii) Singularities on $C_2\setminus \{P_1,\cdots ,P_4\}$ are 
given by $\Aff^3 /\mu_{3}$, where the $\mu_{3}$-action is 
$$(x,y,z) \longmapsto (\zeta_{3}x,\zeta_{3}^{2}y,z).$$ 

\noindent (iii) For $1\leq i\leq 4$, the singularity at $P_i$ is given by 
$\Aff^3 /\mu_{33}$, where $\mu_{33}$ acts as 
$$(x,y,z) \longmapsto (\zeta_{33}x, \zeta_{33}^{21}y, \zeta_{33}^{11}z).$$ 

\noindent (iv) Singularities on $C_3\setminus \{P_5\}$ are described as  
$\Aff^3 /\mu_{2}$, where the $\mu_{2}$-action is 
$$(x,y,z) \longmapsto (-x,-y,z).$$ 

\noindent (v) Finally, $P_5$ is given by taking the quotient of $\Aff^3$ 
by a $\mu_{10}$-action 
$$(z_2,z_3,z_4)\longmapsto (\zeta_{10} z_2,\zeta_{10} z_3,\zeta_{10}^8 z_4)$$ 
where $\zeta$ ranges over $\mu_{10}$. This quotient has an isolated 
singularity at the origin. 
\end{ex} 

\begin{figure} 
\centering 
\includegraphics[width=5cm]{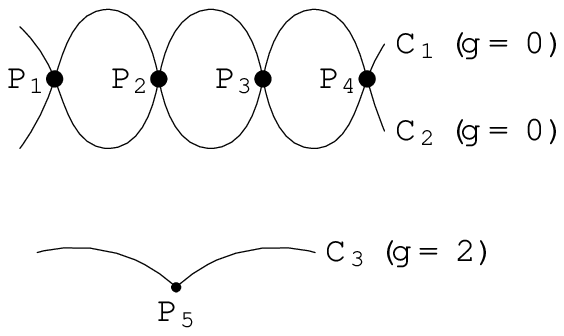} 
\caption{Singular loci of Example \ref{ex-qd2} } 
\label{sing-qd2} 
\end{figure}

\section{Resolution of singularities}  \label{sect-resol} 

We describe resolution of singularities for our Calabi-Yau threefolds. 
For simplicity, we use hypersurfaces of diagonal and quasi-diagonal types 
to illustrate concrete resolutions; the cases with deformations can be 
treated similarly. 

Our threefolds $X$ are quasi-smooth and transversal in $\PP^4(Q)$. This 
implies that their singularities are attributed to the ambient space and 
they can be resolved by applying toroidal desingularizations to $\PP^4(Q)$. 
We note, however, that we do not usually employ a full desingularization of 
$\PP^4(Q)$. Rather, we use a partial desingularization of it so that only 
the singular loci passing through $X$ can be resolved. First we discuss 
the field of definition for a resolution of $X$. 

\begin{prop} \label{field-of-defn} 
Let $X$ be a quasi-smooth hypersurface in $\PP^4(Q)$ over $k$. Then there 
exists a toroidal resolution $\xtil$ of $X$ defined over $k$. Furthermore 
if $X$ has a trivial dualizing sheaf, then there exists a crepant resolution 
$\xtil$ of $X$ such that $\xtil$ is a Calabi-Yau threefolds over $k$. 
\end{prop}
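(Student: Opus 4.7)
The plan is to exploit the toric structure of the ambient space. The weighted projective space $\PP^4(Q)$ is a toric variety over $\spec\ZZ$, hence over $k$, with fan $\Sigma$ consisting of the faces of a simplex in $N_\RR$ where $N\cong\ZZ^4$. Its singular locus is the union of the orbit closures corresponding to the cones $\sigma\in\Sigma$ whose primitive ray generators do not extend to a $\ZZ$-basis of $N$, and these are exactly the cyclic quotient singularities identified in Section~\ref{sect6}.

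First I would construct a partial toric morphism $\pi\colon Y(\Sigma')\to\PP^4(Q)$ by refining $\Sigma$ to a fan $\Sigma'$ which is smooth along those cones that meet $X$. For each singular cone $\sigma$ passing through $\mathrm{Sing}(X)$, one inserts new rays at suitable lattice points of $\sigma\cap N$ and triangulates: Hirzebruch--Jung continued fractions handle the $1$-dimensional singular loci (which are locally products of a surface $A_{m,a}$-type singularity with an affine line), and iterated star subdivisions handle the isolated $0$-dimensional points. Because the fan $\Sigma'$ is a combinatorial datum in $N=\ZZ^4$, the variety $Y(\Sigma')$ and the morphism $\pi$ are defined over $\spec\ZZ$, in particular over $k$. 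Since $X$ is quasi-smooth and transversal, $\mathrm{Sing}(X)=X\cap\mathrm{Sing}(\PP^4(Q))$, and in the affine chart attached to each new smooth cone the local equation of $X$ remains a regular parameter transverse to the exceptional divisors; hence the strict transform $\xtil$ of $X$ in $Y(\Sigma')$ is smooth, and it is defined over $k$ because $X$ and $\pi$ are.

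For the second assertion, suppose $X$ has trivial dualizing sheaf. Lemma~\ref{action-for-cy} translates into the statement that, at each cyclic quotient singularity, the primitive generators of the associated cone $\sigma$ lie on a common affine hyperplane at lattice distance $1$ from the origin; equivalently, the singularity is Gorenstein. I would then choose the added rays to lie on this hyperplane (the ``junior simplex''). By Roan's theorem on three-dimensional Gorenstein cyclic (more generally abelian) quotient singularities, a unimodular triangulation of the junior simplex exists and the resulting toric morphism is crepant. Restricting to the strict transform, $K_{\xtil}=\pi^{*}K_X=0$, so $\xtil$ is a smooth Calabi--Yau threefold over $k$.

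The main delicate point is confirming that the construction descends to $k$ rather than only to $\ok$. This is where the toric viewpoint is essential: the choice of subdivision is a statement about lattice points in $N=\ZZ^4$, involving no choice of roots of unity or other algebraic data; the associated toric morphism is built from \emph{this} fan, so it is defined over $\spec\ZZ$ and therefore over $k$. The exceptional divisors are torus-orbit closures, again defined over $k$, and the only place arithmetic enters is the defining equation of $X$, which by hypothesis already lives over $k$.
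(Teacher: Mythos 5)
Your proposal is correct and follows essentially the same route as the paper: a partial toric refinement of the fan of $\PP^4(Q)$ along the cones meeting $\mathrm{Sing}(X)$, defined over $k$ because fan subdivision is purely lattice-combinatorial, followed by restriction to the strict transform of the quasi-smooth hypersurface, and crepant toric resolution of the Gorenstein quotient singularities in the Calabi--Yau case. The only difference is that you make the paper's Step 3 explicit (junior simplex, Roan's theorem on crepant resolutions of three-dimensional Gorenstein abelian quotients), where the paper simply asserts that such crepant toric blow-ups are known to exist.
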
 

\begin{proof} 
Since $X$ is quasi-smooth, all the singularities are due to the ambient 
space. Let $\tau :\widetilde{\PP^4}(Q)\lra \PP^4(Q)$ be a composite of 
several toric blow-ups, where $\widetilde{\PP^4}(Q)$ may be still 
singular. Denote by $\xtil_{\tau}$ the strict transform of $X$. Then 
$\xtil_{\tau}$ is non-singular if it has no intersection with singular 
loci of $\widetilde{\PP^4}(Q)$. To show the existence of a crepant 
resolution over $k$, it suffices to prove the existence of such 
$\widetilde{\PP^4}(Q)$ over $k$. We do this in three steps. 

Step 1: Every toric blow-up of $\PP^4(Q)$ can be defined over $k$. 

In fact, since $\PP^4(Q)$ is a toric variety, it is constructed by glueing 
together affine toric varieties $U_{\sigma}=\mbox{Spec}\, R_{\sigma}$, 
where $\sigma$ is a fan and $R_{\sigma}$ is a finitely generated polynomial 
algebra over $k$, namely $R_{\sigma} =k[x_1,x_2,x_3,x_4]^{\mu}$ for some 
group action $\mu$ and free variables $x_1,\cdots x_4$. A toric blow-up 
is a morphism corresponding to a subdivision of a fan and this can be 
realized over $k$ (i.e. new rings $\widetilde{R_{\sigma}}$ are also 
finitely generated over $k$). Hence toric blow-ups are defined over $k$ 
and so is a composite of them. 

Step 2: A toric resolution (which may be singular) of $\PP^4(Q)$ gives 
a smooth resolution of $X$ over $k$. 

For this, we express $\PP^4(Q)$ as a collection of $U_{\sigma}=\mbox{Spec}\, 
R_{\sigma}$ according as $x_i\neq 0$ for some $i$. Here every singular 
locus of $U_{\sigma}$ is defined over $k$. Then we apply toroidal 
desingularizations (i.e. toric blow-ups) only to the sheets $U_{\sigma}$ 
and along the singular loci that intersect $X$. In other words, we resolve 
only singularities of $\PP^4(Q)$ which give rise to the singularities 
of $X$. By Step 1, this resolution 
$\tau :\widetilde{\PP^4}(Q) \lra \PP^4(Q)$ is defined over $k$ and 
$\tau^{-1}(\mbox{Sing}(\PP^4(Q))\cap X)$ is no longer singular on 
$\widetilde{\PP^4}(Q)$. Hence the strict transform $\xtil$ of $X$ 
is smooth and defined over $k$. 

Step 3: There exists a crepant resolution of $X$ when $\omega_X \cong \OO_X$. 

Indeed, if $X$ has a trivial dualizing sheaf, then one knows that its 
crepant resolution can be obtained by applying toric blow-ups on $X$ 
locally. Our procedure above is a composite of toric blow-ups applied 
to the ambient space $\PP^4(Q)$ and by restricting them to $X$, we obtain 
the same toric blow-ups on $X$. Therefore $\xtil$ is a crepant resolution of 
$X$, which is Calabi-Yau and defined over $k$. 
\end{proof} 

\begin{cor} \label{field-defn-mod} 
Let $K$ be a number field and $\fp$ a prime of $K$. Let $X$ be a 
quasi-smooth hypersurface in $\PP^4(w_0,\cdots ,w_4)$ over $K$. Assume 
that $\fp \nmid w_i$ for $0\leq i\leq 4$ and the reduction $X_{\fp}:= 
X\mod \fp$ is quasi-smooth over $\OO_K/\fp$. Then the toric resolution 
$\tau :\xtil \lra X$ of {\rm Proposition \ref{field-of-defn}} commutes with 
the mod-$\fp$ reduction map, i.e. $\xtil \ \mbox{\rm mod }\fp \cong 
\widetilde{X_{\fp}}$. 
\end{cor}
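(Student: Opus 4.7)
The plan is to observe that the toric resolution constructed in Proposition \ref{field-of-defn} is, by design, cut out from the ambient weighted projective space by combinatorial data (a subdivision of the fan of $\PP^4(Q)$) which only involves the weights $w_0,\dots,w_4$ and the location of those singular loci of $\PP^4(Q)$ that meet $X$. Since neither of these pieces of data depends on the field of definition in any essential way, compatibility with reduction modulo $\fp$ should be almost automatic, provided the hypothesis $\fp\nmid w_i$ is used to rule out bad reduction of the ambient toric variety.

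More concretely, I would first spread $X$ out to a hypersurface $\mathcal X$ over $\OO_{K,\fp}$ whose generic fiber is $X$ and special fiber is $X_\fp$. Because $\fp\nmid w_i$, the weighted projective space $\PP^4(Q)_{\OO_{K,\fp}}$ is itself a toric scheme over $\OO_{K,\fp}$ whose fan is literally the same fan used over $K$; in particular, every affine chart $U_\sigma=\spec R_\sigma$ and every singular stratum of $\PP^4(Q)$ descends to $\OO_{K,\fp}$ and reduces compatibly over $\OO_K/\fp$. The next step is to identify the singular loci of $X$ and of $X_\fp$ inside $\PP^4(Q)$: since $X$ and $X_\fp$ are both quasi-smooth, both singular sets are cut out by the same coordinate vanishing conditions $\{\gcd(w_i\mid z_i\neq 0)\geq 2\}$, and this set is manifestly independent of reduction. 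Thus the \emph{same} partial subdivision of the fan — the one that resolves precisely the strata of $\mathrm{Sing}(\PP^4(Q))$ that meet $X$ — makes sense over $\OO_{K,\fp}$ and yields a morphism $\widetilde{\mathcal X}\to \mathcal X$ whose generic fiber is $\xtil$.

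It then remains to verify that the special fiber of $\widetilde{\mathcal X}$ is $\widetilde{X_\fp}$, i.e. that the reduction of this partial toric resolution is a smooth, crepant resolution of $X_\fp$. Smoothness on the special fiber is checked on each new chart after subdivision: these charts are $\spec \widetilde R_\sigma$ where $\widetilde R_\sigma$ is again a finitely generated polynomial algebra whose structure over $\OO_{K,\fp}$ only involves the $w_i$ and the primitive generators of the subdivided cones; since none of the $w_i$ are divisible by $\fp$, no denominators appear and reduction mod $\fp$ commutes with the blow-up. The crepancy condition and Calabi--Yau property, which are read off from the discrepancies computed in terms of the $w_i$ (via Lemma \ref{action-for-cy}), are likewise preserved under reduction because the congruences $1+a\equiv 0$ and $a_1+a_2+a_3\equiv 0 \pmod m$ only involve integers prime to $\fp$.

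The main obstacle, and the place where I would spend the most care, is verifying that on each subdivided chart $\spec \widetilde R_\sigma$ the strict transform of $\mathcal X$ remains flat over $\OO_{K,\fp}$ and its special fiber really is the strict transform of $X_\fp$ (as opposed to the total transform, or having extra embedded components in characteristic $p$). This amounts to checking, chart by chart, that the defining polynomial of $X$ does not acquire a factor of a uniformizer after the local monomial change of coordinates used by the toric blow-up — which again is guaranteed by $\fp\nmid w_i$ together with the quasi-smoothness of $X_\fp$, since the latter prevents the defining equation from degenerating into the exceptional locus. Once this flatness/strict-transform compatibility is established on each chart, glueing yields the desired isomorphism $\xtil \bmod \fp \cong \widetilde{X_\fp}$.
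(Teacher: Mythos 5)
Your plan is correct and follows essentially the same route as the paper: use $\fp\nmid w_i$ to get the same toric (partial) subdivision of the fan of $\PP^4(Q)$ over $K$ and over $\OO_K/\fp$, so the ambient resolutions commute with reduction, and then use quasi-smoothness of both $X$ and $X_\fp$ to restrict this compatibility to the hypersurface. Your extra attention to flatness of the strict transform over $\OO_{K,\fp}$ is a sensible refinement of a step the paper treats implicitly, but it does not change the argument.
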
 

\begin{proof} 
Since no $w_i$ is divisible by $\fp$, $\PP^4(Q)_{\fp}:=\PP^4(Q) 
\mbox{ mod } \fp$ is a weighted projective $4$-space over $\FF :=\OO_K/\fp$ 
having at most cyclic quotient singularities. As every toric blow-up 
can be defined over $K$ or $\FF$, we may apply the same blow-up to 
$\PP^4(Q)$ and $\PP^4(Q)_{\fp}$, and find $\widetilde{\PP^4}(Q)_{\fp} 
\cong \widetilde{\PP^4}(Q) \mbox{ mod }\fp$. Hence the following 
diagram is commutative, where the vertical arrows are mod-$\fp$ 
reductions and horizontal arrows are toroidal desingularizations: 
$$\begin{matrix} 
\PP^4(Q) & \longleftarrow &  \widetilde{\PP^4}(Q) \\ 
\downarrow & \quad\quad & \downarrow \\ 
\PP^4(Q)_{\fp} & \longleftarrow & \widetilde{\PP^4}(Q)_{\fp} 
\end{matrix}$$
Now $X$ and $X_{\fp}$ are assumed to be quasi-smooth, so that their 
singularities are all coming from the ambient spaces. By restricting 
the above diagram on $X$ or $X_{\fp}$, we obtain a similar commutative 
diagram for them. Therefore $\xtil \ \mbox{\rm mod }\fp$ is isomorphic 
to $\widetilde{X_{\fp}}$. 
\end{proof} 

Our threefolds $X$ have only cyclic quotient singularities and their 
resolution can be described by toric geometry. In principle, we apply 
toroidal desingularizations to the ambient space. But, the necessary 
desingularizations are determined by local data of singularities 
of $X$. Following is a general procedure for doing this.   

\begin{enumerate} 
\item A $1$-dimensional singular locus arising in the quotient by the 
action  
$$(x,y,z) \longmapsto (\zeta x,\zeta^a y,z)$$ 
is similar to a cyclic quotient singularity of a surface. It can be 
resolved by successive blow-ups.  
\item A $0$-dimensional singularity described locally by the action 
$(x,y,z) \longmapsto (\zeta^{a_1}x, \zeta^{a_2}y, \zeta^{a_3}z )$ 
can be resolved by subdividing the cone generated by four vectors.  
A typical case is that $a_1=1$ where the action is 
$$(x,y,z) \longmapsto (\zeta x, \zeta^a y, \zeta^b z )$$ 
with $\zeta \in \mu_m$. Since $X$ is Calabi-Yau, we find $1+a+b
\equiv 0\pmod{m}$. In this case, we draw a triangle in $\RR^3$ 
with vertices 
$$A(0,1,0),\ B(0,0,1) \mbox{ and } C(m,-a,-b).$$ 
Then the subdivision of the cone is equivalent to finding lattice 
points in or on the triangle $ABC$. 
\item The lattice points on the side $AC$ or $BC$ (aside from the 
vertices) correspond to the exceptional divisors arising from a 
$1$-dimensional singular locus. Each point represents a ruled surface 
$C\times \PP^1$. 
\item The lattice points on the interior of $ABC$ correspond to the 
exceptional divisors arising from a $0$-dimensional singularity. 
Each point represents a projective plane $\PP^2$. 
\end{enumerate} 

\begin{figure} 
\centering 
\includegraphics[width=5cm]{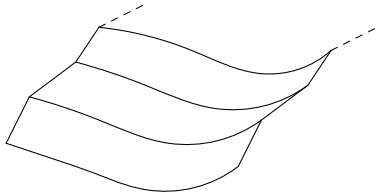} 
\caption{Exceptional divisors from a curve singularity} 
\label{ex-div-dim1} 
\end{figure} 

\begin{rem} 
In many cases, we find $0$-dimensional singularities in the intersection 
of two $1$-dimensional singular loci or as singularities of a 
$1$-dimensional singular locus. When we resolve such $0$-dimensional 
singularities, we can simultaneously obtain the resolution picture 
for the $1$-dimensional singular loci containing them (see examples 
below). 
\end{rem} 

To compute the zeta-function of $X$ or its resolution, 
it is important to find the field of definition for each exceptional 
divisor. The following lemma describes it for our choice of $X$. 

\begin{lem} \label{field-defn} 
Let $X$ be a weighted diagonal hypersurface or a weighted quasi-diagonal 
hypersurface over $\QQ$. Let $\xtil \lra X$ be a crepant resolution of 
$X$. Then $\xtil$ is defined over $\QQ$ and the following assertions hold. 

$(a)$ If $C$ denotes a $1$-dimensional singular locus of $X$, then 
the chain of ruled surfaces arising in the resolution of $C$ can be 
defined over $\QQ$. 

$(b)$ If $P$ is a $0$-dimensional singularity of $X$, then the 
projective planes arising in the resolution of $P$ can be defined 
over a cyclotomic field $\QQ (\zeta_{2m} )$, where $\zeta_{2m}$ is 
a primitive $2m$-th root of unity determined as follows: if 
$P=(1:0:0:0:0)$, then $m=1$ $($i.e. $\QQ (\zeta_{2m} )=\QQ)$. 
If $P\neq (1:0:0:0:0)$, then let $d$ be the degree of the defining 
equation of $X$, and $w_i$ and $w_j$ be the weights corresponding 
to the non-zero coordinates of $P$. Then  
$$m= 
\begin{cases} 
\frac{d-w_1}{\lcm (w_0,w_1)} & \mbox{ if $(i,j)=(0,1)$ and $X$ is 
quasi-diagonal} \\ 
\frac{d}{\lcm (w_i,w_j)} & \mbox{ otherwise. }
\end{cases}$$  

$(c)$ Let $P$ be a singularity defined in $(b)$. The sum of the 
exceptional divisors over the $\mbox{Gal}\, 
(\QQ (\zeta_{2m} )/\QQ)$-conjugates of $P$ is defined over $\QQ$. 
\end{lem}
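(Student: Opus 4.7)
The fact that $\xtil$ is defined over $\QQ$ is immediate from Proposition \ref{field-of-defn}, so the task reduces to tracking the field of definition of each exceptional divisor produced by the toric blow-ups. For part (a), I would observe that any $1$-dimensional singular locus $C$ has the form $\{z_i=z_j=z_k=0\}\cap X$ for some triple of indices, so $C$ is cut out by a $\QQ$-rational polynomial in a $\PP^2(w_l,w_m)$ and is therefore defined over $\QQ$. At a generic point of $C$ the local model is $C\times(\Aff^2/\mu_g)$ with the linear $\mu_g$-action $(x,y)\mapsto(\zeta x,\zeta^a y)$ inherited from the ambient $\PP^4(Q)$; its Hirzebruch--Jung resolution subdivides the cone $\langle(1,0),(g,-a)\rangle\subset\RR^2$ at interior lattice points, each contributing an exceptional divisor of the form $C\times\PP^1$. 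Since the combinatorial data are $\ZZ$-rational and $C$ is defined over $\QQ$, every such ruled surface descends to $\QQ$.

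For part (b), the plan is to compute the residue field of $P$ directly. When $P=(1:0:0:0:0)$ (which only occurs in the quasi-diagonal case with $w_0\geq 2$) the coordinates are rational, so $\QQ(P)=\QQ$ and one takes $m=1$. For any other $0$-dimensional singularity the non-zero coordinates are two in number, say $z_i$ and $z_j$, and $P$ lies on the closure in $\PP^1(w_i,w_j)$ of the solution set of the restricted equation. In the diagonal case this restriction is $z_i^{d/w_i}+z_j^{d/w_j}=0$; in the quasi-diagonal case with $(i,j)=(0,1)$ it reduces, after factoring out the common $z_1$, to $z_0^{e_0}+z_1^{d_1-1}=0$, a homogeneous equation of degree $d-w_1$. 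I would then introduce the weight-zero invariant $u:=z_i^{w_j/g}/z_j^{w_i/g}$ with $g=\gcd(w_i,w_j)$, which generates the function field of $\PP^1(w_i,w_j)$. A direct computation converts both restricted equations into $u^m+1=0$ with $m$ exactly as given in the statement, so the $m$ solutions are parametrised by odd $2m$-th roots of unity; hence $P$ is defined over $\QQ(\zeta_{2m})$, and the local toric blow-up performed at $P$ produces its $\PP^2$'s over the same field.

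For part (c), the toric resolution of $\PP^4(Q)$ is given by $\ZZ$-rational fan subdivisions, so the construction of $\xtil$ is equivariant with respect to the action of $\mbox{Gal}(\ok/\QQ)$. The $0$-dimensional singularities lying in a single $\mbox{Gal}(\QQ(\zeta_{2m})/\QQ)$-orbit therefore have exceptional fibres permuted transitively, and the sum of these fibres is Galois-invariant and descends to $\QQ$. The main obstacle I anticipate is in (b): one must verify the $m$-formula in the quasi-diagonal case $(i,j)=(0,1)$ (where the degree of the relevant equation drops from $d$ to $d-w_1$), and one must check that after quotienting by the residual rescaling of $\PP^1(w_i,w_j)$ the orbit representatives of $u^m=-1$ truly lie in $\QQ(\zeta_{2m})$ and not in a strictly larger cyclotomic field. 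The uniformising role of the invariant $u$ is precisely what lets the diagonal and quasi-diagonal cases be handled by a single formula.
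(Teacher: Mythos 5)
Your proposal follows the paper's argument essentially step for step: $\QQ$-rationality of $\xtil$ from Proposition~\ref{field-of-defn}, $\QQ$-rationality of the loci in (a), reduction of the $\PP^1(w_i,w_j)$-equation in (b) including the degree drop to $d-w_1$ in the quasi-diagonal $(i,j)=(0,1)$ case, and Galois-equivariance of the toric resolution in (c). One small slip in (a): a $1$-dimensional singular locus of a threefold in $\PP^4(Q)$ is obtained by setting \emph{two} of the five coordinates to zero (so $C = \{z_a=z_b=0\}\cap X$ sits in a $\PP^2$ with \emph{three} weights), not three; your $\{z_i=z_j=z_k=0\}\cap X$ inside a $\PP^2(w_l,w_m)$ with two weights describes a $0$-dimensional locus, though the conclusion that $C$ is $\QQ$-rational is unaffected. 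Finally, the worry you raise at the end of (b) is already dispelled by your own construction: since $u=z_i^{w_j/g}/z_j^{w_i/g}$ has weight zero and generates the function field of $\PP^1(w_i,w_j)\cong\PP^1$, the $m$ roots of $u^m=-1$ are precisely the points $P$ and are manifestly primitive $2m$-th roots of unity, so no larger cyclotomic field arises.
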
 

\begin{proof} It follows from Proposition \ref{field-of-defn} that 
$\xtil$ is defined over $\QQ$. 

(a) Every locus $C$ is a weighted projective curve on $X$ obtained by  
letting two coordinates zero. Hence it is defined over $\QQ$ and so is 
the ruled surface $C\times \PP^1$. 

(b) Clearly, $P=(1:0:0:0:0)$ is defined over $\QQ$. Consider the case 
where $X$ is quasi-diagonal and $(i,j)=(0,1)$. The non-zero coordinates 
of $P$ satisfy the equation 
$$x_0^{d_0}+x_1^{d_1-1}=0 \quad \subset \PP^1(w_0,w_1).$$
Reducing the weight, we see that it is equivalent to 
$$x_0^{(d-w_1)/\lcm (w_0,w_1)}+x_1^{(d-w_1)/\lcm (w_0,w_1)}=0 \quad 
\subset \PP^1.$$ 
Hence $P$ is defined over $\QQ (\zeta_{2m} )$. 

Other cases can be discussed similarly, where the non-zero coordinates 
of $P$ satisfy the equation $x_i^{d_i}+x_j^{d_j}=0\subset \PP^1(w_i,w_j)$ 
and we can reduce it to 
$$x_i^{d/\lcm (w_i,w_j)}+x_j^{d/\lcm (w_i,w_j)}=0 \quad \subset \PP^1.$$
Hence $P$ is defined over $\QQ (\zeta_{2m} )$. 

(c) The non-zero coordinates of each $P$ satisfy the equation 
$x_i^{d/\lcm (w_i,w_j)}+x_j^{d/\lcm (w_i,w_j)}=0$ which is defined 
over $Q$. To resolve singularities of $X$, we apply toric blow-ups 
to the ambient space. Hence there exists a simultaneous resolution 
for all the Galois conjugates of $P$ and their exceptional divisors 
are also Galois conjugates. Therefore their sum is defined over $\QQ$. 
\end{proof} 

\begin{rem}
(1) When $m$ is odd, $\QQ (\zeta_{2m} )=\QQ (\zeta_m )$. 

(2) A ruled surface $C\times \PP^1$ or a projective plane $\PP^2$ in 
Lemma \ref{field-defn} can indeed be defined over $\ZZ$ or $\ZZ [\zeta_{2m} ]$. 
When we take modulo reduction of $X$ by a prime $p$, the exceptional 
divisors are defined over $\FF_p$ or $\FF_p (\zeta_{2m} )$, where 
$\zeta_{2m}$ is a $2m$-th root of unity in $\overline{\FF_p}$. Note that 
$\zeta_{2m} \in \FF_p$ if and only if $p\equiv 1\pmod{2m}$.  
\end{rem} 

\begin{lem} \label{resol-dim1} 
Let $C$ be a $1$-dimensional singular locus associated with the 
group action 
$$(x,y,z) \longmapsto (\zeta x,\zeta^{m-1} y,z)$$ 
where $\zeta$ ranges over $\mu_m$. Then the minimal resolution at 
$C$ is a chain of $m-1$ sheets of a ruled surface $C\times \PP^1$ 
connected to one another at $C$ $($see {\em Figure \ref{ex-div-dim1}}$)$. 
\end{lem}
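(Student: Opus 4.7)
The plan is to reduce to the well-known case of a surface cyclic quotient singularity and then to spread the resolution trivially along the curve $C$. First, I would observe that the action $(x,y,z)\mapsto (\zeta x, \zeta^{m-1}y, z)$ with $\zeta\in\mu_m$ fixes the $z$-direction pointwise, so the singular locus is exactly $C=\{x=y=0\}$ and the transverse singularity at each point of $C$ is the quotient of the $(x,y)$-plane by the $\mu_m$-action $(x,y)\mapsto(\zeta x,\zeta^{-1}y)$. This is the Hirzebruch--Jung singularity of type $\frac{1}{m}(1,m-1)$, which is the rational double point $A_{m-1}$.

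Next, I would invoke the standard toric resolution of this singularity. In toric language, the singularity corresponds to the cone in $N_\RR=\RR^2$ spanned by $e_2$ and $me_1-(m-1)e_2$, and the minimal resolution is obtained by inserting the $m-1$ lattice points lying between these generators. Each new ray contributes a smooth rational $(-2)$-curve; consecutive curves meet transversely at a single point, producing the familiar chain of $m-1$ exceptional $\PP^1$'s that form the Dynkin diagram of type $A_{m-1}$.

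Finally, because the $\mu_m$-action is trivial on $z$, an \'etale (or analytic) neighbourhood of any point of $C$ in our ambient toric chart has the form $(\Aff^2/\mu_m)\times\Aff^1$, whose minimal resolution is $\widetilde{\Aff^2/\mu_m}\times\Aff^1$ by taking the product with the identity on $\Aff^1$. Hence each of the $m-1$ exceptional $\PP^1$'s spreads to a $\PP^1$-bundle over $C$. The remaining point is global triviality of these bundles: it follows from the fact that the toric subdivision can be performed simultaneously in an affine open cover of $C$ with compatible transition data, so each exceptional divisor is isomorphic to $C\times\PP^1$, and consecutive sheets meet transversely along a copy of $C$. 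The only real subtlety, and the main place to be careful, is verifying this global triviality; once one sees that the toric resolution of the ambient space is compatible along $C$, the chain of $m-1$ ruled surfaces $C\times\PP^1$ in the statement falls out immediately.
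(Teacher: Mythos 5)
Your proof is correct and takes essentially the same approach as the paper's: reduce to the transverse cyclic quotient singularity of type $\frac{1}{m}(1,m-1)$ (which the paper denotes $A_{m,m-1}$), resolve it to a chain of $m-1$ rational curves, and spread the picture along $C$. The paper performs the resolution by successive blow-ups of a surface singularity over the function field $k(C)$, while you carry out the transverse resolution torically and then argue global triviality of the resulting $\PP^1$-bundles explicitly; the latter point, which justifies writing the exceptional divisors literally as $C\times\PP^1$ rather than as $\PP^1$-bundles over $C$, is left implicit in the paper's argument and is a worthwhile clarification.
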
 

\begin{proof} 
This singularity may be considered as a cyclic quotient singularity 
of type $A_{m,m-1}$ on a surface over the function field $k(C)$. 
Hence we can resolve it by successive blow-ups and obtain a chain 
of $m-1$ sheets of $\PP^1$ over $k(C)$. 
\end{proof} 

In order to compute the zeta-functions of resolutions of $X$ in the 
next section, we discuss the number of rational points on the 
exceptional divisors over finite fields. 

\begin{cor} \label{zeta-resol1} Let $X$ be a threefold over a finite 
field $\FF_q$ of $q$ elements. Let $C$ be a $1$-dimensional singular 
locus of $X$ defined in {\em Lemma \ref{resol-dim1}}. Write $N(C)$ for 
the number of $\FF_q$-rational points on $C$. Then a crepant resolution 
of $X$ acquires $q(m-1)N(C)$ number of $\FF_q$-rational points on the 
exceptional divisor on $C$, where the points on the proper transform 
of $C$ are not counted.
\end{cor}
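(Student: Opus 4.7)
The plan is a direct $\FF_q$-point count using the geometric description in Lemma \ref{resol-dim1}. By that lemma, the exceptional divisor $E$ above $C$ is a chain $E = E_1 \cup \cdots \cup E_{m-1}$, in which each sheet $E_i \cong C \times \PP^1$ is a trivial $\PP^1$-bundle over $C$, consecutive sheets are glued along a single curve isomorphic to $C$, and non-consecutive sheets are disjoint. Equivalently, the fiber of $E$ over any rational point of $C$ is a chain of $m-1$ copies of $\PP^1_{\FF_q}$ meeting consecutively in $m-2$ nodes.

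I would count $\FF_q$-points fiber by fiber. A chain of $m-1$ projective lines over $\FF_q$ meeting in $m-2$ consecutive nodes has
\[
(m-1)(q+1) - (m-2) \;=\; (m-1)q + 1
\]
rational points by inclusion--exclusion, since no three lines in the chain share a common point. The proper transform of $C$ sits inside $E$ as one distinguished section of the fibration $E \to C$, and so contributes exactly one $\FF_q$-point to each such fiber. Removing this section leaves $(m-1)q$ rational points per fiber, and summing over the $N(C)$ rational points of $C$ yields $q(m-1)N(C)$, as claimed.

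The substantive step I have not spelled out is the identification of the proper transform of $C$ with a single section of $E \to C$, rather than with the two end-sections of the chain or with some more complicated subvariety. I expect this to be the main obstacle, and I would resolve it by working through the toric subdivision implicit in Lemma \ref{resol-dim1}---reading off which stratum of the exceptional chain is the strict transform of the singular curve $C$---together with the Calabi--Yau local-action normalization $(x,y,z)\mapsto(\zeta x,\zeta^{m-1}y,z)$ from Lemma \ref{action-for-cy}. Once that identification is granted, the remainder of the argument is the elementary combinatorial count above.
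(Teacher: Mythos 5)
Your count is correct and is essentially the paper's own argument: the paper counts sheet-by-sheet (each of the $m-1$ ruled surfaces $C\times\PP^1$ contributes $qN(C)$ new points because consecutive sheets meet along a copy of $C$), while you count fiber-by-fiber (a chain of $m-1$ lines has $(m-1)q+1$ rational points, minus one), which is the same inclusion--exclusion. The identification you flag as the main obstacle is immaterial for the statement as intended: the ``new'' points are just the $\FF_q$-points of the preimage of $C$ in excess of the $N(C)$ points of $C$ itself, so subtracting $N(C)$ from the total $\bigl((m-1)q+1\bigr)N(C)$ already yields $q(m-1)N(C)$ regardless of which rational copy of $C$ inside the exceptional chain one designates as the proper transform.
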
 

\begin{proof} 
Since the exceptional ruled surfaces meet at $C$, every $C\times \PP^1$ 
contains $qN(C)$ new points. As $m-1$ sheets meet transversely, the 
total number of new points is $q(m-1)N(C)$. 
\end{proof} 

\begin{lem} \label{resol-dim0} 
Let $P$ be a $0$-dimensional singularity associated with the 
group action 
$$(x,y,z) \longmapsto (\zeta x, \zeta^a y, \zeta^b z )$$ 
where $\zeta \in \mu_m$ and $1+a+b=m$. Let $e$ be the number of 
lattice points in the interior of the triangle in $\RR^3$ with 
vertices $A(0,1,0)$, $B(0,0,1)$ and $C(m,-a,-b)$. Then the strict 
transform of $P$ on a crepant resolution of $X$ consists of $e$ 
copies of $\PP^2$. If two $\PP^2$s are not disjoint, then they 
intersect at an exceptional line $\PP^1$.  
\end{lem}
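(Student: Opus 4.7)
The plan is to translate the local analysis of the singularity at $P$ into toric geometry and derive the conclusions from the combinatorics of lattice points in a $2$-simplex.

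Locally, $(X,P)$ is the affine toric variety $\Aff^3/\mu_m$ corresponding to the cone $\sigma \subset N_\RR$ spanned by $e_1,e_2,e_3$, where $N = \ZZ^3 + \ZZ \cdot \frac{1}{m}(1,a,b)$. Choose the basis $v_1 = \frac{1}{m}(1,a,b)$, $v_2 = e_2$, $v_3 = e_3$ of $N$. Then the primitive ray generators of $\sigma$ read $A = e_2 = (0,1,0)$, $B = e_3 = (0,0,1)$, and $C = e_1 = m v_1 - a v_2 - b v_3 = (m,-a,-b)$. The crepant hyperplane in $N_\RR$ (on which all new rays of a crepant refinement must lie) is the affine plane through $A$, $B$, $C$; the hypothesis $1 + a + b = m$ identifies it with $\{\alpha + \beta + \gamma = 1\}$ in our coordinates, whose intersection with $\sigma$ is the triangle of the statement.

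Next I apply the standard dictionary: crepant toric refinements of the local model correspond bijectively to triangulations of the triangle $ABC$ whose vertices are lattice points of $N$, each such lattice point $v$ yielding an exceptional ray $\rho_v$ and an irreducible divisor $D_v$ on the resolution. A point $v$ in the relative interior of an edge of $ABC$ lies in a $2$-face of $\sigma$, and its $D_v$ extends along the $1$-dimensional torus-invariant stratum through $P$; these are the divisors handled by Lemma \ref{resol-dim1}. A point $v$ in the relative interior of the triangle gives a ray $\rho_v$ strictly inside $\sigma$, so $D_v$ collapses entirely onto $P$. Exactly $e$ such interior points occur, producing $e$ divisors comprising the strict transform of $P$ on $\xtil$.

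Finally, to identify each such $D_v$ with $\PP^2$ and compute pairwise intersections, I use that the fan of $D_v$ is obtained from the star of $\rho_v$ in the refined fan by quotienting out $\RR \cdot v$. In a crepant triangulation in which every interior vertex has exactly three neighbors, the link of $\rho_v$ is a triangle and $D_v \cong \PP^2$; this trivalent-link property can be verified by direct inspection for the specific weight triples $(1,a,b)$ with $1+a+b=m$ occurring in our families. Two such divisors $D_v$, $D_{v'}$ meet if and only if $v,v'$ are joined by an edge of the triangulation, in which case their intersection is the $1$-dimensional torus-orbit closure $V(\mathrm{cone}(v,v')) \cong \PP^1$. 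The main obstacle is the case-by-case verification of the trivalent-link property; once this is granted, the remainder of the proof is purely combinatorial toric geometry.
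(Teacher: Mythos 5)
Your toric setup is sound, and it is essentially the argument the paper leaves implicit (the paper's own proof is a one-line appeal to toric geometry and \cite{Hu}): crepant refinements of the local cone correspond to unimodular triangulations of the triangle $ABC$ on its lattice points, interior lattice points give exactly the $e$ divisors contracted to $P$ while points on the sides give the divisors of Lemma \ref{resol-dim1}, and two such divisors meet precisely when the corresponding vertices are joined by an edge of the triangulation, the intersection being the invariant $\PP^1$ of that two-dimensional cone. All of that part of your argument is fine.

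The genuine gap is the step you explicitly postpone: that one can choose the triangulation so that every interior vertex is trivalent, whence each compact divisor is $\PP^2$. This is not just unverified -- it is false for weight triples occurring in this paper, so the promised ``direct inspection'' cannot succeed. Take $(1,a,b)=(1,2,2)$, $m=5$, the action at the isolated singular point $P_4$ of Example \ref{reso-qd1}. In your coordinates the triangle has vertices $A(0,1,0)$, $B(0,0,1)$, $C(5,-2,-2)$ and exactly two interior lattice points, $u=(1,0,0)$ (the class of $\frac{1}{5}(1,2,2)$) and $u'=(3,-1,-1)$ (the class of $\frac{1}{5}(3,1,1)$), and $u'$ is the midpoint of the segment $Cu$. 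If $u'$ were trivalent, its star would be a lattice triangle with vertices in $\{A,B,C,u\}$ containing $u'$ as its only other lattice point; but $ABC$ also contains $u$, the triangle $ABu$ does not contain $u'$ at all, and $ACu$, $BCu$ contain $u'$ only on their boundary. Hence in every unimodular triangulation $u'$ has valence at least four (in fact exactly four, with neighbours $A,B,C,u$), so its exceptional divisor is a smooth complete toric surface of Picard rank at least two -- a Hirzebruch-type surface, never $\PP^2$; the fiber over $P_4$ in the crepant resolution is $\PP^2$ glued to such a surface along a line, not two copies of $\PP^2$. The same collinearity phenomenon occurs for the actions $\frac{1}{22}(1,10,11)$ and $\frac{1}{10}(1,1,8)$ appearing in Examples \ref{reso-qd1} and \ref{reso-qd2}. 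So the statement you are asked to prove needs weakening (each interior vertex yields a smooth complete toric surface, which is $\PP^2$ exactly when its link is a triangle); note that the only consequence used later, the point count $e(q+q^2)$ of Corollary \ref{zeta-resol0}, is insensitive to this, since it depends only on the numbers of interior vertices, edges and triangles of the triangulation, which are the same for every unimodular triangulation.
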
 

\begin{proof} 
This is known from toric geometry. Details may be found in \cite{Hu}. 
\end{proof} 

\begin{cor} \label{zeta-resol0} Let $X$ be a threefold over a finite 
field $\FF_q$ of $q$ elements. Let $P$ be a $0$-dimensional singularity  
of $X$ defined in {\em Lemma \ref{resol-dim0}}. Assume that $P$ is defined 
over $\FF_q$. Then with the assumptions of {\em Lemma \ref{resol-dim0}}, 
we have $e(q+q^2)$ number of $\FF_q$-rational points on the exceptional 
divisor on $P$, where the proper transform of $P$ is not counted.. 
\end{cor}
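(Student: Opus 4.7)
Following the template of Corollary \ref{zeta-resol1}, the proof will rely on the explicit structure given by Lemma \ref{resol-dim0}: the exceptional divisor over $P$ is a union of $e$ copies of $\PP^2$, each defined over $\FF_q$ because $P$ is, and $|\PP^2(\FF_q)| = q^2 + q + 1$ per component.

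The plan is to interpret the phrase ``the proper transform of $P$ is not counted'' by analogy with the $1$-dimensional case in Corollary \ref{zeta-resol1} --- where one excludes the copy of $C$ embedded as a common section in every exceptional ruled sheet. For a $0$-dimensional center one excludes a single distinguished $\FF_q$-rational point lying on each exceptional $\PP^2$. With this convention each component contributes $q^2+q$ fresh rational points, and summing over the $e$ components gives the desired total $e(q^2+q)$.

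For this naive summation to be correct one needs that the $\PP^1$-intersections among pairs of $\PP^2$'s described in Lemma \ref{resol-dim0} all pass through the excluded distinguished point; the components are then effectively disjoint after the exclusion, and inclusion-exclusion collapses just as in the $1$-dimensional case where all $m-1$ sheets share the common curve $C$. The main obstacle is establishing this from the toric picture --- specifically, from the combinatorics of the subdivision of the triangle with vertices $A(0,1,0)$, $B(0,0,1)$, $C(m,-a,-b)$ --- showing that the interior lattice points are arranged around a central simplex whose associated toric point is the common excluded $\FF_q$-point, and that every $2$-face shared by two interior $1$-cones meets this central simplex. Once this combinatorial fact is in hand, the counting argument is formal and mirrors that of Corollary \ref{zeta-resol1}, yielding $e(q^2+q)$ new $\FF_q$-rational points on the exceptional divisor over $P$.
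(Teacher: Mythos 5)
Your argument rests on the combinatorial claim that the pairwise intersection lines of the $e$ exceptional planes all pass through one distinguished $\FF_q$-rational point whose removal makes the planes contribute disjointly, and this claim is both false and, even if true, insufficient. Insufficient: if two planes $D_v$, $D_w$ meet along a line $\PP^1$ passing through the excluded point, then after deleting that one point from each plane they still share the remaining $q$ points of the line, so the union of the two punctured planes has $2(q^2+q)-q$ points, not $2(q^2+q)$; the analogy with Corollary \ref{zeta-resol1} breaks exactly here, because there the pairwise intersections \emph{equal} the excluded curve $C$, whereas here the intersections are curves strictly bigger than the excluded point. False: in the toric picture each edge of the subdivision joining two interior lattice points gives its own intersection line $D_v\cap D_w$, distinct edges give distinct lines, and in the configurations actually occurring in the paper (e.g.\ the cases $e=5$ and $e=10$ in Examples \ref{reso-qd1} and \ref{reso-qd2}, where the interior lattice points form chains) these lines have no common point; nothing forces the interior points to be ``arranged around a central simplex.''

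The statement should not be read as the literal cardinality of the union of the $e$ planes with one point deleted from each; it is the residual allocation of new points after what has already been booked elsewhere, and the honest justification goes through the toric stratification of the whole fiber over $P$. For a unimodular (crepant) subdivision of the triangle, each interior lattice point contributes $(q-1)^2$, each edge not contained in the boundary contributes $q-1$, and each small triangle contributes $1$, giving $eq^2+(e+n)q+1$ points in the fiber, where $n$ is the number of non-vertex boundary lattice points, i.e.\ the total number of ruled-surface sheets over the one-dimensional loci through $P$. Subtracting the one point of $X$ that the fiber replaces and the $q$ new points per sheet already attributed to those loci by Corollary \ref{zeta-resol1} leaves exactly $e(q+q^2)$, which is precisely the count used in Lemma \ref{zeta-e-div}. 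The paper's proof is a compressed form of this bookkeeping (it disposes of the overlaps by remarking that the intersection of two sheets is an exceptional line lying over a point), whereas your route tries to make the naive disjoint-union count literally true; since the step you yourself identify as the main obstacle is false, the argument cannot be completed along those lines.
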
 

\begin{proof} 
As $P$ is defined over $\FF_q$, every projective plane $\PP^2$ in the 
resolution of $P$ is defined over $\FF_q$ and it contains $1+q+q^2$ 
points. Among these points, one belongs to the threefold $X$ and $q+q^2$ 
points are new. Also, 
when two sheets of $\PP^2$ have an intersection, it is an exceptional 
line $\PP^1$ that sprouts out of a point. Hence every $\PP^2$ gives 
rise to $q+q^2$ new points.
\end{proof} 

\begin{lem} \label{formula-for-e} 
The assumptions and hypothesis of {\em Lemma \ref{resol-dim0}} remain in 
force. Write $a_1=\gcd (m,a)$ and $b_1=\gcd (m,b)$. Then there exist 
$1+a_1+b_1$ points on the sides of triangle $ABC$ and the number 
of exceptional $\PP^2$ from the singularity $P$ is equal to  
$$e=\frac{m+1-a_1-b_1}{2}.$$ 
\end{lem}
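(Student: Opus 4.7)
The plan is to apply Pick's theorem to the triangle $ABC$, once we have identified the correct affine lattice on the plane containing it.

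\textbf{Step 1 (the plane and its lattice).} First I would observe that the relation $1+a+b=m$ forces each vertex to satisfy $x+y+z=1$: indeed $0+1+0=1$, $0+0+1=1$, and $m+(-a)+(-b)=m-(a+b)=1$. So the triangle $ABC$ lies on the affine plane $H:\,x+y+z=1$, and the relevant lattice is $H\cap\ZZ^{3}$, which is a translate of the sublattice $L=\{(x,y,z)\in\ZZ^{3}\mid x+y+z=0\}$. The vectors $(1,-1,0)$ and $(0,1,-1)$ form a $\ZZ$-basis of $L$, so Pick's theorem applies verbatim on $H$ in these coordinates.

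\textbf{Step 2 (boundary count).} On any edge between two lattice points, the number of interior lattice points equals $\gcd$ of the coordinate differences minus $1$. For $\vec{AB}=(0,-1,1)$ the gcd is $1$, so only the two endpoints lie on this edge. For $\vec{AC}=(m,-a-1,-b)$, using $a+1=m-b$ I get $\gcd(m,m-b,b)=\gcd(m,b)=b_{1}$, giving $b_{1}+1$ lattice points in total; for $\vec{BC}=(m,-a,-b-1)$ I similarly get $a_{1}+1$. After subtracting the three vertices counted twice, the total number of boundary lattice points is
\[
B \;=\; 2+(b_{1}+1)+(a_{1}+1)-3 \;=\; 1+a_{1}+b_{1},
\]
which is the first assertion of the lemma.

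\textbf{Step 3 (area in the lattice and conclusion).} Write $\vec{AB}$ and $\vec{AC}$ in the basis $\{(1,-1,0),(0,1,-1)\}$ of $L$: one finds $\vec{AB}=(0,-1)$ and, using $a+b+1=m$, $\vec{AC}=(m,b)$. The lattice (normalized) area of the triangle is therefore
\[
\mathrm{Area}(ABC) \;=\; \tfrac{1}{2}\bigl|\det\bigl(\tfrac{0\ -1}{m\ \ b}\bigr)\bigr| \;=\; \tfrac{m}{2}.
\]
Pick's theorem now gives $\mathrm{Area}=e+B/2-1$, hence
\[
e \;=\; \tfrac{m}{2}-\tfrac{1+a_{1}+b_{1}}{2}+1 \;=\; \tfrac{m+1-a_{1}-b_{1}}{2},
\]
as claimed.

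The only mildly subtle point, and the step I would want to write out with care, is the passage from the ambient $\RR^{3}$ to the lattice on $H$: one must check that the chosen basis of $L$ is a $\ZZ$-basis (so that Pick's theorem is applied with the correct normalization of area) and that $\vec{AB},\vec{AC}$ indeed have integer coordinates in this basis. After that, everything reduces to two elementary gcd computations and a $2\times 2$ determinant, so there is no real obstacle.
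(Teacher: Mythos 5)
Your proof is correct: all the computations check out (the vertices do lie on $x+y+z=1$ because $1+a+b=m$; $(1,-1,0),(0,1,-1)$ is indeed a $\ZZ$-basis of $\{x+y+z=0\}\cap\ZZ^3$; the edge gcd's are $\gcd(m,a+1,b)=\gcd(m,m-b,b)=b_1$ and $\gcd(m,a,b+1)=\gcd(m,a,m-a)=a_1$, giving boundary count $1+a_1+b_1$; and the normalized area $m/2$ with Pick's formula yields $e=\frac{m+1-a_1-b_1}{2}$), and it agrees with all the worked examples in the text (e.g.\ $m=6,a=2,b=3$ gives $e=1$; $m=22,a=10,b=11$ gives $e=5$; $m=33,a=21,b=11$ gives $e=10$). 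Your route is, however, different from the one taken in the paper: there the \emph{total} number of lattice points in and on the triangle, $\frac{m+2+(1+a_1+b_1)}{2}$, is quoted from Lemma 1.1 of \cite{Hu} (going back to \cite{RY}), and the boundary points are counted geometrically --- the interior points of $BC$ and $AC$ are identified with the exceptional ruled surfaces coming from the $\mu_{a_1}$- and $\mu_{b_1}$-fixed one-dimensional loci as in Lemma \ref{resol-dim1}, giving $a_1-1$ and $b_1-1$ of them --- after which $e$ is obtained by subtraction. Your argument replaces both ingredients by elementary lattice computations (gcd of edge vectors for the boundary, Pick's theorem in the affine lattice plane for the interior), so it is self-contained and does not rely on the cited formula; what it does not provide, and what the paper's phrasing supplies for the later resolution pictures, is the geometric identification of which side points correspond to which exceptional divisors, though your gcd count independently confirms those numbers. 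The one point you flag as delicate --- that the chosen basis is a genuine $\ZZ$-basis so that the determinant gives the correctly normalized area --- is exactly the right thing to verify, and it holds here, so the proof is complete as written.
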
 

\begin{proof} 
It follows from Lemma 1.1 of \cite{Hu} (an original form of which 
can be found in \cite{RY}) that the number of all lattice points on 
$\triangle ABC$ is equal to 
$$\frac{m+2+(1+a_1+b_1)}{2}.$$ 
Among these, the points on the side $BC$ correspond to the exceptional 
ruled surfaces arising from the $\mu_{a_1}$ action described in Lemma 
\ref{resol-dim1} with $m=a_1$. Hence $BC$ has $a_1-1$ interior points. 
Similarly, there are $b_1-1$ interior points on the side $AC$. Together 
with 3 vertices, there are $1+a_1+b_1$ lattice points on the sides of 
$\triangle ABC$. Hence the number of interior points is calculated 
as asserted.   
\end{proof} 

All Calabi-Yau threefolds we discuss in this paper have at most cyclic 
quotient singularities and the above resolution procedure can be applied 
to them. As it is rather lengthy to list all the cases, we illustrate 
the procedure with three typical examples. Other singularities can be 
resolved similarly. 

\begin{ex} (Diagonal type) \label{resol-diag}  
Consider the diagonal hypersurface discussed in Example \ref{ex-diagonal}: 
$$X:\ z_0^{18}+z_1^9+z_2^6+z_3^3+z_4^3=0\ \subset \PP^4(1,2,3,6,6)$$ 
of degree $18$. It has 2 one-dimensional singular loci meeting at 
3 points $P_i$ $(i=1,2,3)$, namely    
$$\begin{array}{lll}
E: & \ z_2^3+z_3^3+z_4^3=0\ \subset \PP^2 & \mbox{ (fixed by $\mu_3$)} \\ 
E^{'}: & \ z_1^3+z_3^3+z_4^3=0\ \subset \PP^2 & \mbox{ (fixed by $\mu_2$)} \\ 
P_i & =(0:0:0:1:-\omega^i) & 
\end{array}$$
where $E\cong E^{'}\cong E_1$ are elliptic curves over $\QQ$ and $\omega$ 
is a primitive cube root of unity. As every $P_i$ is on both singular loci, 
when we resolve singularity at $P_i$, it will also describe the resolution of 
$E$ and $E^{'}$. 

Recall that the singularity at $P_i$ is locally isomorphic to 
$\Aff^3 /\mu_6$, where $\mu_6$ acts as 
$$(x,y,z) \longmapsto (\zeta_6 x, \zeta_6^2 y, \zeta_6^3 z ).$$ 
We consider a triangle with vertices 
$$A(0,1,0),\ B(0,0,1),\ C(6,-2,-3).$$
By Lemma \ref{formula-for-e}, there are $6$ lattice points on the 
sides of $\triangle ABC$, of which $3\ (=6-3)$ points correspond to 
exceptional divisors. The lattice points on $AC$ (aside from the 
edges) represent a chain of 2 ruled surfaces $E\times \PP^1$ arising 
from a singular locus $E$; a point on $BC$ represents a ruled surface 
$E^{'}\times \PP^1$ from $E^{'}$. On the other hand, there is one lattice 
point in the interior of $\triangle ABC$. This corresponds to a plane $\PP^2$. 

Next we locate the lattice points in and on $\triangle ABC$ exactly 
and then draw lines connecting them in such a way that no two line 
segments cross each other. As in \cite{Hu}, Figure \ref{triangle0} 
shows an example of such subdivision.  

\begin{figure} 
\centering 
\includegraphics[width=5cm]{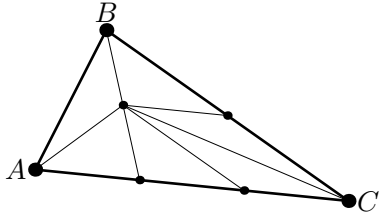} 
\caption{Subdivision of $ABC$ for Example \ref{resol-diag}} 
\label{triangle0} 
\end{figure}

We summarize the exceptional divisors as follows: 
\medskip 

\begin{tabular}{|l|l|l|} \hline 
 & Lattice points & Corresponding exceptional divisors \\ \hline 
On $AC$ & $2$ points &  $2$ copies of $E\times \PP^1$ over $\QQ$ \\ 
On $BC$ & $1$ point &  one $E^{'}\times \PP^1$ over $\QQ$ \\ 
Interior & $1$ point &  one $\PP^2$ over $\QQ(\omega )$ \\ \hline 
\end{tabular} 
\medskip 
 
Note that the exceptional $\PP^2$ appears at each $P_i$. Since there are 
three points in the intersection of $E$ and $E^{'}$, we obtain totally 
$3$ copies of $\PP^2$ each of which is defined over $\QQ(\omega )$. 
These planes intersect ruled surfaces at their exceptional lines. 
The whole resolution picture is given in Figure \ref{e-div-diagonal}. 
\end{ex} 

\begin{figure} 
\centering 
\includegraphics[width=5cm]{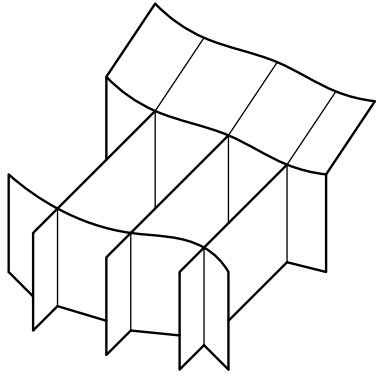} 
\caption{Exceptional divisors for Example \ref{resol-diag}} 
\label{e-div-diagonal} 
\end{figure} 

\begin{ex} (Quasi-diagonal type) \label{reso-qd1} 
We consider the quasi-diagonal hypersurface discussed in Example \ref{ex-qd1}: 
$$z_0^{12}z_1 +z_1^{11}+z_2^6+z_3^3+z_4^3=0\ \subset \PP^4(5,6,11,22,22)$$ 
having degree $66$. There are 2 one-dimensional singular loci: 
$$\begin{array}{lll} 
C: & (z_0=z_2=0),\ z_1^{11}+z_3^3+z_4^3=0\ \subset \PP^2(6,22,22) & 
\mbox{ (fixed by $\mu_2$)} \\ 
E: & (z_0=z_1=0),\ z_2^6+z_3^3+z_4^3=0\ \subset \PP^2(11,22,22) & 
\mbox{ (fixed by $\mu_{11}$)} 
\end{array}$$ 
These curves are isomorphic to 
$$C:\ z_1+z_2^3+z_4^3=0\ \subset \PP^2(3,1,1)\quad \mbox{ and }\quad 
E:\ z_2^3+z_3^3+z_4^3=0\ \subset \PP^2.$$ 
$E$ is an elliptic curve over $\QQ$ isomorphic to $E_1$. $C$ is a rational 
curve over $\QQ$. They meet at 3 points $P_i=(0:0:0:1:-\omega^i)$, where 
$\omega$ is a primitive cube root of unity in $\CC$ and $i=1,2,3$. When we 
resolve singularity at $P_i$, it also gives us the resolution of $C$ and $E$. 

Recall that the singularity at $P_i$ is locally isomorphic to 
$\Aff^3 /\mu_{22}$, where $\mu_{22}$ acts as 
$$(x,y,z) \longmapsto (\zeta_{22}x, \zeta_{22}^{10}y, \zeta_{22}^{11}z).$$ 
We consider a triangle with vertices 
$$A(0,1,0),\ B(0,0,1),\ C(22,-10,-11).$$
By Lemma \ref{formula-for-e}, there are $14$ lattice points on the sides 
of $\triangle ABC$, of which $11$ points correspond to exceptional 
divisors. The lattice points on $AC$ (aside from the edges) represent 
a chain of 10 ruled surfaces $E\times \PP^1$ arising from a singular 
locus $E$; a point on $BC$ represents a ruled surface $C\times \PP^1$ 
from $C$. On the other hand, there are 5 lattice points in the interior 
of $\triangle ABC$. They correspond to 5 copies of $\PP^2$. 

Once we determine the lattice points on $ABC$, we draw lines connecting 
them in such a way that no two line segments cross each other. An example 
of a subdivision of $ABC$ can be found in Figure \ref{triangle1}. 

\begin{figure} 
\centering 
\includegraphics[width=5cm]{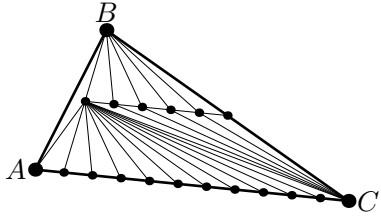} 
\caption{Subdivision of $ABC$ for Example \ref{reso-qd1} (1-dimensional 
loci)} 
\label{triangle1} 
\end{figure}

The exceptional divisors corresponding to $16$ lattice points are 
summarized as follows (note that $5$ copies of $\PP^2$ appear from  
every $P_i$ for $i=1,2,3$): 
\medskip 

\begin{tabular}{|l|l|l|} \hline 
 & Lattice points & Corresponding exceptional divisors \\ \hline 
On $AC$ & $10$ points &  $10$ copies of $E\times \PP^1$ over $\QQ$ \\ 
On $BC$ & $1$ point &  one $C\times \PP^1$ over $\QQ$ \\ 
Interior & $5$ points &  $5$ copies of $\PP^2$ over $\QQ(\omega )$ \\ \hline 
\end{tabular} 
\medskip 
 
In addition to the above singularities, this quasi-diagonal threefold has 
an isolated singularity $P_4=(1:0:0:0:0)$. Since $z_0=1\neq 0$, neither 
singular locus above contains this point. Locally, $P_4$ is given by the 
quotient $\Aff^3/\mu_5$, where the $\mu_5$-action is 
$$(z_2,z_3,z_4)\longmapsto (\zeta_5 z_2,\zeta_5^2 z_3,\zeta_5^2 z_4).$$ 
We consider a triangle with vertices  
$$A(0,1,0),\ B(0,0,1),\ C(5,-2,-2).$$
Apart from these vertices, it has no lattice points on the sides and 
has $2$ lattice points in the interior. Hence the resolution of $P_4$ 
consists of $2$ copies of $\PP^2$ over $\QQ$. The subdivision of $ABC$ 
is given in Figure \ref{triangle2}.

\begin{figure}  
\centering 
\includegraphics[width=5cm]{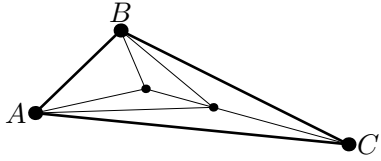} 
\caption{Subdivision of $ABC$ for Example \ref{reso-qd1} (0-dimensional 
loci)} 
\label{triangle2} 
\end{figure}

The intersections between these planes are exceptional lines. 
The whole resolution picture is given in Figure \ref{e-div-q-diago}. 

\begin{figure} 
\centering 
\includegraphics[width=5cm]{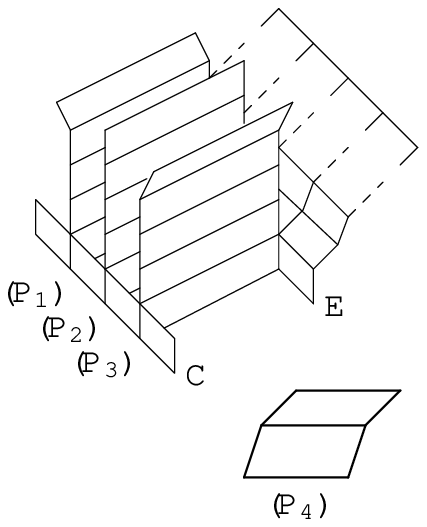} 
\caption{Exceptional divisors for Example \ref{reso-qd1} } 
\label{e-div-q-diago} 
\end{figure} 
\end{ex} 

\begin{ex} (Quasi-diagonal type) \label{reso-qd2} 
We consider another quasi-diagonal hypersurface discussed in Example 
\ref{ex-qd2}: 
$$z_0^{12}z_1 +z_1^{11}+z_2^4+z_3^4+z_4^3=0\ \subset \PP^4(10,12,33,33,44)$$ 
of degree $132$. There are 3 one-dimensional singular loci: 
$$\begin{array}{lll}
C_1: & (z_0=z_1=0),\ z_2^4+z_3^4+z_4=0\ \subset \PP^2(1,1,4) & 
\mbox{ (fixed by $\mu_{11}$)} \\ 
C_2: & (z_0=z_4=0),\ z_1+z_2^4+z_3^4=0\ \subset \PP^2(4,1,1) & 
\mbox{ (fixed by $\mu_3$)} \\ 
C_3: & (z_2=z_3=0),\ z_0^{6}z_1 +z_1^{11}+z_4^3=0\ \subset \PP^2(5,3,11) & 
\mbox{ (fixed by $\mu_2$)} 
\end{array}$$ 
$C_1$ and $C_2$ are rational curves and they meet at four points $P_i= 
(0:0:1:\zeta_8^{2i-1}:0)$, where $\zeta_8$ is a primitive 8th root of unity 
in $\CC$ and $i=1,2,3,4$. $C_3$ is a curve of genus 2, on which there is 
a different type of singularity at $P_5=(1:0:0:0:0)$. (Note that the singular 
locus $C_3$ itself has a singularity at $P_5$.)   

We can find the whole resolution picture by resolving singularities at $P_i$. 

(i) Recall that the singularity at $P_i$ ($i=1,2,3,4$) is locally 
isomorphic to $\Aff^3 /\mu_{33}$, where $\mu_{33}$ acts as 
$$(x,y,z) \longmapsto (\zeta_{33}x, \zeta_{33}^{21}y, \zeta_{33}^{11}z).$$
We consider a triangle with vertices 
$$A(0,1,0),\ B(0,0,1),\ C(33,-21,-11).$$
By Lemma \ref{formula-for-e}, there are $15$ lattice points on the 
sides of $\triangle ABC$, of which $12$ points correspond to 
exceptional divisors. The lattice points on $AC$ (aside from the 
edges) represent a chain of 10 ruled surfaces $C_1\times \PP^1$ arising 
from a singular locus $C_1$; 2 points on $BC$ represent a chain of 2 
ruled surfaces $C_2\times \PP^1$ from $C_2$. On the other hand, there 
are 10 lattice points in the interior of $\triangle ABC$. They 
correspond to 10 copies of $\PP^2$. 

Once we determine the lattice points on $ABC$, we draw lines connecting 
them in such a way that no two line segments cross each other. An example 
of a subdivision of $ABC$ can be found in Figure \ref{triangle3}. 

\begin{figure} 
\centering 
\includegraphics[width=5cm]{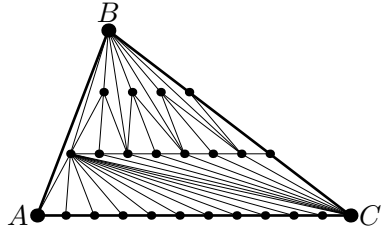} 
\caption{Subdivision of $ABC$ for Example \ref{reso-qd2} (for $C_1$ and $C_2$)} 
\label{triangle3} 
\end{figure}

The exceptional divisors corresponding to $22$ lattice points are 
summarized as follows (note that $10$ copies of $\PP^2$ arise at   
every $P_i$ for $i=1,2,3,4$): 
\medskip 

\begin{tabular}{|l|l|l|} \hline 
 & Lattice points & Corresponding exceptional divisors \\ \hline 
On $AC$ & $10$ points &  $10$ copies of $C_1\times \PP^1$ over $\QQ$ \\ 
On $BC$ & $2$ points &  $2$ copies of $C_2\times \PP^1$ over $\QQ$ \\ 
Interior & $10$ points &  $10$ copies of $\PP^2$ over $\QQ(\zeta_8 )$ \\ \hline 
\end{tabular} 
\medskip 

(ii) Next, we resolve singularity at $P_5$. It is locally isomorphic 
to $\Aff^3$, where $\mu_{10}$-action is 
$$(z_2,z_3,z_4)\longmapsto (\zeta z_2,\zeta z_3,\zeta^8 z_4).$$ 
We consider a triangle with vertices  
$$A(0,1,0),\ B(0,0,1),\ C(10,-1,-8).$$
By Lemma \ref{formula-for-e}, there are $4$ lattice points on the 
sides of $\triangle ABC$, of which $1\ (=4-3)$ point corresponds to an 
exceptional divisor. The lattice point on $AC$ (aside from the edges) 
represents a ruled surface $C_3\times \PP^1$ arising from a singular 
locus $C_3$; there is no lattice point on $BC$. On the other hand, 
there are 4 points in the interior of $\triangle ABC$. They correspond 
to 4 copies of $\PP^2$. A subdivision of $ABC$ is given in Figure 
\ref{triangle4}.

\begin{figure}  
\centering 
\includegraphics[width=5cm]{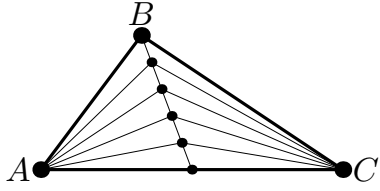} 
\caption{Subdivision of $ABC$ for Example \ref{reso-qd2} (for $C_3$)} 
\label{triangle4} 
\end{figure}

The exceptional divisors corresponding to $5$ lattice points are 
summarized as follows: 
\medskip

\begin{tabular}{|l|l|l|} \hline 
 & Lattice points & Corresponding exceptional divisors \\ \hline 
On $AC$ & $1$ point &  one $C_3\times \PP^1$ over $\QQ$ \\ 
On $BC$ & none &  \\ 
Interior & $4$ points &  $4$ copies of $\PP^2$ over $\QQ$ \\ \hline 
\end{tabular} 
\medskip 

The intersections between these planes are exceptional lines. 
The whole resolution picture is given in Figure \ref{ediv-qd-2}. 

\begin{figure} 
\centering 
\includegraphics[width=8cm]{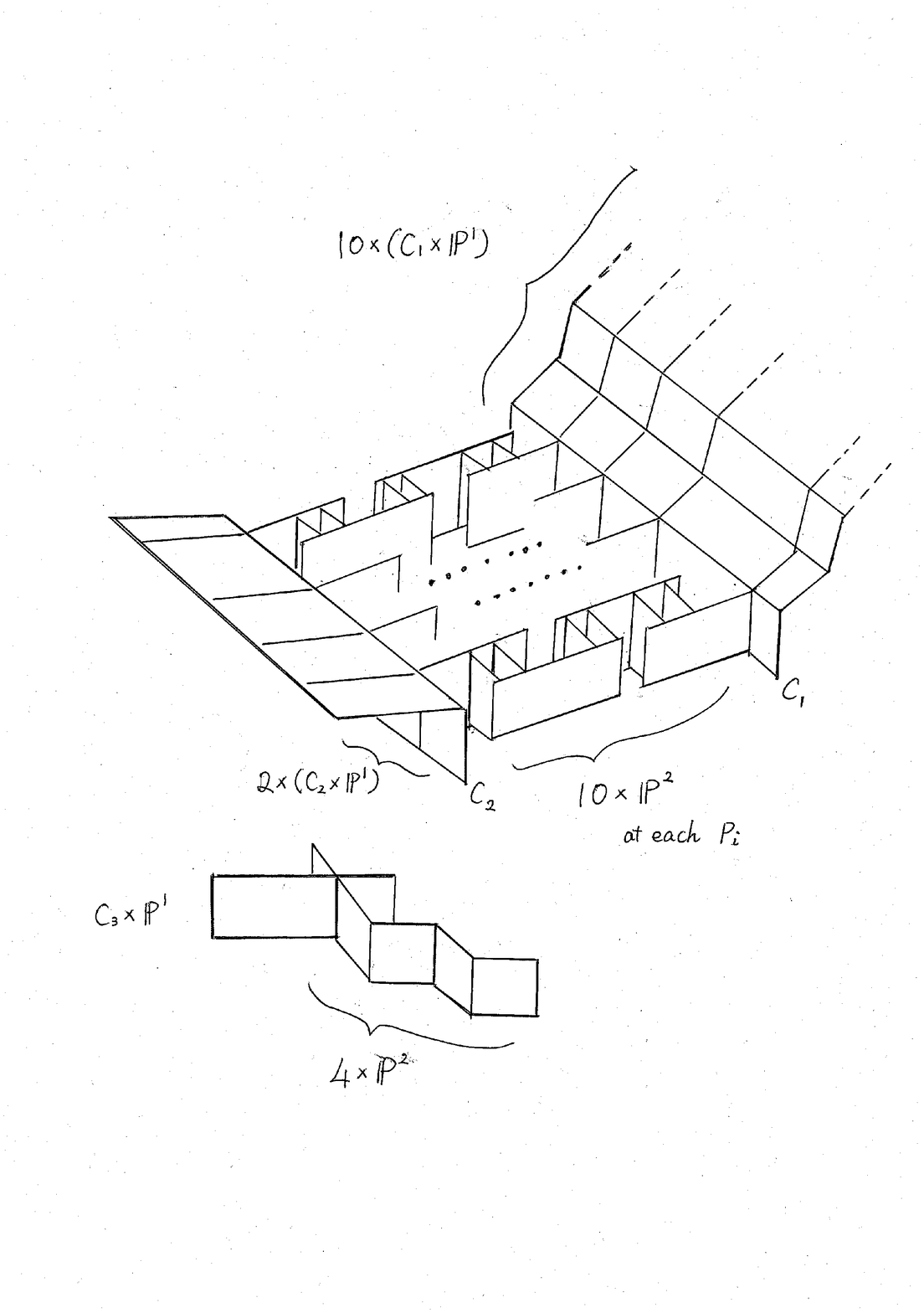} 
\caption{Exceptional divisors for Example \ref{reso-qd2}} 
\label{ediv-qd-2} 
\end{figure} 
\end{ex} 

\section{Cohomology of product and quotient varieties}
\label{sect-cohomology}

In this section, we consider threefolds constructed by taking finite 
quotients of the product of a diagonal curve and a diagonal surface.  
We describe their cohomology over $\ok$. 

Let $Q_1=(w_0,w_1,w_2,w_3)$ and $Q_2=(v_0,v_1,v_2)$. Let $S$ be a weighted 
diagonal surface of degree $d$ in $\PP^3(Q_1)$ and $C$ be a weighted 
diagonal curve of degree $e$ in $\PP^2(Q_2)$. (As we work over $\ok$, $S$ 
and $C$ may be chosen to be of Fermat type.) Write 
$$\begin{array}{l} 
\Gamma_S =\mu_{d/w_0} \times \cdots \times \mu_{d/w_3} /(\mbox{diagonal 
elements}) \\ 
\Gamma_C =\mu_{e/v_0} \times \mu_{e/v_1} \times \mu_{e/v_2}/ 
(\mbox{diagonal elements}).
\end{array}$$ 
In Section \ref{sect2}, we find the following decomposition:   
\begin{equation} \label{h2-s} 
\begin{array}{ll} 
H^2(S,\QQ_{\ell}) & \cong V(0) \oplus \bigoplus_{\ba \in \fA_S} V(\ba ) \\ 
H^1(C,\QQ_{\ell}) & \cong \bigoplus_{\bb \in \fA_C} V(\bb ) 
\end{array} 
\end{equation} 
where 
$$\begin{array}{l} 
\fA_S:=\biggl\{ \ba =(a_0,a_1,a_2,a_3)\,|\, a_i\in (w_i\ZZ/d\ZZ), 
a_i\neq 0, \sum_{i=0}^{3} a_i\equiv 0\pmod d \biggr\} \\ 
\fA_C:=\biggl\{ \bb =(b_0,b_1,b_2)\,|\, b_i\in (v_i\ZZ/e\ZZ), 
b_i\neq 0, \sum_{i=0}^{2} b_i\equiv 0\pmod e \biggr\}. 
\end{array}$$  
and 
$$\begin{array}{l} 
V(\ba )=\{ v\in H^2_{prim}(S,\QQ_{\ell}) \mid  
\gamma^{*}(v)={\zeta}_0^{a_1}{\zeta}_1^{a_1}{\zeta}_2^{a_2} 
{\zeta}_3^{a_3} v,\ \forall \gamma 
=({\zeta}_0,{\zeta}_1,{\zeta}_2,{\zeta}_3)\in \Gamma_S \} \\ 
V(\bb )=\{ v\in H^1(C,\QQ_{\ell}) \mid  
\gamma^{*}(v)={\xi}_0^{b_0}{\xi}_1^{b_0}{\xi}_2^{b_2} v,\ 
\forall \xi =({\xi}_0,{\xi}_1,{\xi}_2)\in \Gamma_C \}.  
\end{array}$$ 

\begin{lem} \label{lem2-1} 
Let $S$ be a weighted diagonal surface and $C$ be a weighted 
diagonal curve. Write $Y=S\times C$. Then the cohomology of $Y$ is 
described as follows: 
$$\begin{array}{ll} 
H^0(Y,\QQ_{\ell})= & H^0(S)\otimes H^0(C) \cong \QQ_{\ell} \\ 
H^1(Y,\QQ_{\ell})= & H^0(S)\otimes H^1(C) \cong H^1(C) \\ 
H^2(Y,\QQ_{\ell})= & H^2(S)\otimes H^0(C) \oplus 
H^0(S)\otimes H^2(C) \cong H^2(S) \oplus \QQ_{\ell}(-1) \\
H^3(Y,\QQ_{\ell})= & H^2(S)\otimes H^1(C) \\ 
H^4(Y,\QQ_{\ell})= & H^2(S)(-1) \oplus 
\QQ_{\ell}(-2) \\
H^5(Y,\QQ_{\ell})= & H^4(S)\otimes H^1(C) \cong 
 H^1(C) (-2)\\
H^6(Y,\QQ_{\ell})= & H^4(S)\otimes H^2(C) \cong \QQ_{\ell}(-3) 
\end{array}$$ 
\end{lem}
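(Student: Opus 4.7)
The plan is to derive the statement directly from the K\"unneth formula~\eqref{coh2} applied to $Y=S\times C$, after first recording the cohomology of the individual factors. No new geometric input is needed beyond what is already developed in Section~\ref{sect2}.

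First, I would assemble the cohomology of $S$ and of $C$. Since $S$ is a weighted diagonal surface of (even) dimension $2$, the discussion preceding~\eqref{coh1} gives $H^0(S,\QQ_\ell)\cong\QQ_\ell$, $H^1(S,\QQ_\ell)=0$, $H^2(S,\QQ_\ell)\cong V(0)\oplus\bigoplus_{\ba\in\fA_S}V(\ba)$, $H^3(S,\QQ_\ell)=0$, and by Poincar\'e duality $H^4(S,\QQ_\ell)\cong\QQ_\ell(-2)$. Similarly for the weighted diagonal curve $C$ one has $H^0(C,\QQ_\ell)\cong\QQ_\ell$, $H^1(C,\QQ_\ell)\cong\bigoplus_{\bb\in\fA_C}V(\bb)$, and $H^2(C,\QQ_\ell)\cong\QQ_\ell(-1)$.

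Next I would apply the K\"unneth isomorphism~\eqref{coh2}
\[
H^{j}(Y,\QQ_\ell)\cong\bigoplus_{i_1+i_2=j}H^{i_1}(S,\QQ_\ell)\otimes H^{i_2}(C,\QQ_\ell)
\]
for $j=0,1,\dots,6$, and read off each summand using the above list. Every term in which an $H^{\mathrm{odd}}(S,\QQ_\ell)$ factor appears drops out because $H^1(S)=H^3(S)=0$, so only the four bidegrees $(i_1,i_2)\in\{0,2,4\}\times\{0,1,2\}$ contribute. This immediately produces the seven formulas in the lemma; the Tate twists come from $H^2(C,\QQ_\ell)\cong\QQ_\ell(-1)$ and $H^4(S,\QQ_\ell)\cong\QQ_\ell(-2)$, which explain why $H^0(S)\otimes H^2(C)$ yields $\QQ_\ell(-1)$, why $H^2(S)\otimes H^2(C)\cong H^2(S)(-1)$, why $H^4(S)\otimes H^1(C)\cong H^1(C)(-2)$, and why $H^4(S)\otimes H^2(C)\cong\QQ_\ell(-3)$.

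The argument is essentially pure bookkeeping, so there is no real obstacle beyond keeping the Tate twists consistent; the only nontrivial inputs (vanishing of $H^1(S)$ and $H^3(S)$, together with the decomposition of $H^2_{\prim}(S)$ and $H^1(C)$) are supplied by the discussion of weighted diagonal hypersurfaces in Section~\ref{sect2}.
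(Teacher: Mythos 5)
Your argument is correct and follows exactly the same route as the paper's proof: apply the K\"unneth formula~\eqref{coh2}, use the vanishing $H^1(S,\QQ_\ell)=H^3(S,\QQ_\ell)=0$ for the weighted diagonal surface $S$, and identify $H^2(C,\QQ_\ell)\cong\QQ_\ell(-1)$ and $H^4(S,\QQ_\ell)\cong\QQ_\ell(-2)$ to obtain the Tate twists. You merely spell out the bookkeeping more explicitly than the paper does.
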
 

\begin{proof} 
This follows from the K\"unneth formula (\ref{coh2}) in Section 2 and 
the fact that $H^1(S,\QQ_{\ell})=H^3(S,\QQ_{\ell})=0$ for a weighted 
diagonal surface $S$. Also note that $H^2(C)=\QQ_{\ell}(-1)$ and $H^4(S)
=\QQ_{\ell}(-2)$. 
\end{proof} 

For $Y=S\times C$, let $\Gamma_Y$ be a subgroup of $\Gamma_S \times \Gamma_C$ 
acting on $Y$ and consisting of elements of the form 
$$\bgam =(({\zeta}_0,{\zeta}_1,{\zeta}_2,{\zeta}_3), 
({\xi}_0,{\xi}_1,{\xi}_2)).$$ 
Write 
$$\begin{array}{l} 
\fA_S^{\Gamma}:=\{ \ba =(a_0,a_1,a_2,a_3)\in \fA_S \,|\, 
{\zeta}_0^{a_0}{\zeta}_1^{a_1}{\zeta}_2^{a_2}{\zeta}_3^{a_3}=1,\ 
\forall \bgam \in \Gamma_Y \} \\ 
\fA_C^{\Gamma}:=\{ \bb =(b_0,b_1,b_2)\in \fA_C \,|\, 
{\xi}_0^{b_0}{\xi}_1^{b_1}{\xi}_2^{b_2}=1,\ 
\forall \bgam \in \Gamma_Y \} \\ 
\fA_X :=\{ (\ba ,\bb )\in \fA_S \times \fA_C \,|\, 
\zeta_0^{a_0}\zeta_1^{a_1}\zeta_2^{a_2} \zeta_3^{a_3} 
\xi_0^{b_0}\xi_1^{b_1}\xi_2^{b_2} =1,\ 
\forall \bgam \in \Gamma_Y \}.  
\end{array}$$  

\begin{prop} \label{prop7-1} 
Let $S$ be a diagonal surface of degree $d$ in $\PP^3(Q_1)$ and 
$C$ be a diagonal curve of degree $e$ in $\PP^2(Q_2)$. Write 
$Y=S\times C$ and let $\Gamma_Y$ be a subgroup of $\Gamma_S 
\times \Gamma_C$ as above. Assume $(p, \# \Gamma_Y)=1$ and write 
$X=Y/\Gamma_Y$. Then the cohomology of $X$ is described as follows: 
$$\begin{array}{ll} 
H^0(X,\QQ_{\ell})= & \QQ_{\ell} \\ 
H^1(X,\QQ_{\ell})= & \bigoplus\limits_{\bb \in \fA_C^{\Gamma}} V(\bb )  \\ 
H^2(X,\QQ_{\ell})= & V(0)\oplus \bigoplus\limits_{\ba \in \fA_S^{\Gamma}} 
V(\ba ) \oplus \QQ_{\ell}(-1) \\
H^3(X,\QQ_{\ell})= & \bigoplus\limits_{\bb \in \fA_C^{\Gamma}} 
V(0)\otimes V(\bb ) \oplus \bigoplus\limits_{(\ba ,\bb )\in \fA_X} 
V(\ba )\otimes V(\bb ) 
\end{array}$$ 
Other cohomology groups can be calculated from these by the 
Poincar\'e duality. 
\end{prop}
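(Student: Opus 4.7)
The plan is to combine three ingredients that are already in hand: the K\"unneth decomposition of $H^\ast(Y,\QQ_\ell)$ established in Lemma \ref{lem2-1}, the character decompositions (\ref{coh1}) of $H^2(S,\QQ_\ell)$ and $H^1(C,\QQ_\ell)$ under the actions of $\Gamma_S$ and $\Gamma_C$, and the invariant-under-a-finite-group formula (\ref{coh3}), which applies because the hypothesis $(p,\#\Gamma_Y)=1$ gives
\[ H^i(X,\QQ_\ell)\cong H^i(Y,\QQ_\ell)^{\Gamma_Y} \]
for every $i$.

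First I would write down, for each $i$, the K\"unneth expression from Lemma \ref{lem2-1} and then take its $\Gamma_Y$-invariants term by term. The action of $\bgam=((\zeta_0,\ldots,\zeta_3),(\xi_0,\xi_1,\xi_2))\in\Gamma_Y$ on a summand $V(\ba)\subset H^2(S,\QQ_\ell)$ is by the character $\ba\mapsto \zeta_0^{a_0}\zeta_1^{a_1}\zeta_2^{a_2}\zeta_3^{a_3}$, and on $V(\bb)\subset H^1(C,\QQ_\ell)$ by $\bb\mapsto \xi_0^{b_0}\xi_1^{b_1}\xi_2^{b_2}$; the distinguished line $V(0)\subset H^2(S,\QQ_\ell)$ corresponding to the hyperplane section, together with $H^0$ and $H^2$ of the curve and $H^0$ and $H^4$ of the surface, carry the trivial character. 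Hence each one-dimensional summand of $H^i(Y,\QQ_\ell)$ is either fixed entirely by $\Gamma_Y$ or contributes nothing to the invariants, with the fixing condition read off exactly as in the definitions of $\fA_S^{\Gamma}$, $\fA_C^{\Gamma}$, and $\fA_X$.

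Carrying this out: $H^0$ is trivial; for $H^1=H^0(S)\otimes H^1(C)$, invariance reduces to requiring $\bb\in\fA_C^{\Gamma}$; for $H^2=(H^2(S)\otimes H^0(C))\oplus(H^0(S)\otimes H^2(C))$, the copies of $V(0)$ and $\QQ_\ell(-1)$ are automatically invariant, while the remaining $V(\ba)$'s survive exactly when $\ba\in\fA_S^{\Gamma}$; and for $H^3=H^2(S)\otimes H^1(C)$, splitting $H^2(S)=V(0)\oplus\bigoplus_{\ba\in\fA_S}V(\ba)$ yields the two stated pieces, the first parameterized by $\bb\in\fA_C^{\Gamma}$ (since $V(0)$ is trivial as a character) and the second by $(\ba,\bb)\in\fA_X$, which is precisely the condition that the product character be trivial on $\Gamma_Y$.

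There is no real obstacle here; the proof is essentially bookkeeping once Lemma \ref{lem2-1} and (\ref{coh1})--(\ref{coh3}) are available. The only point that requires care is checking that the hyperplane class $V(0)\subset H^2(S,\QQ_\ell)$ is fixed by all of $\Gamma_S$ (hence by $\Gamma_Y$), so it enters the formula for $H^3$ paired freely with $V(\bb)$; this is immediate from the geometric description of $V(0)$ as pulled back from $\PP^3(Q_1)$, on which $\Gamma_S$ acts trivially. The formulas for $H^4,H^5,H^6$ are then obtained by Poincar\'e duality, as stated.
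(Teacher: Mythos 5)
Your proof is correct and follows essentially the same route as the paper: reduce to $\Gamma_Y$-invariants via (\ref{coh3}) using $(p,\#\Gamma_Y)=1$, apply the K\"unneth decomposition of Lemma \ref{lem2-1}, and test each one-dimensional character summand $V(\ba)$, $V(\bb)$, $V(\ba)\otimes V(\bb)$ for invariance, reading off exactly the sets $\fA_S^{\Gamma}$, $\fA_C^{\Gamma}$, $\fA_X$. The only slight imprecision is your parenthetical justification that $\Gamma_S$ ``acts trivially'' on $\PP^3(Q_1)$ --- it acts nontrivially on the space but trivially on its cohomology, which is what makes the hyperplane class $V(0)$ invariant, a fact the paper simply asserts.
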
 

\begin{proof} 
Note that the actions by $\Gamma_C$, $\Gamma_S$ and $\Gamma_Y$ 
are compatible with the decomposition (\ref{coh1}) of cohomology 
groups. By the assumption $(p,\# \Gamma_Y)=1$ and an isomorphism 
(\ref{coh3}) in Section 2, we have 
$$\begin{array}{ll} 
H^0(X,\QQ_{\ell})= & \QQ_{\ell} \\ 
H^1(X,\QQ_{\ell})= & H^1(C)^{\Gamma_Y} \\ 
H^2(X,\QQ_{\ell})= & H^2(S)^{\Gamma_Y} \oplus \QQ_{\ell}(-1) \\
H^3(X,\QQ_{\ell})= & (H^2(S)\otimes H^1(C))^{\Gamma_Y}.   
\end{array}$$ 

First we consider $(H^2(S)\otimes H^1(C))^{\Gamma_Y}$. As $V(\ba )$ 
and $V(\bb )$ are subspaces of dimension one, there exist non-zero 
vectors $u_1\in V(\ba )$ and $u_2\in V(\bb )$ forming a basis for 
them respectively. Then $u_1\otimes u_2$ is a basis for $V(\ba ) 
\otimes V(\bb )$. Since $\Gamma_Y$ is a subgroup of $\Gamma_S \times 
\Gamma_C$, it acts on $Y=S\times C$ coordinate-wise. Hence $\bgam = 
(({\zeta}_0,{\zeta}_1,{\zeta}_2,{\zeta}_3), ({\xi}_0,{\xi}_1,{\xi}_2)) 
\in \Gamma_Y$ sends $u_1\otimes u_2 \in V(\ba ) \otimes V(\bb )$ to 
$$\begin{array}{ll} 
\bgam^{*}(u_1\otimes u_2) & =\zeta_0^{a_0}\zeta_1^{a_1}\zeta_2^{a_2}
\zeta_3^{a_3} u_1 \otimes \xi_0^{b_0}\xi_1^{b_1}\xi_2^{b_2} u_2 \\ 
 & = \zeta_0^{a_0}\zeta_1^{a_1}\zeta_2^{a_2} \zeta_3^{a_3} 
\xi_0^{b_0}\xi_1^{b_1}\xi_2^{b_2} (u_1\otimes u_2).
\end{array}$$ 
It follows that $V(\ba ) \otimes V(\bb )$ is invariant under 
the $\Gamma_Y$-action if and only if 
$$\zeta_0^{a_0}\zeta_1^{a_1}\zeta_2^{a_2} \zeta_3^{a_3} 
\xi_0^{b_0}\xi_1^{b_1}\xi_2^{b_2} =1$$ 
for all $\bgam \in \Gamma_Y$. Therefore 
$$(H^2(S)\otimes H^1(C))^{\Gamma_Y}\cong 
\bigoplus_{\bb \in \fA_C^{\Gamma}} V(0)\otimes V(\bb ) \oplus 
\bigoplus_{(\ba ,\bb )\in \fA_X} V(\ba )\otimes V(\bb ).$$  

Next we consider $H^1(C)^{\Gamma_Y}$. If $u_2\in V(\bb )$ denotes 
a basis for $V(\bb )$ as above, we have 
$$\bgam^{*}(u_2) =\xi_0^{b_0}\xi_1^{b_1}\xi_2^{b_2} u_2.$$ 
Hence $\Gamma_Y$ fixes $V(\bb )$ if and only if $\xi_0^{b_0}
\xi_1^{b_1}\xi_2^{b_2} =1$, namely, if and only if $\bb \in 
\fA_C^{\Gamma}$. 

The calculation of $H^2(S)^{\Gamma_Y}$ is similar to $H^1(C)^{\Gamma_Y}$. 
The difference is the existence of the subgroup $V(0)$ corresponding 
to the hyperplane section. It is fixed by the action of $\Gamma_Y$ 
and so we obtain the decomposition of $H^2(X,\QQ_{\ell})$ as asserted. 
\end{proof} 

The cohomology of $X$ becomes simpler when $\fA_C^{\Gamma} =\emptyset$. 
Such a case occurs frequently: typical examples are the diagonal inductive 
structure and many cases of the twist map discussed in the next section. 
These cases have the feature described in the following Corollary. 

\begin{cor} \label{cor3-1} 
The assumption and hypothesis of {\em Proposition \ref{prop7-1}} remain in force. 
Assume that $(v_0, e/v_0)=1$ and that $\Gamma_Y$ consists of elements 
of the form $\bgam =(({\zeta}_0,{\zeta}_1,{\zeta}_2,{\zeta}_3), 
({\xi}_0,1,1))$. Then the cohomology of $X$ is described as follows: 
$$\begin{array}{ll} 
H^0(X,\QQ_{\ell})= & \QQ_{\ell} \\ 
H^1(X,\QQ_{\ell})= & 0  \\ 
H^2(X,\QQ_{\ell})= & V(0)\oplus \bigoplus\limits_{\ba \in \fA_S^{\Gamma}} 
V(\ba ) \oplus \QQ_{\ell}(-1) \\
H^3(X,\QQ_{\ell})= & \bigoplus\limits_{(\ba ,\bb )\in \fA_X} 
V(\ba )\otimes V(\bb ) 
\end{array}$$ 
Other cohomology groups can be calculated by the Poincar\'e duality. 
\end{cor}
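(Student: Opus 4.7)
The plan is to deduce the corollary from Proposition \ref{prop7-1} by showing that its extra hypothesis forces $\fA_C^{\Gamma} = \emptyset$. Once this is in hand, the formula for $H^1(X,\QQ_\ell)$ collapses to $0$, and the summand $\bigoplus_{\bb \in \fA_C^{\Gamma}} V(0)\otimes V(\bb)$ in the formula for $H^3(X,\QQ_\ell)$ vanishes, while the formulas for $H^0$ and $H^2$ are already identical to those in Proposition \ref{prop7-1}. So the entire content of the corollary reduces to the emptiness of $\fA_C^{\Gamma}$.

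To establish $\fA_C^{\Gamma} = \emptyset$, I first unwind the fixing condition. Given $\bb = (b_0,b_1,b_2) \in \fA_C$, membership in $\fA_C^{\Gamma}$ requires $\xi_0^{b_0}\xi_1^{b_1}\xi_2^{b_2} = 1$ for every $\bgam \in \Gamma_Y$. Since by hypothesis every such $\bgam$ has second component $(\xi_0,1,1)$, the equation collapses to the single condition $\xi_0^{b_0} = 1$ for every $\xi_0$ occurring as the first coordinate of the $\Gamma_C$-component of some element of $\Gamma_Y$. Note that the quotient by diagonal elements in the definition of $\Gamma_C$ introduces no ambiguity here: if $(\eta,\eta,\eta)$ is diagonal in $\mu_{e/v_0}\times\mu_{e/v_1}\times\mu_{e/v_2}$ and the last two coordinates of $(\xi_0,1,1)$ are fixed at $1$, then $\eta=1$, so the first coordinate $\xi_0$ is uniquely determined in $\Gamma_C$.

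Next I would identify the set of $\xi_0$'s appearing in $\Gamma_Y$. In the twist-map setting that motivates this corollary, $\Gamma_Y$ is the image of $\bmu_\ell$ with $\ell = e/v_0$ acting diagonally by $\gamma\cdot(y_0{:}y_1{:}y_2) = (\gamma y_0{:}y_1{:}y_2)$, so the projection onto the first factor of $\Gamma_C$ is surjective onto $\mu_{e/v_0}$. Granted this, the fixing condition becomes $\xi^{b_0}=1$ for all $\xi \in \mu_{e/v_0}$, equivalently $(e/v_0)\mid b_0$. Combining with $b_0 \in v_0\ZZ/e\ZZ$ and $b_0 \neq 0$, write $b_0 = v_0 k$ with $0 < k < e/v_0$. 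Then $(e/v_0)\mid v_0 k$ together with the hypothesis $\gcd(v_0, e/v_0) = 1$ forces $(e/v_0)\mid k$, contradicting $0 < k < e/v_0$. So no such $\bb$ exists, giving $\fA_C^{\Gamma} = \emptyset$, and the remaining cohomology groups follow from Poincar\'e duality just as in Proposition \ref{prop7-1}.

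The step I expect to require the most care is justifying that the projection of $\Gamma_Y$ onto the first coordinate of $\Gamma_C$ is exactly $\mu_{e/v_0}$ rather than a proper subgroup. If the projection had image of some smaller order $m < e/v_0$, then the condition would weaken to $m\mid b_0$, and nonzero $b_0 = v_0 k$ with $m\mid k$ and $k < e/v_0$ could very well exist, breaking the argument. The assumption $\gcd(v_0, e/v_0) = 1$ by itself is not enough; one must also use that the full $\mu_{e/v_0}$ occurs, which is implicit in the twist-map construction that supplies $\Gamma_Y$. I would state this explicitly at the point where it is invoked, so that the hypothesis of the corollary is understood to include both $\gcd(v_0,e/v_0)=1$ and the surjectivity of the projection $\Gamma_Y \twoheadrightarrow \mu_{e/v_0}$.
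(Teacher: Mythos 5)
Your proof is correct and follows essentially the same route as the paper: both reduce the corollary to showing $\fA_C^{\Gamma}=\emptyset$, using that the fixing condition collapses to $\xi_0^{b_0}=1$ for $\xi_0\in\mu_{e/v_0}$ and that $\gcd(v_0,e/v_0)=1$ then forces $b_0=0$, contradicting $b_0\neq 0$ in $\fA_C$. Your explicit remark that one must know the first coordinates $\xi_0$ exhaust all of $\mu_{e/v_0}$ (as in the twist-map construction) is a point the paper's proof uses silently when it writes ``$\forall\,\xi_0\in\mu_{e/v_0}$'', so flagging it is a reasonable sharpening rather than a deviation.
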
 

\begin{proof} 
The action of $\Gamma_Y$ restricted to $C$ is given as $(\xi_0, 1,1)$. Hence 
$$\fA_C^{\Gamma}=\{ \bb =(b_0,b_1,b_2)\in \fA_C \,|\, 
{\xi}_0^{b_0}=1,\ \forall \xi_0 \in \mu_{e/v_0} \}.$$ 
As $(v_0, e/v_0)=1$, we have $b_0=0$ in $v_0\ZZ/e\ZZ$. Since there is no 
such $\bb$ in $\fA_C$ from the definition of $\fA_C$, $\fA_C^{\Gamma} =
\emptyset$ and the results follows from Proposition \ref{prop7-1}. 
\end{proof} 

\section{Zeta-functions of $K3$-fibered Calabi--Yau threefolds, I}
\label{sect7}

In this section, we compute zeta-functions of $K3$-fibered Calabi--Yau 
threefolds defined earlier. All these Calabi--Yau threefolds are realized 
as quotient threefolds by twist maps. 

\begin{defn}\label{defn7-1} {\rm Let $K$ be a number field, ${\OO}_K$
the ring of integers of $K$.  Let $V$ be a smooth projective
variety defined over $K$.  For a prime
${\fp}\in\mbox{Spec}({\OO}_K)$, let $V_{\fp}$
be the reduction of $V$ modulo ${\fp}$ and put
$q=\#({\OO}_K/{\fp})=\mbox{Norm}(\fp)$.  The
zeta-function of $V_{\fp}$ is defined by
\begin{equation*}
Z(V_{\fp},t)=\mbox{exp}\biggl( \sum_{n=1}^{\infty} 
\frac{\#V_{\fp}(\FF_{q^n})}{n} t^n \biggr)
\end{equation*}
where $t$ is an indeterminate, and $\#V_{\fp}(\FF_{q^n})$ is 
the number of rational points on $V_{\fp}$ over $\FF_{q^n}$.

It is known (Deligne \cite{De}) that $Z(V_{\fp},t)$ is a
rational function and indeed has the form
\begin{equation*}
Z(V_{\fp},t)=\prod_{i=0}^{2\dim (V_{\fp})} P_i(V_{\fp}, 
t)^{(-1)^{i+1}}
\end{equation*}
where $P_i(V_{\fp},t)$ is the characteristic polynomial of
the endomorphism on the $i$--th \'etale cohomology group
$H^i(V_{\fp},\QQ_{\ell})$ induced by the Frobenius morphism 
$\mbox{Frob}_q$ on $V_{\fp}$.  
$$P_i(V_{\fp}, t)=\det (1-\mbox{Frob}_q^{*} t\mid H^i(V_{\fp}, 
\QQ_{\ell})).$$ 
One knows that $P_i(V_{\fp}, t)$ is a polynomial with integer 
coefficients of degree equal to the $i$--th Betti number 
$B_i(V_{\fp})$ and its reciprocal roots have the absolute 
value $q^{1/2}$. } 
\end{defn}

Our goal of this section is to determine the zeta-function of Calabi--Yau
varieties constructed in Section 5. Since our Calabi--Yau varieties 
are quotients of products of lower dimensional varieties. For instance, for
Calabi--Yau threefolds $X$, they are quotients of products $S\times C$ of
surfaces $S$ and curves $C$. Then the eigenvalues of the Frobenius for
$X$ are given by products of the eigenvalues of the Frobenius on the components.
For dimensions $1$ and $2$ Calabi--Yau varieties discussed in the section 5, 
the zeta-functions have been determined by Goto \cite{Go1, Go2}.  

To describe the eigenvalues and the zeta-functions, we need to introduce 
weighted Jacobi sums.  The reader is referred to
Gouv\^ea and Yui \cite{GY} for Jacobi sums and their properties relevant
to our discussions. 

\begin{defn}\label{defn7-2}
{\rm Let $\LL=\QQ(e^{2\pi \sqrt{-1}/d})$ be the $d$--th cyclotomic field 
over $\QQ$, ${\OO}_{\LL}$ the ring of integers of $\LL$.  Let ${\fp}\in
\mbox{Spec}({\OO}_{\LL})$. For every $x\in {\OO}_{\LL}$
relatively prime to ${\fp}$, let
$\chi_{\fp}(x\ \mbox{mod}\, {\fp})=(\frac{x}{\fp})$
be the $d$--th power residue symbol on $\LL$.  If $x\equiv 0 
\pmod{\fp}$, we put $\chi_{\fp}(x\ \mbox{mod}\, {\fp})=0$.
Let $(w_0,w_1,w_2,\cdots, w_{n+1})$ be a weight.  Define the set
\begin{equation*}
\begin{split}
& {\fA}_d(w_0,w_1,\cdots, w_{n+1}) \\
&:=\biggl\{ {\ba}=(a_0,a_1,\cdots,a_{n+1})\,|\, a_i\in (w_i\ZZ/d\ZZ), 
a_i\neq 0,
\sum_{i=0}^{n+1} a_i\equiv 0\pmod d \biggr\}. 
\end{split}
\end{equation*}
For each ${\ba}\in{\fA}_d(w_0,w_1,\cdots, w_{n+1})$, the
{\it Jacobi sum} is defined by
\begin{equation*}
j_{\fp}(\ba)=
j_{\fp}(a_0,a_1,\cdots, a_{n+1}):=(-1)^n \sum\chi_{\fp}(v_1)^{a_1}
\chi_{\fp}(v_2)^{a_2}\cdots \chi_{\fp}(v_{n+1})^{a_{n+1}}
\end{equation*}
where the sum is taken over $(v_1,v_2,\cdots, v_{n+1})\in 
({\OO}_{\LL}/{\fp})^{\times} \times \cdots \times 
({\OO}_{\LL}/{\fp})^{\times}$ subject to the linear
relation $1+v_1+v_2+\cdots + v_{n+1}\equiv 0 \pmod{\fp}$.

Jacobi sums are elements of ${\OO}_{\LL}$ with complex absolute
value equal to $q^{n/2}$ where $q=\mid \mbox{Norm}\,{\fp} \mid \equiv 1
\pmod{d}$. }
\end{defn}

The Jacobi sums defined above are particularly useful when we describe 
zeta-functions of varieties over some extensions of $\LL= 
\QQ(e^{2\pi \sqrt{-1}/d})$. To discuss zeta-functions of varieties over 
$\QQ$, however, we also need Jacobi sums of finite fields.  

\begin{defn}\label{defn7-3}
{\rm Let $\FF_q$ be a finite field of $q$ elements. Let $d$ be a 
positive integer. Assume that $q\equiv 1\pmod{d}$. Fix a character 
$\chi :\FF_q^{\times} \lra \CC^{\times}$ of exact order $d$. Define 
the set ${\fA}_d(w_0,w_1,\cdots, w_{n+1})$ as in Definition \ref{defn7-2}. 
The Jacobi sum of $\FF_q$ associated with $\ba =(a_0 ,\cdots ,a_{n+1})$ 
is defined to be 
$$j_q(\ba ):=(-1)^n\sum {\chi}^{a_1}(v_1)\cdots \chi^{a_{n+1}}(v_{n+1})$$  
where the sum is taken over all $v_i\in \FF_q^{\times}$ satisfying 
$1+v_1+ \cdots +v_{n+1}=0$. } 
\end{defn}

Jacobi sums $j_q(\ba )$ have absolute value $q^{n/2}$. These Jacobi sums 
are used to describe zeta-functions of varieties over $\QQ$ reduced modulo $p$. 

\subsection{Zeta-functions of elliptic curves and $K3$ surfaces} 

\begin{lem}\label{lem7-2} Let $E_i$ $(i=1,2,3)$ be elliptic curves 
listed in {\em Table \ref{table1}}. Assume that the field of 
definition of $E_i$ $(i=1,2,3)$ is a number field $K$ that contains 
$\LL =\QQ(e^{2\pi \sqrt{-1}/d})$. For $\fp\in \mbox{\em Spec}{\OO}_K$, 
let $E_{i,\fp}$ be the reduction of $E_i$ modulo ${\fp}$. Then the 
zeta-function $Z(E_{i,\fp},t)$ has the form
\begin{equation*}
Z(E_{i,{\fp}},t)=\frac{P_1(E_{i,{\fp}},t)}{(1-t)(1-qt)}
\end{equation*}

The polynomial $P_1(E_{i,{\fp}},t)$ is determined in {\em Table 
\ref{table7}}. 
\end{lem}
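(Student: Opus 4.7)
The plan is to derive the shape of $Z(E_{i,\fp},t)$ from general principles first, and then to compute the degree-two factor $P_1(E_{i,\fp},t)$ via weighted Jacobi sums, following the same scheme that the excerpt has set up for weighted diagonal hypersurfaces. Since each $E_i$ is a smooth projective curve of genus $1$ (as its defining equation is quasi-smooth in its weighted projective plane and the Calabi--Yau degree condition $w_0+w_1+w_2=d$ holds in each case), its $\ell$-adic cohomology is concentrated in degrees $0,1,2$ with $H^0(E_{i,\fp},\QQ_\ell)\cong\QQ_\ell$, $H^2(E_{i,\fp},\QQ_\ell)\cong\QQ_\ell(-1)$ and $\dim H^1(E_{i,\fp},\QQ_\ell)=2$. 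Plugging this into Definition \ref{defn7-1} immediately produces the denominator $(1-t)(1-qt)$ and a numerator $P_1(E_{i,\fp},t)$ of degree $2$.

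To identify $P_1$, I would apply the decomposition \eqref{coh1} to each $E_i$, viewed as a weighted diagonal curve of degree $d\in\{3,4,6\}$. The index set
\[
\fA_C \;=\; \bigl\{\ba=(a_0,a_1,a_2)\mid a_i\in w_i\ZZ/d\ZZ,\ a_i\neq 0,\ a_0+a_1+a_2\equiv 0\pmod d\bigr\}
\]
has exactly two elements in each case, and these come in a single Galois pair $\{\ba,-\ba\}$. Concretely, for $E_1$ one finds $\fA_C=\{(1,1,1),(2,2,2)\}$; for $E_2$ (weights $(1,1,2)$, $d=4$) one finds $\fA_C=\{(1,1,2),(3,3,2)\}$; for $E_3$ (weights $(1,2,3)$, $d=6$) one finds $\fA_C=\{(1,2,3),(5,4,3)\}$. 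The hypothesis $\LL\subset K$ ensures that the $d$-th power residue symbol is defined on ${\OO}_K/\fp$, so Weil's method applies: the Frobenius acts on the one-dimensional piece $V(\ba)$ by the Jacobi sum $j_\fp(\ba)$ introduced in Definition \ref{defn7-2}. Consequently,
\[
P_1(E_{i,\fp},t)\;=\;\prod_{\ba\in\fA_C}\bigl(1-j_\fp(\ba)\,t\bigr),
\]
and the bound $|j_\fp(\ba)|=q^{1/2}$ verifies Deligne's Riemann hypothesis as a consistency check.

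The remaining step is to read off the entries of Table \ref{table7} by evaluating these Jacobi sums in each of the three cases: for $E_1$ the pair $\{j_\fp(1,1,1),\,j_\fp(2,2,2)\}$ recovers the standard CM-by-$\ZZ[\omega]$ $L$-factor; for $E_2$ one expresses $j_\fp(1,1,2)$ via the Gauss-sum identity $j_\fp(1,1,2)=\chi_\fp(-1)\,g_\fp(1)^2/q\cdot q^{1/2}$ type relation, recovering the CM-by-$\ZZ[\sqrt{-1}]$ factor; for $E_3$ the analogous identity at $d=6$ gives the CM-by-$\ZZ[\omega]$ factor. Each $j_\fp$ is then paired with its complex conjugate $j_\fp(-\ba)=\overline{j_\fp(\ba)}$, producing a real integral quadratic polynomial in $t$.

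The only genuine obstacle is purely bookkeeping: the Weil/Katz formula for Frobenius eigenvalues on $V(\ba)$ has to be formulated for \emph{weighted} diagonal hypersurfaces rather than the classical Fermat case, since $E_2$ and $E_3$ live in non-standard weighted projective planes. I would handle this by invoking the description of $H^1_{\mathrm{prim}}$ given just before \eqref{coh1}, which already encodes the weighting into the condition $a_i\in w_i\ZZ/d\ZZ$; with that, the standard point-counting argument of Weil on $\{y_0^{d_0}+y_1^{d_1}+y_2^{d_2}=0\}$ transports verbatim, and the computation of the individual Jacobi sums reduces to the classical evaluations for $d\in\{3,4,6\}$ that yield Table \ref{table7}.
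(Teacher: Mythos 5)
Your argument is correct and is essentially the paper's own proof: the hypothesis $K\supset \LL=\QQ(\zeta_d)$ forces $q\equiv 1\pmod d$, so Weil's classical point-counting method applies and Frobenius acts on each one-dimensional eigenspace $V(\ba)$, $\ba\in\fA_C$, by the Jacobi sum $j_\fp(\ba)$ — exactly what the paper invokes by citing \cite{Go1} and Lemma \ref{weil}, and your explicit lists $\fA_C=\{(1,1,1),(2,2,2)\}$, $\{(1,1,2),(3,3,2)\}$, $\{(1,2,3),(5,4,3)\}$ match Table \ref{table7}. The only row you leave untouched is $\fp\mid d$, where the reduction is not a smooth genus-one curve and $P_1=1$ must be checked directly (e.g. $y_0^3+y_1^3+y_2^3\equiv(y_0+y_1+y_2)^3\bmod 3$ gives $1+q^n$ points), but the paper's two-line proof is equally silent on this, so the omission is harmless.
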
 

\begin{table} 
\caption{Polynomial $P_1(E_{i,{\fp}},t)$}  
\label{table7} 
\[
\begin{array}{c|c|c|c} \hline \hline
E_i & P_1(E_{i,\fp}, t) &  {\fp}  & \LL \\ \hline
E_1 & (1-j_{\fp}(1,1,1)t)(1-j_{\fp}(2,2,2)t) & {\fp}\nmid 
3 & \QQ(e^{2\pi i/3}) \\ 
    &  1                                                     & {\fp}|3     
&  \\ \hline   
E_2 & (1-j_{\fp}(1,1,2)t)(1-j_{\fp}(3,3,2)t) & {\fp}\nmid 
4 & \QQ(e^{2\pi i/4})\\
    &  1                                                     & {\fp}| 4     
&  \\ \hline
E_3 & (1-j_{\fp}(1,2,3)t)(1-j_{\fp}(5,4,3)t) & {\fp}\nmid 
6 & \QQ(e^{2\pi i/6}) \\ 
    &  1                                                     & {\fp}|6  & 
\\ \hline
\hline
\end{array}
\]
\end{table} 

\begin{proof} 
$E_{i,\fp}$ is an elliptic curve over $\FF_q$ with $q=\mid \mbox{Norm}\,{\fp} 
\mid$. As $q\equiv 1\pmod{d}$ for every $\fp$, each $j_{\fp}$ is a Jacobi 
sum of $\FF_q$ and we can apply Weil's method to compute $Z(E_{i,{\fp}},t)$. 
See \cite{Go1} or Lemma \ref{weil} below. 
\end{proof} 

By Lemma \ref{lem7-2}, zeta-functions of $E_i$ over an extension of $\LL$ 
are written in a single form throughout good reductions at $\fp$. When 
$E_i$ is defined over $\QQ$, we need to divide the case according to a 
congruence property of a prime $p$. 

\begin{lem}\label{lem7-2e} Let $E_i$ $(i=1,2,3)$ be elliptic curves 
over $\QQ$ listed in {\em Table \ref{table1}}. For a prime $p\in \QQ$, 
let $E_{i,p}$ be the reduction of $E_i$ modulo $p$. Then the 
zeta-function $Z(E_{i,p},t)$ has the form
\begin{equation*}
Z(E_{i,p},t)=\frac{P_1(E_{i,p},t)}{(1-t)(1-pt)}
\end{equation*}

The polynomial $P_1(E_{i,p},t)$ is determined in {\em Table 
\ref{table7e}}. 
\end{lem}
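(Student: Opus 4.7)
The plan is to derive Lemma \ref{lem7-2e} from Lemma \ref{lem7-2} by Galois descent. Fix $i \in \{1,2,3\}$ and write $d \in \{3,4,6\}$ for the degree of $E_i$, and $\LL = \QQ(e^{2\pi\sqrt{-1}/d})$ for the associated cyclotomic field. Since $(\ZZ/d\ZZ)^{*}$ has order at most $2$, for any prime $p$ of good reduction the residue degree $f$ of any $\fp \subset \OO_\LL$ above $p$ is either $1$ or $2$. I would organise the argument according to this dichotomy.

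First I would treat the split case $p \equiv 1 \pmod d$, i.e.\ $f = 1$. Here $\fp$ has residue degree one, so $E_{i,\fp}$ is the base change of $E_{i,p}$ and the Jacobi sum $j_\fp(\ba)$ coincides with the Jacobi sum $j_p(\ba)$ of Definition \ref{defn7-3}. Lemma \ref{lem7-2} immediately yields the factorisation $P_1(E_{i,p},t) = \prod_{\ba \in \fA_1}(1 - j_p(\ba)t)$; the two elements of $\fA_1$ are interchanged by complex conjugation on $\LL$, so the two Jacobi sums are complex conjugates of absolute value $\sqrt{p}$, making the product lie in $\ZZ[t]$ and match the split row of Table \ref{table7e}.

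Next I would treat the inert case $p \not\equiv 1 \pmod d$, i.e.\ $f = 2$. Each $E_i$ admits complex multiplication by an order in an imaginary quadratic subfield of $\LL$ (namely $\QQ(\sqrt{-3})$ for $E_1$ and $E_3$, and $\QQ(\sqrt{-1})$ for $E_2$), and $f = 2$ is exactly the condition that $p$ is inert in this CM field. By Deuring's criterion, $E_{i,p}$ is then supersingular; the trace of Frobenius on $H^1(E_{i,p},\QQ_\ell)$ vanishes, and the Weil bound forces
\[
P_1(E_{i,p},t) \;=\; 1 + p t^2.
\]
Alternatively, one may apply Lemma \ref{lem7-2} over $\FF_{p^2}$ and establish by a Hasse--Davenport-type identity that $j_\fp(\ba) = -p$ for each $\ba \in \fA_1$, so that the two Frobenius eigenvalues over $\FF_p$ are $\pm\sqrt{-p}$; the two routes yield the same formula.

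Finally, for the finitely many bad primes ($p = 3$ for $E_1$, $p = 2$ for $E_2$, $p \in \{2,3\}$ for $E_3$) one verifies directly from the defining equations that $E_{i,p}$ has additive reduction, so $P_1(E_{i,p},t) = 1$. The main obstacle is the inert case: one must either invoke the CM-theoretic input (Deuring's criterion for supersingularity) or establish the Jacobi-sum identity $j_\fp(\ba) = -p$ at degree-two primes $\fp$. The remainder of the proof is routine bookkeeping of cyclotomic congruences and Galois-conjugate pairs of characters.
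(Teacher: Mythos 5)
Your argument is essentially correct, but it follows a genuinely different route from the paper's, and it is worth spelling out both the difference and a discrepancy it exposes. The paper never invokes CM theory: in the split case $p\equiv 1\pmod d$ it identifies $j_{\fp}$ with $j_p$ exactly as you do, but in the inert case it computes $\#E_{i,p}(\FF_{p^n})$ for every $n$ directly -- for odd $n$ the exponents collapse via $\gcd(d,q-1)$ to a curve of degree $1$ or $2$ with $q+1$ points, and for even $n=2m$ it applies Weil's formula over $\FF_{p^2}$ and evaluates the Jacobi sums there through the Gauss-sum theorem of Berndt--Evans--Williams quoted in Remark \ref{g-sum} -- and then reassembles $P_1$ from the counts. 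Your Deuring-criterion argument (or the alternative Hasse--Davenport-type evaluation $j_{\fp}(\ba)=-p$) replaces that even-degree computation. Note, however, that your conclusion $P_1=1+pt^2$ in the inert case contradicts Table \ref{table7e}, which asserts $(1-\sqrt{p}\,t)(1+\sqrt{p}\,t)=1-pt^2$; here you are right and the table (together with the step ``$j_{p^2}(\ba)=p$'' in the paper's proof) carries a sign error: with the factor $(-1)^n$ built into Definition \ref{defn7-3} the evaluation gives $j_{p^2}(\ba)=-p$, the two Frobenius eigenvalues over $\FF_p$ are $\pm\sqrt{-p}$ (their product must equal $p$, not $-p$), and indeed $\#E_1(\FF_4)=9=(2+1)^2$, whereas the table's polynomial would predict $(2-1)^2=1$.

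Two smaller points to tighten in your write-up. At the inert primes $p=2$ (for $E_1$) and $p=3$ (for $E_2$) supersingularity alone does not force the trace to vanish, since over $\FF_2$ and $\FF_3$ there are supersingular curves with trace $\pm p$; either verify these two counts by hand (both give $q+1$ points, hence trace $0$) or use the Jacobi-sum route there. And at the bad primes the object $E_{i,p}$ is the degenerate plane (or weighted) curve obtained by reducing the equation -- a triple line, a double conic, or a cuspidal curve -- so rather than citing additive reduction of the N\'eron model you should simply count: each of these has exactly $q+1$ rational points over every $\FF_{q}$, which is what yields $P_1=1$.
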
 

\begin{table} 
\caption{Polynomial $P_1(E_{i,p},t)$}  
\label{table7e} 
\[
\begin{array}{c|c|l} \hline \hline
E_i & P_1(E_{i,p}, t) &  p \\ \hline
E_1 & (1-j_p(1,1,1)t)(1-j_p(2,2,2)t) & p\equiv 1\pmod{3} \\ 
 & (1-\sqrt{p}t)(1+\sqrt{p}t) & p\equiv 2\pmod{3} \\ 
 &  1 & p=3 \\ \hline   
E_2 & (1-j_p(1,1,2)t)(1-j_p(3,3,2)t) & p\equiv 1\pmod{4} \\ 
 & (1-\sqrt{p}t)(1+\sqrt{p}t) & p\equiv 3\pmod{4} \\ 
 &  1 & p=2 \\ \hline   
E_3 & (1-j_p(1,2,3)t)(1-j_p(5,4,3)t) & p\equiv 1\pmod{6} \\ 
 & (1-\sqrt{p}t)(1+\sqrt{p}t) & p\equiv 5\pmod{6} \\ 
 &  1 & p=2,3 \\ \hline   
\hline
\end{array}
\]
\end{table} 

\begin{proof} 
When $p\equiv 1\pmod{d}$, the situation is the same as in Lemma \ref{lem7-2} 
and $Z(E_{i,p},t)$ can be expressed with Jacobi sums on $\FF_p$. 

When $p\not\equiv 1\pmod{d}$, we need to consider several extensions of the 
field of definition and collect more data to determine the zeta-functions. 
Let $n$ be the extension degree of $\FF_q$ over $\FF_p$. 

\noindent (i) $E_1$ over $\FF_p$ with $p\equiv 2\pmod{3}$. 

(a) Let $n$ be an odd integer and $q=p^n$. We have $q\equiv 2\pmod{3}$ 
and $E_1$ has the same number of $\FF_q$-rational points as a curve 
$y_0^{\gcd (3,q-1)}+y_1^{\gcd (3,q-1)}+y_2^{\gcd (3,q-1)}=0$, namely 
$$C:\ y_0+y_1+y_3 =0 \quad \subset \PP^2.$$ 
Clearly, $\# E_{1,p} (\FF_q )=\# C_p (\FF_q )=1+q$.   

(b) Let $n=2m$ be an even integer and $q=p^n$. We have $q\equiv 1\pmod{3}$ 
and we can apply Weil's algorithm to obtain 
$$\# E_{1,p} (\FF_q )=1+q-j_{p^2}(1,1,1)^m-j_{p^2}(2,2,2)^m.$$ 
It follows from \cite{BEW}, Theorem 11.6.1 (cf. Remark \ref{g-sum}) that 
$j_{p^2}(1,1,1)=j_{p^2}(2,2,2)=p$. Hence $\# E_{1,p} (\FF_q )=1+q-2p^m$. 

Combining (a) and (b), we determine $\# E_{1,p} (\FF_{p^n} )= 
1+p^n-(\sqrt{p})^n-(-\sqrt{p})^n$, which gives rise to the desired 
formula. 

\noindent (ii) $E_2$ over $\FF_p$ with $p\equiv 3\pmod{4}$. 

(a) Let $n$ be an odd integer and $q=p^n$. We have $q\equiv 3\pmod{4}$ and 
$E_2$ has the same number of $\FF_q$-rational points as  
$y_0^{\gcd (4,q-1)}+y_1^{\gcd (4,q-1)}+ y_2^{\gcd (2,q-1)}=0$, namely 
$$C:\ y_0^2+y_1^2+y_3^2 =0 \quad \subset \PP^2.$$ 
We see immediately that $\# E_{2,p} (\FF_q )=\# C_p (\FF_q )=1+q$.   

(b) Let $n=2m$ be an even integer and $q=p^n$. We have $q\equiv 1\pmod{4}$ and 
$$\# E_{2,p} (\FF_q )=1+q-j_{p^2}(1,1,2)^m-j_{p^2}(3,3,2)^m.$$ 
By \cite{BEW}, Theorem 11.6.1 again, we see $j_{p^2}(1,1,2)=j_{p^2}(3,3,2)
=p$. Hence $\# E_{2,p} (\FF_q )=1+q-2p^m$.

Combining (a) and (b), we determine $\# E_{2,p} (\FF_{p^n} )= 
1+p^n-(\sqrt{p})^n-(-\sqrt{p})^n$. 

\noindent (iii) $E_3$ over $\FF_p$ with $p\equiv 5\pmod{6}$. 

Similarly as above, we see that $\# E_{3,p} (\FF_{p^n} )= 
1+p^n-(\sqrt{p})^n-(-\sqrt{p})^n$ and obtain the polynomial $P_1(E_{3,p},t)$ 
as claimed. 
\end{proof} 

\begin{rem} \label{g-sum}
Let $m$ be a positive integer and $p$ be a prime such that $p^r\equiv -1 
\pmod{m}$ for some $r$. Write $q=p^{2r}$ and let $\chi$ be a character of 
$\FF_q$ of order $m$. Denote by $G_{2r}(\chi )$ the Gauss sum over $\FF_q$ 
associated with $\chi$. Then Theorem 11.6.1 of \cite{BEW} states that 
$$p^{-r}G_{2r}(\chi )=
\begin{cases} 
1 & \mbox{ if } p=2 \\ 
(-1)^{\frac{p^r+1}{m}} & \mbox{ if } p>2 
\end{cases}$$ 
\end{rem} 

Next we calculate the zeta-functions of $K3$ surfaces and discuss their 
decomposition into algebraic and transcendental parts. 

\begin{defn} 
Let $Y$ be a $K3$ surface over $k=\FF_q$. Let $A(Y)$ be the subspace of 
elements in $H^2(Y_{\ok},\QQ_{\ell}(1))$ that are invariant under the 
action of $\mbox{Gal}(\ok /k^{'})$ for some finite extension $k^{'}$ of 
$k$. Let $V(Y)$ be the orthogonal complement of $A(Y)$ in $H^2(Y_{\ok}, 
\QQ_{\ell}(1))$ with respect to the cup-product. We define 
$$\begin{array}{ll} 
P_2(S_{Y},t) & =\det (1-\mbox{Frob}_q^{*} qt\mid A(Y)) \\ 
P_2(T_{Y},t) & =\det (1-\mbox{Frob}_q^{*} qt\mid V(Y)) 
\end{array}$$ 
and call $P_2(S_{Y},t)$ (resp. $P_2(T_{Y},t)$) the {\it algebraic} 
(resp. {\it transcendental}) part of $P_2(Y,t)$. 
\end{defn} 

\begin{rem} 
The above definition is originally due to Zarhin~\cite{Zar} for the 
ordinary case. We simply extend his definition to arbitrary $K3$ 
surfaces. It follows from the definition that $\mbox{Frob}_q^{*}$ 
acts on both $A(Y)$ and $V(Y)$. If the Tate conjecture is true for $Y$, 
then $A(Y)$ is equal to the image of $NS(Y_{\ok})\otimes \QQ_{\ell}$ in 
$H^2(Y_{\ok},\QQ_{\ell}(1))$ under the cycle class map. The subspace 
$V(Y)$ is then isomorphic to $V_{\ell}(Br(Y_{\ok}))$. The Tate 
conjecture is known for $K3$ surface of finite height (cf. \cite{NO}) 
or when $H^2(Y_{\ok},\QQ_{\ell}(1))$ is spanned all by algebraic 
cycles (i.e., $Y$ is supersingular).  
\end{rem} 

\begin{prop}\label{prop7-4} Let $K$ be a number field. Let $Y_i$ 
$(i=1,2,\cdots, 10)$ be the $K3$ surfaces constructed in 
{\em Proposition \ref{prop5-1}} by diagonal equations 
$$y_0^{m_0}+y_1^{m_1}+y_2^{m_2}+y_3^{m_3}=0$$ 
of degree $d$ in $\PP^3(w_0,w_1,w_2,w_3)$ over $K$. Assume that 
$K$ contains the $d$-th cyclotomic field $\QQ (\zeta_d )$. Let 
$\fp$ be a prime of $K$ not dividing $d$ and put $q=\mid 
\mbox{Norm}\,{\fp} \mid$. Then the following assertions hold. 

$(a)$ $q\equiv 1\pmod{d}$ and the zeta-function $Z(Y_{i,\fp},t)$ 
has the form
\begin{equation*}
Z(Y_{i,\fp},t)=\frac{1} {(1-t)P_2(Y_{i,\fp},t)(1-q^2t)}
\end{equation*}
where $P_2(Y_{i,\fp},t)$ is a polynomial of integer coefficients 
whose reciprocal roots have the complex absolute value $q$ and 
$$P_2(Y_{i,\fp},t)=(1-qt)\prod_{{\mathbf a} \in {\fA}_d} 
(1-j_{\fp} ({\mathbf a})t)$$ 
with ${\fA}_d={\fA}_d(w_0,w_1,w_2,w_3)$. 

$(b)$ Let $\ytil_{i,\fp}$ be the minimal resolution of $Y_{i,\fp}$ 
and let $\EE$ denote the proper transform of $\mbox{\rm Sing}\, 
(\overline{Y_{i,\fp}})$ on $\ytil_{i,\fp}$, where $\overline{Y_{i,\fp}}=
Y_{i,\fp} \times \overline{\FF}_q$. Then $\EE$ is a scheme 
over $\FF_q$ consisting of exceptional divisors and 
$$Z(\ytil_{i,\fp},t)=\frac{Z(Y_{i,\fp},t)}{P_2(\EE ,t)}= 
\frac{1} {(1-t)P_2(Y_{i,\fp},t)P_2(\EE ,t)(1-q^2t)}.$$ 
In particular, $P_2(\ytil_{i,\fp},t)=P_2(Y_{i,\fp},t)P_2(\EE ,t)$. 

$(c)$ $P_2(\ytil_{i,\fp},t)$ factors over $\ZZ$ as follows:
\begin{equation*}
P_2(\ytil_{i,\fp},s)= P_2(S_{\ytil_{i,\fp}},t) P_2(T_{\ytil_{i,\fp}},t).
\end{equation*}
Here if $p$ is a rational prime such that $\fp \cap \ZZ =(p)$, then  
$$P_2(T_{\ytil_{i,\fp}},t) =
\begin{cases} 
1 & \mbox{ if $p^r\equiv -1\pmod{d}$ for some $r\in \NN$ } \\ 
\prod_{{\mathbf a} \in \MM_Q} (1-j_{\fp} ({\mathbf a})t) & 
\mbox{ otherwise} 
\end{cases}$$ 
where $\MM_Q$ is the weight motive of $Y_{i,\fp}$, i.e. the 
$(\ZZ /d\ZZ )^{\times}$-orbit of ${\mathbf a}=(w_0,w_1,w_2,w_3)$ in 
${\fA}_d$.
\end{prop}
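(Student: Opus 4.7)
The plan is to treat the three parts in order, with part (c) as the main obstacle. For part (a), observe that since $K \supset \QQ(\zeta_d)$ and $\fp \nmid d$, the residue field $\OO_K/\fp$ contains a primitive $d$-th root of unity, so $q \equiv 1 \pmod d$. Choose a character $\chi$ of $\FF_q^\times$ of exact order $d$ and apply Weil's classical point-count for diagonal hypersurfaces (compatible with Hasse--Davenport to pass from $\FF_q$ to $\FF_{q^n}$) to obtain
$$\# Y_{i,\fp}(\FF_{q^n}) = 1 + q^n + q^{2n} + \sum_{\ba \in \fA_d} j_\fp(\ba)^n.$$
Exponentiating the generating series yields the stated form of $Z(Y_{i,\fp},t)$. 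The $(1-qt)$ factor in $P_2$ comes from the hyperplane subspace $V(0) \subset H^2$ and the product over $\fA_d$ matches the primitive decomposition \eqref{coh1}, in which $\mathrm{Frob}_q^{*}$ acts on the one-dimensional piece $V(\ba)$ by multiplication by $j_\fp(\ba)$.

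For part (b), the minimal resolution $\pi:\ytil_{i,\fp} \to Y_{i,\fp}$ is an isomorphism away from $\mathrm{Sing}(Y_{i,\fp})$, so point counting gives
$$\# \ytil_{i,\fp}(\FF_{q^n}) \;=\; \# Y_{i,\fp}(\FF_{q^n}) \;-\; \# \mathrm{Sing}(Y_{i,\fp})(\FF_{q^n}) \;+\; \# \EE(\FF_{q^n}).$$
By the surface analogue of Section~\ref{sect-resol}, $\EE$ is a union of chains of $\PP^1$'s defined over finite extensions of $\FF_q$ whose Galois orbits are $\FF_q$-rational; its cohomology is concentrated in even degree. Subtracting the singular locus cancels the trivial $H^0$ and $H^2$-classes of the points, so comparing $Z(\ytil_{i,\fp},t)$ with $Z(Y_{i,\fp},t)$ identifies the $P_2$-denominators via $P_2(\ytil_{i,\fp},t) = P_2(Y_{i,\fp},t)\,P_2(\EE,t)$, equivalently $Z(\ytil_{i,\fp},t) = Z(Y_{i,\fp},t)/P_2(\EE,t)$.

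For part (c), the Galois-equivariant orthogonal decomposition $H^2(\ytil_{i,\fp},\QQ_\ell(1)) = A(\ytil_{i,\fp}) \oplus V(\ytil_{i,\fp})$ under cup product gives the factorization $P_2(\ytil_{i,\fp},t) = P_2(S_{\ytil_{i,\fp}},t)\,P_2(T_{\ytil_{i,\fp}},t)$ over $\ZZ$ (Frobenius preserves the decomposition). The hyperplane factor $(1-qt)$ and every factor of $P_2(\EE,t)$ come from classes of divisors and hence lie in the algebraic part. It remains to identify which primitive factors $(1-j_\fp(\ba)t)$ belong to each piece. The $(\ZZ/d\ZZ)^\times$-action permutes the Jacobi sums through Galois conjugation, so algebraic classes appear in complete Galois orbits; by a Shioda--Katsura-type analysis extended to weighted Fermats (as in \cite{Go1}), the non-algebraic classes form exactly the orbit $\MM_Q$ of the weight vector $(w_0,w_1,w_2,w_3)$. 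Finally, when $p^r \equiv -1 \pmod d$ for some $r$, Remark~\ref{g-sum} combined with Theorem 11.6.1 of \cite{BEW} forces each Jacobi sum over $\FF_{p^{2r}}$ to be $\pm p^{rn}$, so every eigenvalue on the Tate twist $V(\ba)(1)$ is a root of unity and the corresponding class becomes algebraic over a finite extension, collapsing $P_2(T_{\ytil_{i,\fp}},t)$ to $1$. The main obstacle is this last step: precisely matching the transcendental motive to the weight orbit $\MM_Q$ and executing the Stickelberger/Gauss-sum collapse in the supersingular case.
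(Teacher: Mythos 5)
Your proposal takes essentially the same route as the paper for parts (a) and (b) — Weil's point count over $\FF_{q^n}$ via Jacobi sums for (a), and comparison of point counts through the minimal resolution for (b), where the key observation (as in the paper) is that $\EE$ is an even-cohomology object built from chains of rational curves whose $\mathrm{Gal}(\overline{\FF}_q/\FF_q)$-orbits are $\FF_q$-rational. For part (c), you and the paper arrive at the same conclusion by noticeably different routes. The paper cites \cite{Go1} for the Tate conjecture for these weighted diagonal surfaces, then cites \cite{Go3} for the geometric criterion that a weighted Fermat $K3$ is supersingular if and only if $p^r\equiv -1\pmod d$ for some $r$, at which point ``supersingular $\Rightarrow$ all cycles algebraic'' gives $P_2(T)=1$ directly. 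You instead run the Gauss--sum mechanism explicitly: Theorem 11.6.1 of \cite{BEW} makes each Jacobi sum over $\FF_{p^{2r}}$ equal to $\pm p^r$, so the Frobenius eigenvalues on the Tate twist become roots of unity and the Frobenius becomes of finite order on $H^2(1)$ after a base change — which is the cohomological characterization of supersingularity. Your route is more self-contained about \emph{why} the transcendental part collapses, while the paper's route is shorter because it delegates the characterization to \cite{Go3}. For the ordinary (non-supersingular) case both you and the paper invoke the same external input: the identification of the transcendental lattice of a weighted Fermat $K3$ with the weight-motive orbit $\MM_Q$, due to the Shioda--Katsura-type inductive analysis carried out in \cite{Go1}. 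You flag this as the remaining obstacle; the paper simply asserts it with the same citation, so there is no substantive gap — but be aware that the two ingredients (Tate conjecture for weighted diagonal surfaces, and the identification of $\MM_Q$ with the transcendental cycles) are both nontrivial and both come from \cite{Go1}, not from the Galois-orbit structure of Jacobi sums alone.
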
 

\begin{proof} 
(a) Since $K$ contains $\QQ (\zeta_d )$, we have $q\equiv 1\pmod{d}$ 
for every prime $\fp$. As $Y_i$ is a weighted diagonal surface over 
$\FF_q$, $Z(Y_{i,\fp},t)$ can be computed by Weil's classical method 
(see \cite{Go1}). 

(b) Each singular point in $\mbox{Sing}\, (\overline{Y_{i,\fp}})$ may 
not be defined over $\FF_q$, but the subscheme $\mbox{Sing}\, 
(\overline{Y_{i,\fp}}) =\overline{Y_{i,\fp}} \cap \mbox{Sing}\, (\PP^3(Q))$ 
is defined over $\FF_q$. Since $\mbox{Sing}\, (\PP^3(Q))$ is defined over 
$\FF_q$ and an embedded resolution exists for $\overline{Y_{i,\fp}}$, the 
resolution $\ytil_{i,\fp}$ and a subscheme $\EE$ are defined over $\FF_q$. 
As $Y_{i,\fp}$ has only cyclic quotient singularities, $\EE$ consists 
of rational lines. Hence $P_1(\EE ,t)=1$ and $Z(\ytil_{i,\fp},t)$ has the 
asserted form. 

(c) Since $Y_{i,\fp}$ is a weighted diagonal surface, the Tate conjecture 
holds for $\ytil_{i,\fp}$ over $\FF_q$ (cf. \cite{Go1}) and its 
supersingularity is dependent on $p$. (Note that $\fp \nmid d$ implies 
$p\nmid d$.) As $\ytil_{i,\fp}$ is a $K3$ surface, it is known that 
$\ytil_{i,\fp}$ is supersingular if and only if $p^r \equiv -1\pmod{d}$ for 
some positive integer $r$ (see \cite{Go3}). 
When $\ytil_{i,\fp}$ is supersingular, all cycles are algebraic and hence 
$P_2(T_{\ytil_{i,\fp}},t) =1$; otherwise, the weight motive $\MM_Q$ 
corresponds to the transcendental cycles. Therefore $P_2(T_{\ytil_{i,\fp}},t)$ 
has the form as we claim. 
\end{proof} 

\begin{rem} \label{zeta-over-Fp} 
Since we assume $K\supset \QQ (\zeta_d )$, the congruence $q\equiv 1\pmod{d}$ 
holds for every prime $\fp$. If we remove this assumption, it may happen that 
$q\not\equiv 1\pmod{d}$ for some classes of $\fp$. In this case, the number of 
$\OO_K/\fp$-rational points on $Y_{\fp}$ is the same as that of 
$$y_0^{\gcd (m_0,q-1)}+y_1^{\gcd (m_1,q-1)}+y_2^{\gcd (m_2,q-1)}+ 
y_3^{\gcd (m_3,q-1)}=0.$$ 
The degree, say $d^{'}$, of this surface now satisfies $q\equiv 1\pmod{d^{'}}$ 
and we can express the number of rational points in terms of Jacobi sums 
associated with a character of order $d^{'}$. If we carry out this calculation 
over sufficiently many finite extensions of $\FF_q$, we can obtain the 
zeta-function of $Y$ over $\FF_q$ with $q\not\equiv 1\pmod{d}$. Furthermore, 
this algorithm works also over a finite prime field $\FF_p$. This is what 
one can do in general to compute the $L$-series of $Y$ over $\QQ$.  
\end{rem} 

\begin{rem} 
It depends on the prime $(p)=\fp \cap \ZZ$ whether or not $\ytil_{i,\fp}$ 
is supersingular. The above condition for a supersingular prime $p$ 
can be phrased as follows: let $\zeta_d =e^{2\pi i/d}$ and write 
$\LL=\QQ(\zeta_d )$ for the $d$-th cyclotomic field over $\QQ$. Choose 
a prime $\wp \in \mbox{Spec}({\OO}_{\LL})$ over $p$ and assume $\wp  
\nmid d$. Then $\wp$ is unramified in $\LL /\QQ$, so 
that the inertia group of $\wp$ is trivial. Hence its decomposition 
group, $Z_{\wp}$, is isomorphic to $\mbox{Gal}(\FF_q /\FF_p)$, where 
$\FF_q \cong {\OO}_{\LL}/\wp$ and $q=p^f$. Let $\sigma_F$ be the Frobenius 
automorphism that generates $\mbox{Gal}(\FF_q /\FF_p)$. Then $Z_{\wp}$ is 
generated by $\sigma_F$ and it acts on $\LL$ as $\sigma_F (\zeta_d )=
\zeta_d^p$ of order $f$. When $p^r \equiv -1\pmod{d}$, $\sigma_F^r$ acts on 
$\QQ (\zeta_d )$ as 
$$\sigma_F^r (\zeta_d )=\zeta_d^{p^r} =\zeta_d^{-1} =\overline{\zeta_d}.$$ 
In other words, $Z_{\wp}$ contains the complex conjugate map restricted 
to $\QQ (\zeta_d )$. The converse is also true. Therefore $\ytil_{i,\fp}$ 
is supersingular if and only if $Z_{\wp}$ contains the complex conjugate. 
\end{rem} 

Next we deal with quasi-diagonal surfaces; threefolds will be discussed 
in the next section. 

\begin{lem} \label{weil} Let $k=\FF_q$ be a finite field of $q$ elements. 
Let $m_1,\cdots ,m_r$ be $r$ positive integers such that $q\equiv 1\pmod{m_i}$ 
for $1\leq i\leq r$. Write $W_k$ be an affine variety in $\Aff^r_k$ defined by 
the equation  
$$b_0+b_1x_1^{m_1}+\cdots +b_rx_r^{m_r}=0\quad \subset \Aff^r_k$$  
with $b_i\in k^{\times}$ $(0\leq i\leq r)$. Define 
$$\begin{array}{l} 
M=\mbox{\em lcm}\, (m_1,\cdots ,m_r) \\  
M_i=M/m_i  
\end{array}$$   
for $1\leq i\leq r$. Assume $q\equiv 1\pmod{M}$. Fix a character, $\chi$, of 
$k^{\times}$ of exact order $M$. Let $N(W)$ denote the number of $k$-rational 
points on $W_k$. Then 
$$N(W)=q^{r-1}+\sum_{\ba \in \fA_0} {\mathcal J} (\bb ,\ba )$$ 
where 
$$\begin{array}{l}  
\bb =(b_0,\cdots ,b_r) \\  
\fA_0 =\biggl\{ \ba =(a_0 ,\cdots ,a_r ) \Big| \ 
\begin{array}{l} 
a_i \in M_i\ZZ /M\ZZ ,\ a_i \neq 0\ (1\leq i\leq r),\ a_0 \in \ZZ /M\ZZ \\ 
\sum_{i=0}^r a_i =0 
\end{array} \biggr\} \\  
j(\ba )=\sum_{\substack{v_i\in k^{\times} \, (1\leq i\leq r) \\ 
1+v_1+ \cdots +v_r=0}} {\chi}^{a_1}(v_1)\cdots {\chi}^{a_r}(v_r) \\ 
{\mathcal J} (\bb ,\ba )=\chi^{-1}(b_0^{a_0}\cdots b_r^{a_r}) j(\ba ) 
\end{array}$$  
\end{lem}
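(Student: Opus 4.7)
My plan is to carry out the standard Weil character-sum computation. First I would parametrize the count by the intermediate variables $v_i := b_i x_i^{m_i}$, giving
$$N(W) = \sum_{\substack{v_1,\ldots,v_r \in k \\ v_1+\cdots+v_r = -b_0}} \prod_{i=1}^r \#\{x_i \in k : x_i^{m_i} = v_i/b_i\}.$$
Since $q \equiv 1 \pmod{M}$ and $\chi$ has exact order $M$, the character-theoretic identity
$$\#\{x \in k : x^m = u\} = \sum_{j=0}^{m-1} \chi^{jM/m}(u),$$
valid under the convention $\chi^0(0) = 1$ and $\chi^{k}(0) = 0$ for $k \neq 0$, decomposes each factor in the product, introducing exponents $a_i \in M_i\ZZ/M\ZZ$.

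Expanding the product and swapping the order of summation yields a sum indexed by tuples $(a_1,\ldots,a_r)$ with $a_i \in M_i\ZZ/M\ZZ$. The tuple $(0,\ldots,0)$ contributes the main term $q^{r-1}$, namely the number of solutions of $\sum v_i = -b_0$ in $k^r$. For a tuple in which the set $T := \{i : a_i = 0\}$ is a proper non-empty subset of $\{1,\ldots,r\}$, I would pick an $i_0 \in T$, use the linear constraint to eliminate $v_{i_0}$, and sum freely over $v_i \in k$ for $i \in T \setminus \{i_0\}$; the remaining inner sum factors into a product over $i \notin T$ of $\sum_{v_i \in k^\times} \chi^{a_i}(v_i/b_i)$, each of which vanishes by orthogonality since $\chi^{a_i}$ is non-trivial. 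Hence only tuples with $a_i \neq 0$ for every $i \in \{1,\ldots,r\}$ survive.

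For a surviving tuple I would substitute $v_i = b_0 w_i$, which converts the constraint $\sum v_i = -b_0$ into $1 + \sum w_i = 0$ and matches the shape of $j(\ba)$. Expanding $\chi^{a_i}(b_0 w_i/b_i) = \chi^{a_i}(b_0)\,\chi^{-a_i}(b_i)\,\chi^{a_i}(w_i)$ and collecting constants produces the factor $\chi^{\sum_{i=1}^r a_i}(b_0) \prod_{i=1}^r \chi^{-a_i}(b_i)$ times $j(\ba)$. Setting $a_0 := -\sum_{i=1}^r a_i \in \ZZ/M\ZZ$, so that the tuple $(a_0,a_1,\ldots,a_r)$ lies in $\fA_0$, this constant equals $\chi^{-1}(b_0^{a_0} b_1^{a_1} \cdots b_r^{a_r})$, and the contribution is exactly $\mathcal{J}(\bb,\ba)$. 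Summing over $\fA_0$ then yields the claimed identity.

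The only real obstacle I foresee is bookkeeping around the trivial-character convention at zero: one must ensure that the decomposition of $\#\{x^m = u\}$ is uniformly valid at $u = 0$, and that the orthogonality argument for partially trivial tuples is applied only to sums over $k^\times$. This is precisely why the definition of $\fA_0$ forbids $a_i = 0$ for $i \geq 1$ but leaves $a_0$ unrestricted.
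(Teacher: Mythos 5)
Your computation is correct and is precisely the standard character-sum argument that the paper itself does not spell out but delegates to Ireland--Rosen, Chapter 8: decompose the fibre count $\#\{x_i\colon x_i^{m_i}=v_i/b_i\}$ by characters of order dividing $m_i$, kill the partially trivial tuples by orthogonality, and normalize by $v_i=b_0w_i$ to land on $j(\ba)$ with the constant $\chi^{-1}(b_0^{a_0}\cdots b_r^{a_r})$, the index $a_0:=-\sum_{i\geq 1}a_i$ matching the definition of $\fA_0$. Your handling of the convention at $u=0$ (so that the main term $q^{r-1}$ comes out of the all-zero tuple and the nontrivial characters vanish at $0$) is exactly the point that needs care, and you have it right.
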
 

\begin{proof} See \cite{IR}, Chap. 8. 
\end{proof} 

\begin{prop}\label{zeta-qd2} Let $Y$ be a quasi-diagonal surface over 
$\FF_q$ defined by an equation 
$$y_0^{m_0}y_1+y_1^{m_1}+y_2^{m_2}+y_3^{m_3}=0$$
in $\PP^3(w_0,w_1,w_2,w_3)$ of degree $d$, where $d=w_0m_0+w_1=w_1m_1
=w_2m_2=w_3m_3$. Write 
$$\begin{array}{l}   
M=\mbox{\em lcm}\, (m_0,m_2,m_3)  \\ 
M_i=M/m_i \qquad (i=0,2,3)    
\end{array}$$   
and assume $q\equiv 1 \pmod{M}$. Fix a character, $\chi$, of $k^{\times}$ 
of exact order $M$. Then the zeta-function of $Y$ has the form   
$$Z(Y,t)=\frac{1}{(1-t)P_2(Y,t)(1-q^2t)}$$
where
\begin{align}
 & P_2(Y,t)=(1-qt) \prod_{a\in \fL} ( 1-q\chi^{aM_3}(-1)t) 
\prod_{{\mathbf a} \in {\fA_Y}} (1-j_{\fp} ({\mathbf a})t) \nonumber \\
 & \fL =\{ a\in \ZZ \mid 0<a<m_3,\ w_3a\equiv 0\pmod{w_2} \} \nonumber \\ 
 & \fA_Y =\biggl\{ {\mathbf a} =(a_0,a_1,a_2,a_3) \Big| 
\begin{array}{l}  
a_1\in \ZZ /M\ZZ ,\ a_i\in M_i\ZZ /M\ZZ \ (i=0,2,3), \\
a_i\neq 0 \ (0\leq i\leq 3),\ \sum_{i=0}^{3}a_i=0\ \text{\em 
and } \ a_0+m_1a_1=0
\end{array}
\biggr\} \nonumber \\
\end{align} 
\end{prop}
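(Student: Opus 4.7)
The plan is to count $\#Y(\FF_{q^n})$ for every $n\geq 1$ by a stratification according to the vanishing of $y_0$ and $y_1$, apply Lemma \ref{weil} on the open stratum, and then read off the zeta-function from the exponential defining $Z(Y,t)$. Concretely, I would write $Y=U\sqcup D_1\sqcup D_0$, where $U$ is the open locus with $y_0y_1\neq 0$, $D_1$ is the locus $y_1=0$ (on which the defining equation reduces to $y_2^{m_2}+y_3^{m_3}=0$ in $\PP^2(w_0,w_2,w_3)$, with $y_0$ free), and $D_0$ is the locus $y_0=0$, $y_1\neq 0$ (on which the equation is the weighted diagonal curve $y_1^{m_1}+y_2^{m_2}+y_3^{m_3}=0$).

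For the open stratum $U$, I would dehomogenize by $y_1=1$ (tracking the $\mu_{w_1}$ stabilizer on the weighted affine chart) to obtain the affine hypersurface
\begin{equation*}
1+y_0^{m_0}+y_2^{m_2}+y_3^{m_3}=0\ \subset \Aff^3,
\end{equation*}
which is exactly the setup of Lemma \ref{weil} with $r=3$, exponents $(m_0,m_2,m_3)$ of least common multiple $M$, and coefficients $b_i=1$. The lemma yields a count of the form $q^{2n}+\sum j(\ba)^n$ over an index set $\fA_0$. The coupling $a_0+m_1a_1\equiv 0$ appearing in the definition of $\fA_Y$ is forced when one re-homogenizes: the variable $y_1$ carries exponent $1$ in the term $y_0^{m_0}y_1$ and exponent $m_1$ in $y_1^{m_1}$, and compatibility of the associated characters gives precisely $a_0+m_1a_1\equiv 0\pmod{M}$. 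The two strata $D_0$ and $D_1$ are counted by the same lemma (or its classical special case for diagonal curves): $D_0$ contributes to the $(1-t)$ and the remaining $\fA_Y$-factors, while $D_1$ splits as a $y_0$-line bundle over a finite set on $\PP^1(w_2,w_3)$, and the congruence $w_3a\equiv 0\pmod{w_2}$ cutting out $\fL$ is exactly the condition for a root of $y_2^{m_2}+y_3^{m_3}=0$ to be defined on this weighted $\PP^1$; the extra sign twist $\chi^{aM_3}(-1)$ comes from evaluating the character on $-1$ when writing $y_2^{m_2}=-y_3^{m_3}$.

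Putting the three strata together gives
\begin{equation*}
\#Y(\FF_{q^n})=1+q^{2n}+\sum_{a\in \fL}\bigl(q\chi^{aM_3}(-1)\bigr)^n+\sum_{\ba\in\fA_Y} j_\fp(\ba)^n,
\end{equation*}
and substituting into the defining series of $Z(Y,t)$ converts each geometric series into the corresponding reciprocal linear factor. Separating the $(1-t)$ factor (from the point $1$), the $(1-q^2t)$ factor (from $q^{2n}$), and packaging the remaining factors into $P_2(Y,t)$ produces the asserted rational expression. The absolute-value claim $|j_\fp(\ba)|=q$ is Weil's bound on Jacobi sums at weight $2$, and is inherited by the characters $q\chi^{aM_3}(-1)$ trivially.

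The main obstacle will be bookkeeping, in two places. First, matching indexing conventions between the set $\fA_0$ produced by Lemma \ref{weil} (in which the index $a_0$ on the constant term $b_0$ is free in $\ZZ/M\ZZ$) and the set $\fA_Y$ in the statement, which enforces the coupling $a_0+m_1a_1=0$; this is where the quasi-diagonal nature (the appearance of $y_1$ in the monomial $y_0^{m_0}y_1$) genuinely enters and must be tracked carefully through the rehomogenization. Second, one must justify that the weighted projective count on the open chart equals the affine count divided by $|\mu_{w_1}|$ up to terms already absorbed in the boundary strata, using the assumption $q\equiv 1\pmod M$ to ensure that every affine solution has the expected weighted-projective $\mu_{w_1}$-orbit size on $U$.
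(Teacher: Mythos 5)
Your overall strategy (stratify by the vanishing of $y_1$, count the open part with Lemma \ref{weil}, read off $Z(Y,t)$ from the point counts) is the same as the paper's detailed combinatorial argument (given for the threefold case in Theorem \ref{zeta-qd3} and invoked in the proof of this proposition), but the step that carries all the content is handled incorrectly. You propose to treat the locus $y_1\neq 0$ by passing to the slice $y_1=1$ and "dividing by $|\bmu_{w_1}|$". Rational points of a $\bmu_{w_1}$-quotient are Frobenius-stable orbits, not orbits of rational points, so the count is not the affine count divided by $w_1$; concretely, the slice $y_1=1$ only meets those $\mathbb{G}_m$-orbits whose $y_1$-coordinate is a $w_1$-th power in $\FF_{q^n}^{\times}$, and it meets each of those in $\gcd(w_1,q^n-1)$ points, so for $w_1>1$ the slice both misses points of $Y$ and overcounts others. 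The assumption $q\equiv 1\pmod M$ says nothing about $w_1$ and does not repair this. The correct device is the one in the paper: work with the affine quasi-cone and use $N_{\nu}(\overline{Y})=1+(q^{\nu}-1)N_{\nu}(Y)$ (a $\mathbb{G}_m$-torsor/Lang argument), never a single affine slice.

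More seriously, setting $y_1=1$ destroys exactly the data that produces the asserted factorization. After that substitution, Lemma \ref{weil} with all coefficients $1$ returns one Jacobi sum for each admissible triple of exponents attached to $(y_0,y_2,y_3)$; there is no index $a_1$ left, and your appeal to "compatibility of the associated characters under rehomogenization" is not an argument that recovers it. Note that $\fA_Y$ can contain several tuples with the same $(a_0,a_2,a_3)$ but different $a_1$ (this happens whenever $\gcd(m_1,M)>1$, since $a_1$ is only constrained by $m_1a_1\equiv -a_0\pmod M$), and these give genuinely different Jacobi sums, so a count indexed by triples cannot match the statement. In the paper's argument the coupling arises mechanically: on the locus $y_1\neq 0$ of the cone one fixes $y_1$ and applies Lemma \ref{weil} with $y_1$ kept as a coefficient, $b_0=y_1^{m_1}$, $b_1=y_1$, $b_2=b_3=1$, producing the twist $\chi_{\nu}^{-1}(y_1^{a_0+m_1a_1})\,j_{\nu}(\ba)$; the subsequent sum over $y_1\in\FF_{q^{\nu}}^{\times}$ kills all terms except those with $a_0+m_1a_1\equiv 0\pmod M$ (by orthogonality of characters) and yields the factor $q^{\nu}-1$ that cancels against the cone-to-projective normalization. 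This is where the index $a_1$ and the condition $a_0+m_1a_1=0$ come from, and your proposal has no substitute for it. Your treatment of the $y_1=0$ stratum (the $\fL$-factor with the $\chi^{aM_3}(-1)$ twists) is in the right spirit, but the attribution of the $(1-t)$ factor to the stratum $y_0=0$, $y_1\neq 0$ is also off: that factor comes from the vertex term $1$ in $N_{\nu}(\overline{Y})=1+(q^{\nu}-1)N_{\nu}(Y)$, i.e.\ from $H^0$, not from a boundary curve.
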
 

\begin{proof}
Here we give an outline and cohomological explanation of the proof. 
A more detailed and combinatorial account of it (in the case 
of threefolds) may be found in Theorem \ref{zeta-qd3}. 

Note first that $Y$ is a Delsarte surface with matrix 
$$\left[ \begin{array}{cccc}
m_0 & 1 & 0 & 0 \\  
0 & m_1 & 0 & 0 \\ 
0 & 0 & m_2 & 0 \\
0 & 0 & 0 & m_3   
\end{array} \right] .$$  
The zeta-functions of such quasi-diagonal surfaces are computed in 
\cite{Go2} by a combinatorial argument. The main idea there is to 
split the surface $Y$ according to $y_1=0$ or $y_1\neq 0$. The subset 
corresponding to $y_1=0$ consists of $r=d/\mbox{lcm}\, (w_2,w_3)$ 
lines and this gives rise to a factor $\prod_{a\in \fL} 
( 1-q\chi^{aM_3}(-1)t)$ of $P_2(Y,t)$. The other factor of it  
comes from the diagonal surface of degree $M$. As described in \cite{Go3}, 
$Y$ is covered by the diagonal surface $F$ of degree $m_0m_1m_2m_3$ and 
there is a group action $\Gamma$ on $F$ so that $Y$ is birational to 
$F/\Gamma$. Computing the cohomology of $F/\Gamma$, we obtain the 
factor $\prod_{{\mathbf a} \in {\fA}} (1-j_{\fp} ({\mathbf a})t)$ of 
$P_2(Y,t)$.
\end{proof} 

\begin{rem} 
In general, the modulus $M$ above is different from the degree $d$ 
of the surface. Proposition \ref{zeta-qd2} shows that the character 
set $\mathfrak A$ of a quasi-diagonal surface can be embedded in  
${\fA}_M (1,1,1,1)$. 
\end{rem} 

\begin{cor} \label{cor7-5} Let $Y$ be a quasi-diagonal surface over a 
number field $K$ defined by an equation 
$$y_0^{m_0}y_1+y_1^{m_1}+y_2^{m_2}+y_3^{m_3}=0\ \subset 
\PP^3(w_0,w_1,w_2,w_3).$$  
Assume that $K$ contains the $M$-th cyclotomic field $\QQ (\zeta_M )$. 
Let $\fp$ be a prime of $K$ not dividing $d$. Then the assertions 
$(a)$, $(b)$ and $(c)$ of {\em Proposition \ref{prop7-4}} hold for 
$Y_{\fp}$ if we replace $d$ by $M$.  
\end{cor}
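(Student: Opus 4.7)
The plan is to reduce each of the three assertions to its diagonal analogue (Proposition \ref{prop7-4}) by invoking results already established for quasi-diagonal surfaces.

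For part (a), the hypothesis $K\supseteq \QQ(\zeta_M)$ immediately gives $q\equiv 1\pmod{M}$ for every prime $\fp$ of $K$, which is precisely the standing assumption of Proposition \ref{zeta-qd2}. So part (a) follows by citing that proposition directly, with $d$ replaced by $M$ in the relevant cyclotomic and character arguments. The only thing to remark is that the condition $\fp\nmid d$ also guarantees $\fp$ avoids the characteristics in which $Y_{\fp}$ could fail to be quasi-smooth (since the degrees $m_i$ divide $d$), so the reduction $Y_{\fp}$ is genuinely a quasi-diagonal surface to which Weil's counting method applies.

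For part (b), I would argue exactly as in Proposition \ref{prop7-4}(b). The surface $Y_{\fp}$ is quasi-smooth in $\PP^3(Q)$ with at most cyclic quotient singularities inherited from the ambient space, so by Proposition \ref{field-of-defn} and Corollary \ref{field-defn-mod} there is a toric crepant resolution $\widetilde{Y_{\fp}}\to Y_{\fp}$ defined over $\FF_q$. The exceptional locus $\EE$ is a union of chains of $\PP^1$'s fibered over the singular curves of $Y_{\fp}$; individual components may only be defined over an extension, but Lemma \ref{field-defn}(c) ensures the sums over Galois conjugates give an $\FF_q$-scheme $\EE$ of rational curves. Therefore $P_1(\EE,t)=P_3(\EE,t)=1$, the blow-up formula for zeta-functions yields $Z(\widetilde{Y_{\fp}},t)=Z(Y_{\fp},t)/P_2(\EE,t)$, and the desired factorization drops out.

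For part (c), the general decomposition $P_2(\widetilde{Y_{\fp}},t)=P_2(S,t)\,P_2(T,t)$ is formal from the Frobenius-stable orthogonal splitting $H^2(\widetilde{Y_{\fp}},\QQ_\ell(1))=A(\widetilde{Y_{\fp}})\oplus V(\widetilde{Y_{\fp}})$. To identify the transcendental factor, I would use that $Y$ is a Delsarte surface dominated by a diagonal surface $F$ of degree $m_0m_1m_2m_3$ (or, after reducing with $\Gamma$-invariants, of degree $M$); Goto's results in \cite{Go2}, \cite{Go3} establish the Tate conjecture for such $K3$ surfaces and show that $\widetilde{Y_{\fp}}$ is supersingular precisely when some power $p^r\equiv -1\pmod{M}$. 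In the supersingular case $P_2(T,t)=1$, and in the non-supersingular case the transcendental cycles correspond to the Galois orbit under $(\ZZ/M\ZZ)^\times$ of the weight vector, which picks out the Jacobi-sum factors indexed by $\MM_Q$.

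The main obstacle will be step (c): the character set $\fA_Y$ appearing in Proposition \ref{zeta-qd2} is not the full set $\fA_M(w_0,w_1,w_2,w_3)$ but a subset cut out by the extra linear constraint $a_0+m_1a_1=0$ coming from the quasi-diagonal monomial $y_0^{m_0}y_1$. One must check that the $(\ZZ/M\ZZ)^\times$-orbit $\MM_Q$ stays inside $\fA_Y$ and that the non-transcendental Jacobi-sum factors contribute to $P_2(S,t)$ with values of the form $q\chi^{\bullet}(-1)$ (matching the extra $\prod_{a\in\fL}(1-q\chi^{aM_3}(-1)t)$ factor from Proposition \ref{zeta-qd2}); this is the genuine combinatorial work underlying the statement, and it is where one must be most careful in transferring the diagonal-case argument.
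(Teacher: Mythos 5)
Your proposal is correct and follows exactly the route the paper intends: the corollary is stated without proof precisely because it is meant to follow by combining Proposition \ref{zeta-qd2} (which already supplies $q\equiv 1\pmod M$ under $K\supseteq\QQ(\zeta_M)$ and the Jacobi-sum form of $P_2$) with the resolution results (Proposition \ref{field-of-defn}, Corollary \ref{field-defn-mod}) and the verbatim arguments of Proposition \ref{prop7-4}(b),(c) with $d$ replaced by $M$, including Goto's Tate-conjecture and supersingularity criteria for Delsarte/quasi-diagonal surfaces and the observation that the $\fL$-factors (coming from the lines $y_1=0$) are algebraic while the weight-motive orbit, which is stable under $(\ZZ/M\ZZ)^{\times}$ and hence stays inside $\fA_Y$, gives the transcendental part. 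The only slip is immaterial: for a quasi-smooth surface in $\PP^3(Q)$ the singular locus is a finite set of cyclic quotient points (not curves), so $\EE$ is a configuration of rational curves over those points, which still gives $P_1(\EE,t)=1$ as you use.
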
 

\begin{ex}\label{zeta-y11} Let $Y_{11}$ be the $K3$ surface over $K$ 
constructed in Proposition \ref{prop5-1}:   
$$Y_{11}:\ y_0^{12}y_1+y_1^{11}+y_2^3+y_3^2=0 \subset \PP^3(5,6,22,33).$$ 
For the values defined in Proposition \ref{zeta-qd2}, we find $M=12$ and 
$r=1$. If $\fp \nmid 2\cdot 3\cdot 5\cdot 11$, then the zeta-function 
$Z(Y_{11,\fp},t)$ has the form
\begin{equation*}
Z(Y_{11,\fp},t)=\frac{1} {(1-t)P_2(Y_{11,\fp},t)(1-q^2t)}
\end{equation*}
where $P_2(Y_{11,\fp},t)$ is a polynomial of degree $5$ and decomposed as 
$$P_2(Y_{11,\fp},t)=(1-qt)\prod_{{\mathbf a} \in {\fA}_{Y_{11}}} 
(1-j_{\fp} ({\mathbf a})t)$$ 
with ${\fA}_{Y_{11}}=\{ (1,1,4,6), (5,5,8,6), (7,7,4,6), (11,11,8,6) \} \subset 
{\fA}_{12} (1,1,1,1)$. Furthermore, if $Y_{11,\fp}$ is not supersingular, then 
$$P_2(T_{Y_{11,\fp}},t)=\prod_{{\mathbf a} \in {\fA}_{Y_{11}}}  
(1-j_{\fp} ({\mathbf a})t).$$ 
\end{ex} 

\section{Zeta-functions of $K3$-fibered Calabi--Yau threefolds, II} 
\label{sect10}

Now we calculate the zeta-functions of Calabi--Yau threefolds. In 
order to have a simple exposition, we slightly raise 
the field of definition and express the zeta-function in a single 
form (independent of the choice of prime reduction). 

As we noted in Section \ref{sect2}, we may bring in non-trivial 
coefficients $c_i$ to the defining equations of our hypersurfaces. 
The geometry will be the same, but the arithmetic will be different. 
We will give more details about the differences in a subsequent paper. 

We begin with some notations to describe singular loci and resolution 
of singularities for our threefolds. 

\begin{defn} \label{index-sing} 
Let $X$ be a weighted hypersurface in $\PP^4(w_0,w_1,w_2,w_3,w_4)$ 
over $k$ defined by $f=0$. Let $I_1$ be a set of indecies which 
parameterizes one-dimensional singular loci of $X$, namely 
$$I_1=\{ \bi =(i,j,k) \mid \gcd (w_i,w_j,w_k) \geq 2\}.$$ 
For each $\bi =(i,j,k)$, write $\hat{\bi} =\{ 0,1,2,3,4 \} \setminus 
\{ i,j,k\}$ and let $C_{\bi}$ be a subscheme of $X$ over $k$ defined by 
$$C_{\bi}:\ f=0 \mbox{ and } x_h =0\ (h\in \hat{\bi}).$$ 
Similarly, let $I_0$ be a set of indecies which parameterizes 
zero-dimensional singular loci of $X$, namely 
$$I_0=\{ \bj =(i,j) \mid \gcd (w_i,w_j) \geq 2\}.$$ 
For each $\bj =(i,j)$, write $\hat{\bj} =\{ 0,1,2,3,4 \} \setminus 
\{ i,j\}$ and let $P_{\bj}$ be a subscheme of $X$ over $k$ defined by 
$$P_{\bj}:\ f=0 \mbox{ and } x_h =0\ (h\in \hat{\bj}).$$ 
Finally, for the case of quasi-diagonal hypersurfaces, we put  
$$P_0 :=(1:0:0:0:0).$$ 
\end{defn} 

In most cases, $C_{\bi}$ are irreducible (i.e. varieties over $k$) and 
$P_{\bj}$ are reducible. As a scheme, $P_{\bj}$ is defined over $k$; 
but, if we decompose it as a set of points, each point may not be 
defined over $k$. 

It follows from the discussion of Section \ref{sect6} that if $X$ is 
a weighted diagonal hypersurface, then $\mbox{Sing}\, (X)$ is a scheme 
over $k$ and we have   
$$\mbox{Sing}\, (X) =\bigcup_{\bi \in I_1} C_{\bi} \cup 
\bigcup_{\bj \in I_0} P_{\bj}.$$ 

\begin{lem} \label{zeta-e-div} 
Let $X$ be a quasi-smooth weighted hypersurface in $\PP^4(Q)$ over $K$. 
Let $\fp$ be a prime of $K$ and $X_{\fp}$ be the reduction of $X$ 
modulo $\fp$. Put $q=\# (\OO_K /\fp)$. Assume that $X_{\fp}$ is quasi-smooth. 
Then there exists a crepant resolution $\xtil_{\fp}$ of $X_{\fp}$ that is 
obtained by applying a partial toroidal desingularization to $\PP^4(Q)$. Let 
$\widetilde{C}_{\bi}$ {\rm (}resp. $\widetilde{P}_{\bj}${\rm )} be the 
strict transform of $C_{\bi}$ {\rm (}resp. $P_{\bj}${\rm )} 
with respect to $\xtil_{\fp} \lra X_{\fp}$. Write $\widetilde{C}_{\bi}^{\circ}$ 
{\rm (}resp. $\widetilde{P}_{\bj}^{\circ}${\rm )} for the open subset 
of $\widetilde{C}_{\bi}$ (resp. $\widetilde{P}_{\bj}$) defined by 
subtracting the proper transform of $C_{\bi}$ {\rm (}resp. $P_{\bj}${\rm )}. 
Then we have 
$$Z(\widetilde{C}_{\bi}^{\circ} ,t)= 
\frac{P_1(C_{\bi},qt)^{n_{\bi}}}{(1-qt)^{n_{\bi}}(1-q^2t)^{n_{\bi}}}, 
\qquad 
Z(\widetilde{P}_{\bj}^{\circ} ,t)=\frac{1}{P_0(P_{\bj},qt)^{e_{\bj}} 
P_0(P_{\bj},q^2t)^{e_{\bj}}}$$ 
where $n_{\bi}$ is the number of ruled surfaces over ${C}_{\bi}$ and 
${e_{\bj}}$ is the number of projective planes over ${P}_{\bj}$. 
\end{lem}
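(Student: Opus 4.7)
The plan is to read off both zeta functions from the explicit combinatorial description of the exceptional loci in Lemmas~\ref{resol-dim1} and~\ref{resol-dim0}, combined with the point counts already recorded in Corollaries~\ref{zeta-resol1} and~\ref{zeta-resol0} and the Galois-theoretic information of Lemma~\ref{field-defn}(c).

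For $\widetilde{C}_{\bi}^{\circ}$, Lemma~\ref{resol-dim1} says the exceptional divisor over $C_{\bi}$ is a chain of $n_{\bi}$ ruled surfaces $C_{\bi}\times\PP^1$; removing the proper transform of $C_{\bi}$ together with the sections along which adjacent sheets meet leaves $n_{\bi}$ disjoint copies of $C_{\bi}\times\Aff^1$, giving $\#\widetilde{C}_{\bi}^{\circ}(\FF_{q^n}) = n_{\bi}\,q^n\,\#C_{\bi}(\FF_{q^n})$, in agreement with Corollary~\ref{zeta-resol1}. Exponentiating and substituting $Z(C_{\bi},qt)=P_1(C_{\bi},qt)/((1-qt)(1-q^2t))$ yields
\begin{equation*}
Z(\widetilde{C}_{\bi}^{\circ},t)=Z(C_{\bi},qt)^{n_{\bi}}=\frac{P_1(C_{\bi},qt)^{n_{\bi}}}{(1-qt)^{n_{\bi}}(1-q^2 t)^{n_{\bi}}}.
\end{equation*}

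For $\widetilde{P}_{\bj}^{\circ}$, Lemma~\ref{resol-dim0} shows that the exceptional divisor above each geometric component of $P_{\bj}$ is a union of $e_{\bj}$ copies of $\PP^2$ meeting along exceptional lines. After removing the proper transform of $P_{\bj}$ and the intersections of these planes with adjacent exceptional strata (the lines between planes and the sections where the ruled surfaces from the $C_{\bi}$-resolutions attach), one is left with $e_{\bj}$ disjoint copies of $\PP^2\setminus\{\mathrm{pt}\}$ per geometric component, reproducing the count $e_{\bj}(q^n+q^{2n})$ of Corollary~\ref{zeta-resol0}. By Lemma~\ref{field-defn}(c), Frobenius permutes the $e_{\bj}|P_{\bj}|$ planes in $e_{\bj}$ orbits each of size $|P_{\bj}|$; the characteristic polynomial of Frobenius on $H^{2i}$ of such an orbit is that of a cyclic permutation scaled by $q^i$, which is precisely $P_0(P_{\bj},q^i t)$. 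Taking the product over the $e_{\bj}$ orbits for $i=1,2$ gives
\begin{equation*}
Z(\widetilde{P}_{\bj}^{\circ},t)=\frac{1}{P_0(P_{\bj},qt)^{e_{\bj}}\,P_0(P_{\bj},q^2 t)^{e_{\bj}}}.
\end{equation*}

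The main obstacle is the final step: verifying that Frobenius acts on the $e_{\bj}|P_{\bj}|$ planes as $e_{\bj}$ independent cyclic permutations rather than as a more entangled permutation. This relies on the observation that the toric labels of the $e_{\bj}$ interior lattice points in the triangle $ABC$ of Lemma~\ref{resol-dim0} are intrinsic to the local singularity type and hence preserved by the geometric Frobenius, which permutes the geometric points of $P_{\bj}$ while acting trivially on the combinatorial data of the toric subdivision. Once this orbit decomposition is in hand, the calculation reduces to the elementary identity $\det(I - q^i t\,\sigma) = 1 - (q^i t)^{|P_{\bj}|}$ for $\sigma$ the $|P_{\bj}|$-cycle, and the claimed zeta-function formula follows.
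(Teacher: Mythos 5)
Your proof is correct and follows essentially the same route as the paper's: read off the point counts of the exceptional strata from Lemmas~\ref{resol-dim1} and~\ref{resol-dim0} (via Corollaries~\ref{zeta-resol1} and~\ref{zeta-resol0}) together with the field-of-definition information of Lemma~\ref{field-defn}, then exponentiate. The paper's proof is terser, merely quoting the counts $n_{\bi}q^{\nu}N_{\nu}(C_{\bi})$ and $e_{\bj}(q^{\nu}+q^{2\nu})$ and asserting the zeta-functions follow; you spell out the Frobenius-equivariance that this relies on, which is a useful addition. One small imprecision: you say Frobenius permutes the $e_{\bj}|P_{\bj}|$ planes ``in $e_{\bj}$ orbits each of size $|P_{\bj}|$'' and invoke the identity $\det(I-q^it\sigma)=1-(q^it)^{|P_{\bj}|}$ for a single $|P_{\bj}|$-cycle. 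This implicitly assumes $P_{\bj}$ is a single Galois orbit, which need not hold (the geometric points of $P_{\bj}$ can split into several Frobenius orbits). The correct — and in fact slightly simpler — statement is just what you observe one sentence earlier: because the interior lattice points of the triangle are intrinsic combinatorial data of the local singularity, Frobenius acts on the set of $\PP^2$'s attached to each fixed lattice-point slot in the same way as it acts on the geometric points of $P_{\bj}$, so each slot contributes a factor $P_0(P_{\bj},q^i t)$ to the characteristic polynomial on $H^{2i}$ regardless of the orbit decomposition of $P_{\bj}$. With that minor rephrasing the argument is complete and matches the paper.
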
 

\begin{proof} 
It follows from Lemma \ref{field-defn} that $\widetilde{C}_{\bi}^{\circ}$ 
consists of $n_{\bi}q^{\nu}N_{\nu}(C_{\bi})$ rational points over the 
$\nu$-th extension of $\OO_K /\fp$. Similarly, Lemma \ref{zeta-resol0} 
implies that $\widetilde{P}_{\bj}^{\circ}$ acquires $e_{\bj}(q^{\nu} 
+q^{2\nu})$ rational points. Hence their zeta-functions are computed 
as above. 
\end{proof} 

\begin{thm}\label{thm7-5} 
Let $K$ be a number field. Let $X$ be a singular Calabi--Yau threefold 
over $K$ of diagonal type constructed in {\em Section $4$} by the equation 
$$z_0^{m_0}+z_1^{m_1}+z_2^{m_2}+z_3^{m_3}+z_4^{m_4}=0$$ 
in $\PP^4(w_0,w_1,w_2,w_3,w_4)$ of degree $d=w_0+\cdots +w_4$. Assume 
that $K$ contains the $d$-th cyclotomic field $\QQ (\zeta_d )$. Let 
$\fp$ be a prime of $K$ not dividing $d$. Write $X_{\fp}$ for the 
reduction of $X$ modulo $\fp$ and put $\FF_q :=\OO_K /\fp$ with 
$q=\mbox{\em Norm}\,{\fp}$. Then the following assertions hold. 

$(a)$ The zeta-function $Z(X_{\fp},t)$ has the form
\begin{equation*}
Z(X_{\fp}, t)=\frac{P_3(X_{\fp},t)}{(1-t)(1-qt)(1-q^2t)(1-q^3t)} 
\end{equation*}
where $P_3(X_{\fp},t)$ is a polynomial of integer coefficients 
whose reciprocal roots have the complex absolute value $q^{3/2}$ and 
$$P_3(X_{\fp},t)=\prod_{{\mathbf a} \in {\fA}_d(Q)} 
(1-j_{\fp} ({\mathbf a})t)$$ 
with ${\fA}_d (Q)={\fA}_d(w_0,w_1,w_2,w_3,w_4)$. 

$(b)$ There exists a crepant resolution $\xtil_{\fp}$ of $X_{\fp}$ 
such that $\xtil_{\fp}$ is obtained by applying a toroidal 
desingularization to $\PP^4(Q)$ and $\xtil_{\fp}$ is a Calabi--Yau 
threefold over $\FF_q$. 

$(c)$ With the notation of $(b)$ and {\rm Lemma \ref{zeta-e-div}}, 
we have 
$$Z(\xtil_{\fp},t)=\frac{P_3(\xtil_{\fp},t)}{(1-t)P_2(\xtil_{\fp},t)
P_4(\xtil_{\fp},t)(1-q^3t)}$$ 
with 
\begin{align} 
P_2(\xtil_{\fp},t) & =(1-qt) 
\prod_{\bi \in I_1} (1-qt)^{n_{\bi}} 
\prod_{\bj \in I_0} P_0(P_{\bj},qt)^{e_{\bj}} \nonumber \\ 
P_3(\xtil_{\fp},t) & =\prod_{{\mathbf a} \in {\fA}_d(Q)} 
(1-j_{\fp} ({\mathbf a})t) 
\prod_{\bi \in I_1} P_1(C_{\bi},qt)^{n_{\bi}} \nonumber \\ 
P_4(\xtil_{\fp},t) & =P_2(\xtil_{\fp},qt) \nonumber 
\end{align} 
\end{thm}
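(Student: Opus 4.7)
The plan has three parts matching (a), (b), (c).

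\textbf{Part (a)} is a direct application of Weil's classical Jacobi sum method. Since $K$ contains $\QQ(\zeta_d)$, every good prime $\fp$ satisfies $q\equiv 1\pmod d$, so the character-sum expansion used in the proof of Proposition \ref{prop7-4}(a) for surfaces applies verbatim to the diagonal threefold: $\#X_\fp(\FF_{q^n})$ equals $1+q^n+q^{2n}+q^{3n}$ plus a sum of $n$-th powers of Jacobi sums $j_\fp(\ba)^n$ indexed by $\ba\in\fA_d(Q)$. Exponentiating the logarithmic series gives the claimed expression for $P_3(X_\fp,t)$, and Deligne's Weil bound yields that the reciprocal roots have absolute value $q^{3/2}$.

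\textbf{Part (b)} is immediate from Proposition \ref{field-of-defn} and Corollary \ref{field-defn-mod}. Because $X$ is quasi-smooth with trivial dualizing sheaf, Proposition \ref{field-of-defn} produces a crepant toroidal resolution $\widetilde X\to X$ over $K$. Since $\fp\nmid d$ and each $w_i\mid d$, we have $\fp\nmid w_i$ for all $i$; hence Corollary \ref{field-defn-mod} gives $\widetilde X\bmod\fp\cong\widetilde{X_\fp}$, a smooth projective Calabi--Yau threefold over $\FF_q$.

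\textbf{Part (c).} I would stratify
\[ \widetilde X_\fp \;=\; \bigl(X_\fp\setminus\mathrm{Sing}(X_\fp)\bigr) \;\sqcup\; \bigsqcup_{\bi\in I_1}\widetilde{C}_{\bi}^{\circ} \;\sqcup\; \bigsqcup_{\bj\in I_0}\widetilde{P}_{\bj}^{\circ}, \]
apply part (a) to the first term, use Lemma \ref{zeta-e-div} to evaluate the zeta-functions of the exceptional strata, and multiply the results together. The identification of the pieces with the claimed $P_i$'s follows K\"unneth-style on each exceptional component: the $\PP^1$-factor on each exceptional ruled surface $C_\bi\times\PP^1$ contributes $(1-qt)$ to $P_2$ via $H^0(C_\bi)\otimes H^2(\PP^1)$ and contributes $P_1(C_\bi,qt)$ to $P_3$ via $H^1(C_\bi)\otimes H^2(\PP^1)$; each exceptional $\PP^2$ over $P_\bj$ contributes $(1-qt)$ to $P_2$ via its hyperplane class and $(1-q^2t)$ to $P_4$ via its top class. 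The final equality $P_4(\widetilde X_\fp,t)=P_2(\widetilde X_\fp,qt)$ then follows from Poincar\'e duality on the smooth proper Calabi--Yau threefold $\widetilde X_\fp$.

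\textbf{Main obstacle.} The bookkeeping around the zero-dimensional singularities $P_\bj$ is the delicate point: these points typically lie on intersections of several one-dimensional singular loci $C_\bi$, so the exceptional fibre above $P_\bj$ includes both the $e_\bj$ projective planes and the portions of the ruled surfaces lying above $P_\bj$. I must verify that the opens $\widetilde{C}_{\bi}^{\circ}$ and $\widetilde{P}_{\bj}^{\circ}$ appearing in Lemma \ref{zeta-e-div} are defined so as to give an actual disjoint stratification with no double counting at these intersections. Once this is settled, the factorization into $P_2$, $P_3$, $P_4$ is a matter of collecting the resulting Frobenius eigenvalues by weight.
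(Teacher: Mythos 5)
Your proposal follows essentially the same route as the paper: part (a) is Weil's classical Jacobi-sum computation for the diagonal threefold with $q\equiv 1\pmod d$, part (b) rests on Proposition \ref{field-of-defn}/Corollary \ref{field-defn-mod} (which is what Lemma \ref{zeta-e-div} invokes), and part (c) multiplies $Z$ of the singular model by the zeta-functions of the exceptional strata from Lemma \ref{zeta-e-div}, exactly as in the paper, with $P_3(\widetilde{P}_{\bj}^{\circ},t)=1$ and the Galois action on the points of $P_{\bj}$ packaged in $P_0(P_{\bj},t)$ rather than a bare $(1-qt)$ per plane.

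One caveat on (c), which is precisely the ``main obstacle'' you flag: the stratification you write, with first stratum $X_{\fp}\setminus\mathrm{Sing}(X_{\fp})$, is not the decomposition actually used. The paper's identity is multiplicative on point counts with first factor $Z(X_{\fp},t)$ itself: $\#\xtil_{\fp}(\FF_{q^{\nu}})=\#X_{\fp}(\FF_{q^{\nu}})+\sum_{\bi}n_{\bi}q^{\nu}N_{\nu}(C_{\bi})+\sum_{\bj}e_{\bj}(q^{\nu}+q^{2\nu})N_{\nu}(P_{\bj})$, i.e.\ the points of the singular loci are charged to $X_{\fp}$ (they correspond to the strict transforms in $\xtil_{\fp}$), while $\widetilde{C}_{\bi}^{\circ}$ and $\widetilde{P}_{\bj}^{\circ}$ count only the genuinely new points (Corollaries \ref{zeta-resol1} and \ref{zeta-resol0}); this is how the double-counting at the intersections is avoided. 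As written, applying part (a) to a stratum from which $\mathrm{Sing}(X_{\fp})$ has been deleted is internally inconsistent — you would either undercount by $\#\mathrm{Sing}$ or overcount by reinstating it — though the two slips cancel and your final factorization agrees with the paper's.
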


\begin{proof} 
(a) Since $X_{\fp}$ is a weighted diagonal threefold over $\FF_q$ and 
$q\equiv 1\pmod{d}$, $Z(X_{\fp},t)$ can be computed by Weil's classical 
method (see \cite{Go1} for details). 

(b) The existence of $\xtil_{\fp}$ is proven in Lemma \ref{zeta-e-div} 
and its Calabi--Yau property follows from $d=w_0+\cdots +w_4$. 

(c) The assertions are deduced from Lemma \ref{zeta-e-div} by noting 
that 
\begin{align} 
P_2(\xtil_{\fp},t) & =P_2(X_{\fp},t)
\prod_{\bi \in I_1} P_2(\widetilde{C}_{\bi}^{\circ}, t) 
\prod_{\bj \in I_0} P_2(\widetilde{P}_{\bj}^{\circ}, t) \nonumber \\ 
P_3(\xtil_{\fp},t) & =P_3(X_{\fp},t)
\prod_{\bi \in I_1} P_3(\widetilde{C}_{\bi}^{\circ}, t) 
\prod_{\bj \in I_0} P_3(\widetilde{P}_{\bj}^{\circ}, t)  \nonumber 
\end{align} 
and $P_3(\widetilde{P}_{\bj}^{\circ}, t)=1$.  
\end{proof} 

\begin{rem} 
As we noted in Remark \ref{zeta-over-Fp}, the assumption $K\supset \QQ (\zeta_d )$ 
enables us to have $q\equiv 1\pmod{d}$ for every prime $\fp$. If we relax 
this assumption, then we can still calculate the number of $\OO_K/\fp$-rational 
points on $X_{\fp}$ via the threefold  
$$z_0^{\gcd (m_0,q-1)}+z_1^{\gcd (m_1,q-1)}+z_2^{\gcd (m_2,q-1)}+
z_3^{\gcd (m_3,q-1)}+z_4^{\gcd (m_4,q-1)}=0.$$ 
Since the degree, $d^{'}$, of this threefold satisfies $q\equiv 1\pmod{d^{'}}$, 
we can express the number of rational points in terms of Jacobi sums. We 
carry out this calculation over sufficiently many finite extensions of 
$\FF_q$ to obtain $Z(X_{\fp},t)$ with $q\not\equiv 1\pmod{d}$. Further, 
this algorithm works also over $\FF_p$ and one can determine the $L$-series 
of $X$ over $\QQ$ (cf. Lemma \ref{lem7-2e}). More details of it will be 
discussed in a subsequent paper. 
\end{rem} 

Next we discuss the quasi-diagonal threefolds. The description is almost 
identical with that of the diagonal case. 

\begin{thm}\label{zeta-qd3} Let $X$ be a quasi-diagonal threefold over 
$\FF_q$ defined by an equation 
$$z_0^{m_0}z_1+z_1^{m_1}+z_2^{m_2}+z_3^{m_3}+z_4^{m_4}=0$$
in $\PP^4(k_0,k_1,k_2,k_3,k_4)$ of degree $d$, where $d=k_0m_0+k_1=k_1m_1
=k_2m_2=k_3m_3=k_4m_4$. Let $C$ be a weighted diagonal curve of degree $d$ 
in $\PP^2(k_2,k_3,k_4)$ defined by 
$$z_2^{m_2}+z_3^{m_3}+z_4^{m_4}=0.$$
Write 
$$\begin{array}{l}   
M=\mbox{\em lcm}\, (m_0,m_2,m_3,m_4)  \\ 
M_i=M/m_i \qquad (i=0,2,3,4).    
\end{array}$$   
Let $Z(C,t)=P_1(C,t)/(1-t)(1-qt)$ denote the zeta-function of $C$ over $\FF_q$. 
Assume $q\equiv 1 \pmod{M}$. Then the zeta-function of $X$ has the form   
$$Z(X,t)=\frac{P_3(X,t)}{(1-t)(1-qt)(1-q^2t)(1-q^3t)}$$
where
$$P_3(X,t)=P_1(C,qt)\prod_{{\mathbf a} \in {\fA}_X} (1-j_{\fp} ({\mathbf a})t)$$ 
$$\fA_X =\biggl\{ {\mathbf a} =(a_0,\cdots ,a_4) \Big| 
\begin{array}{l}  
a_1\in \ZZ /M\ZZ ,\ a_i\in M_i\ZZ /M\ZZ \ (i=0,2,3,4), \\
a_i\neq 0 \ (0\leq i\leq 4),\ \sum_{i=0}^{4}a_i=0\ \text{\em 
and } \ a_0+m_1a_1=0
\end{array}
\biggr\}$$ 
\end{thm}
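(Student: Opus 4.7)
The plan is to follow the template of Proposition~\ref{zeta-qd2} for surfaces, stratifying $X$ by whether $z_1$ vanishes and computing the two strata separately.

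First I would set $X_0 := X \cap \{z_1 = 0\}$ and $X_1 := X \cap \{z_1 \ne 0\}$. On $X_0$ the defining equation collapses to $z_2^{m_2} + z_3^{m_3} + z_4^{m_4} = 0$ in $\PP^3(k_0,k_2,k_3,k_4)$, which is the projective cone over the curve $C \subset \PP^2(k_2,k_3,k_4)$ with vertex $P_0 = (1:0:0:0)$. A direct count gives $\#X_0(\FF_{q^n}) = 1 + q^n\,\#C(\FF_{q^n})$, so
\begin{equation*}
Z(X_0,t) = \frac{Z(C,qt)}{1-t} = \frac{P_1(C,qt)}{(1-t)(1-qt)(1-q^2t)}.
\end{equation*}
This accounts for the $P_1(C,qt)$ factor in the stated $P_3(X,t)$, together with three of the four denominator factors; motivically it is the contribution of $H^1(C)(-1)$ inside $H^3$ of the resolved cone (a $C \times \PP^1$-type piece), which is exactly what one expects from the stratum that looks generically like $C$ times an $\Aff^1$ of $z_0$-values.

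Next I would analyze $X_1$ by passing to the affine chart $z_1 = 1$, modulo the $\mu_{k_1}$-action on the remaining coordinates. Here the equation reads
\begin{equation*}
z_0^{m_0} + 1 + z_2^{m_2} + z_3^{m_3} + z_4^{m_4} = 0 \subset \Aff^4,
\end{equation*}
which is exactly the shape to which Lemma~\ref{weil} applies with $r = 4$ and $b_0 = 1$, $b_i = 1$ for $i \ge 1$. Under the congruence hypothesis $q \equiv 1 \pmod{M}$ the lemma produces $\#X_1(\FF_{q^n}) = q^{3n} + \sum_{\mathbf a} \mathcal{J}(\mathbf 1, \mathbf a)^n$, where the sum runs over the index set $\mathfrak A_0$ of characters $(a_0, a_2, a_3, a_4)$ subject to the standard Delsarte conditions. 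The constraint $a_0 + m_1 a_1 = 0$ in the stated $\mathfrak A_X$ arises by tracking the $\mu_{k_1}$-action used to descend from the chart $z_1 = 1$ to $X_1$: the weight of the monomial $z_0^{m_0}z_1$ under $\mu_{k_1}$ ties the $z_0$-character $a_0$ to an $a_1$-character on $z_1$ via the relation $a_0 = -m_1 a_1$ (equivalently, the coordinate $z_1$ carries a compensating phase $\chi^{a_1}$ that is forced by $a_0 + \sum_{i\ne 0} a_i \equiv 0$ and the shape of the mixed monomial).

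Finally I would assemble $\#X(\FF_{q^n}) = \#X_0(\FF_{q^n}) + \#X_1(\FF_{q^n})$, pass to zeta-functions, and read off
\begin{equation*}
Z(X,t) = \frac{P_1(C,qt)\,\prod_{\mathbf a \in \mathfrak A_X}(1 - j_{\fp}(\mathbf a)t)}{(1-t)(1-qt)(1-q^2t)(1-q^3t)}.
\end{equation*}
The main obstacle I anticipate is the careful bookkeeping in Step~2: one must verify that restricting Lemma~\ref{weil}'s affine count to the $\mu_{k_1}$-invariant subspace selects precisely the index set $\mathfrak A_X$ described in the statement, and not a larger or smaller collection. This requires pinning down which characters on $\Aff^4$ descend to $X_1$, and showing that the extra relation $a_0 + m_1 a_1 = 0$ together with $\sum_{i=0}^{4} a_i = 0$ and the divisibility $a_i \in M_i\ZZ/M\ZZ$ exactly parameterizes the Frobenius eigenvalues on the weight-$3$ part of $H^*(X)$. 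Once that combinatorial identification is nailed down, matching with $P_3(X,t)$ is immediate, and the fact that the $X_0$-contribution and the $X_1$-contribution do not overlap (they are complementary strata) ensures no double-counting.
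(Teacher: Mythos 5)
Your stratification by $z_1=0$ versus $z_1\neq 0$ matches the paper's in spirit, and your count for the closed stratum is correct: the paper works with the affine quasi-cone $\overline{X}$ via $N_{\nu}(\overline{X})=1+(q^{\nu}-1)N_{\nu}(X)$ and splits off $V=\{z_1=0\}$, and your formula $\#X_0(\FF_{q^n})=1+q^n\,\#C(\FF_{q^n})$ reproduces exactly the paper's contribution $q^{\nu}\{q^{2\nu}-(q^{\nu}-1)\sum_i\alpha_i^{\nu}\}$, hence the factor $P_1(C,qt)$.

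The genuine gap is in your Step 2, which is where the whole content of the theorem sits. The paper never passes to the chart $z_1=1$: it keeps $z_1$ as a varying element of $k_{\nu}^{\times}$ inside the quasi-cone, applies Lemma \ref{weil} to each affine slice $U(z_1)$ with coefficients $b_0=z_1^{m_1}$, $b_1=z_1$, and then the inner sum $\sum_{z_1\in k_{\nu}^{\times}}\chi_{\nu}^{-1}(z_1^{a_0+m_1a_1})$, which equals $q^{\nu}-1$ or $0$, is precisely what produces the selection rule $a_0+m_1a_1\equiv 0$ and hence the set $\fA_X$. In your version, after setting $z_1=1$ all coefficients are $1$, Lemma \ref{weil} returns the full character set $\fA_0$ (there is no $a_1$ left anywhere), and the selection of $\fA_X$ is pushed entirely into the phrase ``tracking the $\mu_{k_1}$-action,'' which you do not carry out. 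Worse, $X_1$ is the quotient of the affine hypersurface $\{z_0^{m_0}+1+z_2^{m_2}+z_3^{m_3}+z_4^{m_4}=0\}$ by $\mu_{k_1}$ acting with weights $(k_0,k_2,k_3,k_4)$, and rational points of a finite quotient are not orbits of rational points; so your displayed identity $\#X_1(\FF_{q^n})=q^{3n}+\sum_{\fA_0}\mathcal{J}(\mathbf{1},\ba)^n$ is a count of the covering affine variety, not of $X_1$. To repair your route you would need the invariant-cohomology (twisted Lefschetz) count for the quotient together with an explicit verification that the $\mu_{k_1}$-invariant characters are exactly those $(a_0,a_2,a_3,a_4)$ for which $a_1:=-(a_0+a_2+a_3+a_4)$ is nonzero and satisfies $a_0+m_1a_1=0$ --- which is exactly the combinatorial identification you defer as ``the main obstacle'' --- or, more simply, adopt the paper's device of summing over the fiber variable $z_1$ in the quasi-cone, which yields the constraint directly with no descent argument at all; one should also fix the sign convention, since in the paper the surviving terms enter as $q^{3\nu}-\sum_{\ba\in\fA_X}j_{\fp}(\ba)^{\nu}$, consistent with the numerator $\prod_{\ba\in\fA_X}(1-j_{\fp}(\ba)t)$.
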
 

\begin{rem} 
The main differences of zeta-functions between diagonal and quasi-diagonal 
threefolds are $P_3(X,t)$ and the modulus for $\fA$. Since $M$ is often 
smaller than $d$, the factor of $P_3(X,t)$ associated with the weight 
motive can be smaller than that of a diagonal threefold of the same degree. 
Here the weight motive means the $(\ZZ /M\ZZ )^{\times}$-orbit of ${\mathbf a}=
(M_0,M_1,M_2,M_3,M_4)$ in ${\fA}_X$ with $M_1:=M-M_0-M_2-M_3-M_4$. We will 
discuss more details about weight motives elsewhere (cf. \cite{Yui06}). 
\end{rem} 

\begin{proof} Let $\overline{X}$ be the affine quasi-cone of $X$ in $\Aff^5$. 
Write $k_{\nu}$ for the extension of $k:=\FF_q$ of degree $\nu$. Denote by 
$N_{\nu} (X)$ for the number of $k_{\nu}$-rational points on $X$.  
By Corollary 1.4 of \cite{Go1}, we have $N_{\nu} (\overline{X} )=1+(q^{\nu} -1) 
N_{\nu} (X)$. Let $V$ be the closed subset of $X$ defined by $z_1=0$; i.e. 
$$V:\ \ z_2^{m_2}+z_3^{m_3}+z_4^{m_4}=0 \ \mbox{ with $z_0$ being free } \ 
\subset \Aff^4 .$$  
Write $U$ for the open subset $X\setminus V$:   
\begin{equation} \label{u-qd3} 
U:\ \ z_0^{m_0}z_1+z_1^{m_1}+z_2^{m_2}+z_3^{m_3}+z_4^{m_4}=0 \ 
\mbox{ with $z_1\neq 0$ } \ \subset \Aff^5 .  
\end{equation}  
Then $X_{k_{\nu}}=V_{k_{\nu}}\cup U_{k_{\nu}}$ (disjoint union) for every 
$\nu \geq 1$. Regarding $z_1$ as a constant in $k_{\nu}^{\times}$, write 
$U(z_1)$ for the affine hypersurface in $\Aff^4_{k_{\nu}}$ defined by the 
equation (\ref{u-qd3}). 
Then for $\nu \geq 1$, 
$$N_{\nu} (\overline{X} )=N_{\nu} (V)+\sum_{z_1\in k_{\nu}^{\times}} 
N_{\nu} (U(z_1)).$$  
Write $P_1(C,t)=\prod_i (1-\alpha_i t)$. It follows from Weil's classical
results that    
$$N_{\nu} (V)=q^{\nu} \bigl\{ q^{2\nu} -(q^{\nu} -1)\sum_i \alpha_i^{\nu}
\bigr\}.$$  
Applying Lemma \ref{weil} to the case $r=4,\ b_0=z_1^{m_1},\ b_1=z_1$ and 
$b_2=b_3=b_4=1$ (and also interchanging $a_0$ and $a_1$), we find 
$$N_{\nu} (U(z_0))=q^{3\nu}-\sum_{\ba \in \fA_0} \chi_{\nu}^{-1}
(z_1^{m_1a_1+a_0}) j_{\nu} (\ba )$$   
where $\chi_{\nu}$ is a non-trivial character of $k_{\nu}^{\times}$ defined by 
$\chi_{\nu} =\chi \circ N_{k_{\nu} /k}$, $j_{\nu}$ is the sum 
$$j_{\nu} (\ba )=-\sum_{\substack{v_i\in k_{\nu}^{\times} \, (1\leq i\leq 4) \\ 
1+v_1+ \cdots +v_4=0}} {\chi_{\nu}}^{a_1}(v_1)\cdots {\chi_{\nu}}^{a_4}(v_4)$$ 
and 
$$\fA_0 =\biggl\{ \ba =(a_0 ,\cdots ,a_4 ) \Big| \ 
\begin{array}{l} 
a_1 \in \ZZ /M\ZZ,\ a_i \in M_i\ZZ /M\ZZ \ (i=0,2,3,4), \\ 
a_i \neq 0\ (i=0,2,3,4),\ \sum_{i=0}^4 a_i =0   
\end{array} \biggr\}.$$ 
Noting that 
$$\sum_{z_1\in k_{\nu}^{\times}} \chi_{\nu}^{-1} (z_1^{a_0+m_1a_1})= 
\begin{cases} q^{\nu} -1 & \mbox{ if } a_0+m_1a_1\equiv 0\pmod{M} \\ 
0 & \mbox { otherwise } \end{cases}$$   
we obtain  
\begin{align*} 
\sum_{z_1\in k_{\nu}^{\times}} N_{\nu}(U(z_1)) & =q^{3\nu}(q^{\nu} -1)- 
\sum_{z_1\in k_{\nu}^{\times}} \sum_{\ba \in \fA_0} \chi_{\nu}^{-1} 
(z_1^{a_0+m_1a_1})j_{\nu}(\ba ) \\  
 &= q^{3\nu}(q^{\nu} -1)-\sum_{\ba \in \fA_0} j_{\nu} (\ba )  
\sum_{z_1\in k_{\nu}^{\times}} \chi_{\nu}^{-1} (z_1^{a_0+m_1a_1}) \\  
 &=q^{3\nu}(q^{\nu} -1)-(q^{\nu} -1)\sum_{\substack{\ba \in \fA_0 \\ 
a_0+m_1a_1=0}} j_{\nu} (\ba ) \\ 
 &=q^{3\nu}(q^{\nu} -1)-(q^{\nu} -1)\sum_{\ba \in \fA_X} j_{\nu} (\ba ).   
\end{align*} 
($a_0\neq 0$ implies $a_1\neq 0$.) It holds now that $j_{\nu} (\ba ) = 
j_{\fp} (\ba )^{\nu}$. Combining this with $N_{\nu} (V)$, we conclude  
$$N_{\nu} (\overline{X})=q^{4\nu}-q^{\nu} (q^{\nu} -1)\sum_i 
\alpha_i^{\nu} -(q^{\nu} -1) \sum_{\ba \in \fA_X} j_{\fp} (\ba )^{\nu}.$$    
Therefore it follows from $N_{\nu} (\overline{X})=1+(q^{\nu} -1)N_{\nu}(X)$ 
that  
$$N_{\nu}(X)=1+q^{\nu} +q^{2\nu}+q^{3\nu} -\sum_i (q\alpha_i )^{\nu}  
-\sum_{\ba \in \fA_X} j_{\fp} (\ba )^{\nu}.$$  
This gives rise to the zeta-function of $X_k$. 
\end{proof}   

\begin{rem} 
In Proposition \ref{zeta-qd3}, the weight $(k_2,k_3,k_4)$ of the 
2-space $\PP^2(k_2,k_3,k_4)$ may not be normalized. But the 
computation of the zeta-function of $C$ can be carried out in the 
same way as for a normalized weight. The polynomial $P_1(C,t)$ of 
$C$ then has the form  
$$P_1(C,t)=\prod_{{\mathbf a} \in \fA_C} (1-j({\mathbf a})t)$$ 
with $\fA_C =\{ {\mathbf a} =(a_2,a_3,a_4)\ \mid \ a_i\in 
k_i\ZZ /d\ZZ,\ a_i\neq 0 \ (i=2,3,4),\ \sum_{i=2}^{4}a_i=0\}$.  
\end{rem} 

As in the diagonal case, $\mbox{Sing}\, (X)$ for a quasi-diagonal 
threefold is a scheme over $k$ and now $P_0 :=(1:0:0:0:0)$ becomes a 
possible singularity. We have 
$$\mbox{Sing}\, (X) =\bigcup_{\bi \in I_1} C_{\bi} \cup 
\bigcup_{\bj \in I_0 \cup \{0\}} P_{\bj}.$$ 

\begin{thm}\label{zeta-qd3-smooth} 
Let $K$ be a number field. Let $X$ be a singular Calabi--Yau threefold 
over $K$ of quasi-diagonal type constructed in {\em Section $4$} by the 
equation 
$$z_0^{m_0}z_1+z_1^{m_1}+z_2^{m_2}+z_3^{m_3}+z_4^{m_4}=0$$ 
in $\PP^4(w_0,w_1,w_2,w_3,w_4)$ of degree $d=w_0+\cdots +w_4$. Assume 
that $K$ contains the $M$-th cyclotomic field $\QQ (\zeta_M )$, 
where $M$ is the integer defined in {\rm Theorem \ref{zeta-qd3}}. 
Let $\fp$ be a prime of $K$ not dividing $M$. Write $X_{\fp}$ for 
the reduction of $X$ modulo $\fp$ and put $\FF_q :=\OO_K /\fp$ with 
$q=\mbox{\em Norm}\,{\fp}$. Then the following assertions hold. 

$(a)$ The zeta-function $Z(X_{\fp},t)$ has the form
\begin{equation*}
Z(X_{\fp}, t)=\frac{P_3(X_{\fp},t)}{(1-t)(1-qt)(1-q^2t)(1-q^3t)} 
\end{equation*}
where $P_3(X_{\fp},t)$ is a polynomial of integer coefficients 
whose reciprocal roots have the complex absolute value $q^{3/2}$ and 
$$P_3(X_{\fp},t)=P_1(C,qt)\prod_{{\mathbf a} \in {\fA}_X} 
(1-j_{\fp} ({\mathbf a})t)$$ 
with $C$ and ${\fA}_X$ as being defined in {\rm Theorem \ref{thm7-5}}. 

$(b)$ There exists a crepant resolution $\xtil_{\fp}$ of $X_{\fp}$ 
such that $\xtil_{\fp}$ is obtained by applying a toroidal 
desingularization to $\PP^4(Q)$ and $\xtil_{\fp}$ is a Calabi--Yau 
threefold over $\FF_q$. 

$(c)$ With the notation of $(b)$ and {\rm Lemma \ref{zeta-e-div}}, 
we have 
$$Z(\xtil_{\fp},t)=\frac{P_3(\xtil_{\fp},t)}{(1-t)P_2(\xtil_{\fp},t)
P_4(\xtil_{\fp},t)(1-q^3t)}$$ 
with 
\begin{align} 
P_2(\xtil_{\fp},t) & =(1-qt) 
\prod_{\bi \in I_1} (1-qt)^{n_{\bi}} 
\prod_{\bj \in I_0 \cup \{0\}} P_0(P_{\bj},qt)^{e_{\bj}} \nonumber \\ 
P_3(\xtil_{\fp},t) & =P_1(C,qt)\prod_{{\mathbf a} \in {\fA}_X} 
(1-j_{\fp} ({\mathbf a})t)
\prod_{\bi \in I_1} P_1(C_{\bi},qt)^{n_{\bi}} \nonumber \\ 
P_4(\xtil_{\fp},t) & =P_2(\xtil_{\fp},qt) \nonumber 
\end{align} 
\end{thm}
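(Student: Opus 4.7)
The plan is to imitate the proof of Theorem \ref{thm7-5}, substituting the quasi-diagonal ingredients at each step. Part (a) is essentially immediate: since $K\supset\QQ(\zeta_M)$ and $\fp\nmid M$, the residue field $\FF_q$ satisfies $q\equiv 1\pmod M$, so Theorem \ref{zeta-qd3} applies verbatim to $X_\fp$ and yields the asserted factorization of $P_3(X_\fp,t)$. The quasi-smoothness of $X_\fp$ (needed to invoke the geometric results of Section \ref{sect6}) follows because the weight $(w_0,\dots,w_4)$ and the exponents $m_i$ defining $X$ are coprime to $\fp$ under our hypotheses.

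For (b), I would apply Proposition \ref{field-of-defn} together with Corollary \ref{field-defn-mod}. The Calabi--Yau condition $d=w_0+\cdots+w_4$ guarantees that $\omega_{X_\fp}\cong\OO_{X_\fp}$ (the adjunction formula on a quasi-smooth weighted hypersurface of degree equal to the sum of weights), so a crepant toric resolution $\widetilde{\PP^4}(Q)\to\PP^4(Q)$ restricting to $\xtil_\fp\to X_\fp$ exists over $\FF_q$. The explicit resolution algorithm described in Section \ref{sect-resol}, applied triangle-by-triangle to each cyclic quotient singularity identified in Section \ref{sect6} (the curve strata $C_\bi$, $\bi\in I_1$, the points $P_\bj$, $\bj\in I_0$, and the extra isolated point $P_0=(1:0:0:0:0)$ peculiar to the quasi-diagonal case), gives the desired $\xtil_\fp$.

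For (c), I would perform the standard stratification of $\xtil_\fp$ into the smooth open part $X_\fp^{\circ}:=X_\fp\setminus\mathrm{Sing}(X_\fp)$ (which is isomorphic to its preimage in $\xtil_\fp$) together with the disjoint union of the open exceptional strata $\widetilde{C}_\bi^{\circ}$ and $\widetilde{P}_\bj^{\circ}$ (with $\bj$ ranging over $I_0\cup\{0\}$). Additivity of point counts over such a stratification gives
\[
\#\xtil_\fp(\FF_{q^\nu}) \;=\; \#X_\fp^\circ(\FF_{q^\nu})+\sum_{\bi\in I_1}\#\widetilde{C}_\bi^{\circ}(\FF_{q^\nu})+\sum_{\bj\in I_0\cup\{0\}}\#\widetilde{P}_\bj^{\circ}(\FF_{q^\nu}),
\]
and $\#X_\fp^\circ$ is obtained from $\#X_\fp$ (part (a)) by subtracting the counts on $\mathrm{Sing}(X_\fp)$, which only contributes to $P_1$ and $P_2$ factors and is absorbed in the final formula. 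Assembling these point counts into zeta-functions and applying Lemma \ref{zeta-e-div} (with the local invariants $n_\bi$ and $e_\bj$ computed by Lemma \ref{formula-for-e}) produces the asserted factorization of $Z(\xtil_\fp,t)$, with the Poincar\'e-duality identity $P_4(\xtil_\fp,t)=P_2(\xtil_\fp,qt)$ following from the Calabi--Yau structure on $\xtil_\fp$.

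The main obstacle, as with Theorem \ref{thm7-5}, is keeping the bookkeeping of singularities honest in the quasi-diagonal setting: the extra isolated singularity $P_0$ must be accounted for, and when $P_0$ lies on a $1$-dimensional stratum $C_\bi$ (as in Example \ref{reso-qd1}, Example \ref{ex-qd2}) one must check that the strata $\widetilde{C}_\bi^{\circ}$ and $\widetilde{P}_0^{\circ}$ are disjoint after passing to the \emph{open} parts, so that additivity of point counts really holds and no cross-terms are double-counted. Once this combinatorial check is settled, the rest is a mechanical assembly parallel to the diagonal case.
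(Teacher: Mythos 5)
Your proposal is correct and follows essentially the same route as the paper, whose own proof simply carries over the argument of Theorem \ref{thm7-5} (point counting via Theorem \ref{zeta-qd3} for part (a), the resolution results and Lemma \ref{zeta-e-div} for parts (b) and (c)) while noting the extra contributions of the curve $C$ and the isolated singularity $P_0$. Your additional care about $P_0$ lying on a one-dimensional stratum and about disjointness of the open strata is exactly the bookkeeping the paper leaves implicit.
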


\begin{proof} 
The assertions can be proven in the same way as in Theorem 
\ref{thm7-5} by noting the contributions of a curve $C$ and a 
singularity $P_0$ to the zeta-function of $\xtil_{\fp}$. 
\end{proof}

\section{Deformations of Calabi-Yau threefolds \& zeta-function}\label{sect12}
 
In this section we recall some results on the variation of the zeta 
function of a variety. 

In this context we need to use a different cohomology theory than in the 
previous sections, namely rigid 
cohomology. We will not define rigid cohomology in complete detail, but give a
simplified presentation, 
which works for quasi-smooth hypersurfaces. For a good introduction to the
theory of rigid cohomology 
we refer to \cite{BerSMF} and \cite{BerFin}. 

\begin{defn} 
Let $q=p^n$, with $p$ a prime number. Let $\QQ_q$ (resp. $\ZZ_q$) be the 
unique unramified extension of $\QQ_p$ (resp. $\ZZ_p$ of degree $n$. 
Denote by $\pi$ the maximal ideal of $\ZZ_q$. 

Let $\PP_{\FF_q}$ be a shorthand for $\PP^{n+m-1}_{\FF_q}(v_0 
\mathbf{w},w_0\mathbf{v})$. 
\end{defn} 

Let $\overline{X_{\lambda,\mu}}\subset \PP_{\FF_q}$ be a family of 
hypersurfaces, say given by $\overline{F_{\lambda,\mu}}=0$, such that 
the general element is quasi-smooth. Let $\overline{U_\lambda,\mu}$ be 
the complement $\PP_{\FF_q}\setminus \overline{X_{\lambda,\mu}}$. 
Since
\[
Z(\overline{U_{\lambda,\mu}},t)Z(\overline{X_{\lambda,\mu}},t)=Z(\PP_{\FF_q},
t)=\frac{1}{(1-t)(1-qt) 
\dots (1-q^{n+m-1}t)} \]
it suffices to determine the zeta-function of $\overline{U_{\lambda,\mu}}$, 
if one wants to know that the zeta-function of $\overline{X_{\lambda,\mu}}$.

Choose now a weighted homogenous polynomial $F_{\lambda,\mu} \in
\ZZ_q[\lambda,X_0,\dots, X_n]$ 
such that $F_{\lambda,\mu} \equiv \overline{F_{\lambda,\mu}} \bmod \pi$. 
Let $X_{\lambda,\mu} \subset \PP_{\QQ_q}$ be the zero set of
$F_{\lambda,\mu}$, let $U_
{\lambda,\mu}$ be $\PP_{\QQ_q}\setminus X_{\lambda,\mu}$. Since
$U_{\lambda,\mu}$ is affine, we 
can write $U_{\lambda,\mu}=\spec R_{\lambda,\mu}$, with 
\[ R_{\lambda,\mu}=\QQ_q [\lambda,\mu,Y_0,\dots,
Y_m]/(G_{1,\lambda,\mu},\dots,G_{k,\lambda,\mu}).\]

For $\lambda_0,\mu_0$ in the $p$-adic unit disc, set
\[ R^{\dagger}_{\lambda_0,\mu_0} = \frac{\{ H \in \QQ_q[[Y_0,\dots,Y_m]] \colon
\mbox{ the radius of 
convergence of } H \mbox{ is at least } r>1
\}}{(G_{1,\lambda_0,\mu_0},\dots,G_{k,\lambda_0,\mu_0})}.\]
 Then $R^{\dagger}_{\lambda_0,\mu_0}$ is called the {\em overconvergent
completion} (or weakly completion) of $R_{\lambda_0,\mu_0}$. 

\begin{defn} 
Let $\iota: \PP^n_{\QQ_q} \to \PP_{\QQ_q}$ be the natural quotient map. Let 
$G:=\times {\bmu}_{w_i}/
\Delta$ be the group associated with this quotient. Set $\tilde{R}^\dagger$ to
be the overconvergent 
completion of the coordinate ring of $\PP^n\setminus \iota^{-1}(X)$. Then on
$\Omega^i_{\tilde{R}^\dagger}$ there is a natural $G$-action. Set
$\Omega^i_{R^{\dagger}}=(\Omega^i_{\tilde{R}^\dagger})^G$. 
The {\em  $i$-th 
Monsky-Washnitzer cohomology group $H^i(U,\QQ_q)$} is the $i$-th cohomology
group of the complex $\Omega^\bullet_{R^{\dagger}}$.
\end{defn}

\begin{defn} Let $R$ be a ring over $\ZZ_q$. Let $\pi$ be the maximal ideal of
$\ZZ_q$. 
A {\em lift of Frobenius}  is a ring homomorphism 
$\Frob_q^*: R \rightarrow R$ such that  its reduction modulo $\pi$
 \[ \Frob_q^* \bmod \pi :  R\otimes_{\ZZ_q} \FF_q  \rightarrow R\otimes_{\ZZ_q}
\FF_q \]
is well-defined and equals $x\mapsto x^q$. 
\end{defn}

Fix a lift of Frobenius $\Frob_q^*$ to $R_{\lambda}$, such that
$\Frob_q^*(\lambda)=\lambda^q$ , $
\Frob_q^*(\mu)=\mu^q$ and $\Frob_q^*$ maps $R_{\lambda_0^q}^{\dagger}$ to
$R_{\lambda_0}^{\dagger}$. Denote by $\Frob^*_q$ also  the induced 
morphism on $H^i(U_{\lambda_0},\QQ_q)$.

{From} now on let \[F_{\lambda,\mu}(x_1,\dots,x_n,y_1,\dots,y_m)=\sum
x_i^{d_i}-\sum y_j^{e_j}+ 
\lambda\prod x_i^{a_i}-\mu \prod y_j^{b_j}.\] 

\begin{lem} Let
\[G_{\lambda,\mu}(x_1,\dots,x_n,y_1,\dots,y_n):=F_{\lambda,\mu}(x_1^{v_0w_1},
\dots,x_n^{v_0w_n},y_1^{w_0v_1},\dots,y_m^{w_0v_n}).\] Then $G_{\lambda,\mu}=0$
defines a smooth 
hypersurfaces in $\PP^{n+m-1}(1,\dots,1)$ if and only if $F_{\lambda,\mu}=0$
defines a smooth 
hypersurface in $\PP$.
\end{lem}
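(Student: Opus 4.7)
The plan is to regard $G_{\lambda,\mu}$ as the pullback of $F_{\lambda,\mu}$ under the finite surjective map $\Psi\colon\Aff^{n+m}\to\Aff^{n+m}$ defined by
\[ (x_1,\ldots,x_n,y_1,\ldots,y_m)\longmapsto(x_1^{v_0w_1},\ldots,x_n^{v_0w_n},y_1^{w_0v_1},\ldots,y_m^{w_0v_m}),\]
so that $G_{\lambda,\mu}=F_{\lambda,\mu}\circ\Psi$ by construction. The map $\Psi$ descends to a finite surjective cover $\PP^{n+m-1}(1,\ldots,1)\to\PP^{n+m-1}(v_0\w,w_0\mathbf{v})$ with Galois group $\bigl(\prod_i\mu_{v_0w_i}\times\prod_j\mu_{w_0v_j}\bigr)/\Delta$, and it is \'etale away from the coordinate hyperplanes. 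Since quasi-smoothness of a weighted hypersurface is equivalent to smoothness of its affine cone outside the origin (and in $\PP^{n+m-1}(1,\ldots,1)$ this coincides with ordinary smoothness), the lemma reduces to the statement that $V(G)\subset\Aff^{n+m}$ is smooth outside the origin if and only if $V(F)\subset\Aff^{n+m}$ is.

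The chain rule yields
\[ \frac{\partial G}{\partial x_i}(p)=v_0w_i\,x_i(p)^{v_0w_i-1}\,\frac{\partial F}{\partial X_i}(\Psi(p)),\qquad \frac{\partial G}{\partial y_j}(p)=w_0v_j\,y_j(p)^{w_0v_j-1}\,\frac{\partial F}{\partial Y_j}(\Psi(p)).\]
At a point $p$ with every coordinate nonzero, $J\Psi(p)$ is invertible over $\QQ_q$, so $JG(p)=0$ if and only if $JF(\Psi(p))=0$; as $\Psi$ surjects the open torus onto the open torus, this settles the equivalence at torus points of either cone.

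The main obstacle is the analysis on the coordinate hyperplanes, where $J\Psi$ drops rank. If $x_{i_0}(p)=0$ with $v_0w_{i_0}\ge 2$, then $(\partial G/\partial x_{i_0})(p)=0$ automatically, so smoothness of $V(G)$ at $p$ must be witnessed by a partial in some other variable; symmetrically, a nonvanishing $\partial F/\partial X_{i_0}$ at $\Psi(p)$ need not produce any nonvanishing partial of $G$. I would handle this using the explicit form
\[ \frac{\partial F}{\partial X_i}=d_iX_i^{d_i-1}+\lambda a_iX_i^{a_i-1}\prod_{k\ne i}X_k^{a_k}\]
(and analogously in the $Y_j$) to verify, case by case on which coordinates of $p$ vanish and whether the corresponding weights $v_0w_i$, $w_0v_j$ are $1$ or $\ge 2$, that a nonvanishing partial of $F$ at $\Psi(p)$ always gives rise to a nonvanishing partial of $G$ at $p$ involving a nonzero coordinate, and conversely. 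The crucial combinatorial observation is that when $X_{i_0}=0$ the monomial $d_{i_0}X_{i_0}^{d_{i_0}-1}$ vanishes as soon as $d_{i_0}\ge 2$, and the deformation term $\lambda a_{i_0}X_{i_0}^{a_{i_0}-1}\prod_{k\ne i_0}X_k^{a_k}$ vanishes unless $a_{i_0}=1$, so only finitely many degenerate subcases need be checked individually.

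This hyperplane book-keeping is the technically delicate step. A conceptually cleaner alternative that I would also consider is to invoke that $V(F)\setminus\{0\}=(V(G)\setminus\{0\})/H$ where $H=\bigl(\prod_i\mu_{v_0w_i}\times\prod_j\mu_{w_0v_j}\bigr)/\Delta$ acts diagonally on the coordinates; since $H$ is linearly reductive in characteristic zero and its pseudo-reflections are supported on the coordinate hyperplanes, smoothness of $V(G)\setminus\{0\}$ is equivalent to quasi-smoothness of the quotient $V(F)\setminus\{0\}$, which is exactly the lemma.
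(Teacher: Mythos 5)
Your reduction to affine cones, the chain rule, and the treatment of points with all coordinates nonzero are fine, and this is surely the ``straightforward calculation'' the paper intends (with ``smooth'' for $F=0$ read as quasi-smooth, which is the reading the subsequent theorem requires); the implication that smoothness of $V(G)\setminus\{0\}$ forces quasi-smoothness of $F$ is also complete, since a nonzero singular point of the cone of $F$ pulls back, by the chain rule, to a nonzero singular point of the cone of $G$. The genuine gap is the converse at points on the coordinate hyperplanes: you announce a case-by-case verification but do not carry it out, and this step is not a formality -- it is the entire content of the lemma and it really uses the special shape of $F_{\lambda,\mu}$. Indeed the transfer can fail for other diagonal-plus-monomial data: $F=X_1+X_2^4+X_3^4+2X_2^2X_3^2=X_1+(X_2^2+X_3^2)^2$ of degree $4$ in $\PP^2(4,1,1)$ is quasi-smooth (its $X_1$-partial is identically $1$), yet $G=F(x_1^4,x_2,x_3)=x_1^4+(x_2^2+x_3^2)^2$ is singular at $(0:1:\sqrt{-1})$; the only nonvanishing partial of $F$ sits in a variable whose coordinate vanishes and whose substitution exponent is $\geq 2$, so nothing transfers. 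For the lemma's family the argument closes precisely because every variable carries a diagonal power with exponent $d_i=\deg F/(v_0w_i)\geq 2$, resp.\ $e_j\geq 2$ (my example is the degenerate case $d_1=1$, excluded in the twist setting where every weight is strictly smaller than the degree): if all partials of $G$ vanish at $p\neq 0$ while $\partial F/\partial X_{i_0}(\Psi(p))\neq 0$ for some $i_0$ with $X_{i_0}(\Psi(p))=0$, the explicit formula forces $a_{i_0}=1$, $\lambda\neq 0$, and $X_k(\Psi(p))\neq 0$ for every other $k$ with $a_k>0$ (such $k$ exist, since $a_{i_0}=1$ with no other variable in the monomial would force $d_{i_0}=1$); for such a $k$ the $\lambda$-term of $\partial F/\partial X_k$ contains the factor $X_{i_0}$ and dies, so $\partial F/\partial X_k(\Psi(p))=d_kX_k(\Psi(p))^{d_k-1}\neq 0$, contradicting the vanishing you already deduced from $x_k(p)\neq 0$ and $\partial G/\partial x_k(p)=0$. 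The symmetric argument with $\mu$ handles the $Y$-block. Writing exactly this out is what the proof requires; as it stands your proposal restates the claim at the hyperplane points.

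Your ``conceptually cleaner alternative'' cannot serve as a fallback. Quasi-smoothness of $F=0$ means smoothness of the affine cone $V(F)\setminus\{0\}$, and that cone is the quotient of $V(G)\setminus\{0\}$ by the full product $\prod_i\bmu_{v_0w_i}\times\prod_j\bmu_{w_0v_j}$ (the quotient by the diagonal is only the covering group at the projective level). So your asserted equivalence is ``a variety is smooth if and only if its finite quotient is smooth,'' which is false in general and false here as a general principle: in the example above the covering group is generated by pseudo-reflections of the ambient $\Aff^3$, the quotient $V(F)\setminus\{0\}$ is smooth, and $V(G)\setminus\{0\}$ is not. Chevalley--Shephard--Todd governs quotients of the smooth ambient affine space, not of a hypersurface sitting inside it, so linear reductivity together with the location of the pseudo-reflections gives no implication in either direction; the diagonal structure of $F_{\lambda,\mu}$ is essential, and the computation sketched above is unavoidable.
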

\begin{proof}
This is a straightforward calculation.
\end{proof}

This Lemma allows us to obtain results on $H^i(U_{\lambda_0,\mu_0},\QQ_q)$ that
are very similar to 
the results of \cite[Section 3]{mondef}. We summarize this in  the following
Theorem:

\begin{thm} \label{thmLTF} Suppose $\overline{X_{\lambda_0,\mu_0}}$ is
quasi-smooth. Then $H^i(U_{\lambda_0,\mu_0},\QQ_q)=0$ for $i\neq 0, n+m-1$ 
and
\[ q^{n+m-1}+ (-q)^{n+m-1} \trace( (\Frob_q^*)^{-1} |
H^{n+m-1}(U_{\lambda_0,\mu_0})) = \# \overline{U_
{\lambda_0,\mu_0}}(\FF_q).\]
\end{thm}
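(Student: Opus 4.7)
The plan is to reduce the statement to the analogous theorem for a smooth hypersurface in a standard projective space, proved in \cite[Section 3]{mondef}, and then descend through the finite quotient map that defines the weighted projective space.

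First I would invoke the preceding Lemma: the substitution $x_i\mapsto x_i^{v_0w_i}$, $y_j\mapsto y_j^{w_0v_j}$ produces $G_{\lambda_0,\mu_0}\in\ZZ_q[x_1,\dots,x_n,y_1,\dots,y_m]$ whose zero locus is a smooth hypersurface $\tilde X_{\lambda_0,\mu_0}\subset\PP^{n+m-1}_{\QQ_q}$. Write $\tilde U_{\lambda_0,\mu_0}$ for its complement, and note that $\PP_{\QQ_q}=\PP^{n+m-1}_{\QQ_q}/G$ with $G=\prod_i\bmu_{w_i}\times\prod_j\bmu_{v_j}/\Delta$. By the standing hypothesis $\gcd(q,w_i)=\gcd(q,v_j)=1$, the order of $G$ is invertible in $\ZZ_q$. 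For the smooth hypersurface $\tilde X_{\lambda_0,\mu_0}$ the results of \cite[Section 3]{mondef} deliver both the vanishing $H^i(\tilde U_{\lambda_0,\mu_0},\QQ_q)=0$ for $i\neq 0,n+m-1$ and the Lefschetz trace identity
\[
\#\overline{\tilde U_{\lambda_0,\mu_0}}(\FF_q)=q^{n+m-1}+(-q)^{n+m-1}\trace\bigl((\Frob_q^*)^{-1}\mid H^{n+m-1}(\tilde U_{\lambda_0,\mu_0},\QQ_q)\bigr).
\]

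Next I would descend through $G$-invariants. By the definition of $\Omega^\bullet_{R^\dagger}$ recorded just above the theorem, $\Omega^i_{R^\dagger}=(\Omega^i_{\tilde R^\dagger})^G$, and since $\#G$ is invertible in $\QQ_q$, the functor of $G$-invariants is exact on the Monsky-Washnitzer complex. Hence $H^i(U_{\lambda_0,\mu_0},\QQ_q)=H^i(\tilde U_{\lambda_0,\mu_0},\QQ_q)^G$, which immediately yields the vanishing for $i\neq 0, n+m-1$. Choosing $\Frob_q^*$ upstairs to be $G$-equivariant (which is automatic for $x\mapsto x^q$ on variables and compatible with the group action) ensures that Frobenius commutes with the projection onto the $G$-invariant subspace, so the trace of $(\Frob_q^*)^{-1}$ on $H^{n+m-1}(U_{\lambda_0,\mu_0},\QQ_q)$ is computed from its trace on the $G$-invariant part of $H^{n+m-1}(\tilde U_{\lambda_0,\mu_0},\QQ_q)$.

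For the point count, I would decompose $H^{n+m-1}(\tilde U_{\lambda_0,\mu_0},\QQ_q)$ into $G$-isotypic components and match each component with the corresponding stratum of $\PP_{\FF_q}$ (indexed by the stabilizer subgroups of $G$, i.e.\ by the singular strata of the weighted projective space). The trivial character contributes the term computed above with $\tilde U$ replaced by $U$, while the non-trivial characters account for the discrepancy between $\#\overline{\tilde U_{\lambda_0,\mu_0}}(\FF_q)$ and $(\#G)\cdot \#\overline{U_{\lambda_0,\mu_0}}(\FF_q)$ caused by the fixed loci. Assembling these contributions gives the claimed identity for $\overline{U_{\lambda_0,\mu_0}}$.

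The main obstacle I anticipate is precisely this last bookkeeping step: the set-theoretic quotient $\overline{\tilde U_{\lambda_0,\mu_0}}(\FF_q)\to\overline{U_{\lambda_0,\mu_0}}(\FF_q)$ is not uniformly $(\#G)\colon 1$ because the $G$-action on $\PP^{n+m-1}_{\FF_q}$ has non-trivial stabilizers on each coordinate hyperplane. Controlling the non-trivial isotypic summands on these strata is the genuinely delicate point; however, it is the same calculation as for smooth Delsarte-type covers in \cite[Section 3]{mondef}, which transports \emph{mutatis mutandis} to the weighted setting once the compatibility of $\Frob_q^*$ with $G$ has been arranged.
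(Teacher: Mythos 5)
Your reduction to the smooth cover and the passage to $G$-invariants is exactly the intended route (the paper itself defers the details to the one-parameter case in \cite{mondef}), and the vanishing part of your argument is fine: by definition $\Omega^\bullet_{R^\dagger}=(\Omega^\bullet_{\tilde{R}^\dagger})^G$, the order of $G$ is invertible in $\QQ_q$, so $H^i(U_{\lambda_0,\mu_0},\QQ_q)\cong H^i(\tilde U_{\lambda_0,\mu_0},\QQ_q)^G$, and the Lemma preceding the theorem lets you quote the vanishing for the complement of the smooth hypersurface in $\PP^{n+m-1}$.

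The gap is in your final step, which is the actual content of the trace identity. Matching $G$-isotypic components of $H^{n+m-1}(\tilde U_{\lambda_0,\mu_0},\QQ_q)$ with strata of $\PP_{\FF_q}$ and letting ``non-trivial characters account for the discrepancy between $\#\overline{\tilde U_{\lambda_0,\mu_0}}(\FF_q)$ and $\#G\cdot\#\overline{U_{\lambda_0,\mu_0}}(\FF_q)$'' is not a mechanism that can be made to work as stated: isotypic pieces of cohomology do not compute that set-theoretic discrepancy, and the failure of the quotient map to be $\#G{:}1$ on $\FF_q$-points is caused not only by non-trivial stabilizers but also by Frobenius-twisted $G$-orbits (orbits stable under $\Frob_q$ that contain no $\FF_q$-rational point), which your bookkeeping ignores. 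The argument that ``transports'' from \cite{mondef} avoids comparing the two point counts altogether: one writes
$\#\overline{U_{\lambda_0,\mu_0}}(\FF_q)=\frac{1}{\#G}\sum_{g\in G}\#\bigl\{x\in\overline{\tilde U_{\lambda_0,\mu_0}}(\overline{\FF}_q)\colon (g\circ\Frob_q)(x)=x\bigr\}$
by orbit counting (this single identity handles stabilizers and twisted orbits at once), applies the Lefschetz fixed point formula in Monsky--Washnitzer cohomology to each twist $g\circ\Frob_q$ on the smooth affine cover, and then observes that averaging $g^*$ over $G$ is the projector onto the invariant subspace, so only $H^0$ and $H^{n+m-1}(\tilde U_{\lambda_0,\mu_0},\QQ_q)^G=H^{n+m-1}(U_{\lambda_0,\mu_0},\QQ_q)$ survive, which is precisely the displayed formula. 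Without this (or an equivalent) step your proposal establishes the vanishing but not the point-count identity.
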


The proof is very close to the one-dimensional case (i.e., $\mu=0$). For this
reason we leave it out. See 
\cite[Section 3]{mondef} for the proof in the one-dimensional case.

For the central fiber, we can use the results of the previous Sections, namely:
\begin{prop}[{\cite{mondef}}]\label{prpDia} Let $\kv$ be an admissible monomial
type. Then
 \[\Frob_{q,0}^*\omega_\kv= d_{\kv,q} \omega_{\overline{q}\kv}, \]
 with $d_{\kv,q}\in \QQ_q$. If $d\equiv 1 \bmod q$ then
$q^{n+m-1}/d_{\kv,q}^{-1}$ is a Jacobi-sum.
 \end{prop}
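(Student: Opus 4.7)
The plan is to reduce the statement to a computation on the diagonal (Fermat-type) fiber $X_{0,0}$ and then apply Dwork's classical $p$-adic analysis of Frobenius on monomial cohomology classes. Because we have set $\lambda=\mu=0$, the central fiber is the Fermat-type hypersurface $\sum x_i^{d_i}=\sum y_j^{e_j}$, and the group $G_d=\bmu_{d_1}\times\cdots\times\bmu_{d_n}\times\bmu_{e_1}\times\cdots\times\bmu_{e_m}/\Delta$ acts on $U_{0,0}$ and hence on $H^{n+m-1}(U_{0,0},\QQ_q)$. As in the decomposition (\ref{coh1}), this cohomology decomposes into one-dimensional eigenspaces indexed by the admissible monomial types $\kv$, each spanned by a concrete monomial differential $\omega_{\kv}$ (the classes represented in the Monsky--Washnitzer complex by $x^{\kv}\,\mathrm{d}x/F_{0,0}^{|\kv|}$ up to normalization).

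The first step is to verify that any lift of Frobenius $\Frob_q^*$ that is compatible with the $G_d$-action (for example the naive lift $x_i\mapsto x_i^q$, $y_j\mapsto y_j^q$, extended as in \cite{mondef}) must send the $\kv$-eigenspace to the $(q\kv)$-eigenspace, where $\overline{q}\kv$ denotes the reduction of $q\kv$ to the admissible range. Since both eigenspaces are one-dimensional, this forces $\Frob_{q,0}^*\omega_{\kv}=d_{\kv,q}\omega_{\overline q\kv}$ for a unique $d_{\kv,q}\in\QQ_q$, and one only needs to show that this scalar actually lies in $\QQ_q$ (rather than in a ramified extension), which follows from the Galois-equivariance of the Frobenius lift.

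The next step is to extract $d_{\kv,q}$ explicitly. Following Dwork (cf.\ the one-parameter treatment in \cite{mondef} and the analogue of Proposition~\ref{prpDia} there), one expands $\Frob_q^*(x^{\kv}\,\mathrm{d}x/F_{0,0}^{s})$ in powers of $F_{0,0}$, reduces modulo exact forms in $\Omega^\bullet_{R^{\dagger}}$, and collects the coefficient of $\omega_{\overline q\kv}$. After a Gross--Koblitz-style identification, the surviving $p$-adic series telescopes to a product of $p$-adic Gamma values at the fractional parts $\{k_i/d\}$ and $\{l_j/e\}$. When $q\equiv 1\pmod d$ (presumably the intended condition), these products coincide (up to $q^{n+m-1}$ and a sign) with the Gauss sums $g_q(\chi^{k_i})$, so their product modulo the relation $\sum k_i\equiv 0$ is precisely the Jacobi sum $j_q(\kv)$. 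Hence $q^{n+m-1}d_{\kv,q}^{-1}$ is a Jacobi sum, as claimed.

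The routine parts are the $G_d$-equivariance argument and the combinatorial identification with Jacobi sums; the main obstacle is the $p$-adic analysis of the Frobenius expansion, in particular the termwise overconvergence estimates needed to conclude that only a computable finite piece contributes modulo exact forms. Since our Frobenius lift and our family $F_{\lambda,\mu}$ are precisely of the shape to which the argument of \cite{mondef} applies (after the substitution in the preceding Lemma, which puts us into the unweighted projective setting), one can invoke those convergence estimates verbatim, so no new analytic input is required.
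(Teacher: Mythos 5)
The paper offers no argument for Proposition~\ref{prpDia}; it is quoted from \cite{mondef}, so your proposal has to be measured against the proof given there. Your first step is fine and is also where any proof starts: for the naive lift one has $g_\gamma\circ\Frob_q^*=\Frob_q^*\circ g_{\gamma^q}$ for the coordinate-scaling action $g_\gamma$, so $\Frob_{q,0}^*$ carries the character eigenline spanned by $\omega_\kv$ into the eigenline of the $q$-th power character, which by Proposition~\ref{prpBasis} is the line spanned by $\omega_{\overline{q}\kv}$; since everything is a $\QQ_q$-vector space with basis the $\omega_\kv$, the scalar $d_{\kv,q}$ lies in $\QQ_q$ automatically, and your appeal to ``Galois-equivariance'' to rule out a ramified extension is unnecessary (though harmless). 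You also read the two typos in the statement correctly: the congruence is $q\equiv 1\bmod d$, and the claim is that $q^{n+m-1}d_{\kv,q}^{-1}$ is a Jacobi sum.

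The genuine gap is in the second half, which is the entire content of the proposition. You identify $d_{\kv,q}$ by asserting that the Dwork-style expansion of $\Frob_q^*\omega_\kv$, after reduction in cohomology, ``telescopes, via a Gross--Koblitz-style identification, to a product of $p$-adic Gamma values'' equal to the relevant Gauss sums. That evaluation is true, but it is itself a substantial $p$-adic computation (essentially Dwork's evaluation of Frobenius on diagonal hypersurfaces), and it is not supplied by the overconvergence estimates of \cite{mondef} that you propose to quote ``verbatim'': those estimates control the deformation (the operator $A$ and the Frobenius structure away from the Fermat fiber), not the explicit Frobenius constant on the fiber itself. The cited source pins the constant down differently and more cheaply: when $q\equiv 1\bmod d$ one has $\overline{q}\kv=\kv$, so Frobenius is diagonal in the basis $\{\omega_\kv\}$, and applying the Lefschetz trace formula to the twisted endomorphisms $g_\gamma\circ\Frob_q$ (equivalently, using Weil's classical character-twisted point counts for diagonal hypersurfaces, which are Gauss/Jacobi sums) and Fourier-inverting over the group identifies each diagonal entry separately as $q^{n+m-1}$ times the inverse of a Jacobi sum. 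This last point matters: knowing only the multiset of Frobenius eigenvalues (e.g.\ from the zeta function) does not tell you which Jacobi sum is attached to which $\omega_\kv$, so either the twisted-trace argument or a genuinely carried-out $p$-adic Gamma computation is required; as written, your proposal supplies neither. If you do complete the Gross--Koblitz route, you would in fact get more than the proposition states (an explicit formula for $d_{\kv,q}$ without assuming $q\equiv 1\bmod d$), but the analytic core must then be proved, not cited.
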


We describe an approach to determine the characteristic polynomial of 
Frobenius on $H^{n+m-1}(U_{\lambda_0,\mu_0},\QQ_q)$. We can study the 
Frobenius action on $H^n(U_{\lambda,\mu},\QQ_q)$ as a 
function in $\lambda$ and $\mu$.
 
Fix a point $(\lambda_0,\mu_0)$ and an (analytic) curve
$\nu\to(\lambda(\nu),\mu(\nu))$ connecting 
$(0,0)$ with $(\lambda_0, \mu_0)$. $\mu(\nu )^q=\mu (\nu^q)$ and 
$\lambda(0)=\nu(0)=0$.
 
Following N. Katz \cite{Katz}, consider the  commutative diagram 
\[\xymatrix{ \hspace{-.5in} H^{n+m-1}(U_{\lambda(\nu)^q,\mu(\nu)^q}) 
\ar[r]^(0.6){\Frob_{q,\lambda(\nu),\mu(\nu)}^*} \ar[d]_{A(\nu^q)} & 
H^{n+m-1}(U_{\lambda(\nu),\mu(\nu)},\QQ_q) \hspace{-1.2in} \ar[d]_{A(\nu)} \\  
H^{n+m-1}(U_{0,0}) \ar[r]^{\Frob_{q,0,0}^*} & \qquad H^{n+m-1}(U_{0,0},\QQ_q) 
} \] 
where  $\Frob_{q,\lambda,\nu}$ is the Frobenius acting on the complete family.
Since it maps the fiber 
over $(0,0)$ to the fiber over $(0,0)$ this map can be restricted to
$U_{(0,0)}$.  Katz studied the 
differential equation  associated to $A(\nu)$. He remarked in a note that it is
actually the solution of the 
Picard-Fuchs equation. In \cite{Katz} only the case  $\PP=\PP^n$ is discussed.
In \cite{mondef} it is 
discussed how to deduce the general case from this particular case.

In \cite{mondef} a procedure to compute $A(\nu)$ is given. For this we need some
notation

\begin{notation} \label{notRap}Let $\PP' $ be a $k$-dimensional weighted
projective space with weight 
$(u_i)$ and coordinates $z_i$. Let $\Omega:= \prod_i z_i\sum_j (-1)^j u_j
\frac{dz_0}{z_0}\wedge \dots 
\wedge \widehat{\frac{dz_j}{z_j}} \wedge \dots \wedge \frac{dz_k}{x_k}$.
\end{notation}
Calculating in $H^{n+m-1}(U_{\lambda_0,\mu_0},\QQ_q)$ is relatively easy, due to
the following well-known observation: 
\begin{rem} \label{remRap}
Let $G$ be the defining equation for a quasi-smooth hypersurface $X$ in a
$k$-dimensional weighted 
projective space $\PP'$, let $U$ be its complement. 
The vector space $H^n(U,\QQ_q)$ is the quotient of the infinitely-dimensional
vector space spanned by
\[ \frac{H}{G^t} \Omega\]
with $\deg(H)=t\deg(G_{\lambda_0})-w$  by the relations
\[ \frac{(t-1)H G_{z_i}-GH_{z_i}}{G^t} \Omega, \]
where the subscript $z_i$ means the partial derivative with respect to  a
coordinate $z_i$ on $\PP$. 
\end{rem}

We return to our family $X_{\lambda,\mu}$.
We continue by identifying particular differential forms, such that their
classes generated $H^{n+m-1}(U_{\lambda_0,\mu_0},\QQ_q)$:

\begin{defn}  Assume that $d=\deg(X)$ is divisible by all the $u_i$.
A {\em monomial type} $\mathbf{m}=(\overline{m_0},\dots,\overline{m_n})$ is an
element of $\prod_i u_i\ZZ/d\ZZ$ such that   
$\sum \overline{m_i}=0$ in $\ZZ/d\ZZ$. Choose representatives $m_i\in \ZZ$ 
of $\overline{m_i}$ such that $0\leq m <d$. The {\em relative degree} of 
$\mathbf{m}$ is then $\sum m_i/d$.
 
A monomial type $\mathbf{k}$ is called {\em admissible} if there exist 
integers $k_i, i=0,\dots n$ such that $0\leq k_i\leq d_i-1$ and 
$\mathbf{k}:=(u_0(k_0+1),\dots,u_n(k_n+1))$. Let $t$ be the relative 
degree of $\kv$. With $\mathbf{k}$ we associate the differential form
\[ \omega_\kv:=  \frac{\prod z_i^{k_i}}{F_\lambda^t} \Omega.\]\end{defn}

\begin{prop} \label{prpBasis} Suppose $\overline{X_{\lambda_0,\mu_0}}$ is
quasi-smooth. The set
 \[  \left\{ \omega_\kv \colon \kv \mbox{ an admissible monomial type}\right\}\]
  is a basis for $H^n(U_{\lambda_0,\mu_0},\QQ_q)$.
 \end{prop}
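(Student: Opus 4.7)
The plan is to follow the Griffiths--Dwork pole-reduction strategy, adapted to the weighted overconvergent setting of Monsky--Washnitzer cohomology. First I would invoke Remark \ref{remRap}: every class in $H^{n+m-1}(U_{\lambda_0,\mu_0},\QQ_q)$ is represented by some $H\Omega/F_{\lambda_0,\mu_0}^t$ with $H$ weighted homogeneous of degree $td - w$, modulo the Griffiths relations $(t-1)H G_{z_i} - G H_{z_i}$. The partial derivatives of $F_{\lambda_0,\mu_0}$ have the shape $d_i x_i^{d_i-1} + \lambda_0 a_i(\prod_k x_k^{a_k})/x_i$, and similarly for the $y_j$; by quasi-smoothness of $\overline{X_{\lambda_0,\mu_0}}$ these partials generate an ideal of finite colength in the relevant graded coordinate ring. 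The Griffiths relations therefore let one rewrite any monomial $\prod x_i^{k_i}\prod y_j^{l_j}$ in which some $k_i\geq d_i-1$ or $l_j\geq e_j-1$ as a sum of forms of lower pole order plus forms in which every exponent satisfies the bound $0\leq k_i\leq d_i-1$ (respectively for $l_j$).

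Next, to match the surviving monomials with the admissible types $\kv$, I would observe that the correspondence $\kv = (u_0(k_0+1),\dots,u_n(k_n+1))$ packages two conditions: the weighted-homogeneity requirement $\sum u_i(k_i+1)\equiv 0\pmod d$ (equivalently, that $\omega_\kv$ has a well-defined relative degree $t = \sum m_i/d$), and the bounds $0\leq k_i\leq d_i-1$ that label exactly the monomials surviving Griffiths reduction. Thus $\{\omega_\kv : \kv \text{ admissible}\}$ spans $H^{n+m-1}(U_{\lambda_0,\mu_0},\QQ_q)$.

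For linear independence, I would reduce to the classical projective case via the Lemma preceding Theorem \ref{thmLTF}. The substitution $x_i\mapsto x_i^{v_0 w_i}$, $y_j\mapsto y_j^{w_0 v_j}$ identifies $F_{\lambda,\mu}$ with $G_{\lambda,\mu}$ cutting out a smooth hypersurface in $\PP^{n+m-1}(1,\dots,1)$, and realizes $U_{\lambda_0,\mu_0}$ as the quotient of the corresponding classical affine complement by the finite group $G = \prod\bmu_{v_0 w_i}\times\prod\bmu_{w_0 v_j}/\Delta$. For the ordinary projective case, the Griffiths basis theorem is classical; pulling back and taking $G$-invariants of $\Omega^\bullet_{\tilde R^\dagger}$ yields exactly the forms indexed by the admissible $\kv$, and a dimension count against the Euler characteristic of the smooth projective hypersurface in $\PP^{n+m-1}(1,\dots,1)$ (then balanced against the trivial cohomology in other degrees coming from Theorem \ref{thmLTF}) shows the $\omega_\kv$ are independent.

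The main technical obstacle will be confirming that the pole-order reduction actually terminates within the overconvergent ring $R^\dagger_{\lambda_0,\mu_0}$ rather than merely within its formal completion, so that the Griffiths relations descend honestly to $\Omega^\bullet_{R^\dagger}$. This is precisely the content of the analysis of \cite{mondef} in the one-parameter case, and the two-parameter setting here adds no new difficulty because $\mu$ enters $F_{\lambda,\mu}$ in a way completely symmetric to $\lambda$; I would therefore invoke the overconvergence estimates of \cite{mondef} rather than reproduce them.
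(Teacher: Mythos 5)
Your overall architecture is in fact the one the paper intends: the paper prints no argument for Proposition \ref{prpBasis} at all, relying implicitly on the Lemma preceding Theorem \ref{thmLTF} (the substitution $x_i\mapsto x_i^{v_0w_i}$, $y_j\mapsto y_j^{w_0v_j}$), on the definition of $\Omega^\bullet_{R^\dagger}$ as the $G$-invariant part of $\Omega^\bullet_{\tilde{R}^\dagger}$, and on the one-parameter analysis of \cite{mondef}; your Griffiths--Dwork reduction via Remark \ref{remRap}, the passage to the covering, and the deferral of the overconvergence/termination estimates to \cite{mondef} is exactly that route, and deferring those estimates is no worse than what the paper itself does.

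The soft spot is your independence step. The Euler characteristic of the smooth hypersurface $\{G_{\lambda_0,\mu_0}=0\}\subset\PP^{n+m-1}$ controls the full middle cohomology of the covering complement, which is far larger than its $G$-invariant part; the number you actually need is $\dim H^{n+m-1}(U_{\lambda_0,\mu_0},\QQ_q)$, i.e.\ the dimension of $H^{n+m-2}_{\prim}$ of the quasi-smooth weighted hypersurface itself --- the $\fA$-type count recalled in Section \ref{sect2}, equivalently the relevant graded pieces of the Jacobian ring of $F_{\lambda_0,\mu_0}$, whose Hilbert function is constant on the quasi-smooth locus because the partials form a regular sequence of fixed degrees. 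Likewise, invoking ``the classical Griffiths basis theorem'' upstairs does not by itself make your particular monomials independent modulo the Jacobian ideal of the deformed $G_{\lambda_0,\mu_0}$: that is the same nontrivial content as your spanning step, merely transported to the covering. The repair is standard: keep your spanning argument (granting the reduction machinery of \cite{mondef}), verify that the number of admissible monomial types equals the invariant-part dimension just described, and conclude that a spanning set of the right cardinality is a basis. Two smaller bookkeeping points: quasi-smoothness must be transferred from $\overline{X_{\lambda_0,\mu_0}}$ to its lift (immediate, since non-quasi-smoothness is a closed condition), and the exponent range should be handled consistently --- by Lemma \ref{redformLem} a form with some $k_i=d_i-1$ (an entry of $\kv$ divisible by $d$) reduces to zero, and the paper's own worked example in Section \ref{sect13} uses $0\le k_i\le d_i-2$, so the survivors of your reduction, and the count matched against the dimension, should be exactly those types.
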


The operator $A(\nu)$ can be calculated as follows: Let $\kv$ be an admissible
monomial type of 
relative degree $t$. Then $A(\nu)\omega_{\kv}$ is the reduction of
\begin{eqnarray*} \omega_{\kv} &= &\frac{\prod x_i^{k_i} \prod
y_j^{m_j}\Omega}{(\sum x_i^{d_i}-\sum 
y_j^{e_j} +\lambda(\nu) F_1 - \mu(\nu) F_2 )^t} \\&=& \sum_{j=0}^\infty \left(
\begin{matrix} t+j-1 \\ j \end
{matrix} \right) \frac{  \prod x_i^{k_i} \prod y_j^{m_j} (-\lambda(\nu) F_1 +
\mu(\nu) F_2)^j}{(\sum x_i^{d_i}-
\sum y_j^{e_j})^{t+j} } \Omega\end{eqnarray*}
in $H^n(U_{0,0})$, provided that $|\nu|$ is small enough. For $\nu$ with larger
norm, we can use 
analytic continuation to obtain $A(\nu)$. 
 
The operator $A(\nu)$ depends on the chosen path, but its value at any point
$(\lambda_0,\mu_0)$  is 
independent of the path. Hence it makes sense to describe $A$ in terms of
$(\lambda,\mu)$. 

\section{Calculating of the deformation matrix}\label{sect12a}
In this section we calculate $A(\lambda,\mu)\omega_{\nv}$ for an admissible
monomial type $\nv$.
Let 
$d'_i$ be  the order  of $a_i \bmod d_i$ in $ \ZZ/d_i\ZZ$. Let $d'$ be least
common multiple of all the 
$d_i'$.
 Set $b_i = f_i d'/d_i$. 
 Let 
$e'_j$ be  the order  of $b_j \bmod e_j$ in $ \ZZ/e_j\ZZ$. Let $e'$ be least
common multiple of all the 
$e_j'$.
 Set $g_j = b_j e'/e_i$.

 In the following proposition and its proof we identify elements in $a \in
\ZZ/m\ZZ$ with their 
representative $\tilde{a}\in \ZZ$ such that $0\leq \tilde{a} \leq m-1$. Denote
by $(t)_m$ the Pochhammer 
symbol $t(t+1)\dots(t+m-1)$.

\begin{prop}\label{PrpCoeffb}Let $\kv$ be an admissible monomial type. Let $t$
be the relative degree 
of $\ v$. Write  $A(\lambda,\mu)\omega_\kv =\sum c_\mv(\lambda,\mu) \omega_\mv$,
where  the sum is 
taken over all admissible monomial types. Then $c_\mv(\lambda,\mu)$  is non-zero
only if   
there exist $r_0,s_0\in \ZZ$  with $0\leq r_0\leq d'-1$, $0\leq s_0\leq e'-1$
and such that $\mathbf{m}-
\mathbf{k}=\overline{r_0} \mathbf{a}+\overline{s_0}\mathbf{b}$. If this is the
case then $c_{\mv}(\lambda,
\mu)\omega_{\mv}$ is the product of
\[ \left( \begin{matrix} t+r_0-1 \\ r_0 \end{matrix}\right) (-\lambda)^{r_0} 
{}_{d'}F_{d'-1}\left( \begin{array}{c} 
\alpha_{i,s} \\ \frac{r_0+1}{d'} \; \frac{r_0+2}{d'} \; \dots \; \widehat{1} \;
\dots \;\frac{r_0+d'}{d'} \end{array}; 
\prod_{i\colon a_i\neq 0} \left(\frac{  a_i}{d_i}\right)^{f_i} (-\lambda)^{d'}  
 \right),\]
and
\[ \left( \begin{matrix} t+s_0-1 \\ s_0 \end{matrix}\right) (-\mu)^{s_0} 
{}_{e'}F_{e'-1}\left( \begin{array}{c} 
\beta_{j,s} \\ \frac{s_0+1}{e'} \; \frac{s_0+1}{e'} \; \dots \; \widehat{1} \;
\dots \;\frac{s_0+e'}{e'} \end{array}; 
\prod_{i\colon b_i\neq 0} \left(\frac{  b_i}{e_i}\right)^{g_i} (-\mu)^{e'}   
\right),\]
with
\[ {\red \frac{\prod x_i^{a_i r_0+k_i}  \prod
y_j^{b_js_0+m_j}}{F_{0,0}^{t+r_0+s_0}} \Omega},\]
where
\[ \alpha_{i,s} = \frac{(s-1)d_i+1+a_i r_0+k_i}{a_i d'}, s=1,\dots,f_i;
i=1,\dots n\]
and
\[ \beta_{j,s} = \frac{(s-1)e_i+1+b_i s_0+m_i}{b_i e'}, s=1,\dots,g_i; i=1,\dots
m.\]Where $\red$ means 
reducing in the cohomology group $H^n(U_{0,0},\QQ_q)$. 

\end{prop}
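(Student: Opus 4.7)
The plan is to compute $A(\lambda,\mu)\omega_{\kv}$ directly from the integral-style representation displayed just before the proposition, expand it as a double power series in $(\lambda,\mu)$, and then reduce modulo the Griffiths relations of Remark \ref{remRap} applied to the central fiber $F_{0,0}=\sum x_i^{d_i}-\sum y_j^{e_j}$. The structural fact that makes the calculation feasible is that the gradient of $F_{0,0}$ splits cleanly between the two variable blocks: $\partial F_{0,0}/\partial x_i = d_i x_i^{d_i-1}$ uses only the $x$-variables and $\partial F_{0,0}/\partial y_j = -e_j y_j^{e_j-1}$ only the $y$-variables. Consequently the cohomological reductions in the two blocks are entirely decoupled, which is the geometric reason why the final answer factors as a product of two generalized hypergeometric functions.

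First I would write $F_{\lambda,\mu}=F_{0,0}+\lambda F_1-\mu F_2$ with $F_1=\prod x_i^{a_i}$ and $F_2=\prod y_j^{b_j}$, and combine the generalized binomial series with the ordinary binomial theorem to obtain a formal expansion of the shape
\[
\omega_{\kv} = \sum_{r,s\geq 0} \binom{t+r-1}{r}\binom{t+r+s-1}{s}(-\lambda)^r(-\mu)^s\;\frac{\prod x_i^{a_ir+k_i}\prod y_j^{b_js+m_j}}{F_{0,0}^{t+r+s}}\,\Omega,
\]
up to a sign convention in $\mu$ arising from writing $-\lambda F_1 + \mu F_2 = (-\lambda F_1) - (-\mu F_2)$ before applying the binomial theorem. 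The prefactor is produced by the identity $\binom{t+r+s-1}{r+s}\binom{r+s}{r}=\binom{t+r-1}{r}\binom{t+r+s-1}{s}$, which is the version tailored to factorization in $r$ and $s$.

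Next I would carry out the cohomological reduction. Write $r=r_0+r_1d'$ with $0\le r_0<d'$ and $s=s_0+s_1e'$ with $0\le s_0<e'$. The exponent $a_ir+k_i$ on $x_i$ exceeds $d_i-1$ by a multiple of $d_i$ precisely $f_ir_1$ times, so iterating the relation of Remark \ref{remRap} for $F_{0,0}$ lowers the exponent in $d_i$-sized steps; because $\partial F_{0,0}/\partial x_i$ is a pure power of $x_i$, each reduction rescales the differential by a factor of the form $(\text{linear in the current exponent on }F_{0,0})/d_i$ and introduces no $y$-mixing. Gathering these factors over $i$ and over all $f_ir_1$ reductions produces a product of Pochhammer symbols depending only on $r_1$, and the same analysis in the $y$-block produces an independent Pochhammer product depending only on $s_1$. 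The condition $\mv-\kv=\overline{r_0}\mathbf{a}+\overline{s_0}\mathbf{b}$ in the statement is exactly what ensures that the reduced monomial $\prod x_i^{a_ir_0+k_i}\prod y_j^{b_js_0+m_j}$ represents the admissible class $\mv$, so only these pairs $(r_0,s_0)$ contribute to $c_{\mv}$.

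Summing over $r_1,s_1\ge 0$ with $(r_0,s_0)$ fixed then gives a double sum that factors by the decoupling above. Using the standard identity $(a)_{md'}=(d')^{md'}\prod_{s=1}^{d'}\bigl(\tfrac{a+s-1}{d'}\bigr)_m$ and its $e'$-analogue, each single sum converts into a ${}_{d'}F_{d'-1}$ (resp.\ ${}_{e'}F_{e'-1}$) in the variable $\prod_{i\colon a_i\ne 0}(a_i/d_i)^{f_i}(-\lambda)^{d'}$ (resp.\ the corresponding $\mu$-expression), and comparing numerator and denominator parameters identifies them with the lists $\alpha_{i,s}$ and $\beta_{j,s}$ of the statement. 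The main obstacle is this last bookkeeping: verifying that the combination of the binomial prefactor $\binom{t+r-1}{r}$ with the Pochhammer constants coming from the iterated Griffiths reductions rearranges cleanly into the stated product, and in particular that the omitted $\widehat{1}$ in each parameter list is forced by the denominator $(1)_{r_1}=r_1!$ arising from the hypergeometric normalization. To avoid redoing the one-block computation from scratch, I would specialize to $\mu=0$, recover the known one-parameter case of \cite{mondef} for the $x$-block, and then invoke the decoupling and symmetry to assemble the two-parameter formula.
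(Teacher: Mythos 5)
Your computational core coincides with the paper's: expand the geometric series of $\omega_\kv$ around the central fiber, observe that each cohomological reduction lowers an exponent by $d_i$ or $e_j$ so that only the residue pair $(r_0,s_0)$ with $\mv-\kv=\overline{r_0}\ba+\overline{s_0}\bb$ can contribute to $c_\mv$, argue that the $x$-block and $y$-block reductions never interact so the double sum over $(j_1,j_2)$ factors into two single sums with hypergeometric term ratios, and identify the parameters as in \cite[Proposition 5.2]{mondef} (your proposal to specialize to $\mu=0$ and quote the one-parameter case is exactly the paper's ``similar to the calculation in mondef, left to the reader''). The factorization step itself is fine: the paper phrases it as the ratios $c_{j_1+1,j_2}/c_{j_1,j_2}$ and $c_{j_1,j_2+1}/c_{j_1,j_2}$ being rational functions of $j_1$ alone, resp.\ $j_2$ alone, which is the same decoupling you get from the splitting of the gradient of the central fiber.

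The genuine gap is the sign issue you dismiss as ``a sign convention.'' The paper does not compute directly with $F_{0,0}=\sum x_i^{d_i}-\sum y_j^{e_j}$ and $-\mu F_2$: it computes the deformation matrix of the auxiliary all-plus family $\sum x_i^{d_i}+\sum y_j^{e_j}+\lambda\prod x_i^{a_i}+\mu\prod y_j^{b_j}$, where the expansion produces $(-\lambda)^r(-\mu)^s$ uniformly, and then devotes the second half of the proof to showing that the original family has the same deformation matrix, via the isomorphism $x_i\mapsto x_i$, $y_j\mapsto\zeta^{v_j}y_j$ with $\zeta$ a primitive $2e$-th root of unity; since $\zeta$ need not lie in $\QQ_q$, this requires the Frobenius-eigenspace argument (the decomposition $H_1\oplus H_2$ by the parity of $m_0$) when $q\not\equiv 1\bmod 2e$. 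In your direct route the expansion of $(F_{0,0}+\lambda F_1-\mu F_2)^{-t}$ gives $(-\lambda)^r(+\mu)^s$, while every $y$-block Griffiths reduction against $F_{0,0}$ contributes an extra sign through $\partial F_{0,0}/\partial y_j=-e_jy_j^{e_j-1}$; these recombine into the stated hypergeometric argument $\prod_j(b_j/e_j)^{g_j}(-\mu)^{e'}$ only because the number of $y$-reductions per period is $\sum_j g_j=e'\sum_j b_jv_j/\deg V_2=e'$ (weighted homogeneity of $\prod y_j^{b_j}$), a count you never make; and even granting it, the prefactor comes out as $(+\mu)^{s_0}$, off from the stated $(-\mu)^{s_0}$ by $(-1)^{s_0}$, a discrepancy that is resolved precisely by the basis change $\omega_\mv\mapsto\zeta^{\sum(m_j+1)v_j}\omega_\mv$ and the descent argument above. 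So you must either carry out this sign bookkeeping and the accompanying rescaling of the basis explicitly, or reduce to the all-plus family as the paper does; as written, that step is missing rather than a matter of convention.
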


\begin{proof} This proof is very similar to the proof of \cite[Proposition
5.2]{mondef}. For notational 
convenience we calculate  the deformation matrix for the family $\sum
x_i^{d_i}+\sum y_i^{e_i}+\lambda 
\prod x_i^{a_i}+\mu \prod y_j^{b_j}$. At the end of the proof we show that our
original family has the 
same deformation matrix.

One starts by expanding
\[ \frac{\prod x_i^{k_i} \prod y_j^{m_j}}{(\sum x_i^{d_i}+\sum y_i^{e_i}+\lambda
\prod x_i^{a_i}+\mu \prod 
y_j^{b_j})^t}\]
as follows
\[\sum_{r_1=0}^{d'-1}\sum_{s_1=0}^{e'-1} \sum_{j_1,j_2\geq0} 
B_1B_2 \frac{ \prod x_i^{k_i+(r_1+j_1d') a_i} \prod y_j^{m_j+(s_1+j_2e')b_j}
}{F^{t+r_1+s_1+j_1d'+j_2e'}},\]
with $F=\sum x_i^{d_i}+\sum y_j^{e_j}$,
\[B_1=\left( \begin{matrix} t+r_1+s_1+j_1d'+j_2e'-1 \\ t-1 \end{matrix}\right)
\mbox{ and } B_2=\left( \begin
{matrix} r_1+s_1+j_1d'+j_2e' \\ r_1+j_1d' \end{matrix}\right).\]
In each of the reduction steps the exponent of one of the $x_i$ (resp. $y_j$)
will be lowered by $d_i$ 
(resp. $e_j$.) This implies that we have to distinguish between exponents that
are different modulo one of the $d_i$ or $e_j$ and hence we only need to 
consider the summand for $r_1=r_0, s_1=s_0$.
 
Write the reduction of this summand as  $c_0 (-\lambda)^{k_0}(-\mu)^{m_0} 
\sum c_{j_1,j_2} (-\lambda)^{d'j_1}(-\mu)^{e'j_2}$. 
A calculating similar to the one done in \cite[Proposition 5.2]{mondef} shows
that
$c_{j_1+1,j_2}/c_{j_1,j_2}$ is an element of  $\QQ(j_1)$ and
$c_{j_1,j_2+1}/c_{j_1,j_2}$ is an element 
of $\QQ(j_2)$, i.e., they are rational functions in $j_1$, resp. $j_2$. This
implies that 
\[\begin{matrix}c_0 (-\lambda)^{k_0}(-\mu)^{m_0} \sum c_{j_1,j_2}
(-\lambda)^{d'j_1}(-\mu)^{e'j_2}=\\= 
c_0 (-\lambda)^{k_0}(-\mu)^{m_0}  \left(\sum c^{(1)}_{j_1}
(-\lambda)^{d'j_1}\right) \left(\sum c^{(2)}_{j_2} 
(-\mu)^{e'j_2}\right),\end{matrix}\]
where $c^{(t)}_{j_t+1} /c^{(t)}_{j_t+1} \in \QQ(j_t)$ for $t=1,2$, i.e., this
sum is a product of two 
hypergeometric functions.

The actual calculation of the parameters of these hypergeometric function is
similar to the calculation in 
\cite[Proposition 5.2]{mondef} and we leave this to the reader.

It remains to show that the family $\sum x_i^{d_i}+\sum y_i^{e_i}+\lambda \prod
x_i^{a_i}+\mu \prod y_j^{b_j}$ and  our original family have the same 
deformation matrix.

Let $\zeta$ be a primitive $2e$-th root of unity. Then the map $\varphi$ mapping
$x_i$ to $x_i$ and $y_j
$ to $\zeta^{v_j}y_j$, defines an isomorphism between $X_{\lambda,\mu}$ and
$X'_{\lambda,\mu}$, 
defined over $\QQ_q(\zeta)$. Since $q\equiv 1 \bmod e$ we obtain that
$[\QQ_q(\zeta):\QQ_q]\leq 2$, 
and equality holds if only if $q\not \equiv 1 \bmod 2e$. If $q\equiv 1 \bmod 2e$
then there is nothing to  
prove. So assume that $q\not \equiv 1 \bmod 2e$. Then 
\begin{eqnarray*}\Frob_q \omega_{\nv} &= &\varphi^{-1} \Frob_q' 
\varphi(\omega_{\nv})=\varphi^{-1} 
\Frob_q' (\zeta^{\sum (m_j+1)w_j}\omega_{\nv}\\&= &
\varphi^{-1} \zeta^{q\sum (m_j+1)w_j}  \Frob_q' \omega_{\nv} = \zeta^{(q-1)\sum
(m_j+1)w_j} 
\Frob_q'\omega_{\nv}\\ &=& (-1)^{\sum (m_j+1)w_j}\omega_{\nv}
=(-1)^{e-w_0(m_0+1)}\omega_{\nv} .
\end{eqnarray*}
where $m_0$ is chosen in such a way that $\mv$ is a monomial type for $V_2$. 
Decompose $H^{n+m-1}(U_{\lambda, \mu},\QQ_q)=H_1(\lambda,\mu)\oplus
H_2(\lambda,\mu)$, where 
$H_1$ is spanned by the $\omega_{\nv}$ with odd $m_0$ and $H_2$ is spanned by
the $\omega_{\nv}$ with even $m_0$. 

Since this $m_0$ is the same for all monomial types of the form $\nu+\lambda
\mathbf{a}+\mu \mathbf{b}$, we can write $A=A_1\oplus A_2$ such that $A_i$ 
is defined on $H_i$. Let $A'_i$ be a similar 
decomposition of the deformation matrix $A'$ of $X'_{\lambda,\mu}$.

Since $\omega_{\nv}$ is an eigenvector of $\Frob_{q,0,0}^{'*}$ (by
Proposition~\ref{prpDia}) we obtain 
that $\Frob_{q,0,0}^{'*}$  respects the decomposition $H_1(0,0) \oplus
H_2(0,0)$. 

Collecting every thing we have on $H_1$ that
\begin{eqnarray*}A_1(\lambda^q,\mu^q) \Frob_{q,0,0}
A_1(\lambda,\mu)^{-1}&=&\Frob_{\lambda,\mu} 
w_\nv=-\Frob'_{\lambda,\mu} w_\nv\\&=& A_1'(\lambda^q,\mu^q) (-\Frob'_{q,0,0})
A_1'(\lambda,\mu)^{-1}\\
&=&A_1'(\lambda^q,\mu^q) \Frob_{q,0,0} A_1'(\lambda,\mu)^{-1}.\end{eqnarray*}
An easy calculation shows that  $A_1'=A_1$, and a similar approach shows that
$A_2'=A_2$, whence 
$A'=A$.
\end{proof}
Note that this Proposition gives almost a complete reduction of the form
$\Frob_{q,\lambda,\mu}^* (\omega_\kv)$, in the sense 
that $c_{\mv}(\lambda,\mu)$ is described as the product of a 
hypergeometric function and the reduction of a rational function in the $x_i$
multiplied by $\Omega$. 
This last form can be easily reduced using the following Lemma:

\begin{lem}\label{redformLem} Fix non-negative integers $b_i$ such that $\sum
b_iw_i+w=td$ for some 
integer $t$.
Then the cohomology class $\omega:=\frac{\prod x_i^{b_i}}{F^t}\Omega \in
H^n(U_0,\QQ_q)$ equals
\[ \frac{\prod_i ((c_i+1)w_i/d)_{q_i}} {(s)_{t-s}} \frac{\prod
x_i^{c_i}}{F^{s}}\Omega\in H^n(U_0,\QQ_q),\]
where $0\leq c_i< d_i$  and $q_i,s$ are integers such that $b_i= q_i d_i+c_i$,
and $sd=\sum c_iw_i+w$. Moreover, if for one of the $i$ we have $c_i=d_i-1$, 
then $\omega$ reduces to zero in cohomology. 
\end{lem}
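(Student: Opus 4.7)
The plan is to apply the Griffiths--Dwork relation from Remark \ref{remRap} iteratively to the central-fiber equation $F_0 = \sum_j x_j^{d_j}$, peeling off one factor of $x_i^{d_i}$ from the numerator at each step while lowering the pole order by one.

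First, I would specialize the congruence
\[
\frac{(t-1)H (F_0)_{x_i} - F_0\, H_{x_i}}{F_0^t}\Omega \;\equiv\; 0
\]
in $H^n(U_0,\QQ_q)$ to $H = x_i^{b_i - d_i + 1}\prod_{j\neq i} x_j^{b_j}$, which is legitimate whenever $b_i \geq d_i$. Using $(F_0)_{x_i} = d_i\, x_i^{d_i-1}$, the congruence rearranges to the elementary reduction
\[
\frac{\prod_j x_j^{b_j}}{F_0^t}\Omega \;\equiv\; \frac{b_i - d_i + 1}{(t-1)\,d_i}\,\frac{x_i^{b_i - d_i}\prod_{j\neq i} x_j^{b_j}}{F_0^{t-1}}\Omega.
\]
Iterating this $q_i$ times in variable $x_i$ drives $b_i$ down to $c_i = b_i \bmod d_i$; doing it for every $i$ therefore produces the stated shape. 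The total number of reductions is $\sum_i q_i$, and the identity $td = \sum_j b_jw_j + w = (t-s)d + sd$ confirms that the pole order drops from $t$ to exactly $s$.

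Next, I would collect the accumulated scalar. For the numerators, if the $q_i$ steps in variable $i$ are done consecutively, they contribute $\prod_{k=0}^{q_i-1}\bigl((q_i-k-1)d_i + c_i + 1\bigr)$; dividing by $d_i^{q_i}$ and reindexing turns this into $\prod_{j=0}^{q_i-1}\bigl(j + (c_i+1)/d_i\bigr) = \bigl((c_i+1)w_i/d\bigr)_{q_i}$, using $d_iw_i = d$. For the denominators, the pole order visits every integer from $t$ down to $s+1$ exactly once irrespective of the order of reductions, so the accumulated factor is $\prod_{k=s+1}^{t}(k-1) = (s)_{t-s}$. Taking the product over $i$ recovers the claimed coefficient. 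Independence of order in the numerator is immediate because a reduction in variable $i$ does not change the exponents of the other $x_j$.

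For the vanishing statement, suppose the reduced form has $c_i = d_i - 1$ for some $i$. Apply the same relation once more, this time to $H' = \prod_{j\neq i} x_j^{c_j}$, so that $H'_{x_i} = 0$ and the congruence collapses to
\[
(s-1)\,d_i\,\frac{x_i^{d_i-1}\prod_{j\neq i} x_j^{c_j}}{F_0^s}\Omega \;\equiv\; 0,
\]
forcing the reduced form to vanish whenever $s \geq 2$. The case $s = 1$ is excluded by the degree constraint: substituting $c_i = d_i - 1$ into $sd = \sum_j c_j w_j + w$ with $s = 1$ yields $\sum_{j\neq i} c_j w_j = -\sum_{j\neq i} w_j$, which is impossible with non-negative $c_j$ and strictly positive $w_j$. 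The main obstacle in executing the plan is just the Pochhammer/denominator bookkeeping, which is routine once one recognizes that the individual reduction moves commute in cohomology up to exact forms.
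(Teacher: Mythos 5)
Your proof is correct: each application of the relation of Remark \ref{remRap} with $H=x_i^{b_i-d_i+1}\prod_{j\neq i}x_j^{b_j}$ gives exactly the stated one-step reduction, the scalar bookkeeping yields $\prod_i((c_i+1)w_i/d)_{q_i}/(s)_{t-s}$ independently of the order of the steps, and your vanishing argument correctly disposes of the case $c_i=d_i-1$ by noting $H'_{x_i}=0$ and ruling out $s=1$ on degree grounds. The paper states this lemma without proof (it is the standard Griffiths--Dwork reduction on the Fermat central fiber, as in \cite{mondef}), and your argument is precisely the intended one.
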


These three results enable us, using Katz' result, to give a complete
description of $\Frob_{\lambda,\mu}^*$.

Suppose now that $X_{\lambda,\mu}$ is a family obtained by the twist
construction. I.e., $X_{\lambda,
\mu}\stackrel{\sim}{\dashrightarrow} (V_{1,\lambda}\times V_{2,\mu})/\bmu_\ell$.
 We want to relate the 
zeta-function of $X_{\lambda,\mu}$ with the zeta function of $V_{1,\lambda}$ 
and $V_{2,\mu}$. This can 
be done purely geometrically, in the sense that one can factor the birational
map $X_{\lambda,\mu}
\stackrel{\sim}{\dashrightarrow} (V_{1,\lambda}\times V_{2,\mu})/\bmu_\ell$ in
proper modifications and 
then determine the zeta-function of the center of each of the proper
modification and the zeta function of 
each of the exceptional divisors. However, there is a more straight-forward
procedure that is more 
combinatorical.

We start by giving a technical definition concerning monomial types:
\begin{defn}
Let $\kv$ be an admissible monomial type for $V_1$ and let $\mv$  be an
admissible monomial type for 
$V_2$. We call $\kv,\mv$ is an \emph{admissible couple} if  $k_0+m_0+2\equiv 0
\bmod \ell$.

For an admissible couple $\kv,\mv$ define 
\[\kv \oplus  \mv := (v_0w_1(k_1+1),\dots,v_0w_n(k_n+1),w_0v_1(m_1+1),\dots,
w_0v_n(m_m+1)).\]
Then $\kv\oplus \mv$ is an admissible monomial type for $X_{\lambda,\mu}$.
\end{defn}

\begin{rem}
Let $\sigma$ be a generator of $\bmu_\ell$. Then
$\sigma^{*}\omega_{\kv}=\zeta^{k_0+1}\omega_{\kv}$ 
and $\sigma^*\omega_{\mv}=\zeta^{m_0+1}\omega_{\mv}$.

This implies that the set 
\[\{(\omega_{\kv},\omega_{\mv})\mid \kv,\mv \mbox{ an admissible couple}\}\]
is a basis for \[\oplus_i (H^{n+1}(U_{1,\lambda})^{\sigma^*-\zeta^i} \times
H^{m+1}(U_{2,\mu})^
{\sigma^*-\zeta^{\ell-i}}).\]
\end{rem}

The cohomology groups $H^{n-1}(V_{1,\lambda})_{\prim} \cong H^n(U_{1,\lambda})$
and $H^{m-1}(V_{2,\mu})_{\prim} \cong H^m(U_{2,\mu})$ can be studied using 
the results of \cite{mondef}. Using the 
birational map $(V_{1,\lambda} \times V_{2,\lambda})\bmu_\ell \dashrightarrow
X_{\lambda,\mu}$ we 
can relate a subspace of $H^n(U_{1,\lambda})\times H^m(U_{2,\mu})$ with a
subspace of $H^{n+m-2}
(X_{\lambda,\mu})$:
\begin{prop} We have the following commutative diagram
\[ \xymatrix{ H^{n+m-2}((V_{1,\lambda}\times V_{2,\mu})/\bmu_\ell)_{\prim}
\ar[r] & H^{n+m-2}(X_{\lambda,\mu})_{\prim}     \\
\oplus_i H^{n}(U_{1,\lambda})_{{\zeta^i}} (1)\times
H^{m}(U_{2,\mu})_{\zeta^{\ell-i}}(1) \ar[u] {\ar[r]^(0.62){\psi}} 
& H^{n+m-1}(U_{\lambda,\mu}) (1)\ar[u] 
}\]
Here one should consider the cohomology groups in the upper row as rigid
cohomology groups. The 
subscript $\zeta^i$ indicates that we take $\zeta^i$-eigenspace of $\sigma^*$.

The two vertical arrows are residue maps (i.e., isomorphisms), the upper
horizontal arrow is induced by 
the birational map $X_{\lambda,\mu} \dashrightarrow (V_{1,\lambda}\times
V_{2,\mu})/\bmu_{\ell}$. The 
lower horizontal arrow is the (unique) map making this diagram commutative and
is given by
\[ \psi(\omega_{\kv} , \omega_{\mv}) \mapsto \omega_{\kv \oplus  \mv},\]
where $\kv,\mv$ is an admissible couple of monomial types. In particular, $\psi$
is injective.\end{prop}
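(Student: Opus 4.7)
The plan is to construct the upper horizontal arrow from the birational map, identify the two vertical residue isomorphisms, and then compute what the twist map does to the distinguished differential forms $\omega_\kv$.

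First I would treat the vertical arrows. For a quasi-smooth hypersurface $Z$ in a weighted projective space with affine complement $U$, there is a well-known residue map
\[
\Res\colon H^{\dim U}(U,\QQ_q) \lra H^{\dim Z - 1}(Z,\QQ_q)_{\prim}(-1),
\]
realized in Monsky--Washnitzer cohomology exactly as in Remark~\ref{remRap}; this is an isomorphism, and for quotients by a finite group of order prime to $p$ (such as $\bmu_\ell$) one takes invariants on both sides. Apply this to $U_{\lambda,\mu}\subset \PP$ to get the right-hand vertical arrow, and apply it to each factor (cf.\ \cite{mondef}) and combine by K\"unneth to get the left-hand vertical arrow; the $\zeta^i$- and $\zeta^{\ell-i}$-eigenspaces of $\sigma^*$ are precisely the pieces whose tensor product is $\bmu_\ell$-invariant.

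Next, for the upper horizontal arrow, recall from Section~\ref{sect4} that the twist map $\Phi$ restricts to a generically $\ell{:}1$ dominant map $V_1\times V_2 \to X$ whose fibers are the $\bmu_\ell$-orbits, hence induces a birational isomorphism $(V_1\times V_2)/\bmu_\ell \dashrightarrow X_{\lambda,\mu}$. Since both sides admit crepant resolutions (Proposition~\ref{field-of-defn}) and birational Calabi--Yau threefolds have canonically isomorphic cohomology in middle degree on the primitive part (or, more directly in the rigid setup, since $\Phi^*$ is defined on the open loci where $\Phi$ is an isomorphism and both primitive cohomologies are computed away from codimension-two loci), pullback by $\Phi$ gives the top horizontal arrow. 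The bottom horizontal arrow is then \emph{forced} by commutativity: given $\omega_\kv\otimes\omega_\mv$ on the left, take its residue to land in the tensor product of primitive cohomologies of $V_1$ and $V_2$, push through the top arrow, and lift back via the right residue.

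The substantive step, which I expect to be the main obstacle, is verifying the explicit formula $\psi(\omega_\kv,\omega_\mv)=\omega_{\kv\oplus\mv}$. Concretely, in coordinates the twist map substitutes $z_i = x_0^{s_0w_i}y_0^{tw_i}x_i$ and $t_j=x_0^{sv_j}y_0^{t_0v_j}y_j$. Under this substitution, write out $\Phi^*(\omega_{\kv\oplus\mv})$ as a rational differential on $V_1\times V_2$; the exponents of $x_0$ and $y_0$ that fall out must add up to integers (using the definitions of $s,t,s_0,t_0$ and $\gcd(v_0,w_0)=1$), and the divisor-theoretic degree matching from Proposition~\ref{prop4-2}(2) guarantees the remaining monomial is exactly $\prod x_i^{k_i}\prod y_j^{m_j}$, while the denominator $F_{\lambda,\mu}^{t}$ pulls back to the product of defining polynomials. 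The admissible-couple condition $k_0+m_0+2\equiv 0\bmod \ell$ is precisely what makes the pulled-back form $\bmu_\ell$-invariant, matching the eigenspace decomposition of the left-hand side. Once this matching of forms is verified, injectivity of $\psi$ is immediate from linear independence of the $\omega_{\kv\oplus\mv}$ (Proposition~\ref{prpBasis}) as $(\kv,\mv)$ ranges over admissible couples, since distinct couples give distinct monomial types on $X_{\lambda,\mu}$.
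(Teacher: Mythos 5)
The paper states this proposition without proof, so your sketch can only be judged on its own terms; the overall architecture (residue isomorphisms for the vertical arrows, the eigenspace bookkeeping $\zeta^i\times\zeta^{\ell-i}$ matching $\bmu_\ell$-invariance of $\omega_{\kv}\otimes\omega_{\mv}$, the top arrow coming from the birational map, and injectivity from Proposition~\ref{prpBasis} together with the injectivity of $(\kv,\mv)\mapsto\kv\oplus\mv$ on admissible couples) is the right one and matches the level of detail the paper itself offers.

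However, the step you yourself identify as substantive is sketched in a way that does not work. You propose to pull back $\omega_{\kv\oplus\mv}$, a form on the complement $U_{\lambda,\mu}$, through the ambient twist map and read off $\omega_{\kv}\otimes\omega_{\mv}$ "on $V_1\times V_2$", with the denominator $F_{\lambda,\mu}^t$ pulling back to the product of the defining polynomials. Neither claim is correct: since $f$ and $g$ are weighted homogeneous and $s_0w_0+1=sv_0$, $t_0v_0+1=tw_0$, one computes $\Phi^*F_{\lambda,\mu}=x_0^{s_0\ell w_0}y_0^{t_0\ell v_0}\bigl(y_0^{\ell}f(x)-x_0^{\ell}g(y)\bigr)$, which vanishes identically on $V_1\times V_2$ (as it must, because $\Phi(V_1\times V_2)\subset X_{\lambda,\mu}$) and is in no way proportional to $(x_0^{\ell}+f)(y_0^{\ell}+g)$. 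In particular $\Phi^{-1}(U_{\lambda,\mu})$ is not $U_{1,\lambda}\times U_{2,\mu}$, so there is no direct map of complements along which $\Phi^*\omega_{\kv\oplus\mv}$ could be compared with $\omega_{\kv}\otimes\omega_{\mv}$. The identity $\psi(\omega_{\kv},\omega_{\mv})=\omega_{\kv\oplus\mv}$ has to be verified \emph{after} applying the two residue maps, i.e.\ by comparing the residues (Griffiths-type representatives) on $V_1\times V_2$ and on $X_{\lambda,\mu}$ in an affine chart such as $x_0y_0\neq 0$, where $\Phi$ restricts to a finite degree-$\ell$ map onto its image, and tracking the Jacobian of the substitution using $s_0w_0+1=sv_0$ and $t_0v_0+1=tw_0$; equivalently one can transport the eigenspace decomposition of Section~\ref{sect-cohomology} to the rigid setting. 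A further small slip: $\bmu_\ell$-invariance of the pulled-back class is automatic because $\Phi$ factors through the quotient; the admissible-couple condition $k_0+m_0+2\equiv 0\bmod\ell$ is not what makes the pullback invariant, but what selects those tensor classes $\omega_{\kv}\otimes\omega_{\mv}$ that are invariant and hence can appear in the image at all. Finally, note that the statement is for general $n+m$, so the appeal to "birational Calabi--Yau threefolds" is not available in general; your parenthetical alternative (both primitive middle cohomologies are insensitive to loci of codimension at least two on suitable models) is the argument to develop for the top arrow.
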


Of course, one should describe the image of $\psi$: 
\begin{lem} Let $\nv=(v_0w_1(k_1+1),\dots,v_0w_n(k_n+1),w_0v_1(m_1+1),\dots,
w_0v_n(m_m+1))$ 
be an admissible monomial type for $X_{\lambda,\mu}$.
Let $k_0,m_0$ be the unique elements of $\ZZ/\ell\ZZ$ such that 
$\kv:=(w_0(k_0+1),\dots,w_n(k_n+1))$ 
and  $\mv:=(v_0(m_0+1),\dots.,v_m(m_m+1))$ are (not necessary admissible)
monomial types for $V_1$ and $V_2$. 

Then $\omega_{\nv}$ is in the image of $\psi$ if and only if $\kv$ and $\mv$ 
are admissible monomial types, or 
if and only if $k_0\not \equiv -1 \bmod \ell$. 
\end{lem}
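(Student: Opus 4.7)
My plan is to verify the claim by unpacking definitions and making three small computations modulo $\ell$, $w_0\ell$, and $v_0\ell$, respectively. Everything rests on the uniqueness of $k_0,m_0$ and an automatic compatibility coming from $\nv$ being admissible for $X_{\lambda,\mu}$.

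First, I would argue that $k_0$ and $m_0$ are indeed uniquely determined mod $\ell$: the requirement that $\kv$ be a monomial type for $V_1$ forces
\[
w_0(k_0+1) \equiv -\sum_{i=1}^n w_i(k_i+1) \pmod{w_0\ell},
\]
which pins down $k_0+1$ mod $\ell$, and similarly for $m_0$ with modulus $v_0\ell$. Next, the fact that $(\kv,\mv)$ automatically satisfies the admissible couple relation $k_0+m_0+2\equiv 0\bmod\ell$ drops out by multiplying these two congruences by $v_0$ and $w_0$ respectively, adding, and comparing with the monomial type condition on $\nv$ (which reads $\sum v_0w_i(k_i+1)+\sum w_0v_j(m_j+1)\equiv 0\bmod w_0v_0\ell$). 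After cancellation one gets $v_0w_0(k_0+m_0+2)\equiv 0\bmod v_0w_0\ell$, hence $k_0+m_0+2\equiv 0\bmod \ell$.

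The heart of the equivalence lies in the observation that the entries of $\kv$ other than the $x_0$-entry are nonzero mod $w_0\ell$ automatically: indeed, for $i\geq 1$, $w_i(k_i+1)\equiv 0\bmod w_0\ell$ would force $v_0w_i(k_i+1)\equiv 0\bmod v_0w_0\ell$, contradicting the admissibility of $\nv$ for $X_{\lambda,\mu}$. Therefore $\kv$ fails to be an admissible monomial type if and only if its $x_0$-entry vanishes, i.e.\ $k_0+1\equiv 0\bmod\ell$, i.e.\ $k_0\equiv -1\bmod\ell$. By the same argument $\mv$ fails to be admissible iff $m_0\equiv -1\bmod\ell$, and the couple relation $k_0+m_0\equiv -2\bmod \ell$ shows that $k_0\equiv -1$ iff $m_0\equiv -1$. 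This gives the chain of equivalences in the lemma.

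Finally, to conclude the lemma proper, I would note that whenever $\kv$ and $\mv$ are admissible, the pair $(\kv,\mv)$ is an admissible couple (by the previous paragraph) and the tautology $\kv\oplus\mv=\nv$ together with the defining formula $\psi(\omega_\kv,\omega_\mv)=\omega_{\kv\oplus\mv}$ exhibits $\omega_\nv$ in the image of $\psi$. Conversely, if $\omega_\nv=\psi(\omega_{\kv'},\omega_{\mv'})$ then the uniqueness of $k_0,m_0$ modulo $\ell$ forces $\kv'=\kv$ and $\mv'=\mv$, so both must be admissible. There is no real obstacle here; the only subtlety is to track carefully that ``admissible'' amounts to the nonvanishing of every coordinate modulo the corresponding degree, so the only degree of freedom that can spoil admissibility sits in the $x_0$- and $y_0$-coordinates of $\kv$ and $\mv$, which are precisely the coordinates erased by the twist map.
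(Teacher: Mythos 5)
Your proposal is correct and follows essentially the same route as the paper's proof: the first equivalence comes from $\psi(\omega_{\kv},\omega_{\mv})=\omega_{\kv\oplus\mv}=\omega_{\nv}$ together with the uniqueness of $k_0,m_0$, and the second from the observation that admissibility of $\nv$ forces all entries of $\kv,\mv$ other than the $0$th to be nonzero, plus the relation $k_0+m_0\equiv -2\bmod \ell$. You merely spell out the congruence computations (the determination of $k_0,m_0$ and the derivation of $k_0+m_0+2\equiv 0\bmod\ell$) that the paper leaves as an ``easy calculation''.
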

\begin{proof}

The first `if and only if' follows directly from the equality 
$\omega_{\mathbf n}=\omega_{\kv \oplus \mv}$.

For the second `if and only if' observe that since $\nv$ is admissible we have
$k_i\not \equiv -1 \bmod d_i$ for $i>0$ and $m_j\not \equiv -1 \bmod e_j$ 
for $j>0$.  So $\kv$ is admissible if and only if $k_0\not 
\equiv -1\mod \ell$ and $\mv$ is admissible if and only if $m_0\not \equiv -1
\bmod \ell$. 

An easy calculation shows that $k_0+m_0\equiv -2 \bmod \ell$. This implies that
if one of $k_0,m_0$ is 
$-1$ modulo $\ell$ then so is the other and that if one of $\kv,\mv$ is
admissible so is the other. This 
finishes the proof. 
\end{proof}

We summarize the results of this section 
\begin{thm} Let $\overline{X_{\lambda,\mu}}$ be a 2-parameter family obtained by
applying the twist 
construction to two families $V_{1,\lambda}$ and $V_{2,\mu}$, that are both
monomial deformation of 
diagonal hypersurfaces. Then $Z(\overline{X_{\lambda_0,\mu_0}},t)$ is the
characteristic polynomial of
\[ \lim_{(\lambda,\mu) \to (\lambda_0,\mu_0) } A(\lambda,\mu)^{-1} \Frob_{0,0}^*
A(\lambda^q,\mu^q).\] 
Let $S_i$ be the  vector space generated by all monomial types for $V_i$. Let
$\bmu_{\ell}$ act on $S_i$ in such a way that its action is compatible with 
the map $\kv \mapsto \omega_{\kv}$.

Let $B_1(\lambda)$ (resp. $B_2(\mu)$) be the deformation matrices for $V_1$
(resp. $V_2$). 

Then there exists operators $A_1(\lambda)$ on $S_1$ and $A_2(\mu)$ on $S_2$ such
that
$A(\lambda,\mu)=(A_1(\lambda)\otimes A_2(\mu))|_{(S_1\otimes S_2)^{\bmu_\ell}}$,
where we 
identified $H^{n+m-1}(U_{\lambda,\mu})$ with $(S_1\otimes S_2)^{\bmu_\ell}$ by
sending $\kv \oplus \mv$ to $\omega_{\kv \oplus \mv}$.

Suppose $\kv$ is an admissible monomial type for $V_i$ then $A_i \kv=B_i 
\omega_{\kv}$, where we 
identified $\omega_{\mv}$ with $\mv$.
\end{thm}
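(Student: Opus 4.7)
The first assertion is essentially bookkeeping: by Theorem~\ref{thmLTF}, $Z(\overline{X_{\lambda_0,\mu_0}},t)$ is determined (up to the trivial factors coming from the complement) by the characteristic polynomial of $\Frob^*_{q,\lambda_0,\mu_0}$ on $H^{n+m-1}(U_{\lambda_0,\mu_0},\QQ_q)$. The Katz commutative diagram recalled in Section~\ref{sect12} gives exactly $\Frob^*_{q,\lambda_0,\mu_0}=A(\lambda_0,\mu_0)^{-1}\Frob^*_{q,0,0}A(\lambda_0^q,\mu_0^q)$, and as usual the identity for the deformation matrix is first proved on a small disc around $(0,0)$ and then analytically continued along a path to $(\lambda_0,\mu_0)$. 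The limit notation absorbs the continuation step and the possibility that $A$ has poles along the discriminant.

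The heart of the argument is the tensor factorization. Here I would use Proposition~\ref{PrpCoeffb}: the entry $c_{\mv}(\lambda,\mu)$ of $A(\lambda,\mu)$ in the basis $\{\omega_{\kv\oplus\mv}\}$ is non-zero only when $\mv-\kv$ can be written as $r_0\mathbf{a}+s_0\mathbf{b}$, and is then explicitly the product of a factor depending only on $(\lambda,r_0,\mathbf{a},k_i)$ and a factor depending only on $(\mu,s_0,\mathbf{b},m_j)$, times a reduction of a rational function that itself splits between the $x_i$ and the $y_j$ variables. Reducing the splitting monomial using Lemma~\ref{redformLem} (applied separately on the two variable groups, which is legitimate because $F_{0,0}=\sum x_i^{d_i}-\sum y_j^{e_j}$ is a sum of two variable-disjoint polynomials, so the Griffiths--Dwork reduction operates coordinatewise) preserves the factorization. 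Thus if I \emph{define} $A_1(\lambda)$ on $S_1$ by applying the $\lambda$-factor of Proposition~\ref{PrpCoeffb} term-by-term to an arbitrary (not necessarily admissible) monomial type $\kv$, and similarly $A_2(\mu)$, then I get a well-defined operator satisfying $A(\lambda,\mu)(\omega_{\kv}\otimes\omega_{\mv})=(A_1(\lambda)\kv)\otimes(A_2(\mu)\mv)$ for every admissible couple, i.e.\ $A=(A_1\otimes A_2)|_{(S_1\otimes S_2)^{\bmu_\ell}}$ under the identification given.

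For the last assertion I would appeal to the one-variable version Proposition~5.2 of~\cite{mondef}: when $\kv$ itself is admissible for $V_1$ and $\mu=0$, Proposition~\ref{PrpCoeffb} specializes (the hypergeometric function in $\mu$ collapses to $1$, $s_0=0$, the $\mathbf b$-contribution drops out) to exactly the formula for the deformation matrix $B_1(\lambda)$ of the one-parameter family $V_{1,\lambda}$, and by the construction of $A_1$ this remains its action on admissible types. The symmetric argument identifies $A_2$ with $B_2$ on admissible types.

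The main obstacle is conceptual rather than computational: one has to check that extending the definition of $A_1,A_2$ from admissible to all monomial types is compatible with the $\bmu_\ell$-invariants, and that the tensor product really lives on $(S_1\otimes S_2)^{\bmu_\ell}$ rather than on the whole tensor space. This boils down to the admissibility condition $k_0+m_0+2\equiv 0\pmod\ell$ being exactly the condition that $\omega_{\kv}\otimes\omega_{\mv}$ is $\bmu_\ell$-invariant under the product of the characters $\zeta^{k_0+1}$ and $\zeta^{m_0+1}$ described in the remark preceding the theorem; once this pairing-up is in place, the tensor factorization from Proposition~\ref{PrpCoeffb} finishes the proof.
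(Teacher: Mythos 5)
Your proposal is correct and follows essentially the route the paper intends: the theorem is stated there as a summary of Section 11, and your argument assembles exactly those ingredients — Theorem \ref{thmLTF} plus Katz's commutative diagram for the first assertion, the entrywise factorization of Proposition \ref{PrpCoeffb} together with Lemma \ref{redformLem} to define $A_1(\lambda)$ and $A_2(\mu)$ on all monomial types (including non-admissible ones, as the paper itself does in its examples), the specialization to \cite[Proposition 5.2]{mondef} to identify $A_i$ with $B_i$ on admissible types, and the observation that the admissible-couple condition $k_0+m_0+2\equiv 0\bmod\ell$ is precisely $\bmu_\ell$-invariance of $\omega_{\kv}\otimes\omega_{\mv}$, matching the map $\psi$ and its image as described before the theorem.
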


In more geometric terms, this theorem tells that the deformation matrix $A$ of
$X_{\lambda,\mu}$ is 
essentially the same as the tensor product of the deformation matrices $B_i$ of
the $V_i$,  that the 
difference between the deformation matrix $A$ and the product $B_1\otimes B_2$ 
is completely due to 
admissible monomial types $\nv$ for $X_{\lambda,\mu}$ that cannot be obtained as
the image of an 
admissible couple and that even in this case similar formulas hold.

\section{An example}\label{sect13}
We would like to consider some families of varieties that come out of the twist
construction. The 
examples in Table~\ref{table4} and~\ref{table5} do not give interesting
examples. We start by explaining 
this fact: 

\begin{rem}The Calabi-Yau threefolds that are listed in Table~\ref{table4} and
Table~\ref{table5}  do not 
give interesting examples: 
The examples in Table~\ref{table4} and~\ref{table5} are quotients  of $E_\lambda
\times S_\mu$, by an 
abelian group of order $\ell \in \{3,4 , 6\}$ that acts faithfully and has fixed
points on $E$. This forces the 
elliptic curve $E$ to have $j$-invariant 0 ($\ell=3,6$) or 1728 ($\ell=4$).
The possible variation of the zeta function of $E_\lambda$ is very limited since
for every $\lambda\in 
\FF_q$ there exists a field extension $K/\FF_q$ of degree 4 or 6 such that we
have that $(E)_K\cong (E_\lambda)_K$. This implies that the Frobenius action 
on $H^2(\PP\setminus E_\lambda)$ is the 
Frobenius action on $H^2(\PP \setminus E)$ twisted by a quartic or sextic
character.

Using the K\"unneth decomposition one gets that the deformation of the zeta
function of $X_{\lambda, \mu}$ equals the deformation  of the zeta function 
of $S_\mu$ twisted by a the quartic or sextic character. 
This can be easily determined using the results of \cite{mondef}. For this
reason we will give an example 
of deformations of Calabi-Yau threefolds mentioned in Table~\ref{table6}. 
\end{rem}

Let $C_\lambda$ be the family of genus 25 curves
$x_0^6+x_1^{12}+x_2^{12}+\lambda x_1^{a_1}x_2^{a_2}$ in $\PP^1(2,1,1)$. 
Let $S_\mu$ be the family of $K3$-surfaces
$y_0^6+y_1^{6}+y_2^{3}+y_3^3+
\mu y_1^{\alpha_1}y_2^{\alpha_2}y_3^{\alpha_3}$ in  $\PP^3(1,1,2,2)$. Then the
twist construction 
gives a two-parameter family of quasi-smooth threefold
\[X_{\lambda,\mu}:  x_1^{12}+x_2^{12}+\lambda x_1^{a_1}x_2^{a_2}-y_1^{6} 
-y_2^{3}-y_3^3-\mu y_1^{b_1}y_2^{b_2}y_3^{b_3} \]
in  $\PP^4(1,1,2,4,4)$. In this example we want to describe the zeta-function of
both $X_{\lambda,\mu}$ 
and its resolution of singularities. The latter is a Calabi-Yau threefold by
Proposition~\ref{prop5-8}. 

We explain first how one can compute the deformation associated with
$C_{\lambda}$.

\begin{ex}
The group $H^2(\PP^2-C_\lambda,\QQ_q)$  is generated by the forms
\[ \omega_{\mathbf c}:=\frac{x_0^{k_0}x_1^{k_1}x_2^{k_2}}{F_\lambda} \Omega\]
with $k_0 \in \{0,1,2,3,4\}, k_1\in \{0,1,\dots,10\} \setminus \{9-2k_0\}$ and 
$k_2=8-2k_0-k_1$ or 
$k_2=20-2k_0-k_1$, depending which one of these two lies between 0 and 10. In
particular, $C_\lambda$ has genus 25. Note that a generator $\sigma$ of 
$\bmu_{6}$ acts on $\omega_{\mathbf c}$ by 
sending it to $\zeta_6^{k_0+1}\omega_{\mathbf c}$.  

The forms $\omega_{\mathbf k}$ with $k_0=2$ are pulled back from the genus 5
hyperelliptic curve 
$x_0^2+x_1^{12}+x_2^{12}+\lambda x_1^{a_1}x_2^{a_2}$, the form with $k_0=1,3$
are pulled back 
from the genus 10 trigonal curve $x_0^3+x_1^{12}+x_2^{12}+\lambda
x_1^{a_1}x_2^{a_2}$.

Take now $a_1=1$. Then $a_2=11$. The following result can be obtained using
Proposition~\ref{PrpCoeffb}.
 Fix $k_0\in \{0,1,2,3,4\}$. Let $k_1\in \{0,1,\dots 11\}\setminus \{9-2k_0\}$,
let $k_2=8-2k_0-k_1$ if $2k_0+k_1\leq 8$ or $20-2k_0-k_1$ otherwise. 

The entry in $B_1(\lambda)$ at $(2\cdot (k_0+1),k_1+1,k_2+1) \times (2\cdot
(m_0+1),m_1+1,m_2+1)$ 
is non-zero only if $m_0=k_0$.

Set $p_0:=(1+11(m_1-k_1)+k_2)/132 $. Using Proposition~\ref{PrpCoeffb} we obtain
that this entry equals 
\[(-\lambda)^{m_1-k_1} {_{12}F_{11}} \left( \begin{matrix}\frac{1+m_1}{12}\;
{p_0} \;p_0+\frac{1}{11} \; 
\dots \; p_0+\frac{10}{11} \\ \frac{m_1-k_1+1}{12} \; \frac{m_1-k_1+2}{12}
\;\hat{1}\; \frac{m_1-k_1+12}{12}
\end{matrix} ;\frac{11^{11}}{12^{12}} \lambda^{12} \right) \mbox{ if }k_1\leq
m_1\]
and
\[(-\lambda)^{m_1-k_1+12} {_{12}F_{11}} \left( \begin{matrix}\frac{13+m_1}{12}\;
{p_0+1} \;p_0+\frac{12}{11} \; \dots \; p_0+\frac{21}{11} \\ 
\frac{m_1-k_1+13}{12} \;
\frac{m_1-k_1+14}{12} \;\hat{1}\; \frac{m_1-k_1
+24}{12}\end{matrix}; \frac{11^{11}}{12^{12}} \lambda^{12}  \right) 
\mbox{ if }k_1> m_1.\]

Actually these functions are ${_{10}}F_{9}$, i.e., two of the parameters in the
upper row appear also in 
the lower row, namely $\frac{m_1+1}{12}$ and one of $p_0+\frac{10-k_2}{12}$ and
$p_0+\frac{22-k_2}{12}$ appear in both the upper and lower row. 

For $k_0=m_0=5$ then $B_1\omega_{\kv}=0$, but the above formula make senses, and
yields an expression for $A_1\omega_{\kv}$, 
$(0,k_1+1,k_2+1) \times (0,m_1+1,m_2+1)$ 
provided that none of the $k_i,m_i$ equals $11 \bmod 12$. This is the
deformation matrix associated 
with the finite set $x_1^{12}+x_2^{12}+\lambda x_1x_2^{11}=0$ consisting of 12
geometric points.
\end{ex}

We proceed by doing a similar calculation for $S_\mu$.
\begin{ex}We want to calculate the deformation matrix $B_2(\mu)$. 
 Take $b_1=2,b_2=b_3=1$. Let $(k_0,k_1,k_2,k_3)$ and $(m_0,m_1,m_2,m_3)$ be
admissible 
monomial types.  Note that a generator $\sigma$ of $\bmu_{6}$ acts on
$\omega_{\mathbf c}$ by 
sending it to $\zeta_6^{k_0+1}\omega_{\mathbf c}$.

The entry of $B(\mu)$ at $\mathbf{k} \times \mathbf{m}$ is non-zero if either
$\mathbf{k} =\mathbf{m}$ or 
$k_0=m_0$, $k_1\neq m_1$, $k_2=k_3$ and $m_2=m_3$. We list now all non-zero
entries.

The entry at $(k_0+1,5-k_0,2\cdot 1,2\cdot 2)\times (k_0+1,5-k_0,2\cdot 1,2 
\cdot 2)$, (which equals the 
entry $(k_0+1,5-k_0,2\cdot 2, 2\cdot 1)\times (k_0+1,5-k_0,2\cdot 2,2\cdot 1)$)
is
\[{}_1F_0\left( \begin{matrix} \frac{5-k_0}{6} \\ -\end{matrix} ; \frac{-1}{27}
\mu^3 \right).\]

The entry at $(4,4,2\cdot 1,2\cdot 1)\times (4,4,2\cdot 1,2\cdot 1)$ equals
\[{}_1F_0\left( \begin{matrix} \frac{1}{3}\\ -\end{matrix} ; \frac{-1}{27} \mu^3
\right).\]

The entry at $(2,2,2\cdot 2,2\cdot 2)\times (2,2,2\cdot 2,2\cdot 2)$.
\[{}_1F_0\left( \begin{matrix} \frac{2}{3}\\ - \end{matrix} ; \frac{-1}{27}
\mu^3 \right).\]

The entryblock at $(3,5,2\cdot 1,2\cdot 1) ,(3,1,2\cdot 2,2\cdot 2)$ equals
\[ \left(\begin{matrix}
{_2F_1 \left(\begin{matrix} \frac{5}{6}  \; \frac{1}{3} \\ \frac{2}{3}
\end{matrix} ; \frac{-1}{27} \mu^3 \right)}
& \frac{1}{54}\mu^2 \; {_2F_1 \left(\begin{matrix} \frac{5}{6}  \; \frac{4}{3}
\\ \frac{2}{3} \end{matrix} ; \frac{-1}
{27} \mu^3 \right)}
\\
-\frac{1}{12}\mu \; {_2F_1 \left(\begin{matrix} \frac{7}{6}  \; \frac{2}{3} \\
\frac{4}{3} \end{matrix} ; \frac{-1}
{27} \mu^3 \right)}
&
{_2F_1 \left(\begin{matrix} \frac{1}{6}  \; \frac{2}{3} \\ \frac{1}{3}
\end{matrix} ; \frac{-1}{27} \mu^3 \right)}
   \end{matrix} \right).\]

The entryblock  at $(5,3,2\cdot 1,2\cdot 1) , (5,5,2 \cdot 2,2\cdot 2)$ equals
\[ \left(\begin{matrix}
{_2F_1 \left(\begin{matrix} \frac{1}{2}  \; \frac{1}{3} \\ \frac{2}{3}
\end{matrix} ; \frac{-1}{27} \mu^3 \right)}
& \frac{1}{6^4}\mu^2  \; {_2F_1 \left(\begin{matrix} \frac{3}{2}  \; \frac{4}{3}
\\ \frac{5}{3} \end{matrix} ; \frac{-1}{27} \mu^3 \right)}
\\
-\mu  \;{_2F_1 \left(\begin{matrix} \frac{5}{6}  \; \frac{2}{3} \\ \frac{4}{3}
\end{matrix} ; \frac{-1}{27} \mu^3 
\right)}
&
{_2F_1 \left(\begin{matrix} \frac{5}{6}  \; \frac{2}{3} \\ \frac{1}{3}
\end{matrix} ; \frac{-1}{27} \mu^3 \right)}
   \end{matrix} \right).\]

The entryblock at $(1,1,2\cdot 1,2\cdot 1), (1,3,2\cdot 2,2\cdot 2)$ equals
\[ \left(\begin{matrix}
{_2F_1 \left(\begin{matrix} \frac{1}{6}  \; \frac{1}{3} \\ \frac{2}{3}
\end{matrix} ; \frac{-1}{27} \mu^3 \right)}
& \frac{1}{6^4} \mu^2  \;{_2F_1 \left(\begin{matrix} \frac{7}{6}  \; \frac{4}{3}
\\ \frac{5}{3} \end{matrix} ; \frac{-1}{27} \mu^3 \right)}
\\
-\mu \; {_2F_1 \left(\begin{matrix} \frac{1}{2}  \; \frac{2}{3} \\ \frac{4}{3}
\end{matrix} ; \frac{-1}{27} \mu^3 
\right)}
&
{_2F_1 \left(\begin{matrix} \frac{1}{2}  \; \frac{2}{3} \\ \frac{1}{3}
\end{matrix} ; \frac{-1}{27} \mu^3 \right)}
   \end{matrix} \right).\]

 It remains to calculate the entry block that appears in $A_2$ but not in $B_2$,
namely the block at 
$(0,2,2\cdot 1,2\cdot 1)\times (0,4,2\cdot 2,2\cdot 2)$
\[ \left(\begin{matrix}
{_2F_1 \left(\begin{matrix} \frac{1}{3}  \; \frac{1}{3} \\ \frac{2}{3}
\end{matrix} ; \frac{-1}{27} \mu^3 \right)}
& \frac{1}{54} \mu^2  \;{_2F_1 \left(\begin{matrix} \frac{4}{3}  \; \frac{4}{3}
\\ \frac{5}{3} \end{matrix} ; \frac
{-1}{27} \mu^3 \right)}
\\
-\mu \; {_2F_1 \left(\begin{matrix} \frac{2}{3}  \; \frac{2}{3} \\ \frac{4}{3}
\end{matrix} ; \frac{-1}{27} \mu^3 
\right)}
&
{_2F_1 \left(\begin{matrix} \frac{2}{3}  \; \frac{2}{3} \\ \frac{1}{3}
\end{matrix} ; \frac{-1}{27} \mu^3 \right)}
   \end{matrix} \right).\]
   This final block i  also the deformation block of the elliptic curve
$x_1^3+x_2^3+x_3^3+\lambda 
x_1x_2x_3$.
\end{ex}

\begin{rem} 
One can show that  image of $H^2(U_{1,\lambda})\otimes H^3(U_{2,\mu})$ has
codimension 22 in  
$H^4(U_{\lambda,\mu})$.
\end{rem}

\begin{rem}
One can easily show that for $\overline{\lambda }\in\FF_q$  we have that 
\[ _1F_0\left(\begin{matrix} \frac{a}{6} \\ -\end{matrix} ; c \lambda^3\right)
{_1F_0\left(\begin{matrix} \frac
{a}{6} \\ -\end{matrix} ; c \lambda^{3q}\right)^{-1}}\]
is a sixth root of unity, where $\lambda$ is the Teichm\"uller lift of
$\overline{\lambda}$.

Suppose $q\equiv 1 \bmod d$. Then the above remark implies that there is a
120-dimensional subspace 
of $H'$ of the $202$-dimensional space $H^3(U_{\lambda,\mu})$ on which the
Frobenius action is the 
Frobenius action  on the ``corresponding'' subspace of $H^3(U_{\lambda,0})$
twisted by a sextic 
character. 
\end{rem}

In this way we get enough information to determine the zeta-function of
$\overline{X_{\lambda,\mu}}$. 
However, we are interested in determining the zeta-function of its resolution.
Up to now we found only a 
part of the cohomology of the desingularization
$\widetilde{\overline{X_{\lambda,\mu}}}$ of $\overline
{X_{\lambda,\mu}}$. 

\subsection{Desingularization}
The singular locus of $\overline{X_{\lambda,\mu}}$ consists of
$x_1=x_2=y_1^{6}+y_2^{3}+y_3^3+\mu 
y_1^{\alpha_1}y_2^{\alpha_2}y_3^{\alpha_3}=0$.
In order to resolve these singularities we resolve the weighted projective
space 
$\PP(1,1,2,4,4)$. This can be done by toric methods as follows (cf. \cite{Ful}):

Let $v_i$ be  the four standard basis vectors of $\RR^4$. Let
$v_0=(-1,-2,-4,-4)$. Consider the fan $\Sigma$ consisting of the cones 
generated by the proper subsets of
$\{v_0,v_1,v_2,v_3,v_4\}$. Then it is 
well-known that the toric variety associated with this fan is isomorphic to
$\PP(1,1,2,4,4)$. In order to 
resolve the singularities we have to split the cones in this fan in smaller
cones.

Let $w_1=(v_0+v_1)/2$. Let $\sigma\in \Sigma$ be a four-dimensional cone through
$v_0$ and $v_1$. 
Let $v_i$ and $v_j$ be the other generators. Then we can split up $\sigma$ into
the union of two cones 
$\sigma_1$ and $\sigma_2$, where $\sigma_1$ is generated by $v_0, w_1,v_i, v_j$
and $\sigma_2$ is 
generated by $w_1,v_1,v_i,v_j$. Define a new fan $\Sigma'$ generated by the four
dimensional cones 
in $\Sigma$ that do not contain both $v_0$ and $v_1$, and the cones obtained by
splitting up the cones 
containing both $v_0$ and $v_1$.
 
The toric variety obtained in this way is the blow-up of $\PP(1,1,2,4,4)$ along
$x_1=x_2=0$. This variety 
is still singular. In order to resolve the singularities completely we need to
split up every four-dimensional cone containing $v_2,w_1$ into a  cone 
generated by $w_1,w_2$ and one cone generated 
by $v_2$ and $w_2$, with $w_2=(w_1+v_2)/2$.
One easily sees that the toric variety $\tilde{\PP}$ corresponding to this fan
is smooth.

Consider now again our threefold $X$. Let $\tilde{X}$ be the strict transform of
$X$ in $\tilde{\PP}$. Let $\EE_1$ and $\EE_2$ be the intersection of the 
two exceptional divisors with $\tilde{X}$. 
A computation in local coordinates shows that $\EE_1$ is a ruled surfaces 
over $y_1^{6}+y_2^{3}+y_3^3+\mu y_1^{\alpha_1}y_2^{\alpha_2}y_3^{\alpha_3}=0$ 
blown-up in three (geometric) points lying in the fibers over the 
intersection of $y_1=0$ with  $B:=\spec (\FF_q[(y_2/y_3)]/((y_2/y_3)^3-1)$. 

Using the toric representation of our variety one easily sees that 
$\EE_2\cong F_2 \times B$, where $F_2$ means the Hirzebruch surface $F_2$. 
Hence, if $q\equiv 1 \bmod 3$ then $\EE_2$ consists of three copies of the 
Hirzebruch surface $F_2$, lying over the points with $x_1=x_2=y_1=0$, 
and that the intersection of $\EE_1$ and $\EE_2$ consists of 3 times 
the union of two $\PP^1$s intersecting in a point.

\subsection{$Z(\tilde{X},t)$}
One easily computes that $Z(F_2,t)=\frac{1}{(1-t)(1-pt)^2(1-p^2t)}$ and that
\[ Z(\EE_2,t)= \left\{ \begin{array}{ll} Z(F_2,t)^2Z(F_2,-t)  & \mbox{if } q\equiv
2 \bmod 3 \\ Z(F_2,t)^3 &
\mbox{if } q\equiv 1 \bmod 3\end{array}\right. \]

To compute $Y(\EE_1,t)$ note that a ruled surface over a curve $C$ has
zeta-function
\[ Z(S,t)= Z(C,t)Z(C,pt).\]

The intersection of $\EE_1$ and $\EE_2$ consists of geometrically three 
connected components. In the case that $q\equiv 1\bmod 3$ all three are 
connected over $\FF_q$. Otherwise only one connected component is defined 
over $\FF_q$. Each connected component is isomorphic to a union of two 
$\PP^1$ intersecting a point. 

Collecting everything we see that
\[ Z(\tilde{X},t)=\frac{Z(X,t)}{Z(C,t)Z(C,qt)(1-qt)^2(1-q^2t)^2
(1-(-1)^{(q-1)/3}t)(1-(-1)^{(q-1)/3}qt)} \]

It remains to determine $Z(C,t)$. This is relatively easy. Let $C:
y_0^6+y_1^3+y_2^3+\mu y_0^{\alpha_0}y_1^{\alpha_1}y_2^{\alpha_2}$ in 
$\PP(1,2,2)$. This implies that $\alpha_0$ is divisible by 2. 
The curve $C$ is isomorphic to $x_0^3+x_1^3+x_2^3+\mu
y_0^{\alpha_0/2}y_1^{\alpha_1}y_2^
{\alpha_2}$. In the case $\alpha_0=2, \alpha_1=\alpha_2=1$. We have the
well-known Hasse pencil. 
This family has the following deformation matrix
\[ \left(\begin{array}{cc} 
 \;_{2}F_1\left( \begin{array}{c} \frac{1}{3} \; \frac{1}{3} \\ \frac{2}{3}
\end{array} ; \frac{-\mu^3}{27}
\right)  & 
\frac{\mu^2}{54} \;_{2}F_1\left( \begin{array}{c} \frac{4}{3} \; \frac{4}{3} \\
\frac{5}{3} \end{array} ; \frac{-\mu^3}{27}
\right)  \\
-\mu \;_{2}F_1\left( \begin{array}{c} \frac{2}{3} \; \frac{2}{3} \\ \frac{4}{3}
\end{array} ; \frac{-\mu^3}{27} 
\right)  & 
 \;_{2}F_1\left( \begin{array}{c} \frac{2}{3} \; \frac{2}{3} \\ \frac{1}{3}
\end{array} ; \frac{-\mu^3}{27}
\right)  

\\ \end{array}\right). \]

\end{document}